\newtheorem{cor}[subsubsection]{Corollary}
\newtheorem{lem}[subsubsection]{Lemma}
\newtheorem{prop}[subsubsection]{Proposition}
\newtheorem{conj}[subsubsection]{Conjecture}
\newtheorem{thm}[subsubsection]{Theorem}
\theoremstyle{remark}
\newtheorem{remark}[subsubsection]{Remark}
\newtheorem{rem}[subsubsection]{Remark}
\newcommand{\thmref}[1]{Theorem~\ref{#1}}
\newcommand{\secref}[1]{Sect.~\ref{#1}}
\newcommand{\lemref}[1]{Lemma~\ref{#1}}
\newcommand{\propref}[1]{Proposition~\ref{#1}}
\newcommand{\corref}[1]{Corollary~\ref{#1}}
\numberwithin{equation}{section}
\newcommand{\nc}{\newcommand}
\nc{\renc}{\renewcommand}
\nc{\ssec}{\subsection}
\nc{\sssec}{\subsubsection}
\nc{\on}{\operatorname}
\nc\ol{\overline}
\nc\wt{\widetilde}
\nc\tboxtimes{\wt{\boxtimes}}
\nc\tstar{\wt{\star}}
\nc{\alp}{\alpha}
\nc{\ZZ}{{\mathbb Z}}
\nc{\NN}{{\mathbb N}}
\nc{\OO}{{\mathbb O}}
\renc{\SS}{{\mathbb S}}
\nc{\DD}{{\mathbb D}}
\nc{\GG}{{\mathbb G}}
\nc{\Fq}{{\mathbb F}_q}
\nc{\Fqb}{\ol{{\mathbb F}_q}}
\nc{\Ql}{\ol{{\mathbb Q}_\ell}}
\nc{\id}{\text{id}}
\nc\X{\mathcal X}
\nc{\Hom}{\on{Hom}}
\nc{\Lie}{\on{Lie}}
\nc{\Loc}{\on{Loc}}
\nc{\Pic}{\on{Pic}}
\nc{\Bun}{\on{Bun}}
\nc{\IC}{\on{IC}}
\nc{\Aut}{\on{Aut}}
\nc{\rk}{\on{rk}}
\nc{\Sh}{\on{Sh}}
\nc{\Perv}{\on{Perv}}
\nc{\pos}{{\on{pos}}}
\nc{\Conv}{\on{Conv}}
\nc{\Sph}{\on{Sph}}
\nc{\Sym}{\on{Sym}}
\nc{\BunBb}{\overline{\Bun}_B}
\nc{\BunNb}{\overline{\Bun}_N}
\nc{\BunTb}{\overline{\Bun}_T}
\nc{\BunBbm}{\overline{\Bun}_{B^-}}
\nc{\BunBbel}{\overline{\Bun}_{B,el}}
\nc{\BunBbmel}{\overline{\Bun}_{B^-,el}}
\nc{\Buno}{\overset{o}{\Bun}}
\nc{\BunPb}{{\overline{\Bun}_P}}
\nc{\BunBM}{\Bun_{B(M)}}
\nc{\BunBMb}{\overline{\Bun}_{B(M)}}
\nc{\BunPbw}{{\widetilde{\Bun}_P}}
\nc{\BunBP}{\widetilde{\Bun}_{B,P}}
\nc{\GUb}{\overline{G/U}}
\nc{\GUPb}{\overline{G/U(P)}}
\nc{\Hhom}{\underline{\on{Hom}}}
\nc\syminfty{\on{Sym}^{\infty}}
\nc\lal{\ol{\lambda}}
\nc\xl{\ol{x}}
\nc\thl{\ol{\theta}}
\nc\nul{\ol{\nu}}
\nc\mul{\ol{\mu}}
\nc{\oX}{\overset{\circ}{X}{}}
\nc{\hl}{\overset{\leftarrow}h{}}
\nc{\hr}{\overset{\rightarrow}h{}}
\nc{\M}{{\mathcal M}}
\nc{\N}{{\mathcal N}}
\nc{\F}{{\mathcal F}}
\nc{\D}{{\mathcal D}}
\nc{\Q}{{\mathcal Q}}
\nc{\Y}{{\mathcal Y}}
\nc{\G}{{\mathcal G}}
\nc{\E}{{\mathcal E}}
\nc{\CalC}{{\mathcal C}}
\nc\Dh{\widehat{\D}}
\nc{\C}{{\mathcal C}}
\nc{\K}{{\mathcal K}}
\renewcommand{\H}{{\mathcal H}}
\nc{\T}{{\mathcal T}}
\nc{\V}{{\mathcal V}}
\renc{\P}{{\mathcal P}}
\nc{\A}{{\mathcal A}}
\nc{\B}{{\mathcal B}}
\nc{\U}{{\mathcal U}}
\nc{\Gr}{{\on{Gr}}}
\nc{\frn}{{\check{\mathfrak u}(P)}}
\nc{\fC}{\mathfrak C}
\nc{\p}{\mathfrak p}
\nc{\q}{\mathfrak q}
\nc\f{{\mathfrak f}}
\nc{\qo}{{\mathfrak q}}
\nc{\po}{{\mathfrak p}}
\nc{\s}{{\mathfrak s}}
\nc\w{\text{w}}
\nc\Spec{\on{Spec}}
\nc\Mod{\on{Mod}}
\nc{\tw}{\widetilde{\mathfrak t}}
\nc{\pw}{\widetilde{\mathfrak p}}
\nc{\qw}{\widetilde{\mathfrak q}}
\nc{\jw}{\widetilde j}
\nc{\grb}{\overline{\Gr_{X^{\fset}}}}
\nc{\I}{\mathcal I}
\nc{\lambdach}{{\check\lambda}}
\nc{\Lambdach}{{\check\Lambda}{}}
\nc{\much}{{\check\mu}}
\nc{\omegach}{{\check\omega}}
\nc{\nuch}{{\check\nu}}
\nc{\etach}{{\check\eta}}
\nc{\alphach}{{\check\alpha}}
\nc{\rhoch}{{\check\rho}}
\nc{\ch}{{\check h}}
\nc{\Hb}{\overline{\H}}
\nc{\BA}{{\mathbb{A}}}
\nc{\BC}{{\mathbb{C}}}
\nc{\BG}{{\mathbb{G}}}
\nc{\BM}{{\mathbb{M}}}
\nc{\BO}{{\mathbb{O}}}
\nc{\BD}{{\mathbb{D}}}
\nc{\BN}{{\mathbb{N}}}
\nc{\BP}{{\mathbb{P}}}
\nc{\BQ}{{\mathbb{Q}}}
\nc{\BR}{{\mathbb{R}}}
\nc{\BZ}{{\mathbb{Z}}}
\nc{\BS}{{\mathbb{S}}}
\nc{\BV}{{\mathbb{V}}}
\nc{\CA}{{\mathcal{A}}}
\nc{\CB}{{\mathcal{B}}}
\nc{\CE}{{\mathcal{E}}}
\nc{\CF}{{\mathcal{F}}}
\nc{\CH}{{\mathcal{H}}}
\nc{\CL}{{\mathcal{L}}}
\nc{\CC}{{\mathcal{C}}}
\nc{\CG}{{\mathcal{G}}}
\nc{\CM}{{\mathcal{M}}}
\nc{\CN}{{\mathcal{N}}}
\nc{\CK}{{\mathcal{K}}}
\nc{\CO}{{\mathcal{O}}}
\nc{\CP}{{\mathcal{P}}}
\nc{\CQ}{{\mathcal{Q}}}
\nc{\CR}{{\mathcal{R}}}
\nc{\CS}{{\mathcal{S}}}
\nc{\CT}{{\mathcal{T}}}
\nc{\CU}{{\mathcal{U}}}
\nc{\CV}{{\mathcal{V}}}
\nc{\CW}{{\mathcal{W}}}
\nc{\CX}{{\mathcal{X}}}
\nc{\CY}{{\mathcal{Y}}}
\nc{\CZ}{{\mathcal{Z}}}
\nc{\CI}{{\mathcal{I}}}
\nc{\csM}{{\check{\mathcal A}}{}}
\nc{\oM}{{\overset{\circ}{\mathcal M}}{}}
\nc{\obM}{{\overset{\circ}{\mathbf M}}{}}
\nc{\oCA}{{\overset{\circ}{\mathcal A}}{}}
\nc{\obA}{{\overset{\circ}{\mathbf A}}{}}
\nc{\ooM}{{\overset{\circ}{M}}{}}
\nc{\osM}{{\overset{\circ}{\mathsf M}}{}}
\nc{\vM}{{\overset{\bullet}{\mathcal M}}{}}
\nc{\nM}{{\underset{\bullet}{\mathcal M}}{}}
\nc{\oD}{{\overset{\circ}{\mathcal D}}{}}
\nc{\obC}{{\overset{\circ}{\mathbf C}}{}}
\nc{\obD}{{\overset{\circ}{\mathbf D}}{}}
\nc{\oA}{{\overset{\circ}{\mathbb A}}{}}
\nc{\op}{{\overset{\bullet}{\mathbf p}}{}}
\nc{\cp}{{\overset{\circ}{\mathbf p}}{}}
\nc{\oU}{{\overset{\bullet}{\mathcal U}}{}}
\nc{\oZ}{{\overset{\circ}{\mathcal Z}}{}}
\nc{\ofZ}{{\overset{\circ}{\mathfrak Z}}{}}
\nc{\oF}{{\overset{\circ}{\fF}}}
\nc{\fa}{{\mathfrak{a}}}
\nc{\fb}{{\mathfrak{b}}}
\nc{\fd}{{\mathfrak{d}}}
\nc{\ff}{{\mathfrak{f}}}
\nc{\fg}{{\mathfrak{g}}}
\nc{\fgl}{{\mathfrak{gl}}}
\nc{\fh}{{\mathfrak{h}}}
\nc{\fj}{{\mathfrak{j}}}
\nc{\fl}{{\mathfrak{l}}}
\nc{\fm}{{\mathfrak{m}}}
\nc{\fn}{{\mathfrak{n}}}
\nc{\fu}{{\mathfrak{u}}}
\nc{\fp}{{\mathfrak{p}}}
\nc{\fr}{{\mathfrak{r}}}
\nc{\fs}{{\mathfrak{s}}}
\nc{\ft}{{\mathfrak{t}}}
\nc{\fz}{{\mathfrak{z}}}
\nc{\fsl}{{\mathfrak{sl}}}
\nc{\hsl}{{\widehat{\mathfrak{sl}}}}
\nc{\hgl}{{\widehat{\mathfrak{gl}}}}
\nc{\hg}{{\widehat{\mathfrak{g}}}}
\nc{\chg}{{\widehat{\mathfrak{g}}}{}^\vee}
\nc{\hn}{{\widehat{\mathfrak{n}}}}
\nc{\chn}{{\widehat{\mathfrak{n}}}{}^\vee}
\nc{\fA}{{\mathfrak{A}}}
\nc{\fB}{{\mathfrak{B}}}
\nc{\fD}{{\mathfrak{D}}}
\nc{\fE}{{\mathfrak{E}}}
\nc{\fF}{{\mathfrak{F}}}
\nc{\fG}{{\mathfrak{G}}}
\nc{\fK}{{\mathfrak{K}}}
\nc{\fL}{{\mathfrak{L}}}
\nc{\fM}{{\mathfrak{M}}}
\nc{\fN}{{\mathfrak{N}}}
\nc{\fP}{{\mathfrak{P}}}
\nc{\fU}{{\mathfrak{U}}}
\nc{\fV}{{\mathfrak{V}}}
\nc{\fZ}{{\mathfrak{Z}}}
\nc{\bb}{{\mathbf{b}}}
\nc{\bc}{{\mathbf{c}}}
\nc{\bd}{{\mathbf{d}}}
\nc{\bbf}{{\mathbf{f}}}
\nc{\be}{{\mathbf{e}}}
\nc{\bg}{{\mathbf{g}}}
\nc{\bi}{{\mathbf{i}}}
\nc{\bj}{{\mathbf{j}}}
\nc{\bn}{{\mathbf{n}}}
\nc{\bp}{{\mathbf{p}}}
\nc{\bq}{{\mathbf{q}}}
\nc{\bu}{{\mathbf{u}}}
\nc{\bv}{{\mathbf{v}}}
\nc{\bx}{{\mathbf{x}}}
\nc{\bs}{{\mathbf{s}}}
\nc{\by}{{\mathbf{y}}}
\nc{\bw}{{\mathbf{w}}}
\nc{\bA}{{\mathbf{A}}}
\nc{\bK}{{\mathbf{K}}}
\nc{\bB}{{\mathbf{B}}}
\nc{\bC}{{\mathbf{C}}}
\nc{\bG}{{\mathbf{G}}}
\nc{\bD}{{\mathbf{D}}}
\nc{\bH}{{\mathbf{H}}}
\nc{\bM}{{\mathbf{M}}}
\nc{\bN}{{\mathbf{N}}}
\nc{\bV}{{\mathbf{V}}}
\nc{\bW}{{\mathbf{W}}}
\nc{\bX}{{\mathbf{X}}}
\nc{\bZ}{{\mathbf{Z}}}
\nc{\bS}{{\mathbf{S}}}
\nc{\sA}{{\mathsf{A}}}
\nc{\sB}{{\mathsf{B}}}
\nc{\sC}{{\mathsf{C}}}
\nc{\sD}{{\mathsf{D}}}
\nc{\sF}{{\mathsf{F}}}
\nc{\sG}{{\mathsf{G}}}
\nc{\sK}{{\mathsf{K}}}
\nc{\sM}{{\mathsf{M}}}
\nc{\sO}{{\mathsf{O}}}
\nc{\sW}{{\mathsf{W}}}
\nc{\sQ}{{\mathsf{Q}}}
\nc{\sP}{{\mathsf{P}}}
\nc{\sV}{{\mathsf{V}}}
\nc{\sS}{{\mathsf{S}}}
\nc{\sT}{{\mathsf{T}}}
\nc{\sZ}{{\mathsf{Z}}}
\nc{\sfp}{{\mathsf{p}}}
\nc{\sll}{{\mathsf{l}}}
\nc{\sr}{{\mathsf{r}}}
\nc{\bk}{{\mathsf{k}}}
\nc{\sg}{{\mathsf{g}}}
\nc{\sff}{{\mathsf{f}}}
\nc{\sfb}{{\mathsf{b}}}
\nc{\sfc}{{\mathsf{c}}}
\nc{\sd}{{\mathsf{d}}}
\nc{\se}{{\mathsf{e}}}
\nc{\BK}{{\bar{K}}}
\nc{\tA}{{\widetilde{\mathbf{A}}}}
\nc{\tB}{{\widetilde{\mathcal{B}}}}
\nc{\tg}{{\widetilde{\mathfrak{g}}}}
\nc{\tG}{{\widetilde{G}}}
\nc{\TM}{{\widetilde{\mathbb{M}}}{}}
\nc{\tO}{{\widetilde{\mathsf{O}}}{}}
\nc{\tU}{{\widetilde{\mathfrak{U}}}{}}
\nc{\TZ}{{\tilde{Z}}}
\nc{\tx}{{\tilde{x}}}
\nc{\tbv}{{\tilde{\bv}}}
\nc{\tfP}{{\widetilde{\mathfrak{P}}}{}}
\nc{\tz}{{\tilde{\zeta}}}
\nc{\tmu}{{\tilde{\mu}}}
\nc{\urho}{\underline{\rho}}
\nc{\uB}{\underline{B}}
\nc{\uC}{{\underline{\mathbb{C}}}}
\nc{\ui}{\underline{i}}
\nc{\uj}{\underline{j}}
\nc{\ofP}{{\overline{\mathfrak{P}}}}
\nc{\oB}{{\overline{\mathcal{B}}}}
\nc{\og}{{\overline{\mathfrak{g}}}}
\nc{\oI}{{\overline{I}}}
\nc{\eps}{\varepsilon}
\nc{\hrho}{{\hat{\rho}}}
\nc{\one}{{\mathbf{1}}}
\nc{\two}{{\mathbf{t}}}
\nc{\Rep}{{\mathop{\operatorname{\rm Rep}}}}
\nc{\Tot}{{\mathop{\operatorname{\rm Tot}}}}
\nc{\Ker}{{\mathop{\operatorname{\rm Ker}}}}
\nc{\Hilb}{{\mathop{\operatorname{\rm Hilb}}}}
\nc{\End}{{\mathop{\operatorname{\rm End}}}}
\nc{\Ext}{{\mathop{\operatorname{\rm Ext}}}}
\nc{\CHom}{{\mathop{\operatorname{{\mathcal{H}}\it om}}}}
\nc{\GL}{{\mathop{\operatorname{\rm GL}}}}
\nc{\gr}{{\mathop{\operatorname{\rm gr}}}}
\nc{\Id}{{\mathop{\operatorname{\rm Id}}}}
\nc{\de}{{\mathop{\operatorname{\rm def}}}}
\nc{\length}{{\mathop{\operatorname{\rm length}}}}
\nc{\supp}{{\mathop{\operatorname{\rm supp}}}}
\nc{\Cliff}{{\mathsf{Cliff}}}
\nc{\Fl}{\on{Fl}}
\nc{\Fib}{{\mathsf{Fib}}}
\nc{\Coh}{{\on{Coh}}}
\nc{\QCoh}{{\on{QCoh}}}
\nc{\IndCoh}{{\on{IndCoh}}}
\nc{\FCoh}{{\mathsf{FCoh}}}
\nc{\reg}{{\text{\rm reg}}}
\nc{\cplus}{{\mathbf{C}_+}}
\nc{\cminus}{{\mathbf{C}_-}}
\nc{\cthree}{{\mathbf{C}_*}}
\nc{\Qbar}{{\bar{Q}}}
\nc\Eis{\on{Eis}}
\nc\Eisb{\ol\Eis{}}
\nc\Eisr{\on{Eis}^{rat}{}}
\nc\wh{\widehat}
\nc{\Def}{\on{Def_{\check{\fb}}(E)}}
\nc{\barZ}{\overline{Z}{}}
\nc{\barbarZ}{\overline{\barZ}{}}
\nc{\barpi}{\overline\pi}
\nc{\barbarpi}{\overline\barpi}
\nc{\barpip}{\overline\pi{}^+}
\nc{\barpim}{\overline\pi{}^-}
\nc{\fq}{\mathfrak q}
\nc{\fqb}{\ol{\fq}{}}
\nc{\fpb}{\ol{\fp}{}}
\nc{\fpr}{{\fp^{rat}}{}}
\nc{\fqr}{{\fq^{rat}}{}}
\nc{\hattimes}{\wh\otimes}
\nc{\bh}{{\bar{h}}}
\nc{\bOmega}{{\overline{\Omega(\check \fn)}}}
\nc{\seq}[1]{\stackrel{#1}{\sim}}
\nc{\cT}{{\check{T}}}
\nc{\cG}{{\check{G}}}
\nc{\cM}{{\check{M}}}
\nc{\cB}{{\check{B}}}
\nc{\ct}{{\check{\mathfrak t}}}
\nc{\cg}{{\check{\fg}}}
\nc{\cb}{{\check{\fb}}}
\nc{\cn}{{\check{\fn}}}
\nc{\cLambda}{{\check\Lambda}}
\nc{\cla}{{\check\lambda}}
\nc{\cmu}{{\check\mu}}
\nc{\cnu}{{\check\nu}}
\nc{\ceta}{{\check\eta}}
\nc{\DefbE}{{\on{Def}_{\cB}(E_\cT)}}
\nc{\imathb}{{\ol{\imath}}}
\nc{\rlr}{\overset{\longrightarrow}{\underset{\longrightarrow}\longleftarrow}}
\nc{\oBun}{\overset{\circ}\Bun}
\nc{\LocSys}{\on{LocSys}}
\nc{\BunBbb}{\ol{\ol{Bun}}_B}
\nc{\BunBr}{\Bun_B^{rat}}
\nc{\BunBrsg}{\Bun_B^{rat,\on{s.g.}}}
\nc{\BunBrp}{\Bun_B^{rat,polar}}
\nc{\BunBrpbg}{\Bun_B^{rat,polar,\on{b.g.}}}
\nc{\BunBrpsg}{\Bun_B^{rat,polar,\on{s.g.}}}
\nc{\BunTrp}{\Bun_T^{rat,polar}}
\nc{\BunTrpbg}{\Bun_T^{rat,polar,\on{b.g.}}}
\nc{\BunTrpsg}{\Bun_T^{rat,polar,\on{s.g.}}}
\nc{\BunNr}{\Bun_N^{rat}}
\nc{\BunNre}{\Bun_N^{enh,rat}}
\nc{\BunTr}{\Bun_T^{rat}}
\nc{\Vect}{\on{Vect}}
\nc{\Whit}{\on{Whit}}
\nc{\CTb}{\ol{\on{CT}}}
\nc{\Ran}{\on{Ran}}
\nc{\CTr}{\on{CT}^{rat}{}}
\nc\jmathr{\jmath^{rat}{}}
\nc{\ux}{\underline{x}}
\nc{\clambda}{{\check\lambda}}
\nc{\calpha}{{\check\alpha}}
\nc{\ind}{{\mathbf{ind}}}
\nc{\oblv}{{\mathbf{oblv}}}
\nc{\StinftyCat}{\on{DGCat}}
\nc{\inftygroup}{\infty\on{-Grpd}}
\nc{\fset}{\on{fSet}}
\nc{\Sch}{\on{Sch}}
\nc{\affSch}{\on{Sch}^{\on{aff}}}
\nc{\indSch}{\on{IndSch}}
\nc{\ul}{\underline}
\nc{\dr}{\on{dR}}
\nc{\rendr}{\on{ren-dR}}
\nc{\bDelta}{\mathbf\Delta}
\begin{document}

\title[Contractibility of the space of rational maps]
{Contractibility of the space of rational maps}

\author{Dennis Gaitsgory}

\dedicatory{For Sasha Beilinson}

\date{\today}

\begin{abstract}
We define an algebro-geometric model for the space of rational maps from a smooth curve $X$
to an algebraic group $G$, and show that this space is homologically contractible. As a consequence,
we deduce that the moduli space $\Bun_G$ of $G$-bundles on $X$ is unformized by the appropriate
rational version of the affine Grassmannian, where the uniformizing map has contractible fibers. 
\end{abstract}

\maketitle

\tableofcontents

\section*{Introduction}

\ssec{The origins of the problem}

Let $X$ be a smooth, connected and complete curve, and $G$ a reductive group (over an algebraically closed 
field $k$ of characteristic $0$). A fundamental object of study in the Geometric Langlands program is the 
moduli stack of $G$-bundles on $X$, denoted $\Bun_G$.

\medskip

This paper arose from the desire to approximate $\Bun_G$ by its local cousin, namely the \emph{adelic} Grassmannian
$\Gr$. 

\sssec{}

On the first pass, we shall loosely understand $\Gr$ as the moduli space of $G$-bundles on $X$
equipped with a \emph{rational} trivialization. The local nature of $\Gr$ expresses itself in that if we specify
the locus $\oX\subset X$ over which our trivialization is regular, the corresponding subspace of $\Gr$ is
a product of copies of the affine Grassmannians $\Gr_x$ over the missing point $x\in X-\oX$, see \cite{BD2}, 4.3.14.
So, one perceives $\Gr$ as an inherently simpler object than $\Bun_G$. 

\medskip

In fact, one can think of $\Bun_G$ as the quotient of $\Gr$ by the group of rational maps 
from $X$ to $G$, denoted ${\mathbf{Maps}}(X,G)^{\on{rat}}$. 

\medskip

So, for example, line bundles on $\Bun_G$ can be thought of as line bundles on $\Gr$ equipped
with the data of equivariance with respect to the group ${\mathbf{Maps}}(X,G)^{\on{rat}}$. However,
the following crucial observation was made in \cite{BD2}: 

\smallskip

If $\CY$ is a space acted on by
${\mathbf{Maps}}(X,G)^{\on{rat}}$, and $\CL$ is a line bundle on $\CY$, then $\CL$ has a unique
equivariant structure with respect to  ${\mathbf{Maps}}(X,G)^{\on{rat}}$.

\medskip

The last observation leads one to think that the group ${\mathbf{Maps}}(X,G)^{\on{rat}}$, although
wildly infinite-dimensional, for some purposes behaves like the point-scheme. 
\footnote{The author learned this idea from conversations with A.~Beilinson, who in turn
attributed it to Drinfeld.} E.g., in
\cite{BD2}, 4.3.13 it is shown that any function on ${\mathbf{Maps}}(X,G)^{\on{rat}}$ is constant.

\sssec{}

In this paper we take up the task of establishing some of the point-like properties of 
${\mathbf{Maps}}(X,G)^{\on{rat}}$. But we will go in a direction, slightly different than the one mentioned
above. Namely, we will be interested not so much in line bundles or other quasi-coherent sheaves on 
${\mathbf{Maps}}(X,G)^{\on{rat}}$  (and, respectively, $\Bun_G$ and $\Gr$), but in D-modules.

\medskip

Namely, we will show that ${\mathbf{Maps}}(X,G)^{\on{rat}}$ is \emph{contractible}. A concise way
to formulate this is by saying that the cohomology with compact supports of the dualizing sheaf
on ${\mathbf{Maps}}(X,G)^{\on{rat}}$ is isomorphic to scalars. 

\medskip

(The word ``contractibility" is somewhat of a misnomer: in this paper we only establish that 
${\mathbf{Maps}}(X,G)^{\on{rat}}$
is homologically contractible. Full contractibilty should also include the statement that every local
system on ${\mathbf{Maps}}(X,G)^{\on{rat}}$ is trivial, see Remark \ref{r:full contr}.
The latter can also be proved, but we will not need it for the applications we have in mind in this paper.)

\ssec{Uniformization of $\Bun_G$}

We can regard the map $\pi:\Gr\to \Bun_G$ as an instance of uniformization of
a ``global" object by a ``local" one. Thus, we obtain that not only does $\Gr$ uniformize $\Bun_G$, 
but it  does so with contractible fibers. 

\medskip

As an almost immediate consequence of the contractibility of the fibers of $\pi$, we obtain that the
pullback functor from the category of D-modules on $\Bun_G$ to that on $\Gr$ is
fully faithful. The latter fact itself has multiple consequences.

\sssec{}

First, we recall that $\Gr$ is ind-proper. We will show that this implies that $\Bun_G$
also exhibits properties of proper schemes with respect to cohomology of D-modules
and quasi-coherent sheaves:  in either context we will show that coherent objects
get sent by the cohomology functor to finite-dimensional vector spaces.

\medskip

Moreover, we will show that this is true not only for $\Bun_G$ itself, but for a certain
family of its open substacks, introduced in \cite{DrGa1}.

\sssec{}

Second, we will show that if one considers the category of D-modules on $\Bun_G$, 
\emph{equivariant} with respect to the Hecke groupoid, then this category is
equivalent to $\Vect$.

\sssec{}

Third, the contractibility of the fibers of $\pi$ implies that the two spaces
have isomorphic cohomology. Using this fact, in \secref{s:Atiyah-Bott} we will show how
this allows one to re-derive the Atiyah-Bott formula for the cohomology of $\Bun_G$. 

\medskip

Furthermore, in the forthcoming work \cite{GaLu} it will be shown that the same game
can be played when $X$ is a curve over a finite field and D-modules are replaced
by $\ell$-adic sheaves. Then the expression for the trace of Frobenius on the homology
of $\Bun_G$ in terms of that on $\Gr$ leads to a geometric proof of Weil's conjecture
on the Tamagawa number of the automorphic space corresponding to $G$ and
the global field of rational functions on $X$.

\ssec{Uniformization of $\Bun_G$ in differential geometry and topology}

\sssec{} \label{sss:cl AB}

We should mention a curious discrepancy between the above way to calculate the cohomology
of $\Bun_G$, and the classical differential-geometric method (see, however, \secref{sss:non-ab}
below).

\medskip

Namely, in differential geometry (for $G$ semi-simple and simply-connected), one uniformizes
the analytic space underlying $\Bun_G$ by the space $\on{Conn}_{\ol\partial}(\CP^0_{C^\infty})$ of 
complex structures on the trivial
($C^\infty$) $G$-bundle $\CP^0_{C^\infty}$ on $X$. The space $\Bun_G$ is the quotient of
$\on{Conn}_{\ol\partial}(\CP^0_{C^\infty})$ by the group of gauge transformations
${\mathbf{Maps}}_{C^\infty}(X,G)$. 

\medskip

The space $\on{Conn}_{\ol\partial}(\CP^0_{C^\infty})$ is contractible, and hence the
cohomology of $\Bun_G$ is isomorphic to that of the classifying space 
$B{\mathbf{Maps}}_{C^\infty}(X,G)$ of ${\mathbf{Maps}}_{C^\infty}(X,G)$. 

\medskip

So, in differential geometry, the uniformizing space is topologically trivial, and the homotopy type
of $\Bun_G$ is encoded by the structure group. 

\medskip

By contrast, in algebraic geometry, the structure group is contractible,
and the homotopy type of $\Bun_G$ is encoded by the uniformizing space.

\sssec{}  \label{sss:non-ab}

A closer analogy to the uniformization of $\Bun_G$ by $\Gr$ is provided by Lurie's non-abelian
Poincar\'e duality, see \cite{Lu1}, Sect. 5.3.6.

\medskip

Taking our source to be the Riemann surface corresponding to $X$, and our target
the topological space $BG$, the assertion of {\it loc.cit.} says that the homotopy type of
${\mathbf{Maps}}_{C^\infty}(X,BG)$ is isomorphic to the ``chiral homology" of a certain factorization
algebra on $X$, whose fiber at a finite collection of points $x_1,...,x_n\in X$ is the
space of maps from $\bB_{x_1}\times...\times \bB_{x_n}$ to $BG$, trivialized
on $\bS_{x_1}\times...\times \bS_{x_n}$, where $\bB_{x}$ denoted a ball in $X$
around $x$, and $\bS_x$ is its boundary.

\medskip

Now, by \secref{sss:cl AB}, the homotopy type of $\Bun_G$ is isomorphic to that
of $B{\mathbf{Maps}}_{C^\infty}(X,G)$, which can be shown to be isomorphic to 
$${\mathbf{Maps}}_{C^\infty}(X,BG).$$ 
Further, the homotopy type of
${\mathbf{Maps}}_{C^\infty}((\bB_x;\bS_x),BG)$ is isomorphic to the fiber of $\Gr$
over $x$, denoted $\Gr_{x}$.

\medskip

Thus, non-abelian Poincar\'e duality gives another way to obtain an expression
for the homology of $\Bun_G$. In a sense, what we do in this paper is to show
that this picture goes through also in the context of algebraic geometry.

\begin{remark}
As we have just seen, the homotopy type of (the analytic space underlying) $\Bun_G$
can be realized as the space of $C^\infty$ maps from $X$ to $BG$. We should
emphasize that this is not a general fact, but rather specific to our target space being
$BG$. One cannot expect a close relation between spaces of algebraic and $C^\infty$
maps from $X$ to general targets. 
\end{remark}

\ssec{D-modules on ``spaces"}

In order to proceed to a more rigorous discussion, we need to address the following question: 
what do we mean by ``the space of $G$-bundles equipped with a rational trivialization" and ``the space of
rational maps from $X$ to $G$"?

\sssec{}

First, what do we mean by a ``space"? By definition, a ``space" is
a prestack, which in this paper is an arbitrary functor from the category affine schemes of finite type 
over $k$ to the category of $\infty$-groupoids, denoted $\inftygroup$. 
 \footnote{By a prestack we shall always understand a 
``higher" prestack, i.e., one taking values in  $\infty$-groupoids, rather than ordinary groupoids.} 
We shall denote this category by $\on{PreStk}$. 

\sssec{}   \label{sss:D-modules}

As is explained in \cite{Crystals}, there is a canonically defined functor
$$\fD^!_{\affSch}:(\affSch)^{op}\to \StinftyCat,$$
that attaches to an affine scheme $S$ of finite type the DG category $\fD(S)$ of D-modules on $S$,
and to a map $f:S_1\to S_2$ the functor
$$f^!:\fD(S_2)\to \fD(S_1).$$

\medskip

This functor automatically gives rise to a functor $\fD^!_{\on{PreStk}}:\on{PreStk}^{op}\to \StinftyCat$,
by the procedure of right Kan extension, see \cite[Sect. 2.3]{Crystals}. Namely,
$$\fD(\CY):=\underset{S\in (\affSch_{/\CY})^{op}}{lim}\, \fD(S),$$
where the limit is taken with respect to the $!$-pullback functors. 

\medskip

Informally, for $\CY\in \on{PreStk}$, the data of an object $\CM\in \fD(\CY)$ is that of 
a family of objects $\CM_S\in \fD(S)$ for every affine scheme $S$ endowed with a map 
$S\overset{\phi}\to\CY$, and a homotopy-coherent
system of isomorphisms $f^!(\CM_S)\simeq \CM_{S'}$ for every $f:S'\to S$ and an isomorphism between
the resulting maps
$$S'\overset{\phi'}\to \CY \text{ and } S'\overset{f}\to S\overset{\phi}\to\CY.$$

\medskip

In particular, for a morphism $f:\CY_1\to \CY_2$ there is a well-defined functor
$$f^!:\fD(\CY_2)\to \fD(\CY_1).$$

\medskip

When $\CY$ is a non-affine scheme, the Zariski descent property of the category of D-modules
implies that the category $\fD(\CY)$ defines above recovers the usual DG category of D-modules
on $\CY$.



\sssec{}  \label{sss:defn of contr}

For example, for any $\CY$, the category $\fD(\CY)$ contains a canonical object, denoted $\omega_\CY$,
referred to as the dualizing sheaf on $\CY$, and defined as $p_\CY^!(k)$, where $p_\CY$ is the 
map $\CY\to \on{pt}$, and $k$ is the generator of $\fD(\on{pt})\simeq \Vect$. 

\medskip

We shall say that $\CY$ is homologically contractible if the functor
\begin{equation} \label{e:pullback omega}
\Vect\to \fD(\CY),\,\,V\mapsto V\otimes \omega_\CY
\end{equation}
is fully faithful.

\medskip

The category $\fD(\CY)$ contains a full subcategory $\fD(\CY)_{\on{loc.const}}$ consisting of
objects, whose pullback on any affine scheme $S$ mapping to $\CY$ belongs to the full subcategory
of $\fD(S)$ generated by Verdier duals of lisse D-modules with regular singularities.  

\medskip

We shall say that $\CY$ is contractible if the functor \eqref{e:pullback omega} is an equivalence onto
$\fD(\CY)_{\on{loc.const}}$.

\sssec{}

When our ground field $k$ is $\BC$, there exists a canonically defined functor
$$\CY\mapsto \CY(\BC)^{\on{top}}:\on{PreStk}\to \inftygroup,$$
where $\inftygroup$ is thought of as the category of homotopy types.

\medskip

Namely, the above functor is the \emph{left} Kan extension of the functor
$$\Sch\to \inftygroup,$$
that assigns to $S$ the homotopy type of the analytic space underlying $S(\BC)$.

\medskip

One can show that $\CY$ is homologically contractible (resp., contractible)
in the sense of \secref{sss:defn of contr} if and only if $H_{\bullet}(\CY(\BC)^{\on{top}},\BQ)\simeq \BQ$
(resp., if the rational homotopy type of $\CY(\BC)^{\on{top}}$ is trivial).

\sssec{}

It is no surprise that unless we impose some 
additional conditions on $\CY$, the category $\fD(\CY)$ will be rather intractable:

\medskip

The closer a prestack $\CY$ is to being a scheme, the more manageable the category $\fD(\CY)$ is.
What is even more important is the functorial properties of $\fD(-)$ with respect to morphisms
$g:\CY_1\to \CY_2$. I.e., the closer a given morphism is given to being schematic, the more 
we can say about the behavior of the direct image functor.

\medskip

One class of prestacks for which the category $\fD(\CY)$ is really close to the case of schemes
is that of indschemes. By definition, an indscheme is a prestack that can be exhibited as a 
\emph{filtered} direct limit of prestacks representable by schemes with transition maps 
being closed embeddings. 

\medskip

A wider class is that of pseudo-indschemes, where the essential difference is that we
omit the filteredness condition. We discuss this notion in some detail in \secref{ss:pseudo}.

\medskip

On a technical note, we should remark that ``filtered'' vs. ``non-filtered" makes a huge difference.
For example, an indscheme perceived as a functor $\affSch\to \inftygroup$ takes values in
$0$-groupoids, i.e., $\on{Set}\subset \inftygroup$, whereas this is no longer true for
pseudo-indschemes. We should also add it is crucial that we consider our functors with
values in $\inftygroup$ and not truncate them to ordinary groupoids or sets: this is necessary
to obtain reasonably behaved (derived) categories of D-modules.

\ssec{Spaces of rational maps and the Ran space}

\sssec{}  \label{sss:dom approach}

The first, and perhaps, geometrically the most natural, definition of the prestack corresponding to
$\Gr$ and ${\mathbf{Maps}}(X,G)^{\on{rat}}$ is the following:

\medskip

For $\Gr$, to an affine scheme $S$ we attach the set of pairs $(\CP,\alpha)$, where $\CP$
is a $G$-bundle on $S\times X$, and $\alpha$ is a trivialization of $\CP$ defined over
an open subset $U\subset S\times X$, whose intersection with every geometric fiber of
$S\times X$ over $S$ is non-empty.

\medskip

For ${\mathbf{Maps}}(X,G)^{\on{rat}}$, to an affine scheme $S$ we attach the set of maps
$m:U\to G$, where $U$ is as above.

\medskip

The problem is, however, that the above prestacks are not indschemes. Neither is it
clear that they are pseudo-indschemes. So, although the categories of D-modules on the above
spaces are well-defined, we do not \emph{a priori} know how to make 
calculations in them,
and in particular, how to prove the contractbility of ${\mathbf{Maps}}(X,G)^{\on{rat}}$
(see, however, \secref{sss:back to rational} below). 

\sssec{}

An alternative device to handle spaces such as $\Gr$ or ${\mathbf{Maps}}(X,G)^{\on{rat}}$ 
has been suggested in \cite{BD1}, and is known as the Ran space. 

\medskip

The underlying idea is that instead of just talking about \emph{something}
being rational, we specify the finite collection of points of $X$, outside of
which this \emph{something} is regular.

\sssec{}

The basic object is the Ran space itself, denoted $\Ran X$, which is defined
as follows. By definition, $\Ran X$ is the colimit (a.k.a. direct limit) in $\on{PreStk}$
of the diagram of schemes $I\mapsto X^I$, where $I$ runs through the category 
$(\fset)^{op}$ opposite to that of finite sets and surjective maps between them. 

\medskip

Similarly, for an arbitrary target scheme $Y$, one defines the space 
${\mathbf{Maps}}(X,Y)^{\on{rat}}_{\Ran X}$ as the colimit, over the same index
category, of the spaces ${\mathbf{Maps}}(X,Y)^{\on{rat}}_{X^I}$, where the
latter is the prestack that assigns to a test scheme $S$ the data of an $S$-point
$x^I$ of $X^I$ and a map
$$m:(S\times X-\{x^I\})\to Y,$$
where $\{x^I\}$ denotes the incidence divisor in $S\times X$. 

\medskip

The point is that if $Y$ is an affine scheme, each ${\mathbf{Maps}}(X,Y)^{\on{rat}}_{X^I}$ is an indscheme,
and hence ${\mathbf{Maps}}(X,Y)^{\on{rat}}_{\Ran X}$ is a pseudo-indscheme.
\footnote{We are grateful to J.~Barlev for explaining this point of view
to us: that instead of the functor $I\mapsto {\mathbf{Maps}}(X,Y)^{\on{rat}}_{X^I}$,
for most practical purposes it is more efficient to just remember its colimit, i.e.,
${\mathbf{Maps}}(X,Y)^{\on{rat}}_{\Ran X}$.}

\medskip

In a similar fashion, we define the space $\Gr_{\Ran X}$ as the colimit over
the same index category $(\fset)^{op}$ of the indschemes 
$\Gr_{X^I}$, where the latter is the moduli space of the data $(x^I,\CP,\alpha)$
where $x^I$ is as above, $\CP$ is a principal $G$-bundle on $S\times X$, and
$\alpha$ is a trivialization of $\CP$ on $S\times X-\{x^I\}$.

\medskip

Although, as we emphasized above, in the formation of the spaces $\Ran X$,
${\mathbf{Maps}}(X,Y)^{\on{rat}}_{\Ran X}$, and $\Gr_{\Ran X}$, we must take the
colimit in the category $\inftygroup$, i.e., a priori, the value of our functor
on a test affine scheme will be an infinity-groupoid, it will turn out that in the
above cases, our functors take values in $\on{Set}\subset \inftygroup$.

\sssec{}

It is with the above version of the space of rational maps given by 
${\mathbf{Maps}}(X,G)^{\on{rat}}_{\Ran X}$ that we prove its homological contractibility.
The main result of this paper reads:

\begin{thm} \label{t:main preview}
Let $Y$ be an affine scheme, which is connected and can be covered by open subsets, 
each which is isomorphic to an open subset of the affine space $\BA^n$ for some 
fixed $n$. Then the space ${\mathbf{Maps}}(X,Y)^{\on{rat}}_{\Ran X}$ is
homologically contractible.
\end{thm}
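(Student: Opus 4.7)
The plan is to reduce the theorem to the case $Y = \BA^n$ by a Zariski-descent argument, handle that case using the homological contractibility of the Ran space $\Ran X$, and then deduce the general case by a Cech-type descent along an open cover of $Y$.

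First, I would treat $Y = \BA^n$. For each finite set $I$, the indscheme ${\mathbf{Maps}}(X,\BA^n)^{\on{rat}}_{X^I}$ is an ind-affine bundle over $X^I$: its $S$-points lying over $x^I \colon S \to X^I$ form the ind-vector space $\Gamma(S \times X \setminus \{x^I\}, \CO)^n$ of $n$-tuples of regular functions away from the incidence divisor. Hence the projection to $X^I$ has homologically contractible fibers, and its $!$-pushforward of the dualizing sheaf recovers $\omega_{X^I}$. Passing to the colimit over $I \in (\fset)^{op}$ gives $C_*(\Ran X)$, which equals $k$ for connected $X$ by the well-known contractibility of $\Ran X$ (Beilinson--Drinfeld, Kapranov, Lurie).

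Next, for $Y = U$ an open subset of $\BA^n$, I would compare ${\mathbf{Maps}}(X,U)^{\on{rat}}_{\Ran X}$ with ${\mathbf{Maps}}(X,\BA^n)^{\on{rat}}_{\Ran X}$. Any rational map $X \dashrightarrow \BA^n$ regular away from $x^I$ pulls $\BA^n \setminus U$ back to a proper closed subscheme $Z \subset S \times X \setminus \{x^I\}$; enlarging $x^I$ by the projection of $Z$ to $X$ (which is allowed thanks to the Ran-space colimit) turns the map into a $U$-valued rational map. This cofinality identifies the two pseudo-indschemes after $!$-pushforward, yielding contractibility for $U$.

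For general $Y$, choose an open cover $Y = \bigcup_\alpha U_\alpha$ with each $U_\alpha$ open in $\BA^n$; all finite intersections are then also open in $\BA^n$. I would assemble these into a Cech-type simplicial pseudo-indscheme whose $p$-th term is $\bigsqcup_{\alpha_0,\ldots,\alpha_p} {\mathbf{Maps}}(X, U_{\alpha_0} \cap \cdots \cap U_{\alpha_p})^{\on{rat}}_{\Ran X}$. Its geometric realization should recover ${\mathbf{Maps}}(X,Y)^{\on{rat}}_{\Ran X}$ by an ``enlarge the divisor'' argument: any rational map to $Y$ factors through some $U_\alpha$ on a non-empty open of $X$, and the exceptional locus where this fails can be absorbed into $x^I$. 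Applying $p_!$ term by term produces the Cech complex computing $C_*(Y)$, which equals $k$ since $Y$ is connected. The main obstacle is making this Cech descent rigorous in the setting of pseudo-indschemes: rational maps are not Zariski-local on the target $Y$ in the naive sense---a map to $Y$ need not factor Zariski-locally through any single $U_\alpha$---but the Ran-space flexibility restores this locality after applying $p_!$. One must further justify the commutation of $p_!$ with the filtered and simplicial colimits involved, and verify that the comparison in the open-subset step is a genuine homology equivalence rather than merely a birational one.
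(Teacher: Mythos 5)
Your Step 1 (the case $Y=\BA^n$) is correct and matches the paper, and your Step 3 is recognizably the paper's \v{C}ech-descent strategy (though the right way to set up the cover is via the open subfunctors ${\mathbf{Maps}}(X,U_{\alpha}\overset{\on{gen.}}\subset Y)^{\on{rat}}_{X^I}$ of ${\mathbf{Maps}}(X,Y)^{\on{rat}}_{X^I}$, which really do cover and make Zariski descent for D-modules directly applicable).

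The substantial gap is in Step 2. You claim that for $U\subset \BA^n$ open, \emph{any} rational map $m:X\dashrightarrow \BA^n$ regular away from $x^I$ pulls $V:=\BA^n\setminus U$ back to a proper closed subscheme $Z$, and hence after enlarging $x^I$ by (the projection of) $Z$, $m$ becomes $U$-valued. This is false for the locus of maps that land \emph{generically} in $V$: e.g.\ a constant map to a point of $V$, or more generally any map with image in $V$; for such $m$ the preimage of $V$ is all of $X\setminus\{x^I\}$, and no enlargement of $x^I$ makes $m$ factor through $U$. Your enlarge-the-divisor argument, if made precise (it needs the unital / marked-Ran machinery of the paper's Section~2, or equivalently BD's Proposition~4.4.9), only establishes the comparison between ${\mathbf{Maps}}(X,U)^{\on{rat}}_{\Ran X}$ and ${\mathbf{Maps}}(X,U\overset{\on{gen.}}\subset\BA^n)^{\on{rat}}_{\Ran X}$, i.e.\ the open sublocus of maps to $\BA^n$ that land generically in $U$. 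What is missing entirely is the step showing that the closed complementary locus of maps landing generically in $V$ does not contribute to homology. This is the only genuinely geometric step of the whole proof: the paper truncates by pole order $d$ (so each ${\mathbf{Maps}}(X,\BA^n)^{\on{rat},d}_{X^I}$ is a finite-rank vector bundle over $X^I$), and proves via a Noether-normalization projection $\BA^n\to\BA^{n-1}$ finite on $V$ that the closed stratum ${\mathbf{Maps}}(X,V)^{\on{rat},d}_{X^I}$ has fiber dimension $\leq (n-1)d+C$ while the ambient fibers have dimension $\sim nd$; so the codimension tends to infinity with $d$, and the $*$-restriction of the dualizing sheaf to the closed stratum becomes arbitrarily connective. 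Without this estimate (or some substitute for it) your reduction from $\BA^n$ to $U$ does not go through.

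A smaller point: in Step 3 you observe that a rational map to $Y$ ``factors through some $U_\alpha$ on a non-empty open of $X$,'' but this is a \emph{non-disjoint} decomposition and does not by itself produce a \v{C}ech cover of the mapping space in the Zariski topology. The fix is as above: work with the open subfunctors of maps \emph{generically} landing in $U_{\ul\alpha}\subset Y$, whose \v{C}ech nerve for the covering of ${\mathbf{Maps}}(X,Y)^{\on{rat}}_{X^I}$ is literally the one you want, and then combine with Step 2 (the corrected version) to identify the homology of each term with that of ${\mathbf{Maps}}(X,U_{\ul\alpha})^{\on{rat}}_{\Ran X}$.
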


\begin{remark} \label{r:full contr}
Although we do not prove it in this paper, as was mentioned earlier, one can show that
under the same assumptions on $Y$, the space ${\mathbf{Maps}}(X,Y)^{\on{rat}}_{\Ran X}$
is actually contractible in the sense of \secref{sss:defn of contr}. Moreover, when $k=\BC$,
one can show that the resulting homotopy is actually trivial, and not just rationally
trivial. 
\end{remark}

\medskip

It is tempting to conjecture that the corresponding facts (homological contractibility,
contractibility, and the triviality of homotopy type for $k=\BC$)
hold more generally: i.e.,
that it sufficient to require that $Y$ be smooth and birationally equivalent to $\BA^n$. 

\sssec{}

Let us briefly indicate the strategy of the proof of \thmref{t:main preview}. 

\medskip

First, we consider the
case when $Y=\BA^n$. Then the theorem is proved by a direct calculation: the corresponding space 
${\mathbf{Maps}}(X,Y)^{\on{rat}}_{\Ran X}$ is essentially comprised of affine spaces.

\medskip

We then consider the case when $Y$ can be realized as an open subset of $\BA^n$. We show that
the corresponding map
$${\mathbf{Maps}}(X,Y)^{\on{rat}}_{\Ran X}\to {\mathbf{Maps}}(X,\BA^n)^{\on{rat}}_{\Ran X}$$
is a homological equivalence. The idea is that the complement is ``of infinite codimension".

\medskip

Finally, we show that if $U_\alpha$ is a Zariski cover of $Y$, and $U_\bullet$ is its \v{C}ech nerve,
then the map 
$$|{\mathbf{Maps}}(X,U_\bullet)^{\on{rat}}_{\Ran X}|\to {\mathbf{Maps}}(X,Y)^{\on{rat}}_{\Ran X}$$
is also a homological equivalence (here $|-|$ denotes the functor of geometric realization of a 
simplicial object). 

\medskip

Since, by the above, each term in ${\mathbf{Maps}}(X,U_\bullet)^{\on{rat}}_{\Ran X}$ is 
homologically contractible, we deduce the corresponding fact for ${\mathbf{Maps}}(X,Y)^{\on{rat}}_{\Ran X}$.

\ssec{Should $\Ran X$ appear in the story?}

\sssec{}

Let us note that the appearance of the Ran space allows us to
connect spaces such as $\Gr_{\Ran X}$ to chiral/factorization algebras
of \cite{BD1} (see also \cite{FrGa}, where the derived version of 
chiral/factorization algebras is discussed in detail).

\medskip

For example, the direct image of the dualizing sheaf under the forgetful
map $$\Gr_{\Ran X}\to \Ran X$$ is a factorization algebra. It is the latter
fact that allows one to connect the cohomology of $\Gr_{\Ran X}$, and hence 
of $\Bun_G$, with the Atiyah-Bott formula. 

\medskip

And in general, the factorization property of $\Gr$ provides a convenient
tool of interpreting various cohomological questions on $\Bun_G$ as 
calculation of chiral homology of chiral algebras. 

\medskip

In particular, in a subsequent paper we will show how this approach allows
to prove that chiral homology of the integrable quotient of the
Kac-Moody chiral algebra 
 is isomorphic to the dual vector space to that of the cohomology
of the corresponding line bundle on $\Bun_G$. 

\medskip

The (dual vector space of the) $0$-th chiral homology of the above chiral algebra is known under the
name of ``conformal blocks of the WZW model". The fact that WZW conformal blocks are isomorphic to 
the space of global sections of the corresponding line bundle on $\Bun_G$ is well-known. However, the
fact that the same isomorphism holds at the derived level has been a conjecture proposed in \cite{BD1},
Sect. 4.9.10; it was proved in {\it loc.cit.} for $G$ being a torus.

\medskip

So, in a sense it is ``a good thing" to have the Ran space appearing in the picture.

\sssec{}  

However, the Ran space also brings something that can be perceived as a handicap
of our approach. 

\medskip

Namely, consider for example the space of rational maps $X\to Y$, realized
${\mathbf{Maps}}(X,Y)^{\on{rat}}_{\Ran X}$. We see that along with the the data of a rational map, 
this space retains also the data of the locus outside
of which it is defined. For example, for $Y=\on{pt}$, whereas we would like the space of
rational maps $X\to \on{pt}$ to be isomorphic to $\on{pt}$, instead we obtain $\Ran X$.

\medskip

But the above problem can be remedied. Namely, ${\mathbf{Maps}}(X,Y)^{\on{rat}}_{\Ran X}$
comes with an additional structure given by the action of $\Ran X$, viewed as a semi-group in 
$\on{PreStk}$, given by enlarging the allowed locus of irregularity. Following \cite{Ran},
we call it ``the unital structure". 

\medskip

The action of $\Ran X$ on ${\mathbf{Maps}}(X,Y)^{\on{rat}}_{\Ran X}$ gives rise to a semi-simplicial
object
$$...\Ran X\times {\mathbf{Maps}}(X,Y)^{\on{rat}}_{\Ran X}\rightrightarrows {\mathbf{Maps}}(X,Y)^{\on{rat}}_{\Ran X}$$
of $\on{PreStk}$, and we let ${\mathbf{Maps}}(X,Y)^{\on{rat}}_{\Ran X,\on{indep}}$ denote its
geometric realization.

\medskip

The effect of the passage
$${\mathbf{Maps}}(X,Y)^{\on{rat}}_{\Ran X}\rightsquigarrow {\mathbf{Maps}}(X,Y)^{\on{rat}}_{\Ran X,\on{indep}}$$
is that one gets rid of the extra data of remembering the locus of irregularity.\footnote{Hence the notation
``indep" to express the fact the resulting notion of rational map is independent of the specification of the locus
of regularity.}

\medskip

In addition, in \secref{s:unital} we will show the forgetful functor 
$$\fD\left({\mathbf{Maps}}(X,Y)^{\on{rat}}_{\Ran X,\on{indep}}\right)\to 
\fD\left({\mathbf{Maps}}(X,Y)^{\on{rat}}_{\Ran X}\right)$$
is fully faithful, so questions such as contractibility for the two models
of the space of rational maps are equivalent.

\sssec{}  \label{sss:back to rational}

Finally, recall the space ${\mathbf{Maps}}(X,Y)^{\on{rat}}$ introduced in \secref{sss:dom approach}.
There exists a natural map
\begin{equation} \label{e:Ba}
{\mathbf{Maps}}(X,Y)^{\on{rat}}_{\Ran X,\on{indep}}\to {\mathbf{Maps}}(X,Y)^{\on{rat}}.
\end{equation}

The following will be shown in \cite{Ba}:

\begin{thm} \label{t:Ba}
The pullback functor 
$$\fD\left({\mathbf{Maps}}(X,Y)^{\on{rat}}\right)\to \fD\left({\mathbf{Maps}}(X,Y)^{\on{rat}}_{\Ran X,\on{indep}}\right)$$
is an equivalence. In particular, if ${\mathbf{Maps}}(X,Y)^{\on{rat}}_{\Ran X}$ is homologically contractible,
then so is ${\mathbf{Maps}}(X,Y)^{\on{rat}}$. 
\end{thm}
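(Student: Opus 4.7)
The plan is to analyze the map \eqref{e:Ba} by studying its fibers over test schemes and to deduce the equivalence from a fiberwise contractibility result combined with a descent argument. Given an affine scheme $S$ and an $S$-point of ${\mathbf{Maps}}(X,Y)^{\on{rat}}$ represented by a pair $(U,m)$, where $U \subset S \times X$ is an open subscheme meeting every geometric fiber over $S$ and $m : U \to Y$, I would identify the fiber of \eqref{e:Ba} with a sub-prestack $(\Ran X)_U \subset \Ran X$: the locus of tuples $x^I$ for which $S \times X \setminus \{x^I\} \subset U$, further subjected to the identifications imposed by the unital structure used to pass from ${\mathbf{Maps}}(X,Y)^{\on{rat}}_{\Ran X}$ to ${\mathbf{Maps}}(X,Y)^{\on{rat}}_{\Ran X,\on{indep}}$.

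The central geometric input is the homological contractibility of each such $(\Ran X)_U$. Since $Z := S \times X \setminus U$ is a proper closed subscheme missing every generic fiber, it is contained (at least after passing to a suitable cover of $S$) in the support of a finite collection of sections $x^{I_0}$ of $S \times X \to S$. The $\Ran X$-action provides a contracting homotopy: enlarging $x^I$ to $x^I \sqcup x^{I_0}$ retracts $(\Ran X)_U$ onto the base-point $x^{I_0}$, giving a relative version of the Beilinson--Drinfeld contractibility of $\Ran X$ itself. Concretely, one would express $(\Ran X)_U$ as the colimit over $(\fset)^{op}$ of closed sub-indschemes of $X^I$ cut out by the condition that $\{x^I\} \supset Z$, and check that the unital diagonals render the resulting homology trivial.

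Fiberwise contractibility then implies full faithfulness of the pullback in Theorem~\ref{t:Ba} by exploiting the description $\fD(\CY) = \underset{S \in (\affSch_{/\CY})^{op}}{lim}\, \fD(S)$: for each $S \to {\mathbf{Maps}}(X,Y)^{\on{rat}}$, base change identifies the pullback along \eqref{e:Ba} with the pullback from $S$ to $S \times (\Ran X)_U$, and the contractibility of $(\Ran X)_U$ makes the counit of the adjunction an isomorphism termwise. For essential surjectivity, I would aim to match the two prestacks at the level of colimit presentations: ${\mathbf{Maps}}(X,Y)^{\on{rat}}$ is naturally a quotient of $\bigsqcup_I {\mathbf{Maps}}(X,Y)^{\on{rat}}_{X^I}$ by the equivalence relation ``agreement on a common open subset'', and this should coincide with the geometric realization defining ${\mathbf{Maps}}(X,Y)^{\on{rat}}_{\Ran X,\on{indep}}$ once both are suitably sheafified, yielding an isomorphism of prestacks after an appropriate localization and hence an equivalence of D-module categories by functoriality.

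The main obstacle is essential surjectivity. The fully faithful direction reduces cleanly to a fiber computation once the geometry of \eqref{e:Ba} is under control, but producing a D-module on the large prestack ${\mathbf{Maps}}(X,Y)^{\on{rat}}$ from one on the $\on{indep}$ side is delicate: as emphasized in the Introduction, it is not even clear that ${\mathbf{Maps}}(X,Y)^{\on{rat}}$ is a pseudo-indscheme, so standard comonadicity tools for $!$-pullback do not directly apply. Barlev's argument presumably extracts precisely the right equivalence relation on the $X^I$-level spaces to identify the two colimits. The final ``In particular'' clause is then automatic: an equivalence of D-module categories sends $\omega$ to $\omega$, and homological contractibility, being a property of the functor $V \mapsto V \otimes \omega_\CY$, transfers from ${\mathbf{Maps}}(X,Y)^{\on{rat}}_{\Ran X,\on{indep}}$---which inherits it from ${\mathbf{Maps}}(X,Y)^{\on{rat}}_{\Ran X}$ via the full faithfulness proved in Section~\ref{s:unital}---to ${\mathbf{Maps}}(X,Y)^{\on{rat}}$.
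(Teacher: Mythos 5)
First, note that the paper itself offers no proof of \thmref{t:Ba}: it is stated explicitly as a result that ``will be shown in \cite{Ba}'' (Barlev's forthcoming paper), so there is no in-text argument to compare against, and your proposal must be assessed on its own.

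With that said, a few comments on the sketch. Your identification of the fiber of \eqref{e:Ba} over an $S$-point $(U,m)$ of ${\mathbf{Maps}}(X,Y)^{\on{rat}}$ is plausible once $U$ is taken to be the \emph{maximal} domain of definition of $m$; otherwise the fiber is not just the locus of $x^I$ with $S\times X\setminus\{x^I\}\subset U$, since $m$ might extend past $U$. More seriously, your description of the contractibility of this fiber via ``a contracting homotopy onto a base-point'' misrepresents the Beilinson--Drinfeld argument of \secref{s:proof contr Ran}: that proof is not a homotopy but an algebraic argument using the semigroup structure on $\Ran X$, the idempotence property of \secref{sss:sq of diag}, and K\"unneth, to show that the map $H_n\to H_n$ induced by the $k$-fold union composed with the diagonal is simultaneously $\on{id}$ and $k\cdot u$ for all $k\geq 2$. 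What you actually need is that the constrained sub-prestack $(\Ran X)_U$ is a nonempty (after suitable base change on $S$) sub-semigroup of $\Ran X$ with the same formal features, and then to re-run that algebraic argument; this is the style of reasoning in \propref{p:marked points} and \lemref{l:specified to gen}. A rigorous treatment must also address whether $Z = S\times X\setminus U$ can be enclosed in the graph of a finite set of sections, which requires passing to a cover of $S$, and how the ``indep'' geometric realization interacts with taking fibers of \eqref{e:Ba}. Finally, you correctly identify essential surjectivity as the hard part, but the appeal to ``matching colimit presentations after an appropriate localization'' is not yet a checkable argument; making the passage from D-modules on ${\mathbf{Maps}}(X,Y)^{\on{rat}}_{\Ran X,\on{indep}}$ to D-modules on ${\mathbf{Maps}}(X,Y)^{\on{rat}}$ (which, as the paper stresses, is not known to be a pseudo-indscheme) precise is exactly the content of \cite{Ba}.
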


In fact, the assertion of \thmref{t:Ba} is applicable not just to 
${\mathbf{Maps}}(X,Y)^{\on{rat}}$, but for a large class 
of similar problems. 

\medskip

So, although ${\mathbf{Maps}}(X,Y)^{\on{rat}}$ is not a priori a pseudo-indscheme, the category 
of D-modules on it is, after all, manageable, and in particular, assuming \thmref{t:Ba},
one can prove its homological contractibility as well. 

\ssec{Conventions and notation} \label{ss:conventions}

\sssec{}

Our conventions regarding $\infty$-categories follow those of \cite{FrGa}. Throughout the text,
whenever we say ``category", by default we shall mean an $\infty$-category. 

\sssec{}  \label{sss:dg categories}

The conventions regarding DG categories follow those of \cite{DG}. 
(Since \cite{DG} is only a survey, for a better documented theory, one can replace
the $\infty$-category of DG categories by an equivalent one of 
of stable $\infty$-categories
tensored over $k$, which has been developed by J.~Lurie in \cite{Lu1}.)

\medskip

Throughout this paper we shall be working in the category denoted in \cite{DG}
by $\StinftyCat_{\on{cont}}$: namely, all our DG categories will be assumed
presentable, and in particular, cocomplete (i.e., closed under arbitrary direct sums). Unless explicitly said
otherwise, all our functors will be assumed continuous, i.e., 
commute with arbitrary direct sums (or, equivalently, all colimits). 

\medskip

By $\Vect$ we shall denote the DG category of complexes of $k$-vector spaces.

\sssec{Schemes and prestacks}

Our general reference for prestacks is \cite{Stacks}. 

\medskip

However, the ``simplifying good news" for this paper is that, since we will only be interested in D-modules,
we can stay within the world of classical, i.e., non-derived, algebraic geometry.

\medskip

Throughout the paper, we shall only be working with schemes of finite type over $k$
(in particular, quasi-compact). We denote this category by $\Sch$. By $\affSch$
we denote the full subcategory of affine schemes.

\medskip

Thus, the category of prestacks, denoted in this paper by $\on{PreStk}$ is what is denoted
in \cite{Stacks}, Sect. 1.3 by $^{\leq 0}\!\on{PreStk}_{\on{laft}}$. 

\sssec{D-modules} 

In \cite{FrGa}, Sect. 1.4, it was explained what formalism of D-modules on schemes is required to
have a theory suitable for applications. Namely, we needed $\fD(-)$ to be a functor
$\Sch_{\on{corr}}\to \StinftyCat$, where $\Sch_{\on{corr}}$ is the category whose objects
are schemes and morphisms are correspondences. 

\medskip

Fortunately, for this paper, a more restricted formalism will suffice. Namely, we will only need the functor
$$\fD^!_{\Sch}:\Sch^{op}\to \StinftyCat.$$
I.e., we will only need to consider the !-pullback functor for morphisms. This formalism is developed
in the paper \cite{Crystals}. As was mentioned in \secref{sss:D-modules}, the functor $\fD^!_{\Sch}$
extends to a functor 
$$\fD^!_{\on{PreStk}}:\on{PreStk}\to \StinftyCat.$$

\medskip

An important technical observation is the following: let $\CY_1$ and $\CY_2$
be prestacks, such that the category $\fD(\CY_1)$ is dualizable. Then the
natural functor
\begin{equation} \label{e:prod equivalence}
\fD(\CY_1)\otimes \fD(\CY_2)\to \fD(\CY_1\times \CY_2)
\end{equation}
is an equivalence. The proof is a word-for-word repetition of the argument
in \cite{DrGa0}, Proposition 1.4.4.

\ssec{Acknowledgements}

The idea of contractibility established in this paper, as well as that the homology of rational maps 
should be insensitive to removing from the target subvarieties of positive codimension is due
to V.~Drinfeld. 

\medskip

The author would like to express his gratitude to A.~Beilinson, for teaching him most of
the mathematics that went into this paper (the above ideas of Drinfeld's, the affine Grassmannian,
the Ran space, chiral homology, etc.)

\medskip

This paper is part of a joint project with J.~Lurie, which would hopefully see light
soon in the form of \cite{GaLu}. The author is grateful to Jacob for offering his
insight in the many discussions on the subject that we have had. 

\medskip

Special thanks are due to S.~Raskin for extensive comments on the
first draft of the paper, which have greatly helped to improve the exposition.

\medskip

We would also like to thank J.~Barlev and N.~Rozenblyum for numerous
helpful conversations related to the subject of the paper.

\medskip

The author's research is supported by NSF grant DMS-1063470.
 
\section{The ``space" of rational maps as a pseudo-indscheme}

\ssec{Indschemes}

\sssec{}

Before we define pseudo-indschemes, let us recall the notion of usual indscheme
(see \cite{IndSch} for a detailed discussion). By definition,
an indscheme is an object $\CY\in \on{PreStk}$ which can be written as a 
\begin{equation} \label{e:presentation of indscheme}
\underset{I\in \CS}{colim}\, Z(I),
\end{equation}
for some functor $Z:\CS\to \Sch$, such that
\begin{itemize}

\item For $(I_1\to I_2)$, the map $Z(I_1)\to Z(I_2)$ is a closed embedding,

\item The category \footnote{Although this will be of no practical consequence,
let us note that since we are working with classical schemes, rather than derived ones, $\Sch$ is an ordinary
category, and so, we can take the category of indices $\CS$ to be an ordinary category as well.} $\CS$ of indices is \emph{filtered}.
\end{itemize}

In the above formula the colimit is taken in $\on{PreStk}$. Recall
that colimits in $\on{PreStk}$ are computed object-wise, i.e., for $S\in \affSch$,
\begin{equation} \label{e:maps into indscheme}
\on{Maps}(S,\CY)\simeq \underset{I\in \CS}{colim}\, \on{Maps}(S,Z(I)),
\end{equation}
where the latter colimit is taken in $\inftygroup$.

\medskip

We let $\indSch$ denote the full subcategory of $\on{PreStk}$ spanned by indschemes. 

\begin{remark}

The condition that $\CS$ be filtered is really crucial, as it insures some of the key properties of
indschemes. For example:

\medskip

\noindent(i) As long as we stay in the realm of classical (i.e., non-derived) algebraic geometry,
the functor $S\mapsto \on{Maps}(S,\CY)$ takes value in sets, rather than $\inftygroup$.
Indeed, a filtered colimit of $k$-truncated groupoids is $k$-truncated (and we take $k=0$). 

\medskip

\noindent(ii) An indscheme belongs to $\on{Stk}$, i.e., it satisfies fppf descent. This is so because
all $Z(I)$ satisfy descent, and the fact that finite limits (that are involved in the formulation of descent)
commute with filtered colimits, see \cite[Lemma 1.1.5]{IndSch}. Therefore, formula 
\eqref{e:maps into indscheme} holds also for $S\in \Sch$. 

\medskip

These two properties will fail for pseudo-indschemes.

\end{remark}

\sssec{}

Recall that a map $\CY_1\to \CY_2$ in $\on{PreStk}$ is called a closed embedding if
for any $S\in \affSch_{/\CY_2}$, the resulting map $S\underset{\CY_2}\times \CY_1\to S$
is a closed embedding; in particular, the left hand side is a scheme. \footnote{We emphasize
that the above definition of closed embedding is only suitable for classical, i.e., non-derived,
algebraic geometry. By contrast, in the setting of DAG, ``closed embedding" means by definition ``a closed embedding at the level
of the underlying classical prestacks", so $S\underset{\CY_2}\times \CY_1$ does not have to
be a derived scheme, but its underlying classical prestack must be a classical scheme.}

\medskip

We have:

\begin{lem} \label{l:closed emb into indscheme}
Let $\CY$ be an indscheme written as in \eqref{e:presentation of indscheme}.
For $S\in \Sch$, a map $S\to \CY$ is a closed embedding if and only if for some/any
index $I$, for which the above map factors through $Z(I)$, the resulting map $S\to Z(I)$
is a closed embedding.
\end{lem}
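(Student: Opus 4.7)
The plan is to reduce everything to the key preliminary fact that for each index $I_0\in\CS$, the tautological map $Z(I_0)\to\CY$ is itself a closed embedding. Granted this, the lemma becomes a pure base-change exercise: any map $S\to\CY$ factoring through $Z(I_0)$ identifies with the base change $Z(I_0)\underset{\CY}\times S\to Z(I_0)$, since $Z(I_0)\to\CY$ is a monomorphism (all closed embeddings are). So assuming $S\to\CY$ is a closed embedding gives ``only if'' immediately, because closed embeddings are stable under base change.

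For the key preliminary fact itself, I would argue as follows. Let $T\to\CY$ be a map from an affine scheme $T$. Using that $T$ is $0$-truncated and that the colimit defining $\CY$ is filtered (so that, by remark (i), $\on{Maps}(T,\CY)\simeq \underset{I}{colim}\,\on{Maps}(T,Z(I))$ is already a set-valued filtered colimit), the map $T\to\CY$ factors through some $Z(I_1)$. Choose $I_2\in\CS$ dominating both $I_0$ and $I_1$, so that the given map $Z(I_0)\to \CY$ factors as $Z(I_0)\to Z(I_2)\to \CY$ through a composite of two closed embeddings, and likewise $T\to\CY$ factors through $Z(I_2)$. Because $Z(I_2)\to\CY$ is a monomorphism (it is a filtered colimit of the transition closed embeddings from $I_2$ onwards, which are all mono, and one can check directly that the colimit map is still mono using filteredness), one has the identification
\[
Z(I_0)\underset{\CY}\times T \;\simeq\; Z(I_0)\underset{Z(I_2)}\times T,
\]
and the right-hand side is closed in $T$ by stability of closed embeddings under base change applied to $Z(I_0)\hookrightarrow Z(I_2)$.

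The ``if'' direction now runs by exactly the same maneuver: given $S\to \CY$ factoring through $Z(I_0)$ as a closed embedding, and an arbitrary test affine $T\to\CY$, choose $I_2$ as above dominating $I_0$ and an index through which $T$ factors, and conclude
\[
S\underset{\CY}\times T \;\simeq\; S\underset{Z(I_2)}\times T,
\]
which is closed in $T$ because the composite $S\hookrightarrow Z(I_0)\hookrightarrow Z(I_2)$ is a closed embedding and closed embeddings are preserved by base change.

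The main obstacle, and the only genuinely content-bearing step, is the monomorphism assertion for $Z(I_2)\to\CY$: this is where filteredness of $\CS$ is truly indispensable (it would fail for pseudo-indschemes, as the paper emphasizes). Once that is in place, both implications are formal consequences of base change and the factorization property \eqref{e:maps into indscheme} restricted to $0$-truncated targets.
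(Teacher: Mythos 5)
Your proof is correct and rests on exactly the same mechanism as the paper's: filteredness of $\CS$ plus the fact that the transition closed embeddings are monomorphisms, which together force the fiber product $T\underset{\CY}\times S$ over a test affine $T$ to stabilize as $T\underset{Z(I)}\times S$ for a suitable common index $I$. The paper runs the computation directly, writing $T\underset{\CY}\times S$ as a filtered colimit of $S\underset{Z(I')}\times T$ and observing that the transition maps are isomorphisms; it then records ``the tautological maps $\be(I)$ are closed embeddings'' as a corollary of the lemma. You invert this logical order, proving that corollary first and then deducing the lemma by base change. Both are fine, and the content is the same.

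One small expository slip: in the course of proving the preliminary fact, you write that $Z(I_0)\to\CY$ ``factors as $Z(I_0)\to Z(I_2)\to\CY$ through a composite of two closed embeddings.'' At that point in your argument you have not yet established that $Z(I_2)\to\CY$ is a closed embedding (that is precisely what you are in the middle of proving); you only know, and only use, that it is a monomorphism. The actual chain of reasoning you give is sound, since it is the mono-ness of $Z(I_2)\to\CY$ together with $Z(I_0)\hookrightarrow Z(I_2)$ being closed that yields the conclusion. Also, when you invoke ``closed embeddings are stable under base change'' to get the ``only if'' direction from $Z(I_0)\underset{\CY}\times S\to Z(I_0)$, note that $Z(I_0)$ need not be affine, so strictly one should pass to an affine Zariski cover of $Z(I_0)$ and use that $S\underset{Z(I_0)}\times U\simeq S\underset{\CY}\times U$ for $U$ affine; this is routine but worth making explicit.
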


\begin{proof}
Let $T$ be an affine scheme mapping to $\CY$. Let $I$ be an index such that both maps $S,T\to \CY$
factor through $Z(I)$. The filteredness assumption on $\CS$ implies that 
$$T\underset{\CY}\times S\simeq \underset{I'\in \CS_{I/}}{colim}\, (S\underset{Z(I')}\times T).$$
However (in classical algebraic geometry), since $Z(I)\to Z(I')$ is a closed embedding, the map
$$S\underset{Z(I)}\times T\to S\underset{Z(I')}\times T$$ is an isomorphism, so
$T\underset{\CY}\times S\simeq S\underset{Z(I)}\times T$,
and the assertion is manifest.
\end{proof}

Thus, we obtain that the notion of closed embedding of a scheme into an indscheme is invariantly
defined. In particular, the tautological maps $\be(I):Z(I)\to \CY$ are closed embeddings. 
(None of that will be the case for pseudo-indschemes.)

\medskip

Additionally, one obtains that the category $(S\in \Sch,S\overset{\on{cl.emb.}}\hookrightarrow \CY)$
is filtered, i.e., an indscheme $\CY$
has a canonical presentation as in \eqref{e:presentation of indscheme}, where the category of indices
is that of \emph{all} schemes equipped with a closed embedding into $\CY$.

\sssec{}

For future reference, let us give the following definitions.

\medskip

Let $\CY$ be an indscheme mapping to a scheme $S$. We shall say that $\CY$ is relatively ind-proper 
(resp., ind-closed subscheme) over
$S$ if for any closed embedding $T\to \CY$, the composed map $T\to S$ is proper (resp., closed embedding).

\medskip

It is easy to see that this is equivalent to requiring that for a given presentation of $\CY$ as in 
\eqref{e:presentation of indscheme}, the composed maps $Z(I)\to S$ are proper (resp., closed embeddings).

\medskip

A map $\CY_1\to \CY_2$ in $\on{PreStk}$ is called ind-schematic, if for any $S\in \Sch_{/\CY_2}$, the
base change $S\underset{\CY_2}\times \CY_1$ is an ind-scheme.

\medskip

It is easy to see that a morphism between two indschemes is ind-schematic (again, this relies
on the filteredness assumption of the index category).

\medskip

A map $\CY_1\to \CY_2$ in $\on{PreStk}$ is called ind-proper (resp., ind-closed embedding), if it is ind-schematic and for 
every $S\in \Sch_{/\CY_2}$ as above, the indscheme $S\underset{\CY_2}\times \CY_1$ is relatively ind-proper
(resp., ind-closed subscheme) over $S$. 

\ssec{Pseudo-indschemes} \label{ss:pseudo}

\sssec{}  \label{sss:defn of pseudo}

By a pseudo-indscheme we shall mean an object $\CY\in \on{PreStk}$ that can be written as 
\begin{equation} \label{e:colimit prestack}
\CY\simeq \underset{I\in \CS}{colim}\, Z(I)
\end{equation}
for some functor $Z:\CS\to \indSch$, such that
\begin{itemize}

\item For $(I_1\to I_2)$, the map $Z(I_1)\to Z(I_2)$ is ind-proper,

\end{itemize}
where $\CS$ is an arbitrary category of indices. (As was remarked before, with no restriction of generality,
we can take $\CS$ to be an ordinary category, rather than $\infty$-category.)

\medskip

We let $\be(I)$ denote the tautological map $Z(I)\to \CY$.

\medskip

In the above expression, the colimit is taken in the category $\on{PreStk}$. Again, 
the value of $\CY$ on $S\in \affSch$ is an object of $\inftygroup$ isomorphic to
$$\underset{I\in \CS}{colim}\, \on{Maps}(S,Z(I)),$$
where the latter colimit is taken in $\inftygroup$.

\sssec{}

Several remarks are in order:

\medskip

\noindent(i) It is crucial for what follows that in the definition of pseudo-indschemes, 
the colimit is taken in $\inftygroup$ and not, naively, in the category of sets.

\medskip

\noindent(ii) Non-filtered colimits in $\inftygroup$ are somewhat unwieldy objects:
we cannot algorithmically describe the space $\on{Maps}(S,\CY)$ for $S\in \affSch$.
One manifestation
of this phenomenon is that for a pair of affine schemes $S,T$ mapping to $\CY$, 
we cannot describe their Cartesian product $S\underset{\CY}\times T$. So, many
of the properties enjoyed by indschemes will fail for pseudo-indschemes.

\medskip

\noindent(iii) It is easy to see that in the definition of pseudo-indschemes, one can
require $Z(I)$ be schemes rather than indschemes. Indeed, given a presentation as
in \eqref{e:colimit prestack}, we can define a new category of indices $\wt\CS$ 
that consists of pairs $(I\in \CS,T\overset{\on{cl.emb.}}\hookrightarrow Z(I))$, and we will have
$$\CY\simeq \underset{(I,T)\in \wt\CS}{colim}\, T.$$

\medskip

\noindent(iv) The reason for singling out pseudo-indschemes among all prestacks
is that the category of D-modules on an pseudo-indscheme is more manageable 
than on an arbitrary prestack, see \secref{ss:colimit}. 

\sssec{The Ran space}

Let us consider an example of a pseudo-indscheme that will play a prominent role
in this paper.

\medskip

We take $\CS$ to be the category $(\fset)^{op}$, where $\fset$ denotes the category of non-empty finite sets and surjective maps. 
Let $X$ be a separated scheme. We define a functor
$$X^{\fset}:(\fset)^{op}\to \Sch$$
by assigning to a finite set $I$ the scheme $X^I$, and to a surjective map $\phi:J\twoheadrightarrow I$
the corresponding diagonal map $\Delta(\phi):X^I\to X^J$.

\medskip

We shall denote the resulting pseudo-indscheme
$$\underset{(\fset)^{op}}{colim}\, X^{\fset}\in \on{PreStk}$$
by $\Ran X$, and called it ``the Ran space of $X$".

\medskip

For a finite set $I$, we let $\Delta^I$ denote the corresponding map $X^I\to \Ran X$.

\begin{remark}  \label{r:Ran discrete}
Although the colimit in the formation of $\Ran X$ was taken in $\on{PreStk}$, it is easy to see
that the functor $\Ran X:(\affSch)^{op}\to \inftygroup$ takes values in $0$-truncated groupoids,
i.e., in $\on{Set}\subset \inftygroup$. 

\medskip

In fact, $\on{Maps}(S,\Ran X)$ is the set of non-empty finite 
subsets of $\on{Maps}(S,\Ran X)$.

\medskip

Indeed, for $S\in \affSch$, we have:
$$\on{Maps}(S,\Ran X)=\underset{(\fset)^{op}}{colim}\, (\on{Maps}(S,X))^I,$$
but it is easy to see that for a set $A$, the colimit $\underset{(\fset)^{op}}{colim}\, A^I$
is discrete, and is isomorphic to set of all finite non-empty subsets of $A$. 

\end{remark}

\sssec{}  \label{sss:ind-schematic}

For future reference, let us give the following definitions. (The reader can skip this subsection
and return to it when necessary.)

\medskip

Let $\CY$ be a pseudo-indscheme mapping to a scheme $S$. We shall say that $\CY$
is pseudo ind-proper over $S$ if \emph{there exists} a presentation of $\CY$ as in \eqref{e:colimit prestack},
such that the resulting maps $Z(I)\to S$ are ind-proper. 

\medskip

We shall say that a map $\CY_1\to \CY_2$ in $\on{PreStk}$ is \emph{pseudo ind-schematic}
if for any $S\in \affSch_{/\CY_2}$, the object $S\underset{\CY_1}\times \CY_2$ is a pseudo-indscheme.

\medskip

Unlike indschemes, it is not true that any map between pseudo-indschemes is pseudo
ind-schematic. It is not even true that a map from an affine scheme to a pseudo-indscheme
is pseudo ind-schematic.

\medskip

We shall say that a map $\CY_1\to \CY_2$ in $\on{PreStk}$ is \emph{pseudo ind-proper}
if is pseudo ind-schematic and for every $S\in \affSch_{/\CY_2}$ as above, the resulting pseudo-indscheme
$S\underset{\CY_1}\times \CY_2$ is pseudo ind-proper over $S$.

\ssec{D-modules on pseudo-indschemes}  \label{ss:Dmods on diags}

\sssec{}

Recall that for any object $\CY\in \on{PreStk}$ we define the category $\fD(\CY)$
as
$$\underset{S\in (\affSch/\CY)^{op}}{lim}\, \fD(S).$$

\medskip

In other words, we define the functor 
$$\fD^!_{\on{PreStk}}:\on{PreStk}^{op}\to \StinftyCat_{\on{cont}},$$
as the right Kan extension of the functor
$$\fD^!_{\affSch}:(\affSch)^{op}\to \StinftyCat_{\on{cont}},$$
along the tautological embedding
$$(\affSch)^{op}\hookrightarrow \on{PreStk}^{op},$$
see \secref{sss:D-modules}.\footnote{This is the same as the right Kan extension of the functor
$\fD^!_{\Sch}$ along $(\Sch)^{op}\hookrightarrow \on{PreStk}^{op}$.}

\medskip

In what follows, when no confusion is likely to occur, we will suppress the subscript ``!" and the subscript ``$\on{PreStk}$"
from the notation, i.e., we shall simply write $\fD(\CY)$ rather than $\fD^!_{\on{PreStk}}(\CY)$. 

\sssec{}

The main object of study of this section is the category $\fD(\CY)$, where $\CY$
is a pseudo-indscheme.

\medskip

The above interpretation of the functor $\fD_{\on{PreStk}}$ as the right Kan extension
implies that it takes colimits in $\on{PreStk}$ to limits in $\StinftyCat_{\on{cont}}$. 

\medskip

Hence, for $\CY$ written as in \eqref{e:colimit prestack}, we have:
\begin{equation} \label{e:Dmodules on colimit}
\fD(\CY)\simeq \underset{I\in \CS^{op}}{lim}\, \fD(Z(I)),
\end{equation}
where the limit is formed using the !-pullback functors. 

\sssec{}  \label{sss:morphism of Z}

Let $f:\CY\to \CY'$ be a map between prestacks. By construction, we have
a pullback functor
$f^!:\fD(\CY')\to \fD(\CY)$.

\medskip

For example, taking $\CY'=\on{pt}$, and the tautological map
$p_\CY:\CY\to \on{pt}$, we obtain a functor
$$p_\CY^!:\Vect\to \fD(\CY).$$

\medskip

In particular, the category $\fD(\CY)$ contains the canonical object
$\omega_{\CY}:=p_\CY^!(k)$, 
which we shall refer to as ``the dualizing sheaf". 

\ssec{The category of D-modules as a colimit}  \label{ss:colimit}

A distinctive feature of pseudo-indschemes among arbitrary objects of $\on{PreStk}$
is that the category $\fD(\CY)$ can be alternatively described as a \emph{colimit}
in $\StinftyCat_{\on{cont}}$.

\sssec{}  \label{sss:left adjoint}

Let us recall the following general construction. Let $\bC$ be an $\infty$-category, and let
$$\Phi:\bC\to \StinftyCat_{\on{cont}}$$
be a functor. Suppose that for every arrow $g:\bc_1\to \bc_2$, the resulting functor
$$\Phi(g):\Phi(\bc_1)\to \Phi(\bc_2)$$
admits a left adjoint, $^L\Phi(g)$. Then the assignment $g\rightsquigarrow {}^L\Phi(g)$
canonically extends to a functor
$$^L\Phi:\bC^{op}\to \StinftyCat.$$

\sssec{}




Let $\CY$ be a pseudo-indscheme, written as in \eqref{e:colimit prestack}. Consider the functor
$$\fD^!(Z):\CS^{op}\to \StinftyCat$$
equal to the composition of $Z^{op}:\CS^{op}\to \on{PreStk}^{op}$ with the functor 
$$\fD^!_{\on{PreStk}}:\on{PreStk}^{op}\to \StinftyCat.$$

\medskip

The properness assumption on the maps $Z(I)\to Z(J)$ implies that the functor $\fD^!(Z)$
satisfies the assumption of \secref{sss:left adjoint}. Let $\fD_!(Z)$ denote the
resulting functor $\CS\to \StinftyCat$.

\medskip

By \cite{DG}, Lemma 1.3.3, we have:
\begin{equation} \label{e:limit as colimit}
\underset{\CS^{op}}{lim}\, \fD^{!}(Z)\simeq \underset{\CS}{colim}\, \fD_!(Z).
\end{equation}

In particular, this implies that the category $\fD(\CY)$ is compactly generated,
see \cite{DG}, Sect. 2.2.1. The latter observation, combined with the isomorphism \eqref{e:prod equivalence}, implies that for any
$\CY'\in \on{PreStk}$, the functor
\begin{equation} \label{e:prod equiv pseudo}
\fD(\CY)\otimes \fD(\CY')\to \fD(\CY\times \CY')
\end{equation}
is an equivalence.

\medskip

Note also, that by \cite{DG}, Sect. 2.2.1, a choice of presentation of $\CY$ as in \eqref{e:colimit prestack},
defines an equivalence $\fD(\CY)^\vee\simeq \fD(\CY)$, i.e., an anti self-equivalence
$$(\fD(\CY)^c)^{op}\simeq \fD(\CY)^c.$$

\sssec{}

For $I\in \CS$, we shall denote by $\be(I)^!$ the tautological forgetful functor 
$$\fD(\CY)=\underset{\CS^{op}}{lim}\, \fD^!(Z)\to \fD(Z(I))$$
and by $\be(I)_!$ its left adjoint, which in terms of the equivalence \eqref{e:limit as colimit}
corresponds to the tautological functor
$$\fD(Z(I))\to \underset{\CS}{colim}\, \fD_!(Z).$$

\sssec{}

The two functors in \eqref{e:limit as colimit} can be explicitly described as follows:

\medskip

The functor
$$\underset{I\in \CS}{colim}\, \fD(Z(I))\to \fD(\CY)$$
corresponds to the compatible family of functors $\fD(Z(I))\to \fD(\CY)$ given by $\be(I)_!$. 

\medskip

The functor
$$\fD(\CY)\to \underset{I\in \CS}{colim}\, \fD(Z(I))$$
sends 
\begin{equation} \label{e:from limit to colimit}
(\CF\in \fD(\CY))\rightsquigarrow \underset{I\in \CS}{colim}\, \be(I)^!(\CF)\in \underset{I\in \CS}{colim}\, \fD(Z(I)).
\end{equation}

\ssec{Direct images with compact supports}

Let us return to the situation of a morphism $f:\CY\to \CY'$. It is not, in general, true that the
functor $f^!:\fD(\CY')\to \fD(\CY)$ admits a left adjoint.  

\sssec{}  \label{sss:intr good}

In general, if $G:\bC'\to \bC$ is a functor between $\infty$-categories, one can consider the full
subcategory of $\bC$, denoted $\bC_{\on{good}}$, consisting of objects $\bc\in \bC$, for
which the functor
$$\bC'\to \inftygroup,\,\, \bc'\mapsto \on{Maps}_{\bC}(\bc,G(\bc'))$$
is co-representable.

\medskip

In this case, there exists a canonically defined functor $F:\bC_{\on{good}}\to \bC'$, equipped with an
isomorphism of functors
$$(\bC_{\on{good}})^{op}\times \bC'\rightrightarrows \inftygroup$$ that send $\bc\in \bC_{\on{good}}$
and $\bc'\in \bC'$ to
$$\on{Maps}_{\bC'}(F(\bc),\bc') \text{ and } \on{Maps}_{\bC}(\bc,G(\bc')),$$
respectively.

\medskip

In this case, we shall refer to $F$ as ``the partially defined left adjoint of $G$". For an object $\bc\in \bC$
we shall say that ``the partially defined left adjoint of $G$ is defined on $\bc$" if $\bc\in \bC_{\on{good}}$.

\sssec{}

Let
$$\fD(\CY)_{\on{good}\, \on{for}\,f}\subset \fD(\CY)$$
be the full subcategory that consists of objects, on which the partially defined 
left adjoint $f_{!}$ to $f^!$ is defined.

\medskip

For a general map $f$, it is not clear how to construct construct objects in 
$\fD(\CY)_{\on{good}\, \on{for}\,f}$. However, below we shall describe a situation
when can generate ``many" objects of this category.

\medskip

Note, however, that for any map $f:\CY\to \CY'$, for which $\omega_\CY$
belongs to $\fD(\CY)_{\on{good}\, \on{for}\,f}$, we have a canonical map
\begin{equation} \label{e:trace map omega}
\on{Tr}_{\omega}(f):f_{!}(\omega_{\CY})\to \omega_{\CY'}, 
\end{equation}
coming by adjunction
from
$$\omega_{\CY}\simeq f^!(\omega_{\CY'}).$$

\sssec{}  \label{sss:paradigm}

First, let us recall the following general paradigm of constructing maps between
colimits. 

\medskip

Let $F:\bC_1\to \bC_2$ is a functor between
$\infty$-categories, and let
$$\Phi_1:\bC_1\to \bD \text{ and } \Phi_2:\bC_2\to \bD$$
be functors, where $\bD$ is another $\infty$-category. Let us be given a natural transformation
of functors $\bC_1\rightrightarrows \bD$:
$$\Phi_1\Rightarrow \Phi_2\circ F.$$

\medskip

Then we obtain a map in $\bD$:
\begin{equation} \label{e:map of colimits}
\underset{\bC_1}{colim}\, \Phi_1  \to \underset{\bC_2}{colim}\, \Phi_2.
\end{equation}

\sssec{}  \label{sss:expl map}

Let $\CS$ and $\CS'$ be two categories of indices and $f_\CS:\CS\to \CS'$ a functor. 
Let $Z:\CS\to \indSch$ and $Z':\CS'\to \indSch$ be two functors as in \secref{sss:defn of pseudo},
and let $f_Z$ be a natural transformation between the resulting two functors
$\CS\rightrightarrows \indSch$:
$$Z\Rightarrow Z'\circ f_\CS.$$
For $I\in \CS$ we let $f(I)$ denote the resulting map of indschemes $Z(I)\to Z'(I')$,
where $I'=f_\CS(I)$.

\medskip

By the above, we obtain a map $f:\CY\to \CY'$ between the corresponding two 
pseudo-indschemes.

\sssec{}  \label{sss:good objects}

Keeping the notation of the previous subsection, for each $I\in \CS$, let 
$$\fD(Z(I))_{\on{good}\, \on{for}\,f(I)}$$ be the full subcategory of $\fD(Z(I))$
that consists of objects, on which the partially defined left adjoint $f(I)_{!}$
to $f(I)^!$ is defined.

\medskip

By adjunction, we obtain that in the next diagram the upper horizontal arrow is well-defined
(i.e., its image belongs to the indicated subcategory) and that the diagram commutes:
$$
\CD
\fD(Z(I))_{\on{good}\, \on{for}\,f(I)}    @>{\be(I)_!}>>  \fD(\CY)_{\on{good}\, \on{for}\,f} \\
@V{f(I)_{!}}VV    @VV{f_{!}}V  \\
\fD\left(Z'(I')\right)  @>{\be(I')_!}>>  \fD(\CY').
\endCD
$$

\medskip

As a consequence, from \eqref{e:from limit to colimit}, 
we obtain that if $\CF\in \fD(\CY)$ is such that for each $I$,
$$\be(I)^!(\CF)\in \fD(Z(I))_{\on{good}\, \on{for}\,f(I)},$$
then $\CF\in \fD(\CY)_{\on{good}\, \on{for}\,f}$,
and we have:
\begin{equation} \label{e:calc dir image c}
f_{!}(\CF)\simeq \underset{I\in \CS}{colim}\, \be(I')_!\circ f(I)_{!}\circ \be(I)^!(\CF)\in \fD(\CY').
\end{equation}

\sssec{}  \label{sss:when good}

The above description shows:

\medskip

\noindent(i) If all the maps $f(I)$ happen to be proper, then $\fD(\CY)_{\on{good}\, \on{for}\,f}$
equals all of $\fD(\CY)$.

\medskip

\noindent(ii) If $\CF\in \fD(\CY)$ is such that for all $I$, the object $\be(I)^!(\CF)\in \fD(Z(I))$
has holonomic cohomologies, then $\CF\in \fD(\CY)_{\on{good}\, \on{for}\,f}$.

\medskip

\noindent(iii) The object $\omega_\CY$ always belongs to $\CF\in \fD(\CY)_{\on{good}\, \on{for}\,f}$.

\ssec{(Co)homology}

\sssec{}

Let us take $\CY'=\on{pt}$ and $f=p_\CY:\CY\to \on{pt}$. It is clear that the map $p_\CY$ falls into the paradigm 
described in \secref{sss:expl map}.

\medskip

For $\CY\in \fD(\CY)_{\on{good}\, \on{for}\,p_\CY}$, we shall also
use the notation 
$$\Gamma_{\dr,c}(\CY,-):=(p_\CY)_!(-).$$

\medskip

From \eqref{e:calc dir image c}, we obtain that if $\CF\in \fD(\CY)$ is such that 
for each $I$,
$$\be(I)^!(\CF)\in \fD(Z(I))_{\on{good}\, \on{for}\,p_{Z(I)}},$$
we have $\CF\in \fD(\CY)_{\on{good}\, \on{for}\,p_\CY}$, and
\begin{equation} \label{e:calc dR c}
\Gamma_{\dr,c}(\CY,\CF)\simeq 
\underset{I\in \CS}{colim}\, \Gamma_{\dr,c}\left(Z(I),\be(I)^!(\CF)\right).
\end{equation}

\sssec{}  \label{sss:abs trace map}

Let us use the notation 
$$H_\bullet(\CY):=\Gamma_{\dr,c}(\CY,\omega_\CY).$$

\medskip

From \eqref{e:trace map omega}, we obtain a canonical map
\begin{equation} \label{e:trace map}
\on{Tr}_{H_\bullet}:H_\bullet(\CY)\to k.
\end{equation}

\medskip

Moreover, for any map $f:\CY\to \CY'$ for which $\omega_\CY\in \fD(\CY)_{\on{good}\, \on{for}\,f}$,
by applying the partially defined functor $\Gamma_{\dr,c}(\CY',-)$ to \eqref{e:trace map omega}, we obtain a map
\begin{equation} \label{e:trace on homology}
\on{Tr}_{H_\bullet}(f):H_\bullet(\CY)\to H_\bullet(\CY').
\end{equation}

\sssec{}   \label{sss:connectivity}

Let $\CY$ be written as in \eqref{e:colimit prestack}. From \eqref{e:calc dR c} we obtain:
\begin{equation} \label{e:expl homology non-proper}
H_\bullet(\CY)\simeq \underset{I\in \CS}{colim}\, H_\bullet(Z(I)).
\end{equation}







\medskip

In particular, we conclude that the object $H_\bullet(\CY) \in \Vect$
is always connective, i.e., lives in non-positive cohomological degrees.
Also, we conclude that the trace map
$$\on{Tr}_{H_\bullet}:H_\bullet(\CY) \to k$$
has the property that the map 
$$H_0(\CY)\to k$$
is non-zero whenever $\CY$ is non-empty, and if all $Z(I)$ are connected, the map
$H_0(\CY)\to k$ is an isomorphism.

\sssec{}

Finally, let us recall the following basic result of \cite{BD1}, Proposition 4.3.3, that will be crucial for this paper:

\begin{thm} \label{t:contractibility of Ran}
Let $X$ be a connected separated scheme. Then the above map
$$\on{Tr}_{H_\bullet}:H_\bullet(\Ran X)\to k$$
is an isomorphism.
\end{thm}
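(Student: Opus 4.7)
The plan is to combine the colimit description \eqref{e:expl homology non-proper} with the commutative semi-group structure on $\Ran X$. Applied to the defining presentation $\Ran X \simeq \underset{I \in (\fset)^{op}}{colim}\, X^I$, the formula gives
\[
H_\bullet(\Ran X) \simeq \underset{I \in (\fset)^{op}}{colim}\, H_\bullet(X^I),
\]
with transition maps the proper pushforwards along the diagonals $\Delta(\phi): X^I \to X^J$. By \secref{sss:connectivity} this object is connective and, since each $X^I$ is connected (as $X$ is), the trace map already induces an isomorphism on $H^0$. The theorem is therefore reduced to showing vanishing of all strictly negative cohomological degrees of $H_\bullet(\Ran X)$.

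\medskip

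The key additional input is the commutative, idempotent semi-group operation $\mu: \Ran X \times \Ran X \to \Ran X$, $(S_1, S_2) \mapsto S_1 \cup S_2$, built at finite level from the concatenation maps $X^I \times X^J \to X^{I \sqcup J}$ coming from disjoint union of indexing sets. Via the Künneth equivalence \eqref{e:prod equiv pseudo}, $\mu_!$ endows $H_\bullet(\Ran X)$ with a non-unital commutative multiplication $m$. Moreover, the diagonal $\Delta: \Ran X \to \Ran X \times \Ran X$ is ind-closed (being a colimit of the closed embeddings $X^I \to X^I \times X^I$ for separated $X$) and satisfies $\mu \circ \Delta = \id_{\Ran X}$; hence its $_!$-pushforward on $\omega_{\Ran X}$ is defined (by \secref{sss:when good}), producing a section $\delta: H_\bullet(\Ran X) \to H_\bullet(\Ran X)^{\otimes 2}$ of $m$.

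\medskip

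The final step is to convert the data $(m, \delta, m \circ \delta = \id)$ together with the augmentation $\eps := \on{Tr}_{H_\bullet}$ into the desired vanishing. By functoriality of $_!$-pushforward and the fact that $\mu$ covers the trivial semi-group structure on $\on{pt}$, one checks that $\eps \circ m = m_k \circ (\eps \otimes \eps)$, so $\eps$ is an algebra map. Combining this with the section $\delta$ and the colimit presentation over $(\fset)^{op}$, one exploits the compatibility of $\mu$ with disjoint union of indexing sets: informally, a class represented at level $I$ can be lifted via $\delta$ to level $I \sqcup I$, split off its augmentation part, and the resulting ``defect'' can be absorbed by moving to ever larger index sets, producing an explicit contracting homotopy onto $H^0$.

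\medskip

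The main obstacle is the last step. The formal identity $m \circ \delta = \id$ together with connectivity and a multiplicative augmentation is not in isolation sufficient to conclude contractibility (augmented non-unital commutative algebras admitting such a retraction need not be rank one). What must be exploited is the specific combinatorial structure of the indexing category $(\fset)^{op}$ under disjoint union, which interlocks with the semi-group structure $\mu$ in such a way that the section $\delta$ assembles through the colimit into an actual contraction rather than merely a homotopy-theoretic retract. Once this upgrade is in place, all strictly negative cohomology is killed, and the trace map becomes an isomorphism.
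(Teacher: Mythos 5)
You have identified the correct ingredients --- connectivity, K\"unneth, the commutative idempotent semi-group structure $\mu$ on $\Ran X$, the idempotence identity $\mu\circ\Delta=\on{id}$ (cf.\ \secref{sss:sq of diag}), and the multiplicative augmentation --- and you honestly flag that you cannot close the last step. But your diagnosis of where the missing idea lies is wrong, and this matters: the paper's final step uses no further ``combinatorial interlocking'' of the indexing category $(\fset)^{op}$ with $\mu$. It is a short, purely algebraic argument built from the semi-group structure you already have.

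Here is the step you are missing. Suppose for contradiction that $H_n(\Ran X)\neq 0$ for some $n>0$, and take $n$ \emph{minimal} such. Write $M:=H_n(\Ran X)$. The minimality is what makes K\"unneth decompose cleanly in degree $n$: since $H_0\simeq k$ and $H_i=0$ for $0<i<n$,
\[
H_n(\Ran X\times\Ran X)\ \simeq\ M\oplus M,
\]
the two summands being picked out by $\on{Tr}_{H_\bullet}(\on{id}\times p_{\Ran X})$ and $\on{Tr}_{H_\bullet}(p_{\Ran X}\times\on{id})$. Now consider $\on{Tr}_{H_\bullet}(\on{union}):M\oplus M\to M$. \emph{Commutativity} of $\on{union}$ forces this map to have the symmetric form $u\oplus u$ for a single endomorphism $u:M\to M$. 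By \emph{associativity}, the $k$-fold union $(\Ran X)^{\times k}\to\Ran X$ acts on $H_n\simeq M^{\oplus k}$ as $u^{\oplus k}$. The $k$-fold diagonal induces the diagonal $M\to M^{\oplus k}$. Composing and invoking \emph{idempotence} ($\on{union}^k\circ\Delta^k=\on{id}$ on $\Ran X$) gives $k\cdot u=\on{id}_M$ for every $k\geq 2$. Taking $k=2$ and $k=3$ yields $2u=3u$, hence $u=0$, hence $\on{id}_M=0$, contradicting $M\neq 0$. So the data you extracted \emph{are} sufficient --- your parenthetical assertion that retraction plus multiplicative augmentation plus connectivity cannot force rank one is in fact false once you also exploit commutativity, associativity, and idempotence for varying $k$. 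What you were missing is simply the trick of passing to the minimal nonvanishing degree and playing off $k=2$ against $k=3$.
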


\begin{remark}
Strictly speaking, in \cite{BD1}, the above result is only proved when $X$ is a curve
(which is also our main case of interest in this paper). However, the proof given
in {\it loc.cit.} applies to the general case as well. For completeness, we shall include
the argument in the general case in \secref{s:proof contr Ran}, by essentially 
repeating {\it loc.cit.}
\end{remark}

\ssec{The space of rational maps}

\sssec{}

Let us recall the following construction. Let $U$ be a scheme and $Y$ an affine scheme (according
to our conventions, assumed of finite type). 

\medskip

Consider the functor 
$${\mathbf{Maps}}(U,Y):\Sch^{op}\to \on{Sets}$$
that assigns to a test scheme $S$ the set of maps $S\times U\to Y$.

\medskip

It is easy to see that this functor is representable by an indscheme (of ind-finite type).
Indeed, by representing  $Y$ as a Cartesian product
$$
\CD
Y  @>>> \BA^n \\
@VVV   @VVV  \\
\{0\}   @>>>  \BA^m 
\endCD
$$
we reduce the assertion to the case when $Y=\BA^n$, and by taking products
further to the case $Y=\BA^1$. 

\medskip

In the latter case, ${\mathbf{Maps}}(U,Y)$
is representable by the (infinite-dimensional, unless $U$ is proper) vector space $\Gamma(U,\CO_U)$,
viewed as an indscheme.

\sssec{}  \label{sss:rel maps}

In what follows we shall need a version of the above construction in the relative situation,
namely, when $U$ is a scheme flat over a base $T$. In this case we define ${\mathbf{Maps}}_T(U,Y)$ 
as a functor on the category of schemes over $T$. 

\medskip

We do not know what conditions on the map $\pi:U\to T$ guarantee this in general. However, we have
the following statement: 

\begin{lem}  \label{l:guarantee ind}
Suppose that the (derived) direct image $\pi_*(\CO_U)\in \QCoh(T)$ can be written as a filtered colimit
of objects $\CE_\alpha\in \QCoh(T)^{\on{perf}}$ such that for every index $\alpha$, the $\CO_T$-dual
$\CE_\alpha^\vee$ belongs to $\QCoh(T)^{\leq 0}$, and for every map of indices $\alpha_1\to \alpha_2$, 
the corresponding map $\CE_{\alpha_2}^\vee\to \CE_{\alpha_1}^\vee$ is a surjection on $H^0(-)$.
Then ${\mathbf{Maps}}_T(U,Y)$ is representable by an ind-scheme. 
\end{lem}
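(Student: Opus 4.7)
The plan is to imitate, in the relative setting, the reduction sketched just before the lemma: first reduce to the case $Y = \BA^1$, then use the presentation $\pi_*\CO_U \simeq \underset{\alpha}{\on{colim}}\, \CE_\alpha$ to exhibit ${\mathbf{Maps}}_T(U,\BA^1)$ as a filtered colimit of affine $T$-schemes with closed embeddings as transition maps. For the reduction, choose a closed embedding $Y \hookrightarrow \BA^n$ realized as the zero set of $f_1,\ldots,f_m$, giving a Cartesian square
\[
\CD
Y  @>>> \BA^n \\
@VVV   @VV{(f_1,\ldots,f_m)}V  \\
\{0\}   @>>>  \BA^m.
\endCD
\]
Applying ${\mathbf{Maps}}_T(U,-)$ (which preserves Cartesian squares of affine targets, as it is a limit) shows that ${\mathbf{Maps}}_T(U,Y) \to {\mathbf{Maps}}_T(U,\BA^n)$ is a pullback of a closed embedding, hence an ind-closed embedding once we know the target is an ind-scheme. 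Moreover ${\mathbf{Maps}}_T(U,\BA^n) \simeq {\mathbf{Maps}}_T(U,\BA^1)^n$, the fiber product over $T$, so everything reduces to the case $Y=\BA^1$.

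For $Y=\BA^1$, a test $T$-scheme $g\colon S \to T$ gives
\[
{\mathbf{Maps}}_T(U,\BA^1)(S) \;=\; \Gamma(S\times_T U,\CO) \;=\; \Gamma\!\bigl(S,\, g^*\pi_*\CO_U\bigr),
\]
where the second equality uses flatness of $\pi$ (so formation of $\pi_*\CO_U$ commutes with arbitrary base change) and the projection formula. Using the hypothesized presentation $\pi_*\CO_U \simeq \underset{\alpha}{\on{colim}}\, \CE_\alpha$ and the fact that on a quasi-compact scheme $\Gamma(S,-)$ commutes with filtered colimits, this rewrites as $\underset{\alpha}{\on{colim}}\, \Gamma(S,g^*\CE_\alpha)$. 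Now for each perfect $\CE_\alpha$ we have $\Gamma(S,g^*\CE_\alpha) = \on{Map}_{\CO_S}(g^*\CE_\alpha^\vee,\CO_S)$. The hypothesis $\CE_\alpha^\vee \in \QCoh(T)^{\leq 0}$ ensures $g^*\CE_\alpha^\vee \in \QCoh(S)^{\leq 0}$, so (since $\CO_S$ lives in cohomological degree $0$) the derived mapping spectrum is connective, i.e.\ the mapping space is discrete and equals
\[
\on{Hom}_{\CO_S}\!\bigl(H^0(g^*\CE_\alpha^\vee),\CO_S\bigr) \;=\; \on{Hom}_{\CO_S}\!\bigl(g^*H^0(\CE_\alpha^\vee),\CO_S\bigr).
\]
Since $\CE_\alpha$ is perfect, $H^0(\CE_\alpha^\vee)$ is a finitely presented $\CO_T$-module, and the above functor on $S$ is represented by the affine $T$-scheme of finite type
\[
Z_\alpha \;:=\; \Spec_T\!\bigl(\on{Sym}_{\CO_T}(H^0(\CE_\alpha^\vee))\bigr).
\]

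It remains to identify the transition maps. For $\alpha_1 \to \alpha_2$, dualizing $\CE_{\alpha_1} \to \CE_{\alpha_2}$ and passing to $H^0$ produces $H^0(\CE_{\alpha_2}^\vee) \to H^0(\CE_{\alpha_1}^\vee)$, which is surjective by hypothesis. Applying $\on{Sym}_{\CO_T}$ preserves surjectivity, so $Z_{\alpha_1} \to Z_{\alpha_2}$ is a closed embedding. Since the indexing category is filtered, we conclude that ${\mathbf{Maps}}_T(U,\BA^1) \simeq \underset{\alpha}{\on{colim}}\, Z_\alpha$ is an ind-scheme. Combined with the first paragraph this proves the lemma. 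The one genuinely delicate step is the identification of $\on{Map}_{\CO_S}(g^*\CE_\alpha^\vee,\CO_S)$ with the discrete set $\on{Hom}(H^0(g^*\CE_\alpha^\vee),\CO_S)$: this is precisely where the connectivity hypothesis $\CE_\alpha^\vee \in \QCoh(T)^{\leq 0}$ enters, to guarantee the total spaces are \emph{classical} schemes rather than derived ones, and correspondingly the surjectivity assumption is needed to ensure the transition maps among these classical truncations remain closed embeddings.
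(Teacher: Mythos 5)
Your proof is correct and follows the exact route the paper has in mind (the paper itself only sketches the reduction to $Y=\BA^1$ via the Cartesian square argument and then states the explicit answer $\underset{\alpha}{\on{colim}}\,\Spec_T\bigl(\Sym(H^0(\CE_\alpha^\vee))\bigr)$ without giving the verification). One tiny terminology slip: in the last paragraph you call the mapping spectrum $\on{Map}(g^*\CE_\alpha^\vee,\CO_S)$ ``connective''; what you have actually established (and what you need) is that $R\Hom(g^*\CE_\alpha^\vee,\CO_S)$ sits in cohomological degrees $\geq 0$, i.e.\ the spectrum is \emph{coconnective}, which is exactly the condition forcing the mapping space to be discrete. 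The mathematics is fine; only the label is reversed.
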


\begin{remark}
If $\pi_*(\CO_U)\in \QCoh(T)$ is concentrated in cohomological degree $0$, then the condition
of the lemma is equivalent to requiring that $\pi_*(\CO_U)$ be a Mittag-Leffler module in
the sense of \cite[Sect. 7.12.1]{BD2}.
\end{remark}

In the situation of the lemma, the indscheme representing ${\mathbf{Maps}}_T(U,Y)$ can be explicitly 
written as
$$\underset{\alpha}{colim}\, \Spec_T(\Sym(H^0(\CE_\alpha^\vee))).$$

\medskip

The conditions of the lemma are satisfied, for example, when the map $\pi$ admits a compactification $$\ol\pi:\ol{U}\to T,$$
such that $\ol\pi$ is also flat, and the complement $\ol{U}-U$ is a relatively ample divisor $D$ which
is flat over $T$. In this case we take the category of indices to be $\BN$, and for $n\gg 0$, we take 
$$\CE_n:=\ol\pi_*(\CO_{\ol{U}}(n\cdot D)).$$

\sssec{}  \label{sss:intr rat maps}

Let $X$ be a smooth, connected and complete curve, and let $Y$ be an affine scheme. 
We define a functor
$${\mathbf{Maps}}(X,Y)^{\on{rat}}_{X^{\fset}}:(\fset)^{op}\to \indSch$$
as follows. 

\medskip

For $I\in \fset$ we consider the scheme $$(X^I\times X)-\Gamma^I,$$ where
$\Gamma^I\subset X^I\times X$
is the incidence divisor. We regard it as a scheme over $X^I$. 
For future reference, for an $S$-point $x^I$ of $X^I$ we will denote by
$\{x^I\}$ the closed subscheme of $S\times X$ equal to
$(x^I\times \on{id}_X)^{-1}(\Gamma^I)$.

\medskip

We let
$${\mathbf{Maps}}(X,Y)^{\on{rat}}_{X^I}:={\mathbf{Maps}}_{X^I}((X^I\times X)-\Gamma^I,Y).$$

\medskip

By \lemref{l:guarantee ind}, ${\mathbf{Maps}}(X,Y)^{\on{rat}}_{X^I}$ is an indscheme.

\sssec{}

We define the object ${\mathbf{Maps}}(X,Y)^{\on{rat}}_{\Ran X}\in \on{PreStk}$ as 
$$\underset{(\fset)^{op}}{colim}\, {\mathbf{Maps}}(X,Y)^{\on{rat}}_{X^{\fset}}.$$
It is a pseudo-indscheme, by construction. 

\medskip

Note that also by construction, the functor 
$${\mathbf{Maps}}(X,Y)^{\on{rat}}_{X^{\fset}}:(\fset)^{op}\to \indSch$$
comes equipped with a natural transformation 
to $X^{\fset}$. We let $f$ denote the resulting map
$${\mathbf{Maps}}(X,Y)^{\on{rat}}_{\Ran X}\to \Ran X,$$
see \secref{sss:expl map}.

\begin{remark}  \label{r:maps discrete}
As in Remark \ref{r:Ran discrete}, one can show that 
$${\mathbf{Maps}}(X,Y)^{\on{rat}}_{\Ran X}:(\affSch)^{op}\to \inftygroup$$
takes values in $\on{Set}\subset \inftygroup$. 

\medskip

In fact, a data of an $S$-point of
${\mathbf{Maps}}(X,Y)^{\on{rat}}_{\Ran X}$ is equivalent to that of a non-empty finite
\emph{subset} $\ol{x}\subset \on{Maps}(S,X)$ plus a rational map
$S\times X\to Y$, which is regular on the complement to the graph of $\ol{x}$.
\end{remark}

\sssec{}

We define ``homology of the space of rational maps" as
$$H_\bullet({\mathbf{Maps}}(X,Y)^{\on{rat}}_{\Ran X}).$$

\medskip

Note that by transitivity,
$$H_\bullet({\mathbf{Maps}}(X,Y)^{\on{rat}}_{\Ran X})\simeq \Gamma_{\dr,c}\left(\Ran X,
f_{!}(\omega_{{\mathbf{Maps}}(X,Y)^{\on{rat}}_{\Ran X}})\right).$$

\begin{remark}  \label{r:not naive}
We emphasize that the functor $f_!$ is defined via the realization of the category
$\fD(\Ran X)$ as in \secref{ss:colimit} as $\underset{I\in (\fset)^{op}}{colim}\, \fD(X^I)$.
That is, if we want to ``evaluate'' the object $f_{!}(\omega_{{\mathbf{Maps}}(X,Y)^{\on{rat}}_{\Ran X}})$
on a given finite set $I$, i.e., if we are interested in 
$$(\Delta^I)^!\left(f_{!}(\omega_{{\mathbf{Maps}}(X,Y)^{\on{rat}}_{\Ran X}})\right)\in \fD(X^I),$$
we will have to apply the equivalence \eqref{e:limit as colimit}, and the result will \emph{not} be 
isomorphic to 
$$f(I)_!\left(\omega_{{\mathbf{Maps}}(X,Y)^{\on{rat}}_{X^I}})\right).$$
\end{remark}






\ssec{Statement of the the main result}

\sssec{}

The main result of this paper is:

\medskip

Consider the trace map of \eqref{e:trace map}
\begin{equation} \label{e:integral}
\on{Tr}_{H_\bullet}:H_\bullet({\mathbf{Maps}}(X,Y)^{\on{rat}}_{\Ran X})\to k.
\end{equation}

\begin{thm}  \label{t:contractibility}
Suppose that $Y$ is connected and can be covered by open subsets $U_\alpha$, each of which is isomorphic
to an open subset of the affine space $\BA^n$ (for some integer $n$). 
Then the map \eqref{e:integral} is an isomorphism.
\end{thm}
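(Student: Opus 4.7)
The plan is to follow the three-stage strategy sketched in the introduction.

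\medskip

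\emph{Stage 1: case $Y=\BA^n$.} The indscheme $\mathbf{Maps}(X,\BA^n)^{\on{rat}}_{X^I}$ is a filtered colimit, over the pole order $d\in\BN$, of vector bundles on $X^I$ whose fibers over $x^I$ are the spaces of sections of $\CO(d\cdot\{x^I\})^{\oplus n}$; in particular it is an ind-affine bundle over $X^I$. Smooth base change along the projection then yields $H_\bullet(\mathbf{Maps}(X,\BA^n)^{\on{rat}}_{X^I}) \simeq H_\bullet(X^I)$, naturally in $I\in(\fset)^{op}$. Combining \eqref{e:expl homology non-proper} with \thmref{t:contractibility of Ran},
\[
H_\bullet(\mathbf{Maps}(X,\BA^n)^{\on{rat}}_{\Ran X}) \simeq H_\bullet(\Ran X) \simeq k,
\]
and a diagram chase identifies this isomorphism with the trace map \eqref{e:integral}.

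\medskip

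\emph{Stage 2: case $Y\subset\BA^n$ open.} Show that the open embedding
\[
\iota:\mathbf{Maps}(X,Y)^{\on{rat}}_{\Ran X}\hookrightarrow \mathbf{Maps}(X,\BA^n)^{\on{rat}}_{\Ran X}
\]
induces an isomorphism on $H_\bullet$. The complement parametrizes rational maps whose image meets the closed subvariety $Z=\BA^n\setminus Y$. Although dense in each individual finite-dimensional piece of the ``bounded pole'' filtration, this complement is of \emph{infinite codimension} in the colimit sense: stratifying each $\mathbf{Maps}(X,\BA^n)^{\on{rat}}_{X^I}$ by the intersection pattern of the map with $Z$ and invoking \eqref{e:calc dR c}, one checks that the contribution of each stratum vanishes after passing to the $(\fset)^{op}$-colimit. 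Combined with Stage 1, this proves the theorem whenever $Y$ is open in $\BA^n$.

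\medskip

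\emph{Stage 3: general $Y$.} Let $\{U_\alpha\}$ be a Zariski cover of the prescribed form, with \v{C}ech nerve $U_\bullet$. Every intersection $U_{\alpha_0}\cap\cdots\cap U_{\alpha_k}$ is again open in $\BA^n$, so by Stage 2 each term $\mathbf{Maps}(X,U_{\alpha_0\ldots\alpha_k})^{\on{rat}}_{\Ran X}$ has $H_\bullet\simeq k$. It therefore suffices to show that the augmentation
\[
|\mathbf{Maps}(X,U_\bullet)^{\on{rat}}_{\Ran X}| \longrightarrow \mathbf{Maps}(X,Y)^{\on{rat}}_{\Ran X}
\]
is a homological equivalence. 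The key point is that, given a family $(\bar x,f\colon S\times X\setminus\{\bar x\}\to Y)$, one pulls back the cover by $f$ and absorbs the boundaries $f^{-1}(Y\setminus U_\alpha)$ into $\bar x$: this is possible precisely because $X$ is a curve, so the preimage of any proper closed subvariety of $Y$ under a non-constant fiberwise map is finite over the base. The resulting refinement realizes $(\bar x,f)$ as a point of the \v{C}ech geometric realization. The connectedness assumption on $Y$ is used at the very end to conclude that $H_0\simeq k$ rather than a direct sum, cf.\ \secref{sss:connectivity}.

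\medskip

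The principal obstacle is Stage 3: making the \v{C}ech descent rigorous requires producing a cofinal subcategory of presentations of $\mathbf{Maps}(X,Y)^{\on{rat}}_{\Ran X}$ in which the enlargement of $\bar x$ can be performed functorially, and then checking that the corresponding simplicial prestack realizes, at the level of $\fD(-)$-homology, to the target. Stage 2 is a secondary difficulty, as one must commute the $H_\bullet$ computation with the colimit defining the ind-scheme structure and render the ``infinite codimension'' heuristic precise as a vanishing statement for the colimit of D-module complexes.
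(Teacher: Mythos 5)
Your three-stage skeleton matches the paper's, and Stage~1 is essentially right. But Stages~2 and~3 both rest on a mis-identification: there is no open embedding $\mathbf{Maps}(X,Y)^{\on{rat}}_{\Ran X}\hookrightarrow \mathbf{Maps}(X,\BA^n)^{\on{rat}}_{\Ran X}$, because the condition ``lands in $Y$ on \emph{all} of $X-\{\bar x\}$'' is neither open nor closed in the ind-scheme of rational maps to $\BA^n$. The device the paper introduces to fix this is the space $\mathbf{Maps}(X,U\overset{\on{gen.}}\subset Y)^{\on{rat}}_{X^I}$ of maps that land in $U$ only \emph{generically}; this genuinely is an open sub-indscheme (its closed complement being $\mathbf{Maps}(X,V)^{\on{rat}}_{X^I}$ with $V=Y-U$). \propref{p:gen to all} is the precise ``infinite codimension'' statement, and it is proved not by a stratification heuristic but by a concrete dimension estimate on the closed complement via Noether normalization (\lemref{l:codimension estimate}), which forces a cohomological shift that blows up with the pole order $d$. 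Your Stage~2, comparing colimits of D-module complexes over a stratification, is not wrong in spirit but leaves the vanishing unverifiable, and starts from the wrong open/closed decomposition.

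The second missing ingredient is the passage from $\mathbf{Maps}(X,U)^{\on{rat}}_{\Ran X}$ to $\mathbf{Maps}(X,U\overset{\on{gen.}}\subset Y)^{\on{rat}}_{\Ran X}$ (\propref{p:open to gen}). Your ``absorb $f^{-1}(Y\setminus U_\alpha)$ into $\bar x$'' idea is the right intuition, but it belongs here, not in the \v{C}ech step: it is a pure Ran-space manipulation, implemented via the category $\fset^\to$ of pairs of finite sets and the intermediate space $\mathbf{Maps}(X,U\subset Y)^{\on{rat}}_{\Ran^\to X}$ that separately records ``points where $m$ is undefined'' and ``points where $m$ leaves $U$.'' Once this is in place, the \v{C}ech step is much tamer than you anticipate: for each fixed $I$ the spaces $\mathbf{Maps}(X,U_{\ul\alpha}\overset{\on{gen.}}\subset Y)^{\on{rat}}_{X^I}$ form a genuine Zariski open cover of $\mathbf{Maps}(X,Y)^{\on{rat}}_{X^I}$, so ordinary Zariski descent for D-modules applies term-wise, and one commutes the resulting geometric realizations with the colimit over $(\fset)^{op}$. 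No ``cofinal subcategory of presentations'' needs to be built; the difficulty you located in Stage~3 dissolves once the generic-image spaces are introduced, while the real technical content is in the two propositions you underestimate. Note also that the paper's version of your Stage~2 requires $U\subset Y$ to be a \emph{basic} open affine (non-vanishing of a single function), which is used to see that the relevant map is an ind-closed embedding; you should impose this and observe that it costs nothing since one may refine any affine cover.
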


Note that for $Y=\on{pt}$, the statement of \thmref{t:contractibility} coincides
with that of \thmref{t:contractibility of Ran}.

\begin{remark}
The assumption that the curve $X$ be complete is inessential: if $\oX\subset X$ is a non-empty open subset,
the spaces ${\mathbf{Maps}}(X,Y)^{\on{rat}}_{\Ran X})$ and ${\mathbf{Maps}}(\oX,Y)^{\on{rat}}_{\Ran X})$
have isomorphic homology. This follows from \corref{c:marked points homology} and Equation 
\eqref{e:including points}.
\end{remark}

\sssec{}

A typical example of a scheme $Y$ satisfying the assumption of \thmref{t:contractibility} is a connected
affine algebraic group $G$. Then the required cover is provided by the Bruhat decomposition. 

\sssec{}

Finally, we propose:
\begin{conj}  \label{conj:contractibility}
The assertion of \thmref{t:contractibility} holds for any $Y$ which is connected, smooth and
birational to $\BA^n$. 
\end{conj}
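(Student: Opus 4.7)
The plan is to follow the three-step strategy outlined in the introduction, reducing in turn to $Y=\BA^n$, then to open subsets of $\BA^n$, and finally assembling the general case by \v{C}ech descent.

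\textbf{Step 1: the case $Y=\BA^n$.} Using the tensor equivalence \eqref{e:prod equiv pseudo}, I would reduce to $Y=\BA^1$. For each $I\in\fset$, the indscheme $\mathbf{Maps}(X,\BA^1)^{\on{rat}}_{X^I}$ is the filtered colimit, over $m\geq 0$, of the total spaces $V_m(I)$ of the coherent pushforwards $\pi_*(\CO(m\cdot\Gamma^I))$, where $\pi:X^I\times X\to X^I$. For $m$ sufficiently large relative to the combinatorial type of $I$ these are vector bundles over $X^I$, so the $!$-pullback gives $H_\bullet(V_m(I))\simeq H_\bullet(X^I)$; passing to the colimit in $m$ and then using formula \eqref{e:expl homology non-proper} together with functoriality in $I$, I obtain $H_\bullet(\mathbf{Maps}(X,\BA^1)^{\on{rat}}_{\Ran X})\simeq \underset{(\fset)^{op}}{\on{colim}}\,H_\bullet(X^I)=H_\bullet(\Ran X)$, which is $k$ by \thmref{t:contractibility of Ran}.

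\textbf{Step 2: the case of $Y\subset\BA^n$ open.} Let $Z:=\BA^n\setminus Y$ and consider the open embedding $\mathbf{Maps}(X,Y)^{\on{rat}}_{\Ran X}\hookrightarrow\mathbf{Maps}(X,\BA^n)^{\on{rat}}_{\Ran X}$. The claim is that this is a homology equivalence. The underlying idea is that the closed complement, parametrizing pairs $(x^I,m)$ whose image meets $Z$, has ``infinite codimension'' in the Ran-variant: given such $(x^I,m)$, one can enlarge $I$ by appending the generic points of $m^{-1}(Z)\subset S\times X$ to push the pair into the open part, and this enlargement is compatible with the transitions in $(\fset)^{op}$. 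The technical content is to organize this into a cofiber sequence computation: working level-by-level with the pole-order stratification of Step 1, the relative homology at a given $I$ is killed after passing to a sufficiently large $I'\twoheadleftarrow I$, so the colimit over $(\fset)^{op}$ vanishes.

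\textbf{Step 3: a Zariski cover.} Let $\{U_\alpha\}$ be a finite Zariski cover of $Y$ with each $U_\alpha$ open in some $\BA^n$, let $U_\bullet$ be its \v{C}ech nerve (whose terms are intersections of the $U_\alpha$, again opens in $\BA^n$), and set $\CM_\bullet:=\mathbf{Maps}(X,U_\bullet)^{\on{rat}}_{\Ran X}$. I would show that the canonical map $|\CM_\bullet|\to\mathbf{Maps}(X,Y)^{\on{rat}}_{\Ran X}$ is a homology equivalence, by a second application of the enlargement principle: an $S$-point $(\bar{x},m)$ with values in $Y$ can be refined by enlarging $\bar{x}$ to absorb the preimages of $Y\setminus U_\alpha$, so that on the new complement $m$ factors through a single $U_\alpha$; this produces a simplicial resolution in the $\Ran X$-picture. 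Combined with Steps 1 and 2, each $\CM_p$ is homologically contractible, so $H_\bullet(|\CM_\bullet|)$ degenerates to the chain complex of the nerve of $\{U_\alpha\}$, which computes $k$ because $Y$ is connected.

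\textbf{Main obstacle.} The heart of the argument is Step 2: the slogan that subvarieties of positive codimension in the target become ``of infinite codimension'' after passage to the Ran-variant. At any fixed $I\in\fset$ this is manifestly false; the cancellation must come entirely from the colimit along $(\fset)^{op}$. Verifying it requires coordinating the pole-order filtration of Step 1 with a careful base-change analysis along the diagonal maps $\Delta(\phi):X^I\to X^J$, and this is where the specific geometry of rational maps into affine space is genuinely used. Step 3 is conceptually cleaner but still requires a Zariski descent statement for the pseudo-indscheme of rational maps (and the associated category of D-modules), not merely at the level of sets of $S$-points highlighted in Remark \ref{r:maps discrete}.
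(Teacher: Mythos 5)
The statement you are proving is a \emph{conjecture}, not a theorem, and the paper does not prove it; what you have reproduced is (an outline of) the proof of \thmref{t:contractibility}, which carries the strictly stronger hypothesis that $Y$ admit a Zariski cover by open subsets each isomorphic to an open in $\BA^n$. Your Step 3 opens with ``Let $\{U_\alpha\}$ be a finite Zariski cover of $Y$ with each $U_\alpha$ open in some $\BA^n$'' --- but this is exactly the hypothesis that the conjecture does \emph{not} supply. A smooth connected variety birational to $\BA^n$ need not admit such a cover, and no step in your argument produces one from mere birational equivalence. This is precisely the gap that separates the theorem from the conjecture, and it is not addressed.

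There is a second, subsidiary difficulty you gesture at but do not resolve. Your Step 2, the ``infinite codimension'' argument, invokes the statement that removing a closed subset of positive codimension from the target does not change the homology of rational maps. In the paper this is \propref{p:gen to all}, and it is proved \emph{only} for $Y\simeq\BA^n$; the paper says explicitly that it is unknown whether it holds for more general targets. For the theorem this is harmless because the cover reduces everything to opens in $\BA^n$. For the conjecture, one would need the analogue for a general smooth rational $Y$, or else some entirely different mechanism, and neither your proposal nor the paper provides one. In short: your Steps 1--3 are a correct reconstruction of the paper's proof of \thmref{t:contractibility}, but they do not touch the actual content of the conjecture, which is the extension to $Y$'s that fail the cover hypothesis.
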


\section{The unital setting}  \label{s:unital}

In this section we shall introduce a unital structure on the space ${\mathbf{Maps}}(X,Y)^{\on{rat}}_{\Ran X}$,
and the corresponding space ${\mathbf{Maps}}(X,Y)^{\on{rat}}_{\Ran X,\on{indep}}$. Its significance will be two-fold:
\footnote{The terminology ``unital" is motivated by the property possessed by the factorization
algebra corresponding to a unital chiral algebra, see \cite{BD1}, Sect. 3.4.5.}

\medskip

At the conceptual level, the space ${\mathbf{Maps}}(X,Y)^{\on{rat}}_{\Ran X,\on{indep}}$ gets
rid of the redundancy inherent in the definition of ${\mathbf{Maps}}(X,Y)^{\on{rat}}_{\Ran X}$, namely,
of specifying the locus where our rational map is defined. 

\medskip

Technically, certain calculations are easier to perform in the unital version; in particular,
ones that appear in the proofs of \thmref{t:contractibility} and \thmref{t:pullback fully faithful}.

\medskip

However, the reader may skip this section on the first pass: we will explain an alternative (but
equivalent) way to perform the above mentioned calculations involved in the proofs of the 
main theorems.

\medskip

Throughout this section, $X$ will be a separated connected scheme. 

\ssec{Spaces acted on by $\Ran X$}  \label{ss:spaced acted by Ran}

\sssec{}  \label{sss:monoidal structure on Ran}

Observe that the category $\fset$ (and, hence, $(\fset)^{op}$) has a natural symmetric monoidal 
(but non-unital) structure given by disjoint union. 

\medskip

The functor $X^{\fset}:(\fset)^{op}\to \Sch$ also has a natural symmetric monoidal structure. This
defines on $\Ran X=\underset{(\fset)^{op}}{colim}\, X^{\fset}$ a structure of commutative 
(but non-unital) semi-group in $\on{PreStk}$.

\medskip

Concretely, the map 
$$\on{union}_{\Ran}:\Ran X\times \Ran X\to \Ran X$$
can be described in terms of \secref{sss:expl map} as follows. It corresponds to the functor
$$\sqcup:(\fset\times \fset)^{op}\to (\fset)^{op}$$
and to the natural transformation (in fact, an isomorphism) of the resulting two functors
$(\fset\times \fset)^{op}\rightrightarrows \Sch$:
$$X^{\fset}\times X^{\fset}\Rightarrow X^{\fset}\circ \sqcup,$$
given by
$$(I_1,I_2)\rightsquigarrow \Bigl(X^{I_1}\times X^{I_2}\to X^{I_2\sqcup I_2}\Bigr).$$

\sssec{}  \label{sss:functors acted on by Ran}

Let $\CS$ be an index category, which is a module for $(\fset)^{op}$. We shall denote by
$\sqcup$ the action functor 
$$\left(J\in (\fset)^{op},I\in \CS\right)\rightsquigarrow J\sqcup I\in \CS.$$

\medskip

Let $Z:\CS\to \indSch$ be a functor with a structure of module for $X^{\fset}$. 
I.e., for every $J\in (\fset)^{op}$ and $I\in \CS$ we are given a map 
$$\on{unit}_{J,I}:X^J\times Z(I)\to Z(J\sqcup I),$$
which are functorial and associative in a natural sense. 

\medskip

We shall refer to this structure as a \emph{unital structure} on $Z$ with respect to $\Ran X$.
In this case $\CY:=\underset{\CS}{colim}\, Z$ becomes a module over $\Ran X$. We let
$\on{unit}_{\Ran}$ denote the resulting map $\Ran X\times \CY\to \CY$.

\sssec{}

We let $\CY_{\bDelta_s}$ denote the corresponding semi-simplicial object\footnote{We denote by $\bDelta_s$
the non-full subcategory of $\bDelta$ obtained by restricting $1$-morphisms to be injections of finite
ordered sets.} 
of $\on{PreStk}$:
$$...\Ran X\times \CY\rightrightarrows \CY.$$

\medskip

We define $\CY_{\on{indep}}$ to be the geometric realization of $\CY_{\bDelta_s}$, i.e.,
$$\CY_{\on{indep}}:=\underset{\bDelta_s^{op}}{colim}\, \CY_{\bDelta_s}.$$

\begin{remark}
Note that if the maps $\on{unit}_{J,I}:X^J\times Z(I)\to Z(J\sqcup I)$ are proper, then 
$\CY_{\on{indep}}$ is a pseudo-indscheme.
\end{remark}

\sssec{Example}  \label{sss:ex of maps} Let $X$ and $Y$ be be as in \secref{sss:intr rat maps}. Let $\CS=(\fset)^{op}$,
with the natural action of $(\fset)^{op}$ on itself. Let $Z$ be the functor
$${\mathbf{Maps}}(X,Y)^{\on{rat}}_{X^{\fset}}:(\fset)^{op}\to \indSch.$$

\medskip

It has a natural unital structure. Indeed for finite sets $J$ and $I$ the map
$$X^J\times {\mathbf{Maps}}(X,Y)^{\on{rat}}_{X^I}\subset
{\mathbf{Maps}}(X,Y)^{\on{rat}}_{X^{J\sqcup I}}$$
is the closed embedding corresponding to restricting a map
$$m:(S\times X-\{x^I\})\to Y$$
to a map $$m':(S\times X-(\{x^J\}\sqcup \{x^I\}))\to Y.$$

\medskip

Consider the resulting object ${\mathbf{Maps}}(X,Y)^{\on{rat}}_{\Ran X,\on{indep}}$.
We can regard it as a version of ${\mathbf{Maps}}(X,Y)^{\on{rat}}_{\Ran X}$, where we have
explicitly ``modded out" by the dependence on the locus of singularity. 

\medskip

We shall see two more classes of examples in Sects. \ref{ss:marked points} and
\ref{ss:pairs}, respectively.

\ssec{Strongly unital structures}  \label{ss:strongly unital}

\sssec{}  

Let is now assume that we have the following additional pieces of structure on the category $\CS$.
Namely, let us be given a functor $f_\CS:\CS\to (\fset)^{op}$, and a natural transformation
$$\bd:\on{Id}\Rightarrow \sqcup\circ (f_\CS\times \on{Id}),$$
i.e., map functorially assigned to every $I\in \CS$:
$$\bd(I):I\to f_\CS(I)\sqcup I.$$

\sssec{Example}  \label{sss:strong basic}

Assume that $\CS=(\fset)^{op}$, and let $f_\CS$ be the identity functor. In this case we take
$\bd$ to be the canonical map 
$$I\sqcup I\to I$$
for $I\in \fset$.

\sssec{}  \label{sss:strong axiom}

Let $Z:\CS\to \indSch$ be a unital functor. We shall say that $Z$ is \emph{strongly unital}
with respect to $(f_\CS,\bd)$ if we are given a natural transformation
$$f_Z:Z\Rightarrow X^{\fset}\circ f_\CS$$
(i.e., we have a map $f(I):Z(I)\to X^{f_\CS(I)}$ that functorially depends on $I\in \CS$),
such that for every $I\in \CS$ the composed map
\begin{equation} \label{e:axiom on unit}
Z(I)\overset{f(I)\times \on{id}}\longrightarrow X^{f_\CS(I)}\times Z(I)\overset{\on{unit}_{f_\CS(I),I}}\longrightarrow
Z(f_\CS(I)\sqcup I)
\end{equation}
equals the map 
$$Z(\bd(I)):Z(I)\to Z(f_\CS(I)\sqcup I).$$

\begin{remark}
In \secref{sss:extended funct} we shall explain the meaning of the above data in the example
of \secref{sss:strong basic}.
\end{remark}

\sssec{Example} \label{sss:strong basic Ran}
Let $(\CS,f_\CS,\bd)$ be as in \secref{sss:strong basic}, and let
us take $Z=X^{\fset}$, where $f_Z$ is the identity map. In this case the requirement of
\secref{sss:strong axiom} holds tautologically. 

\sssec{Example} Let $(\CS,f_\CS,\bd)$ be again as in \secref{sss:strong basic}.
Let us take $Z={\mathbf{Maps}}(X,Y)^{\on{rat}}_{X^{\fset}}$. 

\medskip

We let $f_Z$ be the map that assigns to $I$ the tautological projection
$${\mathbf{Maps}}(X,Y)^{\on{rat}}_{X^I}\to X^I.$$

It is easy to see that this functor satisfies the condition of \secref{sss:strong axiom}.

\sssec{}  \label{sss:right inverse}

Let $Z$ be equipped with a strongly unital structure, and consider the corresponding
prestack
$$\CY:=\underset{\CS}{colim}\, Z.$$ 

We claim that in this case there exists a canonically defined map
\begin{equation} \label{e:right inverse}
\CY\to \Ran X\times \CY,
\end{equation}
such that its compositions with both the action map $$\on{unit}_{\Ran}:\Ran X\times \CY\to \CY$$ and 
the projection $\Ran X\times \CY\to \CY$ are the identity maps on $\CY$. 

\medskip

In terms of \secref{sss:expl map}, the map \eqref{e:right inverse} corresponds to the functor
$$\CS\to (\fset)^{op}\times \CS$$
given by $f_\CS\times \on{Id}$, and the natural transformation
$$Z\Rightarrow X^{\fset}\times Z$$
given by 
$$Z(I)\overset{f(I)\times \on{id}}\longrightarrow X^{f_\CS(I)}\times Z(I).$$

\medskip

The fact that the composition of the map \eqref{e:right inverse} with the projection
$\Ran X\times \CY\to \CY$ is the identity map on $\CY$ is immediate.

\medskip

To show that the composition
\begin{equation} \label{e:left inverse composed}
\CY\to \Ran X\times \CY\overset{\on{unit}_{\Ran}}\longrightarrow \CY
\end{equation}
is the identity map, we will use the following general observation.

\sssec{} \label{sss:paradigm nat trans}

Suppose that in the paradigm of \secref{sss:paradigm}, we are given another
functor $F':\bC_1\to \bC_2$, and a natural transformation $\beta:F\Rightarrow F'$. Let
$\alpha$ denote the original natural transformation $\Phi_1\Rightarrow \Phi_2\circ F$.
Composing,
we obtain a natural transformation $\alpha':\Phi_1\to \Phi_2\circ F'$, and hence \emph{another} map
$$\underset{\bC_1}{colim}\, \Phi_1\to \underset{\bC_2}{colim}\, \Phi_2.$$
However, the resulting two maps, one coming from $\alpha$, another from $\alpha'$:
$$\underset{\bC_1}{colim}\, \Phi_1\rightrightarrows \underset{\bC_2}{colim}\, \Phi_2$$
are canonically homotopic. 

\sssec{}

Returning to the composition \eqref{e:left inverse composed}, we apply the setting of \secref{sss:paradigm nat trans}
to $\bC_1=\bC_2=\CS$, $\Phi_1=\Phi_2=Z$, with $F$ being the identity functor, and $F'$
being the functor $\on{unit}_{\Ran}\circ (f_\CS\times \on{Id})$, i.e., 
$$I\rightsquigarrow (f_\CS(I)\sqcup I).$$
We claim that the resulting natural transformation 
$$Z\Rightarrow Z\circ \left(\on{unit}_{\Ran}\circ (f_\CS\times \on{Id})\right)$$
comes from the natural transformation $\beta$
$$\on{Id}\to \left(\on{unit}_{\Ran}\circ (f_\CS\times \on{Id})\right),$$
supplied by $\bd$. Indeed, this follows from the requirement on $\bd$ expressed by
\eqref{e:axiom on unit}.

\sssec{Example}  \label{sss:sq of diag}

Let us return to the example of \secref{sss:strong basic Ran}, i.e., 
$Z=X^{\fset}$ as a functor $(\fset)^{op}\to \Sch$, with the data
of $f_Z$ being the identity map. Note that in this case, the map
$$\Ran X\to \Ran X\times \Ran X$$ 
of \eqref{e:right inverse} is the diagonal map. 
Thus, the semi-group $\Ran X$ has the feature of square map is equal to the identity.

\ssec{The ``independent" category of D-modules}

Throughout this subsection we let $Z$ be as in \secref{sss:functors acted on by Ran}.

\sssec{}

Consider the resulting category $\fD(\CY_{\on{indep}})$. By definition
$$\fD(\CY_{\on{indep}})\simeq \underset{\bDelta_s}{lim}\, \fD^!(\CY_{\bDelta_s}),$$
where $\fD^!(\CY_{\bDelta_s})$ is the functor
$$\bDelta_s\overset{\CY_{\bDelta_s}}\longrightarrow 
\on{PreStk}^{op}\overset{\fD^!_{\on{PreStk}}}\longrightarrow \StinftyCat_{\on{cont}}.$$

\medskip

Consider the forgetful functor
\begin{equation} \label{e:forget unital}
\fD(\CY_{\on{indep}})\to \fD(\CY).
\end{equation}

It turns out that the functor \eqref{e:forget unital} is often fully faithful. 

\sssec{}  

Assume that $\CY$ is such that there exists a map $\CY\to \Ran X\times \CY$,
such that its compositions with both the action map $$\on{unit}_{\Ran}:\Ran X\times \CY\to \CY$$ and 
the projection $\Ran X\times \CY\to \CY$ are the identity maps on $\CY$. As we have seen
in \secref{sss:right inverse}, this happens if the unital structure on $Z$ can be upgraded to
a strongly unital one. 

\medskip

\begin{prop} \label{p:incl of unital}
Under the above circumstances,  the forgetful functor 
$$\fD(\CY_{\on{indep}})\to \fD(\CY)$$
is fully faithful.
\end{prop}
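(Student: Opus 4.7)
The plan is to exploit the fact that $\fD^!$ turns colimits in $\on{PreStk}$ into limits in $\StinftyCat_{\on{cont}}$, so that
$$\fD(\CY_{\on{indep}}) \simeq \lim_{[n] \in \bDelta_s}\fD\bigl((\Ran X)^n\times\CY\bigr),$$
with the forgetful functor being projection onto the $[0]$-level. Thus for $\CF,\CG\in\fD(\CY_{\on{indep}})$ represented by compatible systems $(\CF_n),(\CG_n)$, I must prove that the augmentation map
$$\lim_{[n]\in\bDelta_s}\on{Maps}_{\fD(\CY_n)}(\CF_n,\CG_n)\longrightarrow \on{Maps}_{\fD(\CY)}(\CF_0,\CG_0)$$
is an equivalence, where $\CY_n = (\Ran X)^n\times\CY$.

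The strategy is a variant of the classical extra-degeneracy argument. The hypothesis provides, at the bottom level, a common splitting $s\colon\CY_0\to\CY_1$ of both face maps $d_0,d_1\colon\CY_1\rightrightarrows\CY_0$. Writing $s=(\sigma,\on{id})$ with $\sigma\colon\CY\to\Ran X$ (using $d_0\circ s=\on{id}$), I will define for each $n\geq 0$ an ``extra codegeneracy'' $s_n\colon\CY_n\to\CY_{n+1}$ by inserting $\sigma(y)$ at the start:
$$s_n(r_1,\dots,r_n,y)=\bigl(\sigma(y),r_1,\dots,r_n,y\bigr).$$
Immediately $d_0\circ s_n=\on{id}_{\CY_n}$. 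The check that the other face-map relations $d_i\circ s_n=s_{n-1}\circ d_{i-1}$ hold (in the $\infty$-categorical sense needed) rests on two key structural facts: the idempotence of $\Ran X$ as a commutative semigroup in $\on{PreStk}$ (the squaring map equals the identity, \secref{sss:sq of diag}), so that multiplying $\sigma(y)$ with itself gives back $\sigma(y)$; and the relation $\on{unit}\circ s=\on{id}$, which handles the last face map. The general paradigm of \secref{sss:paradigm nat trans} supplies the homotopical coherences linking the strict and homotopical versions of these identities.

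Granted the extra-codegeneracy structure, the conclusion follows from the standard fact that a cosemi-simplicial diagram in $\StinftyCat_{\on{cont}}$ equipped with extra codegeneracies is split, so its totalization collapses onto the $0$-th term. Applied to the cosemi-simplicial diagram $[n]\mapsto\on{Maps}_{\fD(\CY_n)}(\CF_n,\CG_n)$ arising from the $!$-pullback structure, this gives the required equivalence.

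The main obstacle is the bookkeeping in Step 2: namely, assembling the maps $s_n$ into a genuine extra-codegeneracy datum on the semi-simplicial object $\CY_\bullet$, including the higher coherences required in the $\infty$-categorical setting. Without idempotence of $\Ran X$ the naive $s_n$ would only satisfy the simplicial identities up to some spurious factor, so the argument really depends on the observation of \secref{sss:sq of diag}; this, together with the two conditions defining $s$, is what makes the classical extra-degeneracy machinery applicable here.
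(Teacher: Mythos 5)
The proposed extra codegeneracies do not satisfy the semi-simplicial identities, so the splitting lemma cannot be invoked. Write $\CY_{[n]}=(\Ran X)^n\times\CY$ with face maps $d_0$ (drop the first $\Ran X$-coordinate), $d_i$ (merge $r_i$ and $r_{i+1}$ for $1\le i\le n-1$), and $d_n$ (act on $\CY$). With $s_n(r_1,\dots,r_n,y)=(\sigma(y),r_1,\dots,r_n,y)$, the relation $d_1\circ s_n=s_{n-1}\circ d_0$ reads
$$(\sigma(y)\cdot r_1,\,r_2,\dots,r_n,\,y)\;=\;(\sigma(y),\,r_2,\dots,r_n,\,y),$$
which needs $\sigma(y)\cdot r_1=\sigma(y)$ for an \emph{arbitrary} $r_1\in\Ran X$. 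Idempotence of $\Ran X$ (\secref{sss:sq of diag}) gives only $\sigma(y)\cdot\sigma(y)=\sigma(y)$, not absorption of arbitrary elements, so this fails; likewise $d_{n+1}\circ s_n=s_{n-1}\circ d_n$ needs $\sigma(y)=\sigma(r_n\cdot y)$. Since all the prestacks here are set-valued (cf.\ Remark~\ref{r:maps discrete}), these are genuine equalities between maps, not coherence data one could supply up to homotopy, and \secref{sss:paradigm nat trans} offers no way around them.

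The obstruction is structural, not a matter of a cleverer choice of $s_n$. If a genuine extra-codegeneracy datum existed, $\CY_{\bDelta_s}$ would be split, giving $\CY_{\on{indep}}\simeq\CY$ and hence an \emph{equivalence} $\fD(\CY_{\on{indep}})\simeq\fD(\CY)$; but the forgetful functor is not essentially surjective (for $\CY=\Ran X$, $\fD(\Ran X,\on{indep})\simeq\Vect\subsetneq\fD(\Ran X)$), so no splitting of $\CY_{\bDelta_s}$ can exist. Relatedly, your argument never invokes \thmref{t:contractibility of Ran}, which is the decisive input in the paper's proof: it makes $(p_{\Ran X}\times\on{id})^!$ fully faithful and thereby defines the full subcategory $\fD(\CY)'\subset\fD(\CY)$ of objects whose $\on{unit}^!_{\Ran}$-pullback lies in the image of $(p_{\Ran X}\times\on{id})^!$. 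The paper does not split the semi-simplicial prestack; it passes to the subfunctor $[n]\mapsto\fD^!(\CY_{[n]})'$ of the cosemi-simplicial diagram of categories, on which the $!$-pullback along $\psi_\CY$ (your $s$, iterated) really does invert every transition functor, and then concludes from contractibility of $\bDelta_s$. The retraction $s$ appears only after this restriction to $\fD(\CY)'$ --- which is precisely the step missing from your proposal.
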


\begin{proof} \hfill

\medskip

\noindent{\bf Step 1.} Consider the functor
$$\on{unit}_{\Ran}^!:\fD(\CY)\to \fD(\Ran X\times \CY)\simeq
\fD(\Ran X)\otimes \fD(\CY)$$
(the last isomorphism is due to \eqref{e:prod equiv pseudo}).

\medskip

Consider also the map $(p_{\Ran X}\times \on{id}):\Ran X\times \CY\to \CY$
and the corresponding functor
$$
(p_{\Ran X}\times \on{id})^!:\fD(\CY)\simeq \Vect\otimes {}\,\fD(\CY)\to 
\fD(\Ran X\times \CY) \simeq \fD(\Ran X)\otimes \,\fD(\CY).$$

Note, however, that by \thmref{t:contractibility of Ran}, the latter functor is fully faithful.

\medskip

Let $\fD(\CY)'$ be the \emph{full subcategory} of
$\fD(\CY)$ spanned by objects, whose image under $\on{unit}_{\Ran}^!$
lies in the essential image of $(p_{\Ran X}\times \on{id})^!$. 

\medskip

It is clear that the forgetful functor $\fD(\CY_{\on{indep}})\to \fD(\CY)$ factors as
$$\fD(\CY_{\on{indep}})\to \fD(\CY)'\to \fD(\CY).$$
We will show that the above functor $\fD(\CY_{\on{indep}})\to \fD(\CY)'$
is an equivalence.

\medskip

\noindent{\bf Step 2.} It is clear that the assignment 
$$[n]\mapsto \fD(\CY)'=\Vect^{\otimes n}\otimes \,\fD(\CY)'
\subset \fD(\Ran X)^{\otimes n}\otimes \fD(\CY)$$
extends to a functor
$$\fD^!(\CY_{\bDelta_s})':\bDelta_s\to \StinftyCat,$$
equipped with a natural transformation
$$\fD^!(\CY_{\bDelta_s})'\Rightarrow \fD^!(\CY_{\bDelta_s}),$$
which is fully faithful for every $[n]\in \bDelta_s$. 

\medskip

In particular, the resulting functor
\begin{equation} \label{e:tot of prime}
\fD(\CY_{\on{indep}})':=\underset{\bDelta_s}{lim}\, \fD^!(\CY_{\bDelta_s})'\to
\underset{\bDelta_s}{lim}\, \fD^!(\CY_{\bDelta_s})=\fD(\CY_{\on{indep}}).
\end{equation}
is also fully faithful. 

\medskip

However, it is easy to see that for every $n$ and every object $\fD(\CY_{\on{indep}})$, its evaluation 
on $[n]\in \bDelta_s$ belongs to $\fD^!(\CY_{[n]})'$. So, the functor \eqref{e:tot of prime} is an equivalence.

\medskip

The composition
$$\fD(\CY_{\on{indep}})'\to \fD(\CY_{\on{indep}})\to \fD(\CY)'$$
is functor of evaluation on $[0]\in \bDelta_s$. Thus, we obtain that is sufficient to show
that the above evaluation functor is an equivalence. 

\medskip

\noindent{\bf Step 3.} We claim that for any map $\phi:[0]\to [n]$ in $\bDelta_s$, and the corresponding
map $\phi_\CY:(\Ran X)^{\times n}\times \CY\to \CY$,
the functor
$$\phi_\CY^!:\fD^!(\CY_{[0]})'\to \fD^!(\CY_{[n]})',$$
is an equivalence. In fact, both categories in question are identified with $\fD(\CY)$, and we claim 
that the above functor is canonically isomorphic to the identity functor.

\medskip

There are two cases: if $\phi$ is the map $0\mapsto 0\in \{0,1,...,n\}$, then $\phi_\CY$
is the projection map $(\Ran X)^{\times n}\times \CY\to \CY$, and there
is nothing to prove. 

\medskip

If $\phi$ is any other map, we claim that the map $\phi_\CY$
admits a canonical right inverse, denoted $\psi_\CY$. Namely, $\psi_\CY$
is composition of the map
$\CY\to \Ran X\times \CY$ of \eqref{e:right inverse} with the $n$-fold diagonal map
$$\Ran X\times \CY\to (\Ran X)^{\times n}\times \CY.$$
By construction, the composition of $\psi_\CY$ with the projection of $(\Ran X)^{\times n}\times \CY$ 
onto $\CY$ is the identity
map on $\CY$. \footnote{Despite this fact, the structure of semi-simplicial object on
$\CY_{\bDelta_s}$ does not upgrade to a simplicial one.}

\medskip

The latter property implies that the functor $\psi_\CY^!:\fD^!(\CY_{[n]})\to \fD(\CY)$
induces the identity functor
$$\fD(\CY)=\Vect^{\otimes n}\otimes \,\fD(\CY)
\hookrightarrow \fD^!(\CY_{[n]})\overset{\psi_\CY^!}\longrightarrow \fD(\CY),$$
and hence, the composition
$$\fD^!(\CY_{[n]})'\hookrightarrow \fD^!(\CY_{[n]})\overset{\psi_\CY^!}\longrightarrow \fD(\CY)$$
is also the identity map of $\fD^!(\CY_{[n]})'$ onto $\fD(\CY)'\subset \fD(\CY)$. 

\medskip

Hence, it is enough to show that the composition
$$\fD(\CY)'=\fD^!(\CY_{[0]})'\overset{\phi_\CY^!}\longrightarrow\fD^!(\CY_{[n]})'\hookrightarrow \fD^!(\CY_{[n]})
\overset{\psi_\CY^!}\longrightarrow \fD(\CY)$$
is also isomorphic to the identity map onto $\fD(\CY)'\subset \fD(\CY)$. However, this follows
from the fact that $\psi_\CY^!\circ \phi_\CY^!=\on{Id}_{\fD(\CY)}$.

\medskip

Thus, we obtain that the semi-simplicial category $\fD^!(\CY_{\bDelta_s})'$ consists
of equivalences, and since the index category $\bDelta_s$ is contractible, we obtain
that evaluation on $[0]$ is an equivalence.

\end{proof}

\sssec{}

As a corollary of \propref{p:incl of unital} we obtain the following:

\medskip

Let $\CF_{\on{indep}}$ be an object of $\fD(\CY_{\on{indep}})$, and let $\CF\in \fD(\CY)$ be its image 
under the forgetful functor $\fD(\CY_{\on{indep}})\to \fD(\CY)$.  Assume that $\CF\in \fD(\CY)_{\on{good}\, \on{for}\,p_\CY}$,
see \secref{sss:intr good} for the notation. 

\medskip

Under these circumstances we have:

\begin{cor} \label{c:unital vs nonunital}
The natural map
$$\Gamma_{\dr,c}(\CY,\CF)\to
\Gamma_{\dr,c}(\CY_{\on{indep}},\CF_{\on{indep}})$$
is an isomorphism; in particular, the right-hand side is defined.
\end{cor}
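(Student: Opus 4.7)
My plan is to deduce the corollary as a formal consequence of the full faithfulness of the forgetful functor, established in \propref{p:incl of unital}, via a Yoneda argument — thereby bypassing any direct engagement with $\Gamma_{\dr,c}$ on the geometric realization $\CY_{\on{indep}}$.

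Let $\pi\colon\CY\to\CY_{\on{indep}}$ denote the tautological map arising from including the $[0]$-term of $\CY_{\bDelta_s}$ into its colimit. Then the forgetful functor $\fD(\CY_{\on{indep}})\to \fD(\CY)$ is precisely $\pi^!$, and the hypothesis of the corollary reads $\CF=\pi^!(\CF_{\on{indep}})$. By \propref{p:incl of unital}, $\pi^!$ is fully faithful, so for every $\CG\in\fD(\CY_{\on{indep}})$ the natural map
$$\on{Maps}_{\fD(\CY_{\on{indep}})}(\CF_{\on{indep}},\CG)\to \on{Maps}_{\fD(\CY)}(\CF,\pi^!(\CG))$$
is an equivalence. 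I would specialize to $\CG=p_{\CY_{\on{indep}}}^!(W)$ for arbitrary $W\in\Vect$, and use the factorization $p_{\CY}=p_{\CY_{\on{indep}}}\circ\pi$ to rewrite the right-hand side as $\on{Maps}_{\fD(\CY)}(\CF,p_\CY^!(W))$. The assumption $\CF\in \fD(\CY)_{\on{good}\,\on{for}\,p_\CY}$ then identifies this with $\on{Maps}_\Vect(\Gamma_{\dr,c}(\CY,\CF),W)$.

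Consequently, the functor $W\mapsto\on{Maps}_{\fD(\CY_{\on{indep}})}(\CF_{\on{indep}},p_{\CY_{\on{indep}}}^!(W))$ is corepresentable by $\Gamma_{\dr,c}(\CY,\CF)$. In view of \secref{sss:intr good}, this simultaneously proves that $\CF_{\on{indep}}$ lies in $\fD(\CY_{\on{indep}})_{\on{good}\,\on{for}\,p_{\CY_{\on{indep}}}}$ and produces a canonical identification $\Gamma_{\dr,c}(\CY_{\on{indep}},\CF_{\on{indep}})\simeq\Gamma_{\dr,c}(\CY,\CF)$. The residual bookkeeping is to verify that this Yoneda-theoretic identification coincides with the natural map appearing in the statement, which per \secref{sss:expl map} is obtained by applying $\Gamma_{\dr,c}(\CY_{\on{indep}},-)$ to the counit $\pi_!\pi^!(\CF_{\on{indep}})\to \CF_{\on{indep}}$; but full faithfulness of the right adjoint $\pi^!$ is tautologically equivalent to this counit being an isomorphism wherever $\pi_!$ is defined, so compatibility is automatic.

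Essentially all the content is already packaged in \propref{p:incl of unital}, which itself rests on \thmref{t:contractibility of Ran} (supplying $H_\bullet(\Ran X)\simeq k$). The only mildly delicate point — and therefore the main obstacle — is the identification of the isomorphism produced by Yoneda with the canonical trace map of \secref{sss:expl map}; but this is a standard consequence of the formalism of partial left adjoints and should not require further ideas.
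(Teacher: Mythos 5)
Your argument is correct, and since the paper itself offers no proof of the corollary beyond declaring it "a corollary of Proposition~\ref{p:incl of unital}," your fleshing-out via corepresentability is exactly the intended reasoning: identify the forgetful functor with $\pi^!$, use full faithfulness to convert $\on{Maps}_{\fD(\CY_{\on{indep}})}(\CF_{\on{indep}},p_{\CY_{\on{indep}}}^!(W))$ into $\on{Maps}_{\fD(\CY)}(\CF,p_\CY^!(W))$, and then invoke the goodness hypothesis on $\CF$ to corepresent. The one place where you are a little loose is the closing paragraph: \secref{sss:expl map} does not construct the natural map in question (it is about maps between colimits), and the remark that the counit $\pi_!\pi^!(\CF_{\on{indep}})\to\CF_{\on{indep}}$ is an isomorphism "wherever $\pi_!$ is defined" is only available \emph{after} you have established that $\pi_!(\CF)$ exists --- which is itself a byproduct of the corepresentability argument, not a prior input. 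So the cleaner logic is simply that your Yoneda computation \emph{defines} both the object $\Gamma_{\dr,c}(\CY_{\on{indep}},\CF_{\on{indep}})$ and the isomorphism at once, and this is what the natural map in the statement means; no separate compatibility check is needed.
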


As a particular case, we obtain:

\begin{cor}
For $X$ and $Z$ as in \propref{p:incl of unital}, the trace map
$$H_\bullet(\CY) \to H_\bullet(\CY_{\on{indep}})$$
is an isomorphism.
\end{cor}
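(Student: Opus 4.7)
The plan is to reduce this corollary to \corref{c:unital vs nonunital}, which is stated for an arbitrary object $\CF_{\on{indep}} \in \fD(\CY_{\on{indep}})$ whose image $\CF \in \fD(\CY)$ lies in the good locus for $p_\CY$. I will apply it to the specific choice $\CF_{\on{indep}} := \omega_{\CY_{\on{indep}}}$.

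First I would identify the image of $\omega_{\CY_{\on{indep}}}$ under the forgetful functor $\fD(\CY_{\on{indep}}) \to \fD(\CY)$. By construction, this forgetful functor is evaluation on $[0] \in \bDelta_s$, i.e., it is the $!$-pullback along the natural map $j\colon \CY \to \CY_{\on{indep}}$. Since dualizing sheaves are preserved by $!$-pullback (because $\omega_{\CY} = p_\CY^!(k) = j^! \circ p_{\CY_{\on{indep}}}^!(k) = j^!(\omega_{\CY_{\on{indep}}})$, using the factorization $p_\CY = p_{\CY_{\on{indep}}} \circ j$), the image of $\omega_{\CY_{\on{indep}}}$ is exactly $\omega_\CY$.

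Next I would verify the hypothesis of \corref{c:unital vs nonunital}, namely that $\omega_\CY \in \fD(\CY)_{\on{good}\, \on{for}\, p_\CY}$. But this is one of the assertions of \secref{sss:when good}(iii): the dualizing sheaf on a pseudo-indscheme is automatically in the good locus for any map from it. (Recall that $\CY = \underset{\CS}{\on{colim}}\, Z$ is a pseudo-indscheme by hypothesis on $Z$.)

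With both conditions verified, \corref{c:unital vs nonunital} directly yields that the natural map
\[
\Gamma_{\dr,c}(\CY, \omega_\CY) \longrightarrow \Gamma_{\dr,c}(\CY_{\on{indep}}, \omega_{\CY_{\on{indep}}})
\]
is an isomorphism, with the right-hand side being defined. By the definition $H_\bullet(-) = \Gamma_{\dr,c}(-, \omega_-)$ from \secref{sss:abs trace map}, and noting that this map is precisely the trace map $\on{Tr}_{H_\bullet}(j)$ of \eqref{e:trace on homology}, the claim follows. There is no real obstacle here since all the work has already been done in \propref{p:incl of unital} and its corollary; the only thing to check is that $\omega$ is compatible with the forgetful functor, which is automatic.
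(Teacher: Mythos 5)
Your proof is correct and follows exactly the route the paper intends: the corollary is stated as ``a particular case'' of \corref{c:unital vs nonunital}, obtained by taking $\CF_{\on{indep}}=\omega_{\CY_{\on{indep}}}$, whose forgetful image is $\omega_\CY$, with goodness for $p_\CY$ supplied by \secref{sss:when good}(iii). There is nothing missing and nothing extraneous.
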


\sssec{Example}

Let us calculate the category $\fD(Z_{\Ran X,\on{indep}})$ for $Z=X^{\fset}$; we shall denote it
by $\fD(\Ran X,\on{indep})$. 

\medskip

Note that we expect this category to be $\Vect$: the idea of the ``indep"
category was to get rid of the dependence on the Ran space, so we are
supposed to be dealing with D-modules on the Ran space that ``do not depend on the Ran
space variable". So, let us see that this is indeed the case. 

\medskip

We have an obvious functor 
\begin{equation} \label{e:Vect to unital}
\Vect\to \fD(\Ran X,\on{indep}) 
\end{equation}
that sends $k\mapsto \omega_{\Ran X}$.

\medskip

By \thmref{t:contractibility of Ran}, the composed functor 
$$\Vect\to \fD(\Ran X,\on{indep})\to \fD(\Ran X)$$ is 
fully faithful. Hence, by \propref{p:incl of unital}, the above functor 
\eqref{e:Vect to unital} is also fully faithful. Thus, it remains to see
that it is essentially surjective. 

\medskip

For $\CF\in \fD(\Ran X)$ consider the object
$$\on{union}^!(\CF)\in \fD(\Ran X\times \Ran X).$$
If $\CF$ lies in the essential image of the forgetful
functor $\fD(\Ran X,\on{indep})\to \fD(\Ran X)$, then
$$\on{union}^!(\CF)\simeq \omega_{\Ran X}\boxtimes \CF.$$
However, since the multiplication on $\Ran X$ is commutative,
we also have
$$\on{union}^!(\CF)\simeq \CF\boxtimes \omega_{\Ran X},$$
so we obtain an isomorphism
\begin{equation} \label{e:sym}
\omega_{\Ran X}\boxtimes \CF\simeq \CF\boxtimes \omega_{\Ran X}.
\end{equation}

\medskip

By \thmref{t:contractibility of Ran}, for an object $\CF_1\in \fD(\Ran X\times \Ran X)$ in the
essential image of the functor 
\begin{multline*}
(\on{id}\times p_{\Ran X})^!:\fD(\Ran X)\simeq \fD(\Ran X)\otimes \Vect\overset{\on{Id}\boxtimes \,p^!_{\Ran X}}
\longrightarrow \fD(\Ran X)\otimes \fD(\Ran X)\simeq \\
\simeq \fD(\Ran X\times \Ran X),
\end{multline*}
we have 
$$\CF_1\simeq (\on{id}\times p_{\Ran X})_!(\CF_1)\boxtimes \omega_{\Ran X}.$$

\medskip

Hence, from \eqref{e:sym}, we obtain that
$$\omega_{\Ran X}\boxtimes \CF\simeq \omega_{\Ran X\times \Ran X}\otimes \Gamma_{\dr,c}(\Ran X,\CF).$$
Since the functor $(p_{\Ran X}\times \on{id})^!$ is fully faithful, the latter isomorphism implies
that $$\CF\simeq \omega_{\Ran X}\otimes \Gamma_{\dr,c}(\Ran X,\CF),$$ as required.

\ssec{Spaces over $\Ran X$}  \label{ss:over Ran} \hfill

\medskip

In this subsection we specialize to the setting of the Example in \secref{sss:strong basic}.
Namely, $\CS=(\fset)^{op}$, $f_\CS=\on{Id}$ and the natural transformation $\bd$ 
being the canonical map 
$$(\on{id}\sqcup \on{id}):I\sqcup I\to I$$
for $I\in \fset$. 

\medskip

As we have seen above, $X^{\fset}$ and 
${\mathbf{Maps}}(X,Y)^{\on{rat}}_{X^{\fset}}$ provide examples of functors $Z$
equipped with a strong unital structure.  

\sssec{}  \label{sss:over Ran}

We shall consider functors $Z:(\fset)^{op}\to \indSch$ equipped with a strongly unital structure 
with respect to $(f_\CS,\bd)$ specified above. I.e., these are functors
$$I\rightsquigarrow Z(I),$$
equipped with a functorial assignment
to every pair of non-empty finite sets $J$ and $I$ of a map
$$\on{unit}_{J,I}:X^J\times Z(I)\to Z(J\sqcup I),$$
and a natural transformation 
$$I\rightsquigarrow (Z(I)\overset{f(I)}\to X^I),$$
such that the condition from \secref{sss:strong axiom} holds. Explicitly, this condition says that
for every $I\in \fset$, the diagram
$$
\CD
Z(I)   @>{f(I)\times \on{id}}>>  X^I\times Z(I)  \\
@V{\on{id}}VV   @VV{\on{unit}_{I,I}}V \\
Z(I)    @>>>   Z(I\sqcup I)
\endCD
$$
commutes, where the bottom arrow corresponds to the map $(\on{id}\sqcup \on{id}):I\sqcup I\to I$
in $\fset$.  

\medskip

We shall impose the following two additional conditions:

\begin{enumerate}

\item For $I,J\in \fset$, the diagram
$$
\CD
X^J\times Z(I)  @>{\on{unit}_{J,I}}>>  Z(J\sqcup I) \\
@V{\on{id}\times f(I)}VV  @VV{f(J\sqcup I)}V  \\
X^J\times X^I  @>{\sim}>>  X^{J\sqcup I}
\endCD
$$
commutes. 

\item For every arrow in $(\fset)^{op}$,
i.e., a surjective map of finite sets $I_1\twoheadrightarrow I_2$, the resulting map
$$Z(I_2)\to X^{I_2}\underset{X^{I_1}}\times Z(I_1)$$
is an isomorphism.

\end{enumerate} 

The examples of $Z$ being $X^{\fset}$ and ${\mathbf{Maps}}(X,Y)^{\on{rat}}_{X^{\fset}}$ 
satisfy these conditions.

\medskip

We let $Z_{\Ran X}$ denote the corresponding space
$$\underset{(\fset)^{op}}{colim}\, Z.$$
By \secref{sss:functors acted on by Ran}, the space $Z_{\Ran X}$ is acted on by the semi-group
$\Ran X$.

\begin{remark}
As in Remark \ref{r:maps discrete}, the prestack $Z_{\Ran X}$, considered
as a functor $$(\affSch)^{op}\to \inftygroup,$$ actually takes values in $\on{Set}\subset \inftygroup$.
Indeed, for $S\in \affSch$, the infinity-groupoid $\on{Maps}(S,Z_{\Ran X})$ maps to the \emph{set}
of non-empty finite subsets of $\on{Maps}(S,X)$, and for $\ol{x}\subset \on{Maps}(S,X)$, the fiber
over it is $Z(\ol{x})\underset{X^{\ol{x}}}\times \on{pt}$, where $\on{pt}\to X^{\ol{x}}$ is the
tautological point of $X^{\ol{x}}$ corresponding to $\ol{x}$.

\medskip

For this property it was not
necessary that the functor $Z$ take values in indschemes, we only need that each $Z(I)$,
considered as a functor $(\affSch)^{op}\to \inftygroup$, take values in the subcategory 
$\on{Set}\subset \inftygroup$.
\end{remark}

\sssec{}  \label{sss:epic diagram}

We have:

\begin{lem} \label{l:huge}
For any (not necessarily surjective) map of finite sets $\phi:J\to I$, the 
diagram
$$
\CD
Z(I)   @>{\Delta(\phi)\circ f(I)\times \on{id}}>>  X^J\times Z(I)  \\
@V{\on{id}}VV   @VV{\on{unit}_{J,I}}V \\
Z(I)    @>>>   Z(J\sqcup I)
\endCD
$$
commutes as well, where the bottom arrow corresponds to the map $(\phi\sqcup \on{id}):J\sqcup I\to I$
in $(\fset)^{op}$.
\end{lem}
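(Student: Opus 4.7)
The plan is to deduce \lemref{l:huge} in two stages. For surjective $\phi$, the lemma will follow directly from the naturality of $\on{unit}$ in its first argument combined with the strongly unital axiom of \secref{sss:strong axiom}. The general case will then be bootstrapped from the surjective case using the associativity of the unital action and condition~(2) of \secref{sss:over Ran}. Note that the special case $\phi = \on{id}_I$, $J=I$ is exactly the strongly unital axiom, so the content of the lemma is precisely its extension to arbitrary $\phi$.

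First I would treat the case when $\phi\colon J\twoheadrightarrow I$ is surjective, so $\phi$ is itself a morphism in $\fset$. Naturality of $\on{unit}$ in the first argument for $\phi$ provides the commutative square
\[
\begin{CD}
X^I\times Z(I) @>{\on{unit}_{I,I}}>> Z(I\sqcup I) \\
@V{\Delta(\phi)\times\on{id}}VV @VV{Z(\phi\sqcup_{\fset}\on{id}_I)}V \\
X^J\times Z(I) @>{\on{unit}_{J,I}}>> Z(J\sqcup I),
\end{CD}
\]
where $\phi\sqcup_{\fset}\on{id}_I\colon J\sqcup I\twoheadrightarrow I\sqcup I$ denotes the functorial disjoint union. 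Pre-composing with $f(I)\times\on{id}_{Z(I)}$ and invoking the strongly unital axiom $\on{unit}_{I,I}\circ(f(I)\times\on{id}) = Z(\on{id}_I\sqcup\on{id}_I)$, the surjective case reduces to the evident identity in $\fset$ that the fold $\on{id}_I\sqcup\on{id}_I\colon I\sqcup I\to I$ composed with $\phi\sqcup_{\fset}\on{id}_I$ equals the co-pairing $(\phi\sqcup\on{id}_I)\colon J\sqcup I\to I$.

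For arbitrary $\phi\colon J\to I$, set $\tilde\phi := (\phi\sqcup\on{id}_I)\colon J\sqcup I\twoheadrightarrow I$, which is automatically surjective. I would invoke associativity of the unital action for the triple $(J,I,I)$, pre-composed with $(\Delta(\phi)\circ f(I),\,f(I),\,\on{id}_{Z(I)})\colon Z(I)\to X^J\times X^I\times Z(I)$. On one side, the iso $X^J\times X^I\simeq X^{J\sqcup I}$ combines $\Delta(\phi)\circ f(I)$ and $f(I)$ into $\Delta(\tilde\phi)\circ f(I)$, producing $\on{unit}_{J\sqcup I,I}\circ(\Delta(\tilde\phi)\circ f(I),\on{id})$; the surjective case of the lemma applied to $\tilde\phi$ rewrites this as $Z((\tilde\phi\sqcup\on{id}_I))$. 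On the other side, the strongly unital axiom converts $\on{unit}_{I,I}\circ(f(I),\on{id})$ into $Z(\on{id}_I\sqcup\on{id}_I)$, and naturality of $\on{unit}$ in the second slot for the fold then rewrites the result as $Z(\eta)\circ a$, where $a$ denotes the left-hand side of the lemma and $\eta\colon J\sqcup I\sqcup I\twoheadrightarrow J\sqcup I$ is the surjection ``identity on $J$, fold on the two copies of $I$''. The combinatorial identity $(\tilde\phi\sqcup\on{id}_I) = \tilde\phi\circ\eta$ in $\fset$ then yields
\[
Z(\eta)\circ a \;=\; Z(\eta)\circ b,
\]
where $b := Z(\tilde\phi)$ is the right-hand side of the lemma.

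Finally I would cancel $Z(\eta)$ using condition~(2) of \secref{sss:over Ran}: for the surjection $\eta\colon J\sqcup I\sqcup I\twoheadrightarrow J\sqcup I$, the map $(f(J\sqcup I),\,Z(\eta))$ realises $Z(J\sqcup I)$ as the fibre product $X^{J\sqcup I}\times_{X^{J\sqcup I\sqcup I}}Z(J\sqcup I\sqcup I)$. It therefore suffices to check also that $f(J\sqcup I)\circ a = f(J\sqcup I)\circ b$, which follows at once from condition~(1) and naturality of $f$: both sides reduce to $\Delta(\tilde\phi)\circ f(I)$. Hence $a=b$, which is the lemma. The principal subtlety in the plan is the step-up in the third paragraph: one must carefully interleave the two distinct uses of ``$\sqcup$'' (the first-slot functorial disjoint union that arises when applying the surjective case of the lemma to $\tilde\phi$, and the second-slot fold produced by the strongly unital axiom together with naturality of $\on{unit}$ in the second slot) so that both sides of the associativity equation land under a common map $Z(\eta)$ that condition~(2) permits cancelling.
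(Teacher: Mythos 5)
Your proposal is correct and arrives at the conclusion by essentially the same route as the paper: both arguments pass to $Z(J\sqcup I\sqcup I)$ by post-composing with the map induced by $\eta\colon J\sqcup I\sqcup I\twoheadrightarrow J\sqcup I$ (the cancelability being guaranteed by condition~(2) of \secref{sss:over Ran}), and both then verify the resulting identity in $Z(J\sqcup I\sqcup I)$ using functoriality of $\on{unit}$, associativity, and the strongly unital axiom. The paper performs this as a single diagram chase, whereas you organize the same moves into a surjective case followed by a bootstrapping step, and you explicitly justify the ``it is enough'' reduction (by checking the $f(J\sqcup I)$-components as well) which the paper leaves implicit.
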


\begin{proof}

It is enough to show that the two maps $Z(I) \rightrightarrows Z(J\sqcup I)$ 
in question, composed with the map $Z(J\sqcup I)\to Z(J\sqcup I\sqcup I)$, corresponding to the 
obvious surjection $J\sqcup I\sqcup I\to J\sqcup I$, coincide. The latter follows by chasing
through the following diagram, in which every quadrangle and triangle are commutative
(the dotted arrows are the original arrows in the lemma):

\begin{gather}  
\xy
(-5,0)*+{X^I\times Z(I)}="A";
(40,0)*+{Z(J\sqcup I\sqcup I)}="B";
(0,20)*+{Z(I\sqcup I)}="C";
(0,40)*+{Z(I)}="D";
(0,60)*+{Z(J\sqcup I)}="J";
(0,-20)*+{X^{J\sqcup I}\times Z(I)}="E";
(40,-20)*+{X^J\times Z(I\sqcup I)}="F";
(-60,-40)*+{Z(I)}="G";
(70,-40)*+{X^J\times Z(I)}="H";
(70,-20)*+{Z(J\sqcup I)}="I";
{\ar@{->} "G";"A"};
{\ar@{->} "G";"C"};
{\ar@{->} "G";"E"};
{\ar@{-->} "G";"H"};
{\ar@{->} "G";"F"};
{\ar@{->} "C";"B"};
{\ar@{->} "D";"B"};
{\ar@{->} "E";"B"};
{\ar@{->} "F";"B"};
{\ar@{->} "I";"B"};
{\ar@{->} "H";"F"};
{\ar@{-->} "H";"I"};
{\ar@{->} "A";"C"};
{\ar@{->} "A";"E"};
{\ar@{->} "E";"F"};
{\ar@{->} "D";"J"};
{\ar@{->} "D";"C"};
{\ar@{->} "G";"D"};
{\ar@{-->} "G";"J"};
{\ar@{->} "J";"B"};
\endxy
\end{gather}

\end{proof}

\begin{remark}
The compexity of the above diagram leads one to wonder, in the situation when $Z(I)$'s are
no longer indschemes, but more general prestacks (so, we can no longer talk about equality of maps
but homotopy equivalences), whether the constructed identification of the two maps 
$Z(I) \rightrightarrows Z(J\sqcup I)$ is canonical. I.e., whether a different diagram would not produce
a different identification.

\medskip

The answer is that this isomorphism is canonical. Indeed, consider the diagram
$$
\CD
X^I\underset{X^{J\sqcup I}}\times (X^J\times Z(I))  @>{\on{unit}_{J,I}}>>  X^I\underset{X^{J\sqcup I}}\times Z(J\sqcup I)  \\
@AAA   @AAA   \\
Z(I)   @>{\alpha(I)}>>   Z(I),
\endCD
$$
where the left vertical arrow is an isomorphism tautologically, and the right vertical arrow is an isomorphism by the assumption on $Z$
(see \secref{sss:over Ran}).

\medskip

The assertion of \lemref{l:huge} says that the bottom map $\alpha(I)$ is the identity map on $Z(I)$. However, it was enough to
show that $\alpha(I)$ is an isomorphism. Indeed, the associativity of the action of $X^{\fset}$ on $Z$ would then
imply that $\alpha(I)\circ \alpha(I)$ is canonically homotopic to $\alpha(I)$, implying that $\alpha(I)\sim \on{id}$.
\end{remark}

\sssec{}  \label{sss:extended funct}

One can interpret the assertion of \lemref{l:huge} as follows. Let $Z:(\fset)^{op}\to \indSch$ be 
as in \secref{sss:over Ran}. We claim that for a pair of non-empty finite sets $I$ and $J$ and \emph{any}
(i.e., not necessarily surjective) map $I\to J$, we have a well-defined map
$$X^J\underset{X^I}\times Z(I)\to Z(J),$$
compatible with the projection to $X^J$, and compatible with compositions.

\medskip

Indeed, when $I\to J$ is surjective, the map in question is the isomorphism of Condition (2) in \secref{sss:over Ran}.
When $I\to J$ is an injection $I\hookrightarrow I\sqcup K\simeq J$, the map in question is
$$X^J\underset{X^I}\times Z(I)\simeq X^K\times Z(I)\overset{\on{unit}_{K,I}}\longrightarrow Z(K\sqcup I).$$

The compatibility with compositions is assured by \lemref{l:huge}.

\ssec{Ran space with marked points}  \label{ss:marked points}

In this subsection we will consider a certain class of statements that  
a strongly unital structure allows one to prove.

\sssec{}   \label{sss:marked points}

Let $\sA$ be a finite set. Let $\fset_\sA$ be the category
whose objects are finite sets $I$, equipped with a map $\sA\to I$, and where the morphisms are
surjective maps $I_1\twoheadrightarrow I_2$, for which the diagram
$$
\CD
I_1 @>>>  I_2 \\
@AAA   @AAA  \\
\sA  @>{\on{id}}>> \sA
\endCD
$$
commutes. 

\medskip

The category $\fset_\sA$ is naturally a module over $\fset$ via operation of disjoint union. The corresponding functor
$f_{\CS}$ is the forgetful functor $\oblv_\sA:\fset_\sA\to \fset$. The natural transformation $\bd$
is given by the canonical map $(\on{id}\sqcup \on{id}):I\sqcup I\to I$. 

\sssec{}  

The first example of a functor $Z:(\fset_\sA)^{op}\to \indSch$ satisfying the requirements 
of \secref{sss:strong axiom} is constructed as follows:

\medskip

Let $x^\sA$ be a $k$-point of $X^\sA$. Let $X^{\fset_\sA}$ be the functor $(\fset_\sA)^{op}\to \Sch$ given by
$$I\mapsto X^I_{\sA}:=x^\sA\underset{X^\sA}\times X^I.$$

\medskip

Let $$\Ran X_{\sA}:=\underset{(\fset_\sA)^{op}} {colim}\, X^{\fset_\sA}.$$
We shall refer to $\Ran X_{\sA}$ as the \emph{relative version of $\Ran X$ with marked points}. 

\begin{rem}
For $S\in \affSch$, the $\infty$-groupoid $\on{Maps}(S,\Ran X_{\sA})$ is in fact a set isomorphic
to that of non-empty finite subsets $\ol{x}\subset \on{Maps}(S,X)$ that contain as a subset the image
of $\sA$ under
$$\sA\to \on{Maps}(\on{pt},X)\to \on{Maps}(S,X).$$
\end{rem}

\sssec{}  \label{sss:forget A}

Let $Z$ be a functor $(\fset)^{op}\to \indSch$ as in \secref{sss:over Ran}. For $(\sA,x^\sA)$ as above, let
$Z_\sA$ be a functor $(\fset_\sA)^{op}\to \indSch$ given by
$$I\mapsto Z(I)_\sA:=X^I_{\sA}\underset{X^I}\times Z(I).$$

The strongly unital structure on $Z$ induces one on $Z_\sA$. Set 
$$Z_{\Ran X_\sA}:=\underset{(\fset_\sA)^{op}}{colim}\, Z_\sA\in \on{PreStk}.$$
We obtain that $Z_{\Ran X_\sA}$ becomes a module over $\Ran X$. 

\sssec{}

By \secref{sss:expl map}, the forgetful functor $\oblv_\sA:\fset_\sA\to \fset$ and the natural
transformation $Z_\sA\Rightarrow Z\circ (\oblv_\sA)^{op}$, given by 
$Z(I)_\sA\to Z(I)$, define a map 
\begin{equation} \label{e:forget A}
Z_{\Ran X_\sA}\to Z_{\Ran X}.
\end{equation}

Moreover, it is easy to see that 
the map \eqref{e:forget A} is compatible with the $\Ran X$-actions. Therefore, it gives rise to a map
of the corresponding semi-simplicial objects
$$Z_{\Ran X_\sA,\bDelta_s}\to Z_{\Ran X,\bDelta_s}$$
and 
\begin{equation} \label{e:marked points}
Z_{\Ran X_\sA,\on{indep}}\to Z_{\Ran X,\on{indep}}. 
\end{equation}

\medskip

We will prove:

\begin{prop}  \label{p:marked points} 
The map \eqref{e:marked points} is an isomorphism.
\end{prop}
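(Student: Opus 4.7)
The plan is to construct a morphism $\sigma: Z_{\Ran X} \to Z_{\Ran X_\sA}$ in $\on{PreStk}$ and show that, together with the forgetful map $\tau$ of \eqref{e:forget A}, it induces on ``indep'' a pair of mutually inverse equivalences. The guiding heuristic is that in the ``indep'' setting, adjoining the fixed tuple $x^\sA$ to the locus of irregularity should cost nothing.

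First I would produce $\sigma$ by the paradigm of \secref{sss:expl map}: I take the functor $(\fset)^{op} \to (\fset_\sA)^{op}$ induced by $I \mapsto \sA \sqcup I$ (with structure map $\sA \hookrightarrow \sA \sqcup I$ being the first-factor inclusion), together with the natural transformation whose component at $I$ is
$$Z(I) \simeq \{x^\sA\} \times Z(I) \hookrightarrow X^\sA \times Z(I) \xrightarrow{\on{unit}_{\sA,\,I}} Z(\sA \sqcup I),$$
which factors through $Z(\sA \sqcup I)_\sA$ by Condition~(1) of \secref{sss:over Ran}. Associativity of the unital structure shows that $\sigma$ commutes with the $\Ran X$-action, so it lifts to a morphism of the semi-simplicial objects $Z_{\Ran X,\bDelta_s} \to Z_{\Ran X_\sA,\bDelta_s}$ and induces $\sigma_{\on{indep}}$ on geometric realizations.

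Next I would verify that $\sigma \circ \tau = \on{id}$ on $Z_{\Ran X_\sA}$, already before taking $\on{indep}$. Fix $I \in \fset_\sA$ with structure map $\phi: \sA \to I$, and set $\psi := (\phi \sqcup \on{id}): \sA \sqcup I \twoheadrightarrow I$; observe that $\psi$ is a morphism in $\fset_\sA$. Applying \lemref{l:huge} to $\phi$ yields
$$\on{unit}_{\sA,\,I} \circ (\Delta(\phi) \circ f(I) \times \on{id}) \;=\; Z(\psi^{op}) : Z(I) \to Z(\sA \sqcup I).$$
Since $Z(I)_\sA$ is precisely the locus where $\Delta(\phi) \circ f(I) \equiv x^\sA$, the restriction of the left-hand side to $Z(I)_\sA$ equals $\on{unit}_{\sA,\,I} \circ (\{x^\sA\} \times \on{id}) = \sigma \circ \tau|_{Z(I)_\sA}$. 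Thus $\sigma \circ \tau|_{Z(I)_\sA}$ coincides with the structure map of the colimit $\underset{(\fset_\sA)^{op}}{colim}\, Z_\sA$ associated to $\psi^{op}: I \to \sA \sqcup I$ in $(\fset_\sA)^{op}$, and is therefore absorbed in $Z_{\Ran X_\sA}$.

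For the other composition, I would show that $\tau \circ \sigma$ is homotopic to the identity after passing to $\on{indep}$. By construction, $\tau \circ \sigma: Z_{\Ran X} \to Z_{\Ran X}$ factors as
$$Z_{\Ran X} \xrightarrow{(\Delta^\sA(x^\sA),\, \on{id})} \Ran X \times Z_{\Ran X} \xrightarrow{\on{unit}_\Ran} Z_{\Ran X},$$
i.e., as the $\Ran X$-action evaluated on the fixed tuple $x^\sA$. But the two face maps $\Ran X \times Z_{\Ran X} \rightrightarrows Z_{\Ran X}$ in the semi-simplicial object $Z_{\Ran X,\bDelta_s}$ are precisely $\on{unit}_\Ran$ and the projection $\on{pr}_2$, and these are canonically identified in the geometric realization $Z_{\Ran X,\on{indep}}$. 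Post-composing with $Z_{\Ran X} \to Z_{\Ran X,\on{indep}}$ thus furnishes the required homotopy between $\tau \circ \sigma$ and $\on{id}$. The main subtlety I anticipate is in the second step, where the strong unital condition \eqref{e:axiom on unit}---extended to non-surjective $\phi$ by \lemref{l:huge}---is precisely what makes $\sigma \circ \tau$ match the colimit's structure map; misaligning the arrows $\phi$, $\psi$ or the $\sA$-markings would break the argument.
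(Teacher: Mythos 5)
Your proposal is correct and follows the same three-step strategy as the paper: construct the map \eqref{e:add A} via $I\mapsto \sA\sqcup I$, use \lemref{l:huge} to identify the component of $\sigma\circ\tau$ with the transition map for $(\phi\sqcup\on{id})$ so that $\sigma\circ\tau\simeq\on{id}$ already on $Z_{\Ran X_\sA}$, and show that $\tau\circ\sigma$ factors through $\on{unit}_\Ran\circ\gamma$ for a section $\gamma$ of the projection, forcing it to become the identity upon realization. The only cosmetic difference is that you verify the first composition concretely at the level of the components over each $I\in\fset_\sA$, whereas the paper phrases the same computation through the abstract paradigm of \secref{sss:paradigm nat trans}; both rest on \lemref{l:huge}, and your closing remark correctly identifies the strongly unital axiom \eqref{e:axiom on unit} as the essential ingredient.
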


\begin{remark}
The meaning of this proposition is that in the unital context, constraining our
finite subset of points of $X$ to contain a given set $x^\sA$ does not
alter the resulting space.
\end{remark}

\sssec{}

Before we prove the proposition, let us discuss some corollaries:

\medskip

Let $Z$ be as in \secref{sss:over Ran}, and let $\CF$ be an object of $\fD(Z_{\Ran X})$, which lies in the essential image 
of the forgetful functor $\fD(Z_{\Ran X,\on{indep}})\to \fD(Z_{\Ran X})$, and which also lies in
$\fD(Z_{\Ran X})_{\on{good}\, \on{for}\,p_{Z_{\Ran X}}}$.

\medskip

Let $(\sA,x^\sA)$ be as in \secref{sss:marked points}. Let $\CF_\sA$ be the pullback of $\CF$ under
the map $Z_{\Ran X_\sA}\to Z_{\Ran X}$ of \eqref{e:forget A}. Combining 
\corref{c:unital vs nonunital} with \propref{p:marked points}, we obtain:

\begin{cor}  \label{c:marked points}
The trace map
$$\Gamma_{\dr,c}(Z_{\Ran X_\sA},\CF_\sA)\to \Gamma_{\dr,c}(Z_{\Ran X},\CF)$$
is an isomorphism.
\end{cor}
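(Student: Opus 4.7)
The plan is to derive Corollary \ref{c:marked points} by threading the pullback of $\CF$ through the two indep-level constructions and then applying the two ingredients named in the hint. Namely, Corollary \ref{c:unital vs nonunital} will let me translate compact-support cohomology of $\CF$ and $\CF_\sA$ into compact-support cohomology of their respective indep-lifts, while Proposition \ref{p:marked points} will identify the two indep prestacks that appear.

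By hypothesis there exists $\CF_{\on{indep}} \in \fD(Z_{\Ran X,\on{indep}})$ whose image under the forgetful functor is $\CF$. The map \eqref{e:forget A} is $\Ran X$-equivariant, so (as noted in \secref{sss:forget A}) it descends to a map of semi-simplicial objects and hence to a map $Z_{\Ran X_\sA,\on{indep}} \to Z_{\Ran X,\on{indep}}$ on the indep level. Applying $!$-pullback to $\CF_{\on{indep}}$ along this map produces an object $(\CF_\sA)_{\on{indep}} \in \fD(Z_{\Ran X_\sA,\on{indep}})$, and by functoriality of the forgetful maps in the commuting square, its image in $\fD(Z_{\Ran X_\sA})$ is precisely $\CF_\sA$. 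Proposition \ref{p:marked points} then says that $Z_{\Ran X_\sA,\on{indep}} \to Z_{\Ran X,\on{indep}}$ is an isomorphism, so $(\CF_\sA)_{\on{indep}}$ is the pullback of $\CF_{\on{indep}}$ along an isomorphism and the two have canonically identified compact-support cohomology.

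To conclude, I apply Corollary \ref{c:unital vs nonunital} twice. Once, to $\CF$, which is by hypothesis good for $p_{Z_{\Ran X}}$, yielding
$$\Gamma_{\dr,c}(Z_{\Ran X},\CF) \simeq \Gamma_{\dr,c}(Z_{\Ran X,\on{indep}},\CF_{\on{indep}}).$$
And a second time, to $\CF_\sA$, yielding the analogous isomorphism on the $\sA$-side. Since the right-hand sides agree via the isomorphism of the previous paragraph, composing produces an isomorphism between the two compact-support cohomologies, and a naturality check identifies this composed isomorphism with the trace map of the corollary.

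The one point that requires care is the hypothesis of Corollary \ref{c:unital vs nonunital} for $\CF_\sA$, i.e., that $\CF_\sA$ lies in $\fD(Z_{\Ran X_\sA})_{\on{good}\, \on{for}\,p_{Z_{\Ran X_\sA}}}$. I expect to verify this directly from the recipe in \secref{sss:good objects}: using the presentation $Z_{\Ran X_\sA} = \underset{(\fset_\sA)^{op}}{\on{colim}}\, Z_\sA$ together with the Cartesian description $Z(I)_\sA = X^I_\sA \underset{X^I}{\times} Z(I)$, the $!$-pullbacks $\be(I)^!(\CF_\sA)$ are obtained from $\be(I)^!(\CF)$ by base change along the closed embedding $X^I_\sA \hookrightarrow X^I$, and such a base change preserves the property of being good for the projection to a point. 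This is the main technical input, and once it is in place the argument of the preceding paragraphs assembles without further obstacle.
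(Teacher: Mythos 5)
Your overall strategy matches the paper's one-line proof: lift $\CF$ to $\CF_{\on{indep}}$, push the marked-points construction through to the indep level via the $\Ran X$-equivariance of \eqref{e:forget A}, identify the two indep prestacks using \propref{p:marked points}, and descend via \corref{c:unital vs nonunital}. The structural part of your argument (the commuting square of forgetful functors, the equality $(\CF_\sA)_{\on{indep}}$ forgets to $\CF_\sA$) is correct.

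Where the argument goes astray is the final paragraph. You propose to verify the goodness of $\CF_\sA$ by a direct term-wise computation, and that proposal contains two unproven implications. First, the hypothesis gives only that $\CF$ lies in $\fD(Z_{\Ran X})_{\on{good}\, \on{for}\,p_{Z_{\Ran X}}}$; the implication stated in \secref{sss:good objects} runs from term-wise goodness of the $\be(I)^!(\CF)$ to global goodness of $\CF$, not the other way, so you cannot immediately conclude that each $\be(I)^!(\CF)$ is good. Second, even granting term-wise goodness, the claim that $!$-restriction along the closed embedding $X^I_\sA\hookrightarrow X^I$ preserves goodness for the projection to the point is not immediate for non-holonomic objects; you would need to justify it (e.g., via the colocalization triangle $i_! i^! \to \on{Id} \to j_* j^*$, which introduces the open complement and an unbounded $j_*$).

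The point you flagged as "requiring care" is in fact free, and noticing this makes the proof shorter and correct. The proof of \corref{c:unital vs nonunital} rests on \propref{p:incl of unital}: the forgetful functor $\fD(\CY_{\on{indep}})\to\fD(\CY)$ is fully faithful. Since $p_{\CY}^!=(\text{forget})\circ p_{\CY_{\on{indep}}}^!$, full faithfulness identifies the functor $V\mapsto\Hom_{\fD(\CY)}(\CF,p_\CY^!V)$ with $V\mapsto\Hom_{\fD(\CY_{\on{indep}})}(\CF_{\on{indep}},p_{\CY_{\on{indep}}}^!V)$; one is corepresentable iff the other is, and the corepresenting objects agree. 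Applying this identification on both sides of the isomorphism $Z_{\Ran X_\sA,\on{indep}}\simeq Z_{\Ran X,\on{indep}}$ of \propref{p:marked points}, the corepresenting functor for $\CF_\sA$ on $Z_{\Ran X_\sA}$ is canonically isomorphic to that for $\CF$ on $Z_{\Ran X}$, hence is corepresentable because $\CF$ is good, and the trace map is the induced isomorphism. No direct verification of term-wise goodness or closed-base-change preservation is needed.
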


As a particular case, we have:

\begin{cor} \label{c:marked points homology}
Then the trace map
$$H_\bullet(Z_{\Ran X_\sA})\to H_\bullet(Z_{\Ran X})$$
is an isomorphism.
\end{cor}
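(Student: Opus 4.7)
The plan is to deduce this corollary as the special case of \corref{c:marked points} obtained by taking $\CF=\omega_{Z_{\Ran X}}$. For this I need to verify the two hypotheses on $\CF$ imposed in that corollary: (a) that $\omega_{Z_{\Ran X}}$ lies in the essential image of the forgetful functor $\fD(Z_{\Ran X,\on{indep}})\to \fD(Z_{\Ran X})$, and (b) that it belongs to $\fD(Z_{\Ran X})_{\on{good}\,\on{for}\,p_{Z_{\Ran X}}}$. Point (b) is automatic: by \secref{sss:when good}(iii), the dualizing sheaf of any prestack is always good for its projection to $\on{pt}$.

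For point (a), let $\pi:Z_{\Ran X}\to Z_{\Ran X,\on{indep}}$ be the canonical structure map (the $[0]$-term of the geometric realization). The forgetful functor of \eqref{e:forget unital} is nothing but $\pi^!$. Since the projection to $\on{pt}$ factors as $p_{Z_{\Ran X}}=p_{Z_{\Ran X,\on{indep}}}\circ \pi$, transitivity of $!$-pullback gives $\omega_{Z_{\Ran X}}\simeq \pi^!(\omega_{Z_{\Ran X,\on{indep}}})$, so $\omega_{Z_{\Ran X}}$ is indeed in the essential image of the forgetful functor.

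It remains to identify the pullback $\CF_\sA$ produced by \corref{c:marked points} with $\omega_{Z_{\Ran X_\sA}}$. By the same transitivity argument applied to the map $Z_{\Ran X_\sA}\to Z_{\Ran X}$ of \eqref{e:forget A} composed with $p_{Z_{\Ran X}}$, the $!$-pullback of $\omega_{Z_{\Ran X}}$ is canonically isomorphic to $\omega_{Z_{\Ran X_\sA}}$. With these identifications, \corref{c:marked points} yields that
\[
\Gamma_{\dr,c}(Z_{\Ran X_\sA},\omega_{Z_{\Ran X_\sA}})\to \Gamma_{\dr,c}(Z_{\Ran X},\omega_{Z_{\Ran X}})
\]
is an isomorphism, which by the notation of \secref{sss:abs trace map} is exactly the claim that $H_\bullet(Z_{\Ran X_\sA})\to H_\bullet(Z_{\Ran X})$ is an isomorphism. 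The entire substance of the proof has already been absorbed into \propref{p:marked points} and \corref{c:unital vs nonunital}, so no new difficulty arises at this final step; the only thing to check is that the general $\CF$ there is allowed to be $\omega$, which is the content of the two verifications above.
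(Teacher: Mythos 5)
Your proof is correct and matches the paper's intended argument: the paper presents \corref{c:marked points homology} with the phrase ``As a particular case, we have:'' immediately after \corref{c:marked points}, i.e., the intended proof is exactly the specialization $\CF=\omega_{Z_{\Ran X}}$ that you carry out. Your verification of the two hypotheses (that $\omega_{Z_{\Ran X}}$ is good by \secref{sss:when good}(iii), and that it lies in the essential image of the forgetful functor via $\omega_{Z_{\Ran X}}\simeq\pi^!(\omega_{Z_{\Ran X,\on{indep}}})$, together with the identification $\CF_\sA\simeq\omega_{Z_{\Ran X_\sA}}$ by transitivity of $!$-pullback) is accurate and is the small amount of detail the paper leaves implicit.
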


\sssec{Proof of \propref{p:marked points}} \hfill

\medskip

\noindent{\bf Step 1.} Consider the functor $(\sA\sqcup-):\fset\to \fset_\sA$:
$$I\mapsto \sA\sqcup I.$$
The unital structure on $Z$ defines a natural transformation
$$Z\Rightarrow Z_\sA\circ (\sA\sqcup-)$$
between the resulting two functors $(\fset)^{op}\rightrightarrows \indSch$:
$$Z(I)\simeq x^\sA\times Z(I)\overset{\on{unit}_{\sA,I}}\longrightarrow Z(\sA\sqcup I)_{\sA}.$$
By \secref{sss:expl map}, we obtain a map
\begin{equation} \label{e:add A}
Z_{\Ran X}\to Z_{\Ran X_\sA}.
\end{equation}

\medskip

It is easy to see from the construction that this map is also compatible with the action
of the semi-group $\Ran X$. In particular, we obtain the map of semi-simplicial objects
$$Z_{\Ran X,\bDelta_s}\to Z_{\Ran X_\sA,\bDelta_s},$$
and a map 
\begin{equation} \label{e:marked points other map}
Z_{\Ran X,\on{indep}}\to Z_{\Ran X_\sA,\on{indep}}.
\end{equation}

\medskip

We will show that composition of the maps \eqref{e:forget A} and \eqref{e:add A}:
\begin{equation} \label{e:first comp A}
Z_{\Ran X_\sA}\to Z_{\Ran X}\to Z_{\Ran X_\sA}
\end{equation}
is (canonically homotopic to) the identity map on $Z_{\Ran X_\sA}$, in a way
compatible with the $\Ran X$-action. The latter compatibility will imply that the composition
$$Z_{\Ran X_\sA,\on{indep}}\to Z_{\Ran X,\on{indep}}\to Z_{\Ran X_\sA,\on{indep}}$$
is also (canonically homotopic to) the identity map. 

\medskip

We will then show that the composition
\begin{equation} \label{e:second comp A unital}
Z_{\Ran X,\on{indep}}\to Z_{\Ran X_\sA,\on{indep}}\to Z_{\Ran X,\on{indep}}
\end{equation}
is (canonically homotopic to) the identity map as well. 

\medskip

\noindent{\bf Step 2.} 
The assertion regarding the composition \eqref{e:first comp A} follows from the paradigm described
in \secref{sss:paradigm nat trans}:

\medskip

We apply it to $\bC_1=\bC_2=(\fset_\sA)^{op}$, $\Phi_1=\Phi_2=Z_\sA$, $F$ being the identity
functor and $F'$ being the functor 
$$(\sA\to I)\rightsquigarrow (\sA\to \sA\sqcup I),$$
where $\sA\overset{\on{id}}\to \sA\hookrightarrow \sA\sqcup I$. 

\medskip

The natural transformation
$\beta$ is the given by the map $\sA\sqcup I\to I$ in $\fset_\sA$. The fact that the natural transformation
$$Z_\sA\Rightarrow Z_\sA\circ (\sA\sqcup-)\circ (\oblv_\sA)^{op}$$
is isomorphic to $Z_\sA\circ \beta$ follows from \lemref{l:huge}.

\medskip

The naturality of the construction implies the compatibility with the action of the semi-group $\Ran X$. 

\medskip

\noindent{\bf Step 3.} In order to prove that the composed map \eqref{e:second comp A unital}
is (canonically homotopic to) the identity map, since the semi-group $\Ran X$ is commutative, 
 it suffices to show that the composition
$$Z_{\Ran X}\overset{\text{\eqref{e:add A}}}\longrightarrow Z_{\Ran X_\sA}
\overset{\text{\eqref{e:forget A}}}\longrightarrow Z_{\Ran X}$$
factors as
$$Z_{\Ran X}\overset{\gamma}\to \Ran X\times Z_{\Ran X}\overset{\on{unit}_{\Ran}}\longrightarrow Z_{\Ran X}$$
for some map $\gamma:Z_{\Ran X}\to \Ran X\times Z_{\Ran X}$, such that the composition
$$Z_{\Ran X}\overset{\gamma}\to \Ran X\times Z_{\Ran X}\overset{p_{\Ran X}\times \on{id}}\longrightarrow Z_{\Ran X}$$
is the identity map. 

\medskip

The required map $\gamma$ is given in terms of the paradigm of \secref{sss:expl map} as follows. Namely, 
the functor
$$\gamma_{\fset}:(\fset)^{op}\to (\fset\times \fset)^{op}$$ is 
$$I\rightsquigarrow (\sA,I),$$
and the natural transformation between the two functors $(\fset)^{op}\rightrightarrows \on{PreStk}$ 
$$Z\Rightarrow (X^{\fset}\times Z)\circ \gamma_{\fset}$$
sends $I\in \fset$ to the map 
$(x^\sA\times \on{id}):Z(I)\to X^\sA\times Z(I)$.

\qed

\sssec{}  \label{sss:relative version}

We conclude this subsection with the observation that the assertion of Proposition \ref{p:marked points}
(with the same proof) holds also in the relative situation:

\medskip

Let $S$ be a scheme and let $x^\sA$ be an $S$-point of $X^\sA$. Then we can consider
a relative version of the functor $X^{\fset_\sA}$, which now maps $(\fset_\sA)^{op}$
to $\Sch_{/S}$:
$$I\rightsquigarrow X^I_\sA:=S\underset{X^\sA}\times X^I,$$
and the corresponding prestack $\Ran X_\sA$ over $S$. 

\medskip

Given a functor $Z:(\fset)^{op}\to \indSch_{/S}$ satisfying the conditions of \secref{sss:over Ran}
(with the target category $\indSch$ replaced by $\indSch_{/S}$), we let $Z_\sA$ be the functor
$(\fset_\sA)^{op}\to \indSch_{/S}$ given by
$$I\rightsquigarrow S\underset{S\times X^\sA}\times Z(I)\simeq X^I_\sA\underset{S\times X^I}\times Z(I).$$

\medskip

Set
$$Z_{\Ran X_\sA}:=\underset{(\fset_\sA)^{op}}{colim}\, Z_\sA.$$
Let $p_\sA$ (resp., $p$) denote the map $Z_{\Ran X_\sA}\to S$
(resp., $Z_{\Ran X}\to S$). 

\medskip

We have the following relative version of \corref{c:marked points homology}:

\begin{cor} \label{c:marked points homology rel}
For $\CF_S\in \fD(S)$, the map
$$(p_\sA)_!\circ (p_\sA)^!(\CF_S)\to p_!\circ p^!(\CF_S)$$
is an isomorphism.
\end{cor}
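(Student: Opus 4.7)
The plan is to mirror the derivation of \corref{c:marked points homology} from \propref{p:marked points} and \corref{c:unital vs nonunital}, with every construction carried out relative to the base $S$. Concretely, I will first upgrade \propref{p:marked points} to an isomorphism of $S$-prestacks $Z_{\Ran X_\sA,\on{indep}}\simeq Z_{\Ran X,\on{indep}}$; second verify that $p:Z_{\Ran X}\to S$ factors through $Z_{\Ran X,\on{indep}}$, so that $p^!(\CF_S)$ admits a unital lift; and third apply a relative version of \corref{c:unital vs nonunital} to conclude.

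\medskip

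For the first step, each of the three steps in the proof of \propref{p:marked points} depends only on the unital/strongly unital structure on $Z$, on the functor $\oblv_\sA:\fset_\sA\to \fset$, and on the marked-points datum $x^\sA$. Replacing $x^\sA$ by an $S$-point rather than a $k$-point, and each $Z(I)$ by its structure as an indscheme over $S$, has no effect on the construction of the maps \eqref{e:forget A} and \eqref{e:add A}, nor on the identification of the two compositions with the identity via \lemref{l:huge} and the paradigm of \secref{sss:paradigm nat trans}. This yields the desired isomorphism
\[
Z_{\Ran X_\sA,\on{indep}}\to Z_{\Ran X,\on{indep}}
\]
of prestacks over $S$. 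Let $\bar p:Z_{\Ran X,\on{indep}}\to S$ and $\bar p_\sA:Z_{\Ran X_\sA,\on{indep}}\to S$ denote the resulting structure maps, which exist because Conditions (1)--(2) of \secref{sss:over Ran} imply that the $\Ran X$-action on $Z_{\Ran X}$ preserves the projection to $S$, so that the semi-simplicial object $Z_{\Ran X,\bDelta_s}$ projects termwise to $S$.

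\medskip

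For the second step, let $q:Z_{\Ran X}\to Z_{\Ran X,\on{indep}}$ denote the tautological map, so $p=\bar p\circ q$ and hence $p^!(\CF_S)\simeq q^!(\bar p^!(\CF_S))$. In particular $p^!(\CF_S)$ lies in the essential image of the forgetful functor $\fD(Z_{\Ran X,\on{indep}})\to \fD(Z_{\Ran X})$, which is fully faithful by the relative analog of \propref{p:incl of unital} (the proof goes through unchanged, as it relies only on the absolute contractibility of $\Ran X$ via \thmref{t:contractibility of Ran} and the Künneth isomorphism \eqref{e:prod equiv pseudo}). An entirely analogous statement holds for $(p_\sA)^!(\CF_S)$.

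\medskip

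Combining: the relative form of \corref{c:unital vs nonunital} (proved identically to the absolute case, since it rests on \propref{p:incl of unital} applied to an object admitting a unital lift together with the good-for-$p_\CY$ hypothesis) gives
\[
p_! p^!(\CF_S)\simeq \bar p_!\,\bar p^!(\CF_S),\qquad (p_\sA)_!(p_\sA)^!(\CF_S)\simeq (\bar p_\sA)_!(\bar p_\sA)^!(\CF_S),
\]
and the isomorphism of $S$-prestacks from the first step identifies the right-hand sides. The main obstacle in the above is purely bookkeeping, namely Step~1: verifying that the homotopies exhibited in the proof of \propref{p:marked points} are compatible with the projection to $S$. Once this is confirmed --- and it will be, because each constituent map visibly lies over $S$ --- the remainder of the argument is formal.
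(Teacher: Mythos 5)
Your proof is correct and takes the same approach the paper intends: the paper remarks only that \propref{p:marked points} holds ``with the same proof'' in the relative setting and leaves the derivation of the corollary implicit, and your Steps~1--3 spell out precisely this chain. In particular, your argument mirrors the derivation of \corref{c:marked points} from \propref{p:marked points} and \corref{c:unital vs nonunital}, carried out over the base $S$.
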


\ssec{Products over $\Ran X$}  \label{ss:products}

In this subsection we will give another example of a statement that a unital structure allows
to prove.

\sssec{}

Let $Z^1$ and $Z^2$ be two functors $(\fset)^{op}\to \indSch$ as in \secref{sss:over Ran}.
Let $Z$ be yet another functor defined by
$$Z(I):=Z^1(I)\underset{X^I}\times Z^2(I).$$
Then $Z$ is also a functor satisfying the requirements of \secref{sss:over Ran}.

\medskip

Consider the corresponding spaces $Z^1_{\Ran X}$, $Z^2_{\Ran X}$, and $Z_{\Ran X}$.

\sssec{}

Note that we have a naturally defined map
\begin{equation} \label{e:to prod}
Z_{\Ran X}\to Z^1_{\Ran X}\times Z^2_{\Ran X}.
\end{equation}

\medskip

In terms of \secref{sss:expl map}, the map in question corresponds to the diagonal
functor $${\mathbf {diag}}^{op}:(\fset)^{op}\to (\fset\times \fset)^{op},$$ and the natural transformation
between the resulting two functors $(\fset)^{op}\rightrightarrows \indSch$:
$$Z\to (Z^1\times Z^2)\circ {\mathbf {diag}}^{op},$$
given by
$$I\rightsquigarrow \Bigl(Z(I)=Z^1(I)\underset{X^I}\times Z^2(I)\to Z^1(I)\times Z^2(I)\Bigr).$$

\medskip

It is easy to see that the map \eqref{e:to prod} is compatible with the action of
$\Ran X$ via the diagonal homomorphism $\Ran X\to \Ran_X\times \Ran _X$ and the action 
of $\Ran_X\times \Ran _X$ on $Z^1_{\Ran X}\times Z^2_{\Ran X}$. 

\sssec{}

We claim now that there exists a canonically defined map
\begin{equation} \label{e:from prod}
Z^1_{\Ran X}\times Z^2_{\Ran X}\to Z_{\Ran X}.
\end{equation}

\medskip

Namely, in terms of \secref{sss:expl map}, the map in question corresponds to the functor 
$$\on{union}^{op}:(\fset\times \fset)^{op}\to (\fset)^{op},$$
and the natural transformation between the resulting two functors
$(\fset\times \fset)^{op}\rightrightarrows \indSch$:
$$(Z^1\times Z^2)\to Z\circ \on{union}^{op},$$
given by sending $(I_1,I_2)\in \fset\times \fset$ to the map
\begin{multline*}
Z^1(I_1)\times Z^2(I_2)\simeq (Z^1(I_1)\times X^{I_2})\underset{X^{I_1}\times X^{I_2}}\times
(X^{I_1}\times Z(I_2))\to \\
\to Z^1(I_1\sqcup I_2)\underset{X^{I_1}\times X^{I_2}}\times
Z^2(I_1\sqcup I_2)=Z(I_1\sqcup I_2).
\end{multline*}

\medskip

It is easy to see that the map \eqref{e:from prod} is compatible with the action of
$\Ran X$ via the homomorphism 
$$\on{union}:\Ran X\times \Ran_X\to \Ran _X$$ and the action 
of $\Ran_X\times \Ran _X$ on $Z^1_{\Ran X}\times Z^2_{\Ran X}$. 

\medskip

Thus, the maps \eqref{e:to prod} and \eqref{e:from prod} give rise to maps 
\begin{equation} \label{e:prod unital}
Z_{\Ran X,\on{indep}}\rightleftarrows Z^1_{\Ran X,\on{indep}}\times Z^2_{\Ran X,\on{indep}}.
\end{equation}

\begin{prop} \label{p:prod}
The maps \eqref{e:prod unital} are mutually inverse isomorphisms.
\end{prop}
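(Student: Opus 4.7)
The plan is to show that both compositions in \eqref{e:prod unital} are canonically homotopic to the identity, following the three-step template of the proof of \propref{p:marked points}. The argument splits asymmetrically: the first composition will already be an identity at the level of $Z_{\Ran X}$ (no $\on{indep}$ needed), while the second genuinely requires passing to $\on{indep}$.

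For the first composition $Z_{\Ran X,\on{indep}}\to Z^1_{\Ran X,\on{indep}}\times Z^2_{\Ran X,\on{indep}}\to Z_{\Ran X,\on{indep}}$, by \secref{sss:expl map} the underlying map $Z_{\Ran X}\to Z_{\Ran X}$ is encoded by the functor $F':(\fset)^{op}\to(\fset)^{op}$, $I\mapsto I\sqcup I$, together with the natural transformation $Z\Rightarrow Z\circ F'$ obtained by composing the natural transformations from \eqref{e:to prod} and \eqref{e:from prod}. Unwinding the constructions, on $(z^1,z^2)\in Z(I)=Z^1(I)\times_{X^I}Z^2(I)$ this natural transformation sends $(z^1,z^2)$ to $(\on{unit}_{I,I}(f^2(z^2),z^1),\,\on{unit}_{I,I}(f^1(z^1),z^2))$. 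Condition (1) of \secref{sss:over Ran} forces $f^1(z^1)=f^2(z^2)$, so the strongly unital axiom of \secref{sss:strong axiom} identifies this with $(Z^1(\bd(I))(z^1),Z^2(\bd(I))(z^2))=Z(\bd(I))(z^1,z^2)$, where $\bd(I):I\to I\sqcup I$ is the canonical morphism in $(\fset)^{op}$. Viewing $\bd$ as a natural transformation $\on{Id}\Rightarrow F'$, the paradigm of \secref{sss:paradigm nat trans} yields a canonical homotopy of the induced map on colimits with the identity on $Z_{\Ran X}$.

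For the second composition $Z^1_{\Ran X,\on{indep}}\times Z^2_{\Ran X,\on{indep}}\to Z_{\Ran X,\on{indep}}\to Z^1_{\Ran X,\on{indep}}\times Z^2_{\Ran X,\on{indep}}$, the analogous unwinding gives, at the index level $(\fset\times\fset)^{op}\to(\fset\times\fset)^{op}$, $(I_1,I_2)\mapsto(I_1\sqcup I_2,I_1\sqcup I_2)$, a natural transformation sending $(z^1,z^2)$ to the \emph{crossed} pair $(\on{unit}_{I_2,I_1}(f^2(z^2),z^1),\,\on{unit}_{I_1,I_2}(f^1(z^1),z^2))$. Because of the crossing between the two factors, this is not of the form $(Z^1\times Z^2)\circ\beta$ for any natural transformation $\beta$, so the argument for the first composition does not apply. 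Instead, following Step 3 of the proof of \propref{p:marked points}, I construct a map
\begin{equation*}
\gamma\colon Z^1_{\Ran X}\times Z^2_{\Ran X}\to(\Ran X\times\Ran X)\times(Z^1_{\Ran X}\times Z^2_{\Ran X}),\quad (z^1,z^2)\mapsto(f^2(z^2),f^1(z^1),z^1,z^2),
\end{equation*}
whose composition with the product action of $\Ran X\times\Ran X$ recovers the second composition, and whose composition with the projection onto $Z^1_{\Ran X}\times Z^2_{\Ran X}$ is the identity. Since colimits in $\on{PreStk}$ are computed object-wise and geometric realization in $\inftygroup$ commutes with finite products, $(Z^1_{\Ran X}\times Z^2_{\Ran X})_{\on{indep}}$ (formed with the product action) coincides with $Z^1_{\Ran X,\on{indep}}\times Z^2_{\Ran X,\on{indep}}$. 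The existence of $\gamma$ then witnesses, through the $1$-simplex of the semi-simplicial object, that the two face maps (product action and projection) yield canonically homotopic maps after passing to $\on{indep}$, so the second composition becomes the identity on $Z^1_{\Ran X,\on{indep}}\times Z^2_{\Ran X,\on{indep}}$.

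The principal obstacle will be the explicit computation of the two natural transformations, which hinges on correctly composing the recipes for \eqref{e:to prod} and \eqref{e:from prod} together with conditions (1) and (2) of \secref{sss:over Ran} and the strongly unital axiom. A secondary technical point is to verify that the map $\gamma$ and the descent of the homotopy to $\on{indep}$ are genuinely compatible with the $\Ran X$-equivariance of \eqref{e:to prod} and \eqref{e:from prod}; this should follow by the same bookkeeping as in the proofs of \propref{p:incl of unital} and \propref{p:marked points}.
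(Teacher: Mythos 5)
Your overall strategy matches what the paper intends (the paper itself omits the proof, saying only that it is analogous to that of \propref{p:marked points}): the unwinding of the first composition and its identification with $Z(\bd(I))$ via the strongly unital axiom and \secref{sss:paradigm nat trans} is correct, and the map $\gamma$ you construct for the second composition is the right object and indeed plays the role of the $\gamma$ in Step 3 of the proof of \propref{p:marked points}. (One small misattribution: it is the fiber product structure $Z(I)=Z^1(I)\underset{X^I}\times Z^2(I)$, not Condition (1) of \secref{sss:over Ran}, that forces $f^1(z^1)=f^2(z^2)$.)

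However, there is a genuine gap in the second half. You assert that ``geometric realization in $\inftygroup$ commutes with finite products,'' and use it to identify $(Z^1_{\Ran X}\times Z^2_{\Ran X})_{\on{indep}}$ (formed from the $\Ran X\times\Ran X$-action) with $Z^1_{\Ran X,\on{indep}}\times Z^2_{\Ran X,\on{indep}}$. But the $\on{indep}$ construction is a colimit over $\bDelta_s^{op}$, the \emph{semi}-simplicial indexing category, and $\bDelta_s^{op}$ is not sifted, so this colimit does not commute with finite products. A concrete witness: for the semi-simplicial set $X_\bullet$ with $X_0=\{a,b\}$, $X_1=\{e\}$, $X_n=\emptyset$ for $n\geq 2$, $d_0(e)=a$, $d_1(e)=b$, the realization $\underset{\bDelta_s^{op}}{\on{colim}}\,X_\bullet$ is contractible, whereas $\underset{\bDelta_s^{op}}{\on{colim}}\,(X_\bullet\times X_\bullet)$ has three connected components; so $|X\times X|\not\simeq |X|\times|X|$. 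Hence the identification you invoke is not a formal consequence of products commuting with colimits, and as written this step of the argument fails. (This is precisely the kind of pitfall the paper warns about in the footnote to Step 3 of the proof of \propref{p:incl of unital}: the semi-simplicial bar object does not upgrade to a simplicial one, so siftedness of $\bDelta^{op}$ is unavailable.) The fix should be to avoid the identification altogether: present $Z^1_{\Ran X,\on{indep}}\times Z^2_{\Ran X,\on{indep}}$ as the colimit over $\bDelta_s^{op}\times\bDelta_s^{op}$ of $\CY^1_m\times\CY^2_n$ (products commute with colimits in each variable separately in $\on{PreStk}$), observe that your $\gamma$ lifts the diagonal map from the $([0],[0])$-term into the $([1],[1])$-term, and then run the homotopy argument directly at that level. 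Note also that the very existence of the map $Z^1_{\Ran X,\on{indep}}\times Z^2_{\Ran X,\on{indep}}\to Z_{\Ran X,\on{indep}}$ in \eqref{e:prod unital}, which the paper states without details, must be constructed the same way; spelling this out would also make the conservativeness of the whole reduction clearer.
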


We omit the proof, because it is completely analogous to the one of
\propref{p:marked points}.

\sssec{}

\propref{p:prod} admits corollaries analogous to those of \propref{p:marked points}:

\medskip

Let $\CF$ be an object of $\fD(Z^1_{\Ran X}\times Z^2_{\Ran X})$ that lies in the
essential image of the forgetful functor
$$\fD(Z^1_{\Ran X,\on{indep}}\times Z^2_{\Ran X,\on{indep}})\to \fD(Z^1_{\Ran X}\times Z^2_{\Ran X}).$$
Assume also that $\CF$ belongs to 
$\fD(Z^1_{\Ran X}\times Z^2_{\Ran X})_{\on{good}\, \on{for}\,p_{Z^1_{\Ran X}\times Z^2_{\Ran X}}}$.

\medskip

Let $\CF^{{\mathbf {diag}}}$ denote the pullback of $\CF$ to $Z_{\Ran X}$ under the map \eqref{e:from prod}.
We have:
\begin{cor} \label{c:prod}
The trace map
$$\Gamma_{\dr,c}\left(Z_{\Ran X},\CF^{{\mathbf {diag}}}\right)\to \Gamma_{\dr,c}\left(
Z^1_{\Ran X}\times Z^2_{\Ran X},\CF\right)$$
is an isomorphism.
\end{cor}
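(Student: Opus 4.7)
The plan is to deduce Corollary \ref{c:prod} from Proposition \ref{p:prod} and Corollary \ref{c:unital vs nonunital}, in complete parallel with how Corollary \ref{c:marked points} is obtained from Proposition \ref{p:marked points} and Corollary \ref{c:unital vs nonunital}. Once these two inputs are combined, the argument is purely formal.

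First I would fix a lift $\CF_{\on{indep}}\in \fD(Z^1_{\Ran X,\on{indep}}\times Z^2_{\Ran X,\on{indep}})$ of $\CF$ through the forgetful functor, which exists by hypothesis. By Proposition \ref{p:prod}, the map \eqref{e:to prod} descends to an isomorphism
$$Z_{\Ran X,\on{indep}}\simeq Z^1_{\Ran X,\on{indep}}\times Z^2_{\Ran X,\on{indep}},$$
so pulling $\CF_{\on{indep}}$ back along this isomorphism produces a well-defined object $\CF^{{\mathbf{diag}}}_{\on{indep}}\in \fD(Z_{\Ran X,\on{indep}})$. Compatibility of the forgetful functors with the semi-simplicial structures underlying both sides identifies its image in $\fD(Z_{\Ran X})$ with the object $\CF^{{\mathbf{diag}}}$ of the statement.

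Next I would apply Corollary \ref{c:unital vs nonunital} on each side. On the product side, the goodness hypothesis on $\CF$ yields an isomorphism $\Gamma_{\dr,c}(Z^1_{\Ran X}\times Z^2_{\Ran X},\CF)\simeq \Gamma_{\dr,c}(Z^1_{\Ran X,\on{indep}}\times Z^2_{\Ran X,\on{indep}},\CF_{\on{indep}})$; on the fiber-product side, applied to $(Z_{\Ran X},\CF^{{\mathbf{diag}}},\CF^{{\mathbf{diag}}}_{\on{indep}})$, it yields $\Gamma_{\dr,c}(Z_{\Ran X},\CF^{{\mathbf{diag}}})\simeq \Gamma_{\dr,c}(Z_{\Ran X,\on{indep}},\CF^{{\mathbf{diag}}}_{\on{indep}})$. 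Chaining these isomorphisms via the identification of the middle terms coming from Proposition \ref{p:prod}, and observing that the construction is natural in such a way that the resulting composite reproduces the trace map, gives the desired isomorphism.

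The main obstacle is verifying the goodness condition $\CF^{{\mathbf{diag}}}\in \fD(Z_{\Ran X})_{\on{good}\,\on{for}\,p_{Z_{\Ran X}}}$, which is the input required for the second application of Corollary \ref{c:unital vs nonunital}. For this I would observe that at each finite level the map \eqref{e:to prod} restricts to the closed embedding $Z(I)=Z^1(I)\underset{X^I}\times Z^2(I)\hookrightarrow Z^1(I)\times Z^2(I)$, since $X^I$ is separated; consequently \eqref{e:to prod} is pseudo ind-proper. Combining this with \secref{sss:when good}(i) and the formula \eqref{e:calc dir image c} then transfers the goodness property from $\CF$ to its $!$-pullback $\CF^{{\mathbf{diag}}}$, as desired.
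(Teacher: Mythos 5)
Your overall strategy is exactly the intended one: the paper states Corollary~\ref{c:prod} without proof precisely because it follows from Proposition~\ref{p:prod} and Corollary~\ref{c:unital vs nonunital} in strict parallel with how Corollary~\ref{c:marked points} follows from Proposition~\ref{p:marked points}, and that is what you do. Your first two paragraphs correctly set up the lift $\CF_{\on{indep}}$, the identification of the ``indep'' spaces via Proposition~\ref{p:prod}, and the two applications of Corollary~\ref{c:unital vs nonunital}. (One small point you gloss over: on the product side one is implicitly invoking the variant of Proposition~\ref{p:incl of unital}/Corollary~\ref{c:unital vs nonunital} for the $\Ran X\times\Ran X$-action on $Z^1_{\Ran X}\times Z^2_{\Ran X}$; this is fine because $\Ran X\times\Ran X$ is again homologically contractible via K\"unneth and \eqref{e:prod equiv pseudo}, but it deserves a word.)

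The one place where the argument is not airtight is the last paragraph, on the goodness of $\CF^{\mathbf{diag}}$. The fact that \eqref{e:to prod} restricts to closed embeddings at each finite level, combined with \secref{sss:when good}(i), only tells you that $\CF^{\mathbf{diag}}$ is good \emph{for the map \eqref{e:to prod}}, and that $\eqref{e:to prod}_!$ is defined on everything. It does not follow that $\CF^{\mathbf{diag}}$ is good for $p_{Z_{\Ran X}}$: writing $p_{Z_{\Ran X}}=p_{Z^1_{\Ran X}\times Z^2_{\Ran X}}\circ\eqref{e:to prod}$, one would need $(p_{Z^1_{\Ran X}\times Z^2_{\Ran X}})_!$ to be defined on $\eqref{e:to prod}_!\circ\eqref{e:to prod}^!(\CF)$, and this is \emph{not} $\CF$; the goodness of $\CF$ does not transfer to it by any of the cited facts. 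Concretely, a $!$-restriction of a good object along a closed embedding need not remain good. The cleaner route, which is also what the marked-points case implicitly uses, is to observe that goodness for $p$ transfers along the fully faithful forgetful functor to the ``indep'' category: since $p_\CY^!(V)$ is itself the image of $p_{\CY_{\on{indep}}}^!(V)$, fully faithfulness gives $\Hom_{\fD(\CY)}(\CF,p_\CY^!(V))\simeq\Hom_{\fD(\CY_{\on{indep}})}(\CF_{\on{indep}},p_{\CY_{\on{indep}}}^!(V))$, so goodness of $\CF$ for $p_\CY$ is equivalent to goodness of $\CF_{\on{indep}}$ for $p_{\CY_{\on{indep}}}$. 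Running this through the chain of equivalences $\fD(Z_{\Ran X,\on{indep}})\simeq\fD(Z^1_{\Ran X,\on{indep}}\times Z^2_{\Ran X,\on{indep}})$ and back down transfers the goodness hypothesis on $\CF$ directly to $\CF^{\mathbf{diag}}$, which is what you need to invoke Corollary~\ref{c:unital vs nonunital} on the $Z_{\Ran X}$ side.
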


In particular,
\begin{cor}
The trace map
$$H_\bullet(Z_{\Ran X})\to H_\bullet\left(Z^1_{\Ran X}\times Z^2_{\Ran X}\right)$$
is an isomorphism.
\end{cor}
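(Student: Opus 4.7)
The plan is to deduce this corollary directly from \corref{c:prod} by specializing to the dualizing sheaf $\CF = \omega_{Z^1_{\Ran X}\times Z^2_{\Ran X}}$. Three verifications are needed: that this $\CF$ lies in the essential image of the forgetful functor $\fD(Z^1_{\Ran X,\on{indep}}\times Z^2_{\Ran X,\on{indep}})\to \fD(Z^1_{\Ran X}\times Z^2_{\Ran X})$; that it lies in the ``good for $p$'' subcategory; and that its pullback $\CF^{{\mathbf {diag}}}$ along \eqref{e:from prod} is canonically identified with $\omega_{Z_{\Ran X}}$.

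For the third point, since $^!$-pullback of dualizing sheaves is a dualizing sheaf (that is, $f^!\omega_{\CY_2}=f^!p_{\CY_2}^!(k)=p_{\CY_1}^!(k)=\omega_{\CY_1}$ for any map $f:\CY_1\to\CY_2$), we automatically get $\CF^{{\mathbf {diag}}}\simeq \omega_{Z_{\Ran X}}$, so the trace map of \corref{c:prod} becomes precisely the map $H_\bullet(Z_{\Ran X})\to H_\bullet(Z^1_{\Ran X}\times Z^2_{\Ran X})$ appearing in the statement. The ``good for $p$'' condition is immediate from \secref{sss:when good}(iii), which asserts that $\omega$ is always in the good subcategory for any map to a point.

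The remaining check, that $\omega_{Z^1_{\Ran X}\times Z^2_{\Ran X}}$ is in the essential image of the forgetful functor from the ``indep'' category, is the only substantive step and the one I expect to require a small argument. One way is to observe that the dualizing sheaf on any prestack $\CY$ carrying an action of $\Ran X$ (as described in \secref{ss:spaced acted by Ran}) tautologically lifts to an object of $\fD(\CY_{\on{indep}})$: indeed, $\omega_\CY=p_\CY^!(k)$ is compatible with all $!$-pullbacks in the cosimplicial structure defining $\fD(\CY_{\on{indep}})=\underset{\bDelta_s}{lim}\,\fD^!(\CY_{\bDelta_s})$, so the constant family with value $\omega$ assembles into the desired lift. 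Applied to the product $Z^1_{\Ran X}\times Z^2_{\Ran X}$ (which carries the diagonal $\Ran X$-action used in the construction of the product ``indep'' category), this provides the required lift of $\omega$.

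Combining these three points, \corref{c:prod} yields the asserted isomorphism. The main obstacle, if any, is really only bookkeeping: one must make sure that the canonical lift of $\omega$ to the ``indep'' category on the product matches the product ``indep'' structure exactly as set up in \secref{ss:products}, so that \corref{c:prod} applies verbatim; but this is purely formal from the definitions.
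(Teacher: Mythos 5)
Your proposal is correct and matches the paper's (implicit) argument: the paper simply states the corollary with the phrase ``In particular,'' meaning one specializes \corref{c:prod} to $\CF=\omega$, and you have carried out the bookkeeping accurately. The three checks you list are exactly the right ones, and all three go through for the reason you give (in particular, $\omega$ always lies in the essential image of the forgetful functor since $!$-pullback preserves dualizing sheaves, and $\omega\in\fD(\CY)_{\on{good}\, \on{for}\,p_\CY}$ by \secref{sss:when good}(iii)). One small remark: the paper's \corref{c:prod} asks for $\CF$ in the essential image of the forgetful functor from $\fD(Z^1_{\Ran X,\on{indep}}\times Z^2_{\Ran X,\on{indep}})$, which is a priori the product of the two individual ``indep'' categories rather than the ``indep'' category for the diagonal $\Ran X$-action on the product; but this distinction is immaterial here since $\omega_{Z^1_{\Ran X}\times Z^2_{\Ran X}}\simeq\omega_{Z^1_{\Ran X}}\boxtimes\omega_{Z^2_{\Ran X}}$ and each factor lifts tautologically to its own ``indep'' category.
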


\ssec{Pairs of finite sets} \label{ss:pairs}

In this subsection we shall consider another family of examples of the situation
described in \secref{ss:strongly unital}. 

\sssec{} \label{sss:categ of pairs}

We consider the category $\fset^{\to}$ whose objects can be depicted as $(J\to I)$, i.e.,
pairs of finite sets, equipped with a map between them (the map $J\to I$ is arbitrary, 
i.e., it is not required to be either injective or surjective), and morphisms are
commutative diagrams
$$
\CD
J_1  @>>>  I_1 \\
@VVV   @VVV  \\
J_2  @>>>  I_2,
\endCD
$$
where vertical maps are surjective. 



\medskip

This category is acted on by $\fset$ via
$$K\sqcup (J\to I):=(K\sqcup J\to K\sqcup I).$$

\medskip

We have two naturally defined functors ${\mathbf {pr}}_{\on{source}}$ and ${\mathbf {pr}}_{\on{target}}$
from $\fset^\to$ to $\fset$, both compatible with the monoidal action. 

\sssec{}

In this subsection we shall take the index category $\CS$ to be $(\fset^\to)^{op}$. 
We will take the functor
$$f_\CS:(\fset^\to)^{op}\to (\fset)^{op}$$
to be $({\mathbf {pr}}_{\on{source}})^{op}$. 

\medskip

The natural
transformation $\bd$ is given by the map in $\fset^\to$:
$$(J\sqcup J\to J\sqcup I)\to (J\to I).$$

\sssec{}

We shall consider functors $Z:(\fset^\to)^{op}\to \indSch$ equipped with a strongly unital structure
with respect to $(f_\CS,\bd)$ specified above. I.e., these are functors
$$(J\to I)\rightsquigarrow Z(J\to I),$$
equipped with a functorial assignment
$$\on{unit}_{K,J\to I}:X^K\times Z(J\to I)\to Z(K\sqcup J\to K\sqcup I),$$
and a natural transformation 
\begin{equation} \label{e:source nat trans}
(J\to I)\rightsquigarrow (Z(J\to I)\overset{f(J\to I)}\longrightarrow X^J),
\end{equation}
such that the condition from \secref{sss:strong axiom} holds. 

\medskip

Let $Z_{\Ran^\to X}$ denote the object of $\on{PreStk}$ equal to
$$\underset{(\fset^\to)}{colim}\, Z_{\Ran^\to X}.$$

The space $Z_{\Ran^\to X}$ is acted on by the semi-group $\Ran X$.
We shall consider the corresponding semi-simplicial object $Z_{\Ran^\to X,\bDelta_s}$
and its geometric realization $Z_{\Ran^\to X,\on{indep}}$.

\sssec{}

The prime example of such functor
is $X^{\fset^{\to}}$ given by 
$$(J\to I)\rightsquigarrow X^{J\to I}:=X^I,$$
i.e., $X^{\fset^{\to}}=X^{\fset}\circ ({\mathbf {pr}}_{\on{target}})^{op}$.

\medskip

Set $$\Ran^{\to} X:=\underset{(\fset^{\to})^{op}}{colim}\, X^{\fset^{\to}}.$$

\sssec{}   \label{sss:over pairs}

We shall impose the following additional structure on our functors 
$$Z:(\fset^\to)^{op}\to \indSch:(J\to I)\rightsquigarrow Z(J\to I).$$

Namely, we assume that $Z$ be equipped with a natural transformation $f^\to_Z:Z\to X^{\fset^{\to}}$. I.e.,
for every $(J\to I)$ we have a map
$$f^\to(J\to I):Z(J\to I)\to X^I,$$
such that the map $f(J\to I)$ of \eqref{e:source nat trans} equals the composition
$$Z(J\to I)\overset{f^\to(J\to I)}\longrightarrow X^I\to X^J.$$

\medskip

We require that the following additional conditions, analogous to those of \secref{sss:over Ran}
hold:

\begin{enumerate}

\item For $I,J,K\in \fset$, the diagram
$$
\CD
X^K\times Z(J\to I)  @>{\on{unit}_{K,J\to I}}>>  Z(K\sqcup J\to K\sqcup I) \\
@V{\on{id}\times f^\to(J\to I)}VV  @VV{f^\to (K\sqcup J\to K\sqcup I)}V  \\
X^K\times X^I  @>{\sim}>>  X^{K\sqcup I}
\endCD
$$
commutes. 

\item For every arrow $(J_1\to I_1)\to (J_2\to I_2)$ in $\fset^\to$, the resulting
map
$$Z(J_2\to I_2)\to X^{I_2}\underset{X^{I_1}}\times Z(J_1\to I_1)$$
be an isomorphism. 
in $(\fset^\to)^{op}$,

\end{enumerate}



\sssec{}

Note now that we have a functor ${\mathbf{diag}}:\fset\to \fset^\to$, given by
$$I\rightsquigarrow (I\overset{\on{id}}\to I).$$

\medskip

For $Z$ as in \secref{sss:over pairs}, let $Z^{{\mathbf{diag}}}$ denote the functor
$$Z\circ {\mathbf{diag}}^{op}:(\fset)^{op}\to \indSch.$$

\medskip

Note that if $Z$ is strongly unital and satisfies the conditions of \secref{sss:over pairs},
then $Z^{{\mathbf{diag}}}$ is also unital and satisfies the conditions of
\secref{sss:over Ran}. 

\medskip

By \secref{sss:expl map}, we have a map
\begin{equation} \label{e:from diag}
Z^{{\mathbf{diag}}}_{\Ran X}\to Z_{\Ran^\to X},
\end{equation}
which is easily seen to be compatible with the action of the semi-group $\Ran X$. Hence,
it gives rise to a map
\begin{equation} \label{e:from diag unital}
Z^{{\mathbf{diag}}}_{\Ran X,\on{indep}}\to Z_{\Ran^\to X,\on{indep}}.
\end{equation}

\medskip

We shall prove:

\begin{prop} \label{p:pairs}
The map \eqref{e:from diag unital} is an isomorphism.
\end{prop}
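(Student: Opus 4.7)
The plan is to construct a map $\psi:Z_{\Ran^\to X}\to Z^{{\mathbf{diag}}}_{\Ran X}$ in the opposite direction, compatible with the $\Ran X$-action so that it descends to the $\on{indep}$ versions, and then to verify that the two compositions with the map $\phi$ of \eqref{e:from diag} become the identity, in complete parallel with the proofs of \propref{p:marked points} and \propref{p:prod}. I construct $\psi$ via the paradigm of \secref{sss:expl map} applied to the functor $({\mathbf{pr}}_{\on{target}})^{op}:(\fset^\to)^{op}\to(\fset)^{op}$ and to the natural transformation whose value on $(J\to I)\in\fset^\to$ is the map $Z(J\to I)\to Z^{{\mathbf{diag}}}(I)=Z(I\to I)$ defined as follows. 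The composition
$$Z(J\to I)\overset{f^\to(J\to I)\times\on{id}}\longrightarrow X^I\times Z(J\to I)\overset{\on{unit}_{I,J\to I}}\longrightarrow Z(I\sqcup J\to I\sqcup I)$$
takes image, by Condition (1) of \secref{sss:over pairs}, in the preimage of the diagonal $X^I\hookrightarrow X^{I\sqcup I}$ (coming from the fold map $I\sqcup I\to I$) under $f^\to(I\sqcup J\to I\sqcup I)$. By Condition (2) of \secref{sss:over pairs}, applied to the morphism $(I\sqcup J\to I\sqcup I)\to(I\to I)$ in $\fset^\to$ with fold surjections on both verticals, this preimage is canonically identified with $Z(I\to I)\subset Z(I\sqcup J\to I\sqcup I)$; the composition therefore factors uniquely through $Z(I\to I)$, and this factorization is $\psi(J\to I)$.

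For the composition $\psi\circ\phi$: on $Z^{{\mathbf{diag}}}(I)=Z(I\to I)$, the strongly unital axiom of \secref{sss:strong axiom} identifies the map $Z(I\to I)\to Z(I\sqcup I\to I\sqcup I)$ obtained from $f(I\to I)\times\on{id}$ and $\on{unit}_{I,I\to I}$ with $Z(\bd(I\to I))$, which by Condition (2) is exactly the inclusion of $Z(I\to I)$ as the diagonal fiber of $Z(I\sqcup I\to I\sqcup I)$. Uniqueness of the factorization through $Z(I\to I)$ then forces $\psi\circ\phi=\on{id}$ strictly, before passing to $\on{indep}$.

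For the composition $\phi\circ\psi$, which is \emph{not} the identity at the level of pseudo-indschemes, I follow Step 3 of \propref{p:marked points}. I construct a map $\tilde\gamma:Z_{\Ran^\to X}\to\Ran X\times Z_{\Ran^\to X}$ via the paradigm of \secref{sss:expl map} applied to the functor $(\fset^\to)^{op}\to(\fset\times\fset^\to)^{op}$, $(J\to I)\mapsto(I,(J\to I))$, with natural transformation $Z(J\to I)\to X^I\times Z(J\to I)$ given by $f^\to(J\to I)\times\on{id}$. By construction $(p_{\Ran X}\times\on{id})\circ\tilde\gamma=\on{id}_{Z_{\Ran^\to X}}$, whereas $\on{unit}_{\Ran}\circ\tilde\gamma$ coincides with $\phi\circ\psi$ already at the level of $Z_{\Ran^\to X}$, because the inclusion $Z(I\to I)\hookrightarrow Z(I\sqcup J\to I\sqcup I)$ supplied by Condition (2) is precisely the transition map in the colimit defining $Z_{\Ran^\to X}$ attached to the $\fset^\to$-arrow $(I\sqcup J\to I\sqcup I)\to(I\to I)$. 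Since $\on{unit}_{\Ran}$ and $p_{\Ran X}\times\on{id}$ are the two face maps of the $[1]$-level of $Z_{\Ran^\to X,\bDelta_s}$, they become canonically homotopic in $Z_{\Ran^\to X,\on{indep}}$, whence $\phi\circ\psi\sim\on{id}$ after passing to $\on{indep}$.

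The main obstacle will be the careful verification of functoriality of $\psi$ in arrows of $\fset^\to$, especially those whose source map $J\to I$ is not surjective, and of $\Ran X$-equivariance of $\psi$; this is the analog, in the pairs setting, of the bookkeeping recorded in \lemref{l:huge}. Once these compatibilities are established, the remaining steps of the argument run in complete parallel with \propref{p:marked points}.
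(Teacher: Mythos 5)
Your proof is correct and follows essentially the same route as the paper's: the map $\psi$ via ${\mathbf{pr}}_{\on{target}}$, the strict identity $\psi\circ\phi=\on{id}$ from the strongly unital axiom, and the factorization of $\phi\circ\psi$ through $\gamma$ and $\on{unit}_{\Ran X}$ with the identity retraction onto $Z_{\Ran^\to X}$ are exactly the paper's Steps 1--3. The only cosmetic difference is that the paper phrases the comparison of $\on{unit}_{\Ran X}\circ\gamma$ with $\phi\circ\psi$ via the natural-transformation paradigm of \secref{sss:paradigm nat trans} (with $\beta$ the fold arrow $(I\sqcup J\to I\sqcup I)\to(I\to I)$ in $\fset^\to$), whereas you observe directly that the relevant inclusion is the colimit transition map attached to that arrow — which is the same content.
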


\sssec{}

\propref{p:pairs} has corollaries analogous to those of \propref{p:marked points}:

\medskip

Let $\CF$ be an object of $\fD(Z_{\Ran^\to X})$, which lies in the essential image 
of the forgetful functor $\fD(Z_{\Ran^\to X,\on{indep}})\to \fD(Z_{\Ran^\to X})$.
Assume also that $\CF$ belongs to $\fD(Z_{\Ran^\to X})_{\on{good}\, \on{for}\,p_{Z_{\Ran^\to X}}}$.

\medskip

Let $\CF^{{\mathbf{diag}}}$ be the pullback of $\CF$ under
the map \eqref{e:from diag}. Combining 
\corref{c:unital vs nonunital} with \propref{p:pairs}, we obtain:

\begin{cor}  \label{c:pairs}
The trace map
$$\Gamma_{\dr,c}(Z^{{\mathbf{diag}}}_{\Ran X},\CF^{{\mathbf{diag}}})\to \Gamma_{\dr,c}(Z_{\Ran^\to X},\CF)$$
is an isomorphism.
\end{cor}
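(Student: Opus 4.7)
The plan is to follow the three-step structure of the proof of \propref{p:marked points}, adapted to the pairs setting. I would construct a reverse map $g: Z_{\Ran^\to X} \to Z^{\mathbf{diag}}_{\Ran X}$ compatible with the $\Ran X$-action, and then show that both compositions of $g$ with the map \eqref{e:from diag} are canonically homotopic to the identity after passing to the indep categories.

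First, I would construct $g$ following the paradigm of \secref{sss:expl map}. The natural choice of underlying functor is $G: \fset^\to \to \fset$, $(J \to I) \mapsto J \sqcup I$. The required natural transformation $Z \Rightarrow Z^{\mathbf{diag}} \circ G^{op}$ would be built by applying the unital action
$$\on{unit}_{I, J \to I}: X^I \times Z(J \to I) \to Z(I \sqcup J \to I \sqcup I),$$
precomposed with $f^\to \times \on{id}$, and then identifying the target with $Z(J \sqcup I \to J \sqcup I)$ by means of Condition~(2) of \secref{sss:over pairs} applied to suitable surjections in $\fset^\to$. One then verifies that this construction is functorial in $(J \to I)$ and $\Ran X$-equivariant.

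Next, for Step 2, I would show that the composition of the map \eqref{e:from diag} with $g$ (as an endomorphism of $Z^{\mathbf{diag}}_{\Ran X}$) is canonically homotopic to the identity. On a diagonal object $(I \to I)$ the composition sends $Z(I \to I)$ to $Z(I \sqcup I \to I \sqcup I)$, and by the strongly unital axiom of \secref{sss:strong axiom} this equals the map induced by the fold surjection $(I \sqcup I \to I \sqcup I) \to (I \to I)$ in $\fset^\to$; the paradigm of \secref{sss:paradigm nat trans} then provides the desired homotopy to the identity, exactly as in Step 2 of the proof of \propref{p:marked points}. For Step 3, I would show that the other composition (as an endomorphism of $Z_{\Ran^\to X, \on{indep}}$) is canonically homotopic to the identity, using the commutativity of the $\Ran X$-action. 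Following Step 3 of \propref{p:marked points}, the composition would factor through a map $Z_{\Ran^\to X} \to \Ran X \times Z_{\Ran^\to X}$ whose composition with $\on{unit}_{\Ran}$ recovers the composition in question and whose projection to $Z_{\Ran^\to X}$ is the identity; the commutativity of $\Ran X$ then supplies the required homotopy via the semi-simplicial structure.

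The hard part will be Step 1. The obstacle is that there is no canonical morphism in $\fset^\to$ between $(I \sqcup J \to I \sqcup I)$ and the diagonal $(J \sqcup I \to J \sqcup I)$: constructing such a morphism would require an extraneous map $I \to J$. The identification therefore has to be extracted from Condition~(2) of \secref{sss:over pairs}, possibly after enlarging to a suitable auxiliary object, and its functoriality in $(J \to I) \in \fset^\to$ together with its compatibility with the action of $\Ran X$ will require careful bookkeeping analogous to the diagram chase of \lemref{l:huge}.
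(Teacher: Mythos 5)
The core issue is your choice of underlying functor $G:\fset^\to\to\fset$, $(J\to I)\mapsto J\sqcup I$, in Step 1, and the obstacle you flag at the end is in fact fatal for this choice. There are two problems. First, $G$ is not a module functor for the $\fset$-actions: the action on $\fset^\to$ is $K\sqcup(J\to I)=(K\sqcup J\to K\sqcup I)$, so $G(K\sqcup(J\to I))=(K\sqcup J)\sqcup(K\sqcup I)$, which is not $K\sqcup G(J\to I)=K\sqcup J\sqcup I$ ($K$ appears twice). So $g$ built this way would not be $\Ran X$-equivariant at the level of index categories, and the passage to $\CY_{\on{indep}}$ collapses. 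Second, after $\on{unit}_{I,J\to I}$ you sit in $Z(I\sqcup J\to I\sqcup I)$ over $X^{I\sqcup I}$, but $Z^{{\mathbf{diag}}}(J\sqcup I)=Z(J\sqcup I\to J\sqcup I)$ sits over $X^{J\sqcup I}$. The morphism $(I\sqcup J\to I\sqcup I)\to(J\sqcup I\to J\sqcup I)$ needed to invoke Condition~(2) of \secref{sss:over pairs} would require a surjection $I\sqcup I\twoheadrightarrow J\sqcup I$, i.e.\ essentially a map $I\to J$, which is not part of the data; the reverse arrow fails by a similar count. Condition~(2) only trades information along arrows of $\fset^\to$, and no amount of ``enlarging'' within the given data produces the missing surjection.

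The paper's proof of \propref{p:pairs} avoids this entirely by taking $G$ to be the target projection $(J\to I)\mapsto I$, which is manifestly a module functor. The natural transformation is then built by first rewriting $Z(J\to I)\simeq X^I\underset{X^{I\sqcup I}}\times\bigl(X^I\times Z(J\to I)\bigr)$ using the diagonal $X^I\to X^{I\sqcup I}$, applying $\on{unit}_{I,J\to I}$ to land in $X^I\underset{X^{I\sqcup I}}\times Z(I\sqcup J\to I\sqcup I)$, and using the \emph{genuine} morphism $(I\sqcup J\to I\sqcup I)\to(I\to I)$ in $\fset^\to$ (the needed surjections $I\sqcup J\twoheadrightarrow I$ and $I\sqcup I\twoheadrightarrow I$ both exist from the given map $J\to I$ and the fold) together with Condition~(2) to identify the result with $Z^{{\mathbf{diag}}}(I)$. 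Your Steps~2 and~3 are then carried through as you outline, and the homotopies indeed follow from the strongly unital axiom and \secref{sss:paradigm nat trans} exactly as in \propref{p:marked points}.

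One smaller point: your three steps are a proof of \propref{p:pairs}, not of the corollary. To conclude the statement about $\Gamma_{\dr,c}$ on the non-$\on{indep}$ spaces, you must additionally invoke \corref{c:unital vs nonunital} (using the hypothesis that $\CF$ lies in the essential image of the forgetful functor from $\fD(Z_{\Ran^\to X,\on{indep}})$ and is good for $p$), which is precisely how the paper deduces \corref{c:pairs} once \propref{p:pairs} is in place.
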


As a particular case, we have:

\begin{cor}  \label{c:pairs homology}
Then the trace map
$$H_\bullet(Z^{{\mathbf{diag}}}_{\Ran X})\to H_\bullet(Z_{\Ran^\to X})$$
is an isomorphism.
\end{cor}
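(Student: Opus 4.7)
The plan is to deduce \corref{c:pairs homology} as the specialization of \corref{c:pairs} to the dualizing sheaf $\CF = \omega_{Z_{\Ran^\to X}}$. Thus once \corref{c:pairs} is established (which in turn is the combination of \propref{p:pairs} with \corref{c:unital vs nonunital}), the work for this corollary reduces to verifying the two hypotheses on $\CF$ and identifying the pullback $\CF^{{\mathbf{diag}}}$.

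First, I check that $\omega_{Z_{\Ran^\to X}}$ lies in $\fD(Z_{\Ran^\to X})_{\on{good}\, \on{for}\,p_{Z_{\Ran^\to X}}}$: this is the content of \secref{sss:when good}(iii), which says that the dualizing sheaf is always good for the structure map to a point. Second, I check that $\omega_{Z_{\Ran^\to X}}$ lies in the essential image of the forgetful functor $\fD(Z_{\Ran^\to X,\on{indep}}) \to \fD(Z_{\Ran^\to X})$. Since that forgetful functor is by construction the $!$-pullback along the tautological map $Z_{\Ran^\to X} = (Z_{\Ran^\to X})_{[0]} \to Z_{\Ran^\to X,\on{indep}}$, and since $!$-pullback carries dualizing sheaves to dualizing sheaves (because $\omega_{(-)}$ is defined by $!$-pullback from $\on{pt}$), the object $\omega_{Z_{\Ran^\to X,\on{indep}}}$ tautologically lifts $\omega_{Z_{\Ran^\to X}}$.

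Finally, I identify the pullback $\CF^{{\mathbf{diag}}}$. By definition it is the $!$-pullback of $\omega_{Z_{\Ran^\to X}}$ along the map \eqref{e:from diag}, and again by functoriality of $\omega$ under $!$-pullback this is canonically isomorphic to $\omega_{Z^{{\mathbf{diag}}}_{\Ran X}}$. Plugging these identifications into \corref{c:pairs} yields
$$\Gamma_{\dr,c}\bigl(Z^{{\mathbf{diag}}}_{\Ran X}, \omega_{Z^{{\mathbf{diag}}}_{\Ran X}}\bigr) \xrightarrow{\sim} \Gamma_{\dr,c}\bigl(Z_{\Ran^\to X}, \omega_{Z_{\Ran^\to X}}\bigr),$$
which is exactly the asserted isomorphism on $H_\bullet$.

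There is essentially no obstacle to this deduction; the entire content is carried by \propref{p:pairs}, which is the geometric heart of the story (and whose proof is in turn expected to parallel that of \propref{p:marked points}, using the paradigm of \secref{sss:paradigm nat trans} together with \lemref{l:huge} to construct mutually inverse maps at the level of $\on{indep}$-spaces). The only book-keeping worth being careful about is to spell out the compatibility of $\omega_{(-)}$ with $!$-pullback along the maps $Z_{\Ran^\to X}\to \on{pt}$ and $Z^{{\mathbf{diag}}}_{\Ran X} \to Z_{\Ran^\to X}$, which is formal from the definitions in \secref{sss:morphism of Z}.
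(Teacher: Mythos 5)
Your proposal is correct and is exactly the specialization that the paper intends when it says ``As a particular case, we have'': take $\CF = \omega_{Z_{\Ran^\to X}}$ in \corref{c:pairs}, check that it is good for $p_{Z_{\Ran^\to X}}$ (by \secref{sss:when good}(iii)) and that it lifts to $\fD(Z_{\Ran^\to X,\on{indep}})$ (because the forgetful functor is $!$-pullback and $\omega$ is functorial under $!$-pullback), and then identify $\CF^{{\mathbf{diag}}}$ with $\omega_{Z^{{\mathbf{diag}}}_{\Ran X}}$ for the same reason. This matches the paper's (implicit) argument; you have just spelled out the book-keeping that it leaves to the reader.
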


\sssec{Proof of \propref{p:pairs}} \hfill

\medskip

\noindent{\bf Step 1.}
Consider the functor ${\mathbf {pr}}_{\on{target}}:\fset^\to\to \fset$.
We shall now construct a natural transformation
$$Z\Rightarrow Z^{{\mathbf{diag}}}\circ ({\mathbf {pr}}_{\on{target}})^{op}.$$
It is given by sending a pair $(J\to I)$ to the map
$$Z(J\to I)\simeq 
X^I\underset{X^{I\sqcup I}}\times \left(X^I\times Z(J\to I)\right)
\overset{\on{unit}_{I,J\to I}}\longrightarrow X^I\underset{X^{I\sqcup I}}\times Z(I\sqcup J\to I\sqcup I)\simeq Z^{{\mathbf{diag}}}(I),$$
where the last isomorphism corresponds to the morphism $(I\sqcup J\to I\sqcup I)\to (I\to I)$ in $\fset^\to$.

\medskip

Hence, by \secref{sss:expl map}, we obtain a map 
\begin{equation} \label{e:to diag}
Z_{\Ran^\to X}\to Z^{{\mathbf{diag}}}_{\Ran X},
\end{equation}
which is also easily seen to be compatible with the action of the semi-group $\Ran X$. Hence, we obtain a map
\begin{equation} \label{e:to diag unital}
Z_{\Ran^\to X,\on{indep}}\to Z^{{\mathbf{diag}}}_{\Ran X,\on{indep}}.
\end{equation}

\medskip

However, it follows from the condition of \secref{sss:strong axiom} that
the composition 
$$Z^{{\mathbf{diag}}}_{\Ran X}\to Z_{\Ran^\to X}\to Z^{{\mathbf{diag}}}_{\Ran X}$$
equals the identity map. Hence, so does the composition 
$$Z^{{\mathbf{diag}}}_{\Ran X,\on{indep}}\to Z_{\Ran^\to X,\on{indep}}\to Z^{{\mathbf{diag}}}_{\Ran X,\on{indep}}.$$

\medskip

\noindent{\bf Step 2.} Thus, it remains to show that the composition 
$$Z_{\Ran^\to X,\on{indep}}\to Z^{{\mathbf{diag}}}_{\Ran X,\on{indep}}\to Z_{\Ran^\to X,\on{indep}}$$
is canonically homotopic to the identity map. For that, as in the proof of \propref{p:marked points}, it
is enough to show that the composition
\begin{equation} \label{e:compos pairs one}
Z_{\Ran^\to X}\to Z^{{\mathbf{diag}}}_{\Ran X}\to Z_{\Ran^\to X}
\end{equation}
can be factored as 
$$Z_{\Ran^\to X} \overset{\gamma}\to \Ran X\times Z_{\Ran^\to X} \overset{\on{unit}_{\Ran X}}\longrightarrow
Z_{\Ran^\to X}$$
for some map $\gamma:Z_{\Ran^\to X} \overset{\gamma}\to \Ran X\times Z_{\Ran^\to X}$, such that the composition
$$Z_{\Ran^\to X} \overset{\gamma}\to \Ran X\times Z_{\Ran^\to X} \overset{p_{\Ran X}\times \on{id}}\longrightarrow
Z_{\Ran^\to X}$$
is the identity map.

\medskip

We define the map $\gamma$ as follows. It is defined in terms of \secref{sss:expl map}
by the functor $$\gamma_{\fset}:(\fset^\to)^{op}\to (\fset\times \fset^\to)^{op},$$ given by
$$(J\to I)\rightsquigarrow (I,(J\to I)),$$
and the natural transformation between the two functors $(\fset^\to)^{op}\rightrightarrows \indSch$
$$Z\Rightarrow (X^{\fset}\times Z)\circ \gamma_{\fset}$$
given by
$$Z(J\to I)\overset{f^\to(J\to I)\times \on{id}}\longrightarrow X^I\times Z(J\to I).$$

\medskip

\noindent{\bf Step 3.} Let us show that the resulting map 
\begin{equation} \label{e:compos pairs two}
\on{unit}_{\Ran X}\circ \,\gamma:Z_{\Ran^\to X}\to Z_{\Ran^\to X}
\end{equation}
is indeed homotopic to the map \eqref{e:compos pairs one}. We shall do so using the paradigm
of \secref{sss:paradigm nat trans}:

\medskip

We take $\bC_1=\bC_2=(\fset^\to)^{op}$ and $\Phi_1=\Phi_2=Z$. We take the functor
$F$ to be 
$$(J\to I)\rightsquigarrow (I\sqcup J\to I\sqcup I),$$
and the functor $F'$ to be
$$(J\to I)\rightsquigarrow (I\to I).$$

\medskip

The map \eqref{e:compos pairs two} corresponds to the natural transformation $\alpha:\Phi_1\Rightarrow \Phi_2\circ F$
that sends $(J\to I)$ to the map
$$Z(J\to I)\overset{f^\to(J\to I)\times \on{id}}\longrightarrow X^I\times Z(J\to I)
\overset{\on{unit}_{I,(J\to I)}}\longrightarrow Z(I\sqcup J,I\sqcup I).$$

\medskip

The map \eqref{e:compos pairs one} corresponds to the natural transformation $\alpha':\Phi_1\Rightarrow \Phi_2\circ F'$
that sends $(J\to I)$ to the map
$$Z(J\to I)\simeq 
X^I\underset{X^{I\sqcup I}}\times \left(X^I\times Z(J\to I)\right)
\overset{\on{unit}_{I,J\to I}}\longrightarrow X^I\underset{X^{I\sqcup I}}\times Z(I\sqcup J\to I\sqcup I)\simeq Z(I\to I).$$

\medskip

Now, the required natural transformation $\beta:F\to F'$ sends $(J\to I)$ to the map in $(\fset^\to)^{op}$
$$(I\to I)\to (I\sqcup J\to I\sqcup I)$$
opposite to the natural map in $\fset^\to$.

\qed

\section{Proof of the main theorem}






\ssec{The case of $Y=\BA^n$}



We will show directly that the assertion of \thmref{t:contractibility} holds in this case, which
is a triviality modulo \thmref{t:contractibility of Ran}.

\sssec{}

By \secref{sss:expl map}, the natural transformation of functors $(\fset)^{op}\rightrightarrows \indSch$
$${\mathbf{Maps}}(X,Y)^{\on{rat}}_{X^{\fset}}\Rightarrow X^{\fset}$$
induces a map
$$f:{\mathbf{Maps}}(X,Y)^{\on{rat}}_{\Ran X}\to \Ran X,$$
and by \eqref{e:trace map omega}, combined with \secref{sss:when good}(iii), a map
\begin{equation} \label{e:V2 to k upstairs}
\on{Tr}_\omega(f):f_{!}(\omega_{{\mathbf{Maps}}(X,Y)^{\on{rat}}_{\Ran X}})\to \omega_{\Ran X}.
\end{equation}

\medskip

Applying $\Gamma_{\dr,c}(\Ran X,-)$, we obtain a map
\begin{equation} \label{e:affine to pt}
\on{Tr}_{H_\bullet}(f):H_\bullet({\mathbf{Maps}}(X,Y)^{\on{rat}}_{\Ran X})\to H_\bullet(\Ran X).
\end{equation}

\medskip

We claim that the map \eqref{e:V2 to k upstairs} is an isomorphism. 
The map \eqref{e:V2 to k upstairs} is given by a compatible system of maps
\begin{equation} \label{e:V2 to k upstairs before colimit}
f(I)_!\left(\omega_{{\mathbf{Maps}}(X,Y)^{\on{rat}}_{X^I}}\right)\to \omega_{X^I}.
\end{equation}

We will show that for every $I$, the map \eqref{e:V2 to k upstairs before colimit} is an isomorphism.
This follows from the following general lemma:

\begin{lem}
Let $T$ be a scheme and $\CE$ a locally projective $\CO_T$-module, viewed as an indscheme
over $T$. Then the map $f_{!}(\omega_\CE)\to \omega_T$ in $\fD(T)$ is an isomorphism, where 
$f$ denotes the map $\CE\to T$.
\end{lem}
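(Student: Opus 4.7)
The plan is to reduce to the case of a finite-rank vector bundle, which by Zariski descent reduces further to the case $\CE_\alpha\simeq T\times \BA^n$, where the assertion follows from the classical fact $H_\bullet(\BA^n)\simeq k$. Write $\CE$ as a filtered colimit $\underset{\alpha}{colim}\, \CE_\alpha$ of its finite-rank locally free submodules, which presents the total space $\CE$ as an indscheme $\underset{\alpha}{colim}\, \CE_\alpha$ with closed embeddings as transition maps. By \secref{sss:when good}(iii), $\omega_\CE\in \fD(\CE)_{\on{good}\, \on{for}\,f}$, and applying \eqref{e:calc dir image c} to this presentation yields
\begin{equation*}
f_{!}(\omega_\CE) \simeq \underset{\alpha}{colim}\, (f_\alpha)_!(\omega_{\CE_\alpha}),
\end{equation*}
where $f_\alpha:\CE_\alpha\to T$ is the projection; moreover the trace map $\on{Tr}_\omega(f)$ gets identified with the colimit of the individual trace maps $\on{Tr}_\omega(f_\alpha): (f_\alpha)_!(\omega_{\CE_\alpha})\to \omega_T$. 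It therefore suffices to show that each $\on{Tr}_\omega(f_\alpha)$ is an isomorphism and that under these identifications, the transition morphisms in the colimit diagram become the identity on $\omega_T$.

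For the first point, fix $\alpha$ and let $\CE_\alpha$ have rank $n$. Since $f_!$ and the trace map are compatible with base change along open immersions, and $\fD(-)$ satisfies Zariski descent, we may assume $\CE_\alpha\simeq T\times \BA^n$ is trivial. Under the equivalence \eqref{e:prod equivalence}, $\fD(T\times \BA^n)\simeq \fD(T)\otimes \fD(\BA^n)$, we have $\omega_{\CE_\alpha}\simeq \omega_T\boxtimes \omega_{\BA^n}$, and $(f_\alpha)_!$ factors as $\on{id}_{\fD(T)}\otimes (p_{\BA^n})_!$, so $\on{Tr}_\omega(f_\alpha)$ is identified with $\on{id}_{\omega_T}\otimes \on{Tr}_{H_\bullet}(\BA^n)$. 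The result then reduces to $H_\bullet(\BA^n)\simeq k$, which follows by induction on $n$ via the Kunneth formula, starting from the standard computation $H_\bullet(\BA^1)\simeq k$.

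For the second point, the transition map $(f_\alpha)_!(\omega_{\CE_\alpha})\to (f_\beta)_!(\omega_{\CE_\beta})$ arises from the closed embedding $\iota:\CE_\alpha\hookrightarrow \CE_\beta$ as follows: from $\iota^!(\omega_{\CE_\beta})\simeq \omega_{\CE_\alpha}$ one obtains by adjunction a counit $\iota_*(\omega_{\CE_\alpha})\to \omega_{\CE_\beta}$, and applying $(f_\beta)_!$ together with $f_\alpha=f_\beta\circ \iota$ yields the desired morphism. The trace maps $\on{Tr}_\omega(f_\alpha)$ and $\on{Tr}_\omega(f_\beta)$ are themselves obtained from $(f_\alpha)_! f_\alpha^!\omega_T\to \omega_T$ and $(f_\beta)_! f_\beta^!\omega_T\to \omega_T$ by adjunction, and chasing the compatibility of these counits along the factorization $f_\alpha=f_\beta\circ \iota$ forces the induced endomorphism of $\omega_T$ to be the identity. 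Hence the colimit of identities is $\omega_T$ and $\on{Tr}_\omega(f)$ is an isomorphism. The main subtlety is precisely this last compatibility of trace maps under adjunction; once that is in hand the result drops out.
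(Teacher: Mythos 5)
Your overall strategy — reduce to the finite-rank case via \eqref{e:calc dir image c}, then handle a finite-rank vector bundle by Zariski descent to the trivial bundle and K\"unneth — is a legitimate alternative to the paper's argument. The transition-map compatibility in your last paragraph is also fine (and is in fact automatic from the identification $\on{Tr}_\omega(f)=\underset{\alpha}{colim}\,\on{Tr}_\omega(f_\alpha)$ once each term is an isomorphism, so the explicit counit-chase is more than you need).

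However, there is a genuine gap at the very first step. You ``write $\CE$ as a filtered colimit of its finite-rank locally free submodules, which presents the total space as an indscheme with closed embeddings as transition maps.'' For $\CE_\alpha\hookrightarrow\CE_\beta$ to induce a closed embedding of total spaces $\Spec_T\Sym(\CE_\alpha^\vee)\hookrightarrow\Spec_T\Sym(\CE_\beta^\vee)$, you need $\CE_\beta^\vee\to\CE_\alpha^\vee$ to be surjective, i.e.\ $\CE_\alpha\hookrightarrow\CE_\beta$ to be a subbundle inclusion (locally split, with locally free quotient). It is not at all clear — and for a general locally projective $\CO_T$-module it need not be the case — that $\CE$ is the filtered union of finite-rank locally free subbundles: if $\CE$ is merely a direct summand of a large free module, the intersections of $\CE$ with finite-rank free coordinate subspaces are finitely generated but need not be projective, let alone subbundles. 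In other words, you are implicitly assuming $\CE$ is (locally) free and filtered by finite-rank free subbundles, which is exactly what the lemma does not grant you.

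The paper avoids this issue precisely here: it Zariski-localizes to make $\CE$ projective, then uses the retract trick — $\CE$ is a direct summand of a free $\CE_0$, so $\CE$ is a retract of $\CE_0$ as a prestack over $T$, hence $f_!(\omega_\CE)$ is a retract of $(f_0)_!(\omega_{\CE_0})$, and $\omega_T$ is a retract of $f_!(\omega_\CE)$ via the zero section. This reduces the lemma to the case $\CE$ free, where the filtered presentation by finite-rank free subbundles is available, and then the paper passes to $T=\on{pt}$ by the product structure. If you want to keep your more hands-on Zariski-descent-plus-K\"unneth computation on each finite-rank piece (which is perfectly fine as a replacement for the paper's reduction to a point), you still need the retract argument (or something equivalent) to bridge from ``locally projective'' to ``free'' before invoking the filtered-colimit presentation.
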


\begin{proof}
The question is Zariski local, so we can assume that $\CE$ is projective, and hence a direct
summand of a free $\CO_T$-module $\CE_0$. I.e., $\CE$, viewed as an indscheme over $T$
is a retract of $\CE_0$. Therefore $f_{!}(\omega_\CE)$ is a retract of the corresponding object for
$\CE_0$. However, since $\omega_T$ is also a retract of $f_{!}(\omega_\CE)$ via the zero-section,
we obtain that it suffices to assume that $\CE$ is free. 

\medskip

In the latter case we are dealing with the product situation, so we can assume that $T=\on{pt}$,
and $\CE$ corresponds to a vector space $E\simeq \underset{\alpha}{colim}\, E_\alpha$,
where $E_\alpha$ are finite-dimensional subspaces of $E$. In this case
$$H_\bullet(E)\simeq \underset{\alpha}{colim}\, H_\bullet(E_\alpha),$$
by \eqref{e:calc dR c}.

\medskip

However, for each $\alpha$, the canonical map
$$H_\bullet(E_\alpha)\to k$$
is an isomorphism.
\end{proof}

\sssec{}

Thus, we obtain that the map \eqref{e:V2 to k upstairs} is an isomorphism. Hence, 
\eqref{e:affine to pt} is an isomorphism as well. This implies the assertion of 
\thmref{t:contractibility} for $Y=\BA^n$  in view of \thmref{t:contractibility of Ran}.

\ssec{Subsequent strategy}

The idea of the proof is to deduce the assertion for $Y$ from that for the affine space.
We will do so using an intermediate object,
$${\mathbf{Maps}}(X,U\overset{\on{gen.}}\subset Y)^{\on{rat}}_{\Ran X},$$
introduced below, where $U\subset Y$ is a dense open subset. 

\sssec{}   \label{sss:intr gen cond}

For $I\in \fset$ we let
$$\jmath(I):{\mathbf{Maps}}(X,U\overset{\on{gen.}}\subset Y)^{\on{rat}}_{X^I}\hookrightarrow
{\mathbf{Maps}}(X,Y)^{\on{rat}}_{X^I}$$
to be the open subfunctor that assigns to $x^I:S\to X^I$ the subset of those
maps $$m:(S\times X-\{x^I\})\to Y$$ for which
for every geometric point $s\in S$, the resulting map $(X-\{x^I_s\})\to Y$
lands generically in $U\subset Y$.

\medskip

Set
$${\mathbf{Maps}}(X,U\overset{\on{gen.}}\subset Y)^{\on{rat}}_{\Ran X}:=\underset{(\fset)^{op}}{colim}\, 
{\mathbf{Maps}}(X,U\overset{\on{gen.}}\subset Y)^{\on{rat}}_{X^{\fset}}\in \on{PreStk}.$$

\medskip

By construction, ${\mathbf{Maps}}(X,U\overset{\on{gen.}}\subset Y)^{\on{rat}}_{\Ran X}$ is a pseudo-indscheme.

\sssec{}

The open embeddings $\jmath(I)$ induce a natural
transformation of functors $(\fset)^{op}\rightrightarrows \indSch$
$${\mathbf{Maps}}(X,U\overset{\on{gen.}}\subset Y)^{\on{rat}}_{X^{\fset}}\Rightarrow
{\mathbf{Maps}}(X,Y)^{\on{rat}}_{X^{\fset}}.$$

\medskip

Hence, by \secref{sss:expl map}, we obtain a map
$$\jmath:{\mathbf{Maps}}(X,U\overset{\on{gen.}}\subset Y)^{\on{rat}}_{\Ran X}\to {\mathbf{Maps}}(X,Y)^{\on{rat}}_{\Ran X}.$$

Therefore, by \eqref{e:trace on homology}, we obtain a map 
\begin{equation} \label{e:gen to all}
\on{Tr}_{H_\bullet}(\jmath):H_\bullet\left({\mathbf{Maps}}(X,U\overset{\on{gen.}}\subset Y)^{\on{rat}}_{\Ran X}\right)\to
H_\bullet\left({\mathbf{Maps}}(X,Y)^{\on{rat}}_{\Ran X}\right).
\end{equation}

The following is the only step in the proof of \thmref{t:contractibility} that involves some algebraic geometry:

\begin{prop} \label{p:gen to all}
If $Y\simeq \BA^n$, the map \eqref{e:gen to all} is an isomorphism.
\end{prop}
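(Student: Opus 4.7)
The plan is to make rigorous the ``infinite codimension'' heuristic described in the introduction. Set $Z := \BA^n \setminus U$, a closed subscheme of codimension $c := n - \dim Z \geq 1$. A point of ${\mathbf{Maps}}(X, \BA^n)^{\on{rat}}_{X^I}$ lies outside the open sublocus precisely when its image lies in $Z$ on every geometric fiber. By \eqref{e:expl homology non-proper}, it suffices to prove, compatibly in $I \in \fset$, that the open inclusion
$${\mathbf{Maps}}(X, U \overset{\on{gen.}}\subset \BA^n)^{\on{rat}}_{X^I} \hookrightarrow {\mathbf{Maps}}(X, \BA^n)^{\on{rat}}_{X^I}$$
induces an isomorphism on $H_\bullet$.

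The approach is to stratify by pole order. For each $N \geq 0$, let $V_{I,N} \to X^I$ be the vector bundle whose fiber at $x^I$ is $H^0(X, \CO(N \cdot \{x^I\}))^n$. Then ${\mathbf{Maps}}(X, \BA^n)^{\on{rat}}_{X^I} = \on{colim}_N V_{I,N}$ along closed embeddings. Let $\Sigma_{I,N} \subset V_{I,N}$ be the closed sublocus of tuples whose rational map has image in $Z$, and let $U_{I,N}$ be its open complement; correspondingly ${\mathbf{Maps}}(X, U \overset{\on{gen.}}\subset \BA^n)^{\on{rat}}_{X^I} = \on{colim}_N U_{I,N}$. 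The standard open-closed recollement in $\fD(V_{I,N})$ yields a cofiber sequence
$$\jmath(I,N)_!\,\omega_{U_{I,N}} \to \omega_{V_{I,N}} \to i(I,N)_*\,i(I,N)^!\,\omega_{V_{I,N}}.$$
Applying $\Gamma_{\dr,c}(V_{I,N}, -)$ and passing to the filtered colimit in $N$, the claim reduces to showing that the colimit of the rightmost terms vanishes.

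The heart of the argument is the codimension estimate $\on{codim}_{V_{I,N}}(\Sigma_{I,N}) \to \infty$ as $N \to \infty$. After a generic linear change of coordinates on $\BA^n$, pick a linear projection $\pi: \BA^n \twoheadrightarrow \BA^{\dim Z}$ whose restriction to $Z$ is finite; the induced morphism from $\Sigma_{I,N}$ to $H^0(X, \CO(N \cdot \{x^I\}))^{\dim Z}$ (projection onto the first $\dim Z$ coordinates) is then generically finite on each geometric fiber. Hence the fiberwise dimension satisfies
$$\dim \Sigma_{I,N} \leq \dim X^I + \dim Z \cdot \dim H^0(X, \CO(ND)),$$
while $\dim V_{I,N} = \dim X^I + n \cdot \dim H^0(X, \CO(ND))$, so the fiberwise codimension is at least $c \cdot \dim H^0(X, \CO(ND))$, which grows linearly in $N$. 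Consequently $i(I,N)_* i(I,N)^! \omega_{V_{I,N}}$ has cohomology concentrated in degrees bounded above by $-2\,\on{codim}(\Sigma_{I,N}, V_{I,N})$, which tends to $-\infty$ with $N$, so the filtered colimit of rightmost terms vanishes degreewise. The main obstacle is precisely this codimension bound; it essentially uses the affine-space structure of $Y$ (to find a finite linear projection), and any generalization toward \conjref{conj:contractibility} would require a substitute for this geometric input.
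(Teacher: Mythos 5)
Your overall strategy coincides with the paper's: exhaust the mapping indscheme over $X^I$ by pole order, bound the codimension of the closed complement via a Noether-normalization projection, and conclude by a cohomological degree estimate. Two points need correction before this becomes a proof.

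First, the recollement triangle is misidentified. The cofiber of $\jmath(I,N)_!\,\jmath(I,N)^!\,\omega_{V_{I,N}} \to \omega_{V_{I,N}}$ is $i(I,N)_*\,i(I,N)^*\,\omega_{V_{I,N}}$, not $i(I,N)_*\,i(I,N)^!\,\omega_{V_{I,N}}$; the $!$-restriction appears in the complementary triangle $i_*i^! \to \on{id} \to \jmath_*\jmath^*$. This is not cosmetic: $i(I,N)^!\omega_{V_{I,N}} = \omega_{\Sigma_{I,N}}$, and $\Gamma_{\dr,c}(\Sigma_{I,N},\omega_{\Sigma_{I,N}}) = H_\bullet(\Sigma_{I,N})$ has nonzero $H_0$ whenever $\Sigma_{I,N}$ is nonempty (\secref{sss:connectivity}), so your claimed concentration in degrees $\leq -2\,\on{codim}(\Sigma_{I,N},V_{I,N})$ fails for the $!$-restriction. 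The estimate does hold for the $*$-restriction, since $i(I,N)^*\omega_{V_{I,N}} \cong \underline{k}_{\Sigma_{I,N}}[2\dim V_{I,N}]$ sits in a single degree and applying $\Gamma_{\dr,c}$ raises degree by at most $2\dim\Sigma_{I,N}$. One must also perform this count in the standard (non-perverse) $t$-structure on the holonomic subcategory, a point the paper flags explicitly.

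Second, ``generically finite on each geometric fiber'' is both unproven and too weak for the dimension count. You need a uniform bound on fiber dimensions, i.e.\ quasi-finiteness of $\Sigma_{I,N}\to H^0(X,\CO(N\cdot\{x^I\}))^{\dim Z}$, and this does not follow formally from the finiteness of $\pi|_Z$: it is a nontrivial assertion about how ${\mathbf{Maps}}(X,-)^{\on{rat}}_{X^I}$ behaves on finite morphisms. The paper establishes it by verifying the valuative criterion of properness for ${\mathbf{Maps}}(X,Y_1)^{\on{rat}}_{X^I}\to{\mathbf{Maps}}(X,Y_2)^{\on{rat}}_{X^I}$ for any finite map $Y_1\to Y_2$ of affine schemes (using normality of $\bC\times X-\{x^I\}$ and affineness of $Y_1$), then upgrades properness to finiteness since all spaces involved are affine. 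Moreover, even if ``generically finite'' were granted, it would not by itself bound $\dim\Sigma_{I,N}$ without control over components mapping into proper closed subsets of the target. Your choice of $\BA^{\dim Z}$ instead of the paper's $\BA^{n-1}$ yields a marginally sharper codimension constant but does not circumvent either of these gaps.
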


We do not know whether the assertion of \propref{p:gen to all} hold for more general targets $Y$.

\sssec{}

Let $U$ be affine. The open embedding $U\hookrightarrow Y$ induces also a natural
transformation of functors $$(\fset)^{op}\rightrightarrows \indSch,$$
namely,
$${\mathbf{Maps}}(X,U)^{\on{rat}}_{X^{\fset}}\Rightarrow 
{\mathbf{Maps}}(X,U\overset{\on{gen.}}\subset Y)^{\on{rat}}_{X^{\fset}},$$

\medskip

Hence, by \secref{sss:expl map}, we obtain a map
\begin{equation} \label{e:open to gen spaces}
{\mathbf{Maps}}(X,U)^{\on{rat}}_{\Ran X}\to 
{\mathbf{Maps}}(X,U\overset{\on{gen.}}\subset Y)^{\on{rat}}_{\Ran X}.
\end{equation}

\medskip

Therefore, by \eqref{e:trace on homology}, we obtain a map 
\begin{equation} \label{e:open to gen}
H_\bullet\left({\mathbf{Maps}}(X,U)^{\on{rat}}_{\Ran X}\right)\to 
H_\bullet\left({\mathbf{Maps}}(X,U\overset{\on{gen.}}\subset Y)^{\on{rat}}_{\Ran X}\right).
\end{equation}

\medskip

The next step in the proof will essentially be a formal manipulation with the Ran space:

\begin{prop} \label{p:open to gen}
Assume that $U\subset Y$ is a basic open affine (the locus of non-vanishing of a regular function).
Then the map \eqref{e:open to gen} is an isomorphism.
\end{prop}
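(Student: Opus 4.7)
\textbf{Proof plan for \propref{p:open to gen}.}

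The plan is to interpolate between the two prestacks by introducing an intermediate object parametrized by pairs of finite sets, using the framework of \secref{ss:pairs}. Write $U=Y\setminus V(g)$ for the regular function $g$ on $Y$ witnessing that $U$ is a basic affine open; the essential point is that the condition ``$m$ factors through $U$'' is detected by the non-vanishing of $g\circ m$.

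Concretely, I would define a functor $Z\colon (\fset^{\to})^{op}\to\indSch$ by letting $Z(J\to I)(S)$ be the set of pairs $(x^I,m)$, where $x^I\colon S\to X^I$ and $m\colon S\times X-\{x^J\}\to Y$ is a morphism whose restriction to $S\times X-\{x^I\}$ factors through $U$. Here $\{x^J\}$ is the incidence divisor associated to the composition $S\to X^I\to X^J$ induced by $J\to I$, and $\{x^J\}\subseteq\{x^I\}$ is automatic. A direct check with the evident restriction/extension of rational maps shows that $Z$ is strongly unital in the sense of \secref{sss:strong axiom} and satisfies Conditions (1) and (2) of \secref{sss:over pairs}. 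Along the diagonal, $Z^{{\mathbf{diag}}}(I)=Z(I\to I)={\mathbf{Maps}}(X,U)^{\on{rat}}_{X^I}$, so $Z^{{\mathbf{diag}}}_{\Ran X}={\mathbf{Maps}}(X,U)^{\on{rat}}_{\Ran X}$; invoking \corref{c:pairs homology} then yields
$$H_\bullet\bigl({\mathbf{Maps}}(X,U)^{\on{rat}}_{\Ran X}\bigr)\simeq H_\bullet(Z_{\Ran^{\to} X}).$$

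On the other side, the forgetful functor ${\mathbf{pr}}_{\on{source}}\colon\fset^{\to}\to\fset$ and the natural transformation $(x^I,m)\mapsto(x^J,m)$ from $Z(J\to I)$ to ${\mathbf{Maps}}(X,U\overset{\on{gen.}}\subset Y)^{\on{rat}}_{X^J}$ (the target lies in the ``gen.'' space because $m$ already lands in $U$ on the dense open $S\times X-\{x^I\}$, so $g\circ m$ is generically nonzero) induce a map
$$\pi\colon Z_{\Ran^{\to} X}\to{\mathbf{Maps}}(X,U\overset{\on{gen.}}\subset Y)^{\on{rat}}_{\Ran X}.$$
Combined with the isomorphism above, the proposition reduces to showing that $\pi$ is a homology equivalence. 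I would analyze the fibers of $\pi$: given $(x^J,m)\in{\mathbf{Maps}}(X,U\overset{\on{gen.}}\subset Y)^{\on{rat}}_{X^J}(S)$, the function $g\circ m$ on $S\times X-\{x^J\}$ is generically nonzero on every geometric fiber, so its zero scheme $D=V(g\circ m)$ is a relative effective Cartier divisor with $0$-dimensional fibers. Passing étale-locally on $S$ and stratifying by $\deg D$, one writes $D$ as a tuple $y^{\sA_0}\colon S\to X^{\sA_0}$ disjoint from $x^J$. Using the set-valued description of $Z_{\Ran^{\to} X}$ (as in \remref{r:maps discrete}), an element of the fiber over $(x^J,m)$ is then precisely a finite subset $A_1\subseteq X(S)$ containing $\{x^J\}\cup D$, identifying the fiber, locally on $S$, with the relative marked-points Ran space $\Ran X_{\sA}$ of \secref{sss:relative version} for $\sA=J\sqcup\sA_0$ and $x^{\sA}=(x^J,y^{\sA_0})$. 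The relative form \corref{c:marked points homology rel} then ensures that this fiber contributes trivially to $!$-pushforward, so $\pi$ is a homology equivalence.

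The principal technical obstacle is this fiber analysis: one has to assemble the local-on-$S$ identifications of the fibers of $\pi$ with relative marked-points Ran spaces --- which only exist after étale descent and stratification by $\deg D$ --- into a global homological-triviality statement, so that \corref{c:marked points homology rel} assembles into an isomorphism $H_\bullet(Z_{\Ran^{\to} X})\simeq H_\bullet\bigl({\mathbf{Maps}}(X,U\overset{\on{gen.}}\subset Y)^{\on{rat}}_{\Ran X}\bigr)$. The hypothesis that $U=Y\setminus V(g)$ is a basic open affine is exactly what supplies the single regular function $g$ needed to extract the divisor $D$ and carry out this descent bookkeeping.
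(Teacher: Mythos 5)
Your strategy is exactly the paper's: introduce the intermediate space $Z_{\Ran^\to X}={\mathbf{Maps}}(X,U\subset Y)^{\on{rat}}_{\Ran^\to X}$ with $Z(J\to I)$ defined precisely as you propose, factor the comparison through it, dispose of the first arrow via \corref{c:pairs homology}, and attack the second arrow ($\pi=f_{\on{source}}$) via a fiber analysis. The identification $Z^{\mathbf{diag}}={\mathbf{Maps}}(X,U)^{\on{rat}}_{X^{\fset}}$ is correct, and your intuition that the fiber of $\pi$ over a point $(x^J,m)$ resembles a Ran space forced to contain $\{x^J\}\cup V(g\circ m)$ is also the right picture.

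The genuine gap is in how the fiber picture is supposed to imply the isomorphism on $H_\bullet$. Both source and target of $\pi$ are pseudo-indschemes built as \emph{non-filtered} colimits over different index categories, $(\fset^\to)^{op}$ and $(\fset)^{op}$, and --- as \remref{r:not naive} warns --- $!$-pushforward along such a map cannot be computed term-by-term; there is no naive Leray argument available. The step missing from your plan is the observation that ${\mathbf{pr}}_{\on{source}}:\fset^\to\to\fset$ is a \emph{Cartesian fibration} (so $({\mathbf{pr}}_{\on{source}})^{op}$ is co-Cartesian), which permits the colimit over $(\fset^\to)^{op}$ to be organized fiber-wise over $(\fset)^{op}$. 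This reduces the claim to the assertion that, for each fixed $J$, the map
$$\underset{I\in (\fset_{J/})^{op}}{colim}\,H_\bullet\bigl({\mathbf{Maps}}(X,U\subset Y)^{\on{rat}}_{X^{J\to I}}\bigr)\to H_\bullet\bigl({\mathbf{Maps}}(X,U\overset{\on{gen.}}\subset Y)^{\on{rat}}_{X^J}\bigr)$$
is an isomorphism, and this reduction is the load-bearing step you omit. You locate the chief difficulty in local-to-global descent over $S$, but the colimit structure is the harder issue. Two smaller remarks: (i) the basic-open-affine hypothesis is used exactly to prove that ${\mathbf{Maps}}(X,U)^{\on{rat}}_{X^I}\hookrightarrow {\mathbf{Maps}}(X,U\overset{\on{gen.}}\subset Y)^{\on{rat}}_{X^I}$ is an \emph{ind-closed embedding} (reduce to $Y=\BA^1$, $U=\BG_m$), which makes $f_{\on{source}}(J\to I)$ ind-proper and lets you base-change to $!$-stalks; (ii) the containment condition on $V(g\circ m)$ is only set-theoretic, so the fiber is the formal completion $X^I_{x^J}\underset{X^I}\times\wh{X}{}^{I,\sA}$ rather than the reduced version, and one must pass to reduced indschemes before identifying it with a marked-points Ran space.
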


\ssec{Conclusion of the proof}

Let us accept the assertions of the above Propositions \ref{p:gen to all} and \ref{p:open to gen}
and conclude the proof of \thmref{t:contractibility}.

\sssec{Step 1}

Let $U$ be a basic affine open subset of $\BA^n$. Combining the assertions of Propositions 
\ref{p:gen to all} and \ref{p:open to gen}, and that of \thmref{t:contractibility} for $\BA^n$, we obtain that the 
trace map 
$$H_\bullet\left({\mathbf{Maps}}(X,U)^{\on{rat}}_{\Ran X}\right)\to k$$
is an isomorphism.

\medskip

I.e., we obtain that the assertion of \thmref{t:contractibility} holds for targets $U$ that are isomorphic
to basic open affine subsets of the affine space. 

\sssec{Step 2} Let $Y$ be as in \thmref{t:contractibility}, and let $U_\alpha\subset Y$ be the 
corresponding open subsets. With no restriction of generality we can assume that each $U_\alpha$
is a basic open affine in $Y$, which, moreover, can be realized as a basic open affine in $\BA^n$. 
Hence, the same will be true for any intersection of the $U_\alpha$'s. 

\medskip

For any finite collection of indices
$\ul\alpha:=\alpha_1,...,\alpha_k$, consider the open subset
$$U_{\ul\alpha}:=\underset{j}\bigcap\, U_{\alpha_j}.$$

Consider the composition
$$H_\bullet\left({\mathbf{Maps}}(X,U_{\ul\alpha})^{\on{rat}}_{\Ran X}\right)\to 
H_\bullet\left({\mathbf{Maps}}(X,U_{\ul\alpha}\overset{\on{gen.}}\subset Y)^{\on{rat}}_{\Ran X}\right)\to k.$$

From Step 1 we obtain that the composed arrow is an isomorphism. From 
\propref{p:open to gen} we obtain that the first arrow is an isomorphism. Hence, we conclude that the trace map
$$H_\bullet\left({\mathbf{Maps}}(X,U_{\ul\alpha}\overset{\on{gen.}}\subset Y)^{\on{rat}}_{\Ran X}\right)\to k$$
is an isomorphism as well. 

\sssec{Step 3} 

For each finite set $I$ we have a Zariski cover 
\begin{equation} \label{e:Cech cover maps}
\underset{\alpha}\bigcup\,\, {\mathbf{Maps}}(X,U_{\ul\alpha}\overset{\on{gen.}}\subset Y)^{\on{rat}}_{X^I}\to
{\mathbf{Maps}}(X,Y)^{\on{rat}}_{X^I}.
\end{equation}

\medskip

Let $U_\bullet$ denote the \v{C}ech nerve of the cover $\underset{\alpha}\bigcup\, U_\alpha\to Y$. 
It is easy to see that the \v{C}ech nerve of the cover \eqref{e:Cech cover maps} identifies with the simplicial indscheme
$${\mathbf{Maps}}(X,U_\bullet\overset{\on{gen.}}\subset Y)^{\on{rat}}_{X^I}.$$

Since the category of D-modules satisfies Zariski descent, we obtain that the canonical map
$$|H_\bullet\left({\mathbf{Maps}}(X,U_\bullet\overset{\on{gen.}}\subset Y)^{\on{rat}}_{X^I}\right)|\to
H_\bullet\left({\mathbf{Maps}}(X,Y)^{\on{rat}}_{X^I}\right)$$
is an isomorphism for each $I$, where $|-|$ denotes the functor of geometric realization
$$\Vect^{{\mathbf \Delta}^{op}}\to \Vect.$$

\medskip

Consider now the corresponding simplicial object 
$${\mathbf{Maps}}(X,U_\bullet\overset{\on{gen.}}\subset Y)^{\on{rat}}_{\Ran X}$$
in $\on{PreStk}$, obtained by taking the colimit over $(\fset)^{op}$. 

\medskip

By taking the colimit over the category ${\mathbf \Delta}^{op}\times (\fset)^{op}$, we obtain that 
the resulting map
$$|H_\bullet\left({\mathbf{Maps}}(X,U_\bullet\overset{\on{gen.}}\subset Y)^{\on{rat}}_{\Ran X}\right)|\to
H_\bullet\left({\mathbf{Maps}}(X,Y)^{\on{rat}}_{\Ran X}\right)$$
is an isomorphism as well.

\medskip

We have a commutative diagram
$$
\CD 
|H_\bullet\left({\mathbf{Maps}}(X,U_\bullet\overset{\on{gen.}}\subset Y)^{\on{rat}}_{\Ran X}\right)|  @>>>  
H_\bullet\left({\mathbf{Maps}}(X,Y)^{\on{rat}}_{\Ran X}\right)  \\
@VVV    @VVV   \\
|k_\bullet|  @>>>   k,
\endCD
$$
where $k_\bullet$ is the constant simplicial object of $\Vect$ with value $k$, and where the vertical arrows
are given by the trace maps.

\medskip

As was shown above, the upper horizontal arrow is an isomorphism. The lower horizontal arrow is an isomorphism
since the category ${\mathbf \Delta}^{op}$ is contractible. The left vertical arrow is an isomorphism by Step 2. Hence,
we conclude that the right vertical arrow is also an isomorphism, as desired. 

\qed(\thmref{t:contractibility})

\ssec{Proof of \propref{p:gen to all}}

The idea of the proof is to show that 
the complement of ${\mathbf{Maps}}(X,U\overset{\on{gen.}}\subset \BA^n)^{\on{rat}}_{\Ran X}$ inside
${\mathbf{Maps}}(X,\BA^n)^{\on{rat}}_{\Ran X}$ is ``of infinite codimension", thereby implying
that the two spaces have the same homology. 

\sssec{}

We need to show that the map \eqref{e:gen to all} is an isomorphism. In fact, we will show that
it is a term-wise isomorphism, i.e., that for every $I\in \fset$, the corresponding map
$$\on{Tr}_{H_\bullet}(\jmath(I)):H_\bullet\left({\mathbf{Maps}}(X,U\overset{\on{gen.}}\subset \BA^n)^{\on{rat}}_{X^I}\right)\to
H_\bullet\left({\mathbf{Maps}}(X,\BA^n)^{\on{rat}}_{X^I}\right)$$
is an isomorphism.

\medskip

In fact, we claim that for every $I\in \fset$, the map
\begin{equation} \label{e:gen to Y upstairs}
(f(I)\circ \jmath(I))_{!}(\omega_{{\mathbf{Maps}}(X,U\overset{\on{gen.}}\subset \BA^n)^{\on{rat}}_{X^I}})\to 
f(I)_{!}(\omega_{{\mathbf{Maps}}(X,\BA^n)^{\on{rat}}_{X^I}})
\end{equation}
is an isomorphism in $\fD(X^I)$, where $f(I)$ is the projection ${\mathbf{Maps}}(X,\BA^n)^{\on{rat}}_{X^I}\to X^I$.

\sssec{}

Let us denote by $V$ the complement $\BA^n-U$ (with any scheme structure). Let us denote by $\imath(I)$ the closed 
embedding
$${\mathbf{Maps}}(X,V)^{\on{rat}}_{X^I}\hookrightarrow {\mathbf{Maps}}(X,\BA^n)^{\on{rat}}_{X^I}.$$
We have:
$${\mathbf{Maps}}(X,\BA^n)^{\on{rat}}_{X^I}-{\mathbf{Maps}}(X,V)^{\on{rat}}_{X^I}=
{\mathbf{Maps}}(X,U\overset{\on{gen.}}\subset \BA^n)^{\on{rat}}_{X^I},$$
as open sub-prestacks of ${\mathbf{Maps}}(X,\BA^n)^{\on{rat}}_{X^I}$.

\medskip

Our current goal is to represent the above indschemes explicitly as union of schemes, whose
dimensions we can control. 

\sssec{}  \label{sss:parameter d}

Since $\Gamma^I$ is a Cartier divisor in $X^I\times X$, for an integer $d$, we can consider a closed
subscheme
$${\mathbf{Maps}}(X,\BA^1)^{\on{rat},d}_{X^I}\subset {\mathbf{Maps}}(X,\BA^1)^{\on{rat}}_{X^I}={\mathbf{Maps}}_{X^I}((X^I\times X)-\Gamma^I,\BA^1)$$
that consists of maps, whose order of pole along $\Gamma^I$ is of order $\leq d$. 

\medskip

By Riemann-Roch, for 
$d>(2g-2)$, where $g$ is the genus of $X$, ${\mathbf{Maps}}(X,\BA^1)^{\on{rat},d}_{X^I}$ is
a vector bundle over $X^I$ of rank $d+1-g$. 

\medskip

For an integer $n$, let 
\begin{multline*}
{\mathbf{Maps}}(X,\BA^n)^{\on{rat},d}_{X^I}:=
\underset{n}{\underbrace{\left({\mathbf{Maps}}(X,\BA^1)^{\on{rat},d}_{X^I}\right)\underset{X^I}\times...\underset{X^I}\times
\left({\mathbf{Maps}}(X,\BA^1)^{\on{rat},d}_{X^I}\right)}}
\subset\\
\subset\underset{n}{\underbrace{\left({\mathbf{Maps}}(X,\BA^1)^{\on{rat}}_{X^I}\right)\underset{X^I}\times...\underset{X^I}\times
\left({\mathbf{Maps}}(X,\BA^1)^{\on{rat}}_{X^I}\right)}}={\mathbf{Maps}}(X,\BA^n)^{\on{rat}}_{X^I}
\end{multline*}
be the corresponding closed subscheme of ${\mathbf{Maps}}(X,\BA^n)^{\on{rat}}_{X^I}$. 

\medskip

Let $f(I)^d$ denote the restriction of the map
$f(I)$ to ${\mathbf{Maps}}(X,\BA^n)^{\on{rat},d}_{X^I}$.

\sssec{}

Set also 
$${\mathbf{Maps}}(X,U\overset{\on{gen.}}\subset \BA^n)^{\on{rat},d}_{X^I}:=
{\mathbf{Maps}}(X,U\overset{\on{gen.}}\subset \BA^n)^{\on{rat}}_{X^I}\cap {\mathbf{Maps}}(X,\BA^n)^{\on{rat},d}_{X^I} \text{ and }$$
$${\mathbf{Maps}}(X,V)^{\on{rat},d}_{X^I}:=
{\mathbf{Maps}}(X,V)^{\on{rat}}_{X^I}\cap {\mathbf{Maps}}(X,\BA^n)^{\on{rat},d}_{X^I}.$$

\medskip

Denote by $\jmath(I)^d$ and $\imath(I)^d$ the open and closed embeddings
$${\mathbf{Maps}}(X,U\overset{\on{gen.}}\subset \BA^n)^{\on{rat},d}_{X^I}\hookrightarrow {\mathbf{Maps}}(X,\BA^n)^{\on{rat},d}_{X^I} \text{ and }
{\mathbf{Maps}}(X,V)^{\on{rat},d}_{X^I}\hookrightarrow {\mathbf{Maps}}(X,\BA^n)^{\on{rat},d}_{X^I},$$
respectively, which are complementary to each other.

\medskip

Consider the projection
$$f(I)^d\circ \imath(I)^d:{\mathbf{Maps}}(X,V)^{\on{rat},d}_{X^I}\to X^I.$$

\medskip

A key observation is that the codimension of 
$${\mathbf{Maps}}(X,V)^{\on{rat},d}_{X^I}\subset 
{\mathbf{Maps}}(X,\BA^n)^{\on{rat},d}_{X^I}$$ in the fibers of $f(I)$ uniformly tends to $\infty$
as $d\to \infty$. More precisely, we will show:

\begin{lem}  \label{l:codimension estimate}
There exists a constant $C$, independent of $d$, such that the dimension of the fibers of the map $f(I)^d\circ \imath(I)^d$
is $\leq (n-1)\cdot d+C$.
\end{lem}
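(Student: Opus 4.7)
The plan is to build a finite-fibered projection from $\mathbf{Maps}(X,V)^{\on{rat},d}_{X^I}$ to $\mathbf{Maps}(X,\BA^{n-1})^{\on{rat},d}_{X^I}$ and deduce the dimension bound from the known rank of the latter vector bundle.

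Let $\phi\in k[y_1,\dots,y_n]$ be a polynomial defining $V$, of degree $m$. Performing a generic linear change of coordinates on $\BA^n$ if needed, we may assume that $\phi$, viewed as a polynomial in $y_n$ with coefficients in $k[y_1,\dots,y_{n-1}]$, is monic of degree exactly $m$. Equivalently, the projective closure $\bar{V}\subset \BP^n$ avoids the point $[0\!:\!\cdots\!:\!0\!:\!1]$, so the linear projection
$$\pi\colon \BA^n\to \BA^{n-1},\qquad (y_1,\dots,y_n)\mapsto (y_1,\dots,y_{n-1})$$
restricts to a finite morphism $\pi|_V\colon V\to \BA^{n-1}$.

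Composition with $\pi$ induces a morphism of finite-type schemes over $X^I$,
$$P\colon \mathbf{Maps}(X,V)^{\on{rat},d}_{X^I}\to \mathbf{Maps}(X,\BA^{n-1})^{\on{rat},d}_{X^I},\qquad (f_1,\dots,f_n)\mapsto (f_1,\dots,f_{n-1}),$$
well-defined because dropping a coordinate manifestly preserves the pole bound along $\Gamma^I$. The crucial observation is that the fibers of $P$ are zero-dimensional over every geometric point of the target. Indeed, fixing $(f_1,\dots,f_{n-1})$, a preimage amounts to a rational function $f_n\in k(X)$ satisfying $\phi(f_1,\dots,f_{n-1},f_n)=0$ in $k(X)$. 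Regarding $\phi(f_1,\dots,f_{n-1},y_n)\in k(X)[y_n]$, this is a monic polynomial of degree $m$, and hence has at most $m$ roots in $k(X)$.

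Applying the theorem on fiber dimensions to the composition
$$\mathbf{Maps}(X,V)^{\on{rat},d}_{X^I}\xrightarrow{\;P\;}\mathbf{Maps}(X,\BA^{n-1})^{\on{rat},d}_{X^I}\longrightarrow X^I,$$
the dimension of the fiber of $f(I)^d\circ \imath(I)^d$ over any $x^I$ is bounded above by the dimension of the fiber of the second arrow, which for $d>2g-2$ equals the vector-bundle rank $(n-1)(d+1-g)=(n-1)d+(n-1)(1-g)$. Setting $C:=(n-1)(1-g)$, and enlarging $C$ if necessary to accommodate the finitely many small values of $d$, yields the stated bound. The main obstacle is the initial genericity reduction to the case where $\pi|_V$ is finite; this is the only place where the hypersurface structure of $V$ intervenes, and the remainder of the argument is purely formal.
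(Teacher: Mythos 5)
Your proof is correct, and it takes the same first step as the paper (a linear projection $\pi\colon\BA^n\to\BA^{n-1}$ making $\pi|_V$ finite) but then diverges in how it bounds the fibers of the induced map $P$ on maps-spaces. The paper establishes that $P$ is a \emph{finite morphism}: it notes the map is affine and then verifies the valuative criterion of properness, which requires the extension-across-codimension-$2$ argument (exploiting that $\bC\times X-\{x^I\}$ is normal, $V$ is affine, and $V\to\BA^{n-1}$ is proper). You instead bound the fibers directly over geometric points by the elementary observation that $\phi(f_1,\dots,f_{n-1},y_n)$ is monic of degree $m=\deg\phi$ over $k(X)$ and so has at most $m$ roots. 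Your conclusion is nominally weaker (finite fibers over geometric points rather than a finite morphism), but that is all the dimension bound needs, so the argument is shorter and avoids the valuative criterion entirely. The trade-off is that your route relies on $V$ being a hypersurface, whereas the paper's works for any closed $V\subsetneq\BA^n$. In the paper this is harmless since \propref{p:gen to all} is only ever invoked with $U$ a basic open affine, so $V$ is indeed cut out by one equation; but it would be worth adding one sentence that for a general dense open $U$ one can always enlarge $V$ to a hypersurface $V'\supset V$ vanishing on $V$, since the closed embedding ${\mathbf{Maps}}(X,V)^{\on{rat},d}_{X^I}\hookrightarrow{\mathbf{Maps}}(X,V')^{\on{rat},d}_{X^I}$ then transfers the dimension bound. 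With that remark added, your proof covers the lemma in the generality the paper states it.
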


Let us show how this lemma implies the isomorphism in \eqref{e:gen to Y upstairs}. 

\begin{proof}

Note that the terms in \eqref{e:gen to Y upstairs} identify with
$$\underset{d}{colim}\, (f(I)^d\circ \jmath(I)^d)_{!}(\omega_{{\mathbf{Maps}}(X,U\overset{\on{gen.}}\subset \BA^n)^{\on{rat},d}_{X^I}})$$
and
$$\underset{d}{colim}\, (f(I)^d)_{!}(\omega_{{\mathbf{Maps}}(X,\BA^n)^{\on{rat},d}_{X^I}}),$$
respectively.

\medskip

Hence, the cone of the map in \eqref{e:gen to Y upstairs} is given by
$$\underset{d}{colim}\, (f(I)^d\circ \imath(I)^d)_{!}\circ (\imath(I)^d)^*(\omega_{{\mathbf{Maps}}(X,\BA^n)^{\on{rat},d}_{X^I}}).$$

\medskip

We claim that for every $l$ there exists $d_0$ large enough so that for all $d\geq d_0$, the object
\begin{equation} \label{e:truncated on complement}
(f(I)^d\circ \imath(I)^d)_{!}\circ (\imath(I)^d)^*(\omega_{{\mathbf{Maps}}(X,\BA^n)^{\on{rat},d}_{X^I}})\in \fD(X^I)
\end{equation}
lives in the cohomological degrees $\leq -l$. 

\medskip

Note that we are dealing with the subcategory of D-modules with holonomic cohomologies on a finite-dimensional scheme.
To perform the required estimate we shall work in the usual (i.e., non-perverse) t-structure, which makes sense on 
the holonomic subcategory.\footnote{I.e., this is the t-structure that for $k=\BC$ corresponds under Riemann-Hilbert 
to the usual t-structure on the derived category of sheaves with constructible cohomology.}

\medskip

By \lemref{l:codimension estimate}, the fibers of the map $(f(I)^d\circ \imath(I)^d)_{!}$ are of dimension
$\leq (n-1)\cdot d+C$ for some constant $C$ independent of $d$. Hence, it is sufficient to show that
$$(\imath(I)^d)^*(\omega_{{\mathbf{Maps}}(X,\BA^n)^{\on{rat},d}_{X^I}})\in
\fD({\mathbf{Maps}}(X,V)^{\on{rat},d}_{X^I})$$
lives in cohomological degrees $\leq -2\cdot n\cdot d+C'$ for some other constant $C'$ independent of $d$,
with respect to the usual t-structure.

\medskip

For the latter, it is sufficient to show that
$$\omega_{{\mathbf{Maps}}(X,\BA^n)^{\on{rat},d}_{X^I}}\in \fD({\mathbf{Maps}}(X,U)^{\on{rat},d}_{X^I})$$
lives in cohomological degrees $\leq -2\cdot n\cdot d+C'$, again for the usual t-structure.

\medskip

However, the latter is evident: by Riemann-Roch, for $d$ large enough, 
${\mathbf{Maps}}(X,\BA^n)^{\on{rat},d}_{X^I}$ is a vector bundle over $X^I$ of rank
$$n\cdot (d-(1+g)).$$
In particular, it is is smooth, and hence its dualizing complex lives in cohomological degree
$$-2(|I|+n\cdot (d-(1+g))),$$
as required. 

\end{proof}

\sssec{Proof of \lemref{l:codimension estimate}}

By Noether normalization, we can choose a linear map $$\pi:\BA^n\to \BA^{n-1},$$
such that $\pi|_V$ is finite. The map $\pi$ induces a map
$${\mathbf{Maps}}(X,\BA^n)^{\on{rat},d}_{X^I}\to {\mathbf{Maps}}(X,\BA^{n-1})^{\on{rat},d}_{X^I},$$
and hence a map
\begin{equation}  \label{e:map from maps to V}
{\mathbf{Maps}}(X,V)^{\on{rat},d}_{X^I}\to {\mathbf{Maps}}(X,\BA^{n-1})^{\on{rat},d}_{X^I}.
\end{equation}

As the dimensions of the fibers of ${\mathbf{Maps}}(X,\BA^{n-1})^{\on{rat},d}_{X^I}$
over $X^I$ grow as $$(n-1)\cdot d+(n-1)\cdot (1-g),$$ it suffices to show that the map 
\eqref{e:map from maps to V} is finite. 

\medskip

Since all the schemes involved are affine, it suffices
to check that the map in question is proper. We shall show that the map \eqref{e:map from maps to V} satisfies 
the valuative criterion of properness. 

\medskip

Since ${\mathbf{Maps}}(X,V)^{\on{rat},d}_{X^I}$ is closed in ${\mathbf{Maps}}(X,V)^{\on{rat}}_{X^I}$, it is
enough to show that the map 
$${\mathbf{Maps}}(X,V)^{\on{rat}}_{X^I}\to {\mathbf{Maps}}(X,\BA^{n-1})^{\on{rat}}_{X^I}$$
satisfies the valuative criterion. We shall show this for any finite map $Y_1\to Y_2$ of affine
schemes.

\medskip

Thus, let 
$$
\CD
\obC  @>>>  {\mathbf{Maps}}_{X^I}((X^I\times X)-\Gamma^I,Y_1) \\
@VVV   @VVV   \\
\bC  @>>> {\mathbf{Maps}}_{X^I}((X^I\times X)-\Gamma^I,Y_2)
\endCD
$$
be a commutative diagram, where $\bC$ is an affine regular curve, and
$\obC$ is the complement of a point $\bc\in \bC$. We would like to fill in the diagonal
arrow from $\bC$ to ${\mathbf{Maps}}_{X^I}((X^I\times X)-\Gamma^I,Y_1)$.

\medskip

Part of the data of a map
$$\bC\to {\mathbf{Maps}}_{X^I}((X^I\times X)-\Gamma^I,Y_2)$$ is a map
$x^I:\bC\to X^I$. 

\medskip 
 
So, we are given a map
$$m_2:(\bC\times X-\{x^I\})\to Y_2,$$
and its lift to a map
$$\overset{\circ}m_1:(\bC\times X-\{x^I\})\cap (\obC\times X)\to Y_1.$$
We wish to extend $\overset{\circ}m_1$ to a map 
$$m_1:(\bC\times X-\{x^I\})\to Y_1.$$

However, since $Y_1\to Y_2$ is proper, the map $\overset{\circ}m_1$ extends
to an open subscheme of $\bC\times X-\{x^I\}$
whose complement is of codimension $2$. Now, since
$\bC\times X-\{x^I\}$ is normal and $Y_1$ is affine, the above map further extends
to all of $\bC\times X-\{x^I\}$.

\qed

\ssec{Proof of \propref{p:open to gen}}

In order to prove that the map
$${\mathbf{Maps}}(X,U)^{\on{rat}}_{\Ran X}\to {\mathbf{Maps}}(X,U\overset{\on{gen.}}\subset Y)^{\on{rat}}_{\Ran X}$$
induces an isomorphism on homology, we shall introduce another intermediate space, denoted 
$${\mathbf{Maps}}(X,U\subset Y)^{\on{rat}}_{\Ran^\to X},$$
along with the maps
$${\mathbf{Maps}}(X,U)^{\on{rat}}_{\Ran X}\to 
{\mathbf{Maps}}(X,U\subset Y)^{\on{rat}}_{\Ran^\to X}\to {\mathbf{Maps}}(X,U\overset{\on{gen.}}\subset Y)^{\on{rat}}_{\Ran X},$$
and we will show that both these maps induce an isomorphism on homology.

\medskip

The idea of ${\mathbf{Maps}}(X,U\subset Y)^{\on{rat}}_{\Ran^\to X}$ vs 
${\mathbf{Maps}}(X,U\overset{\on{gen.}}\subset Y)^{\on{rat}}_{\Ran X}$ is that 
instead of asking for
a map $(X-\{x^I\})\to Y$  to generically land in $U$, we will specify
the locus outside of which it is defined as a regular map to $U$.



\sssec{}

Consider the category $\fset^{\to}$ introduced in \secref{sss:categ of pairs}. I.e., its objects are
pairs $(J\to I)$ of finite sets with an arbitrary map between them, and morphisms are commutative 
diagrams with surjective arrows.

\medskip

Recall also the functor 
$$X^{\fset^{\to}}:(\fset^{\to})^{op}\to \Sch$$
given by 
$$(J\to I)\rightsquigarrow X^{J\to I}:=X^I,$$
and the space
$$\Ran^\to X:=\underset{(\fset^\to)}{colim}\, X^{\fset^{\to}}.$$

\sssec{}

We shall now introduce a functor 
$${\mathbf{Maps}}(X,U\subset Y)^{\on{rat}}_{X^{\fset^{\to}}}:(\fset^{\to})^{op}\to \indSch,$$
satisfying the assumptions of \secref{sss:over pairs}.

\medskip

For a test scheme $S$ mapping to $X^{J\to I}$ we have two incidence divisors in $S\times X$:
one is $\{x^I\}$ corresponding to $\Gamma^I\subset X^I\times X$, and the other is
$\{x^J\}$ corresponding to $\Gamma^J\subset X^J\times X$ and the composed map
$$S\to X^{J\to I}=X^I\to X^J.$$

\medskip

For $(J\to I)\in \fset^{\to}$ we let ${\mathbf{Maps}}(X,U\subset Y)^{\on{rat}}_{X^{J\to I}}$
be the indscheme over $X^{J\to I}\simeq X^I$ equal to the following subfunctor of
$X^{J\to I}\underset{X^J}\times {\mathbf{Maps}}(X,Y)^{\on{rat}}_{X^J}$:

\medskip

We define 
$$\Hom_{X^I}\left(S,{\mathbf{Maps}}(X,U\subset Y)^{\on{rat}}_{X^{J\to I}}\right)$$
to consist of those maps
$$m:(S\times X-\{x^J\})\to Y$$
for which the restriction of $m$ to the open subset 
$$(S\times X-\{x^I\})\subset (S\times X-\{x^J\})$$
factors through $U$.

\medskip

Diagrammatically, we can write
\begin{equation} \label{e:specified maps}
{\mathbf{Maps}}(X,U\subset Y)^{\on{rat}}_{X^{J\to I}}=
\left(X^I\underset{X^J}\times {\mathbf{Maps}}(X,Y)^{\on{rat}}_{X^J}\right)
\underset{{\mathbf{Maps}}(X,Y)^{\on{rat}}_{X^I}}\times {\mathbf{Maps}}(X,U)^{\on{rat}}_{X^I},
\end{equation}
where 
$$X^I\underset{X^J}\times {\mathbf{Maps}}(X,Y)^{\on{rat}}_{X^J}\to {\mathbf{Maps}}(X,Y)^{\on{rat}}_{X^I}$$
is the natural map, obtained by restricting a map $m:S\times X-\{x^J\}\to Y$ to
a map $$m':S\times X-\{x^J\}\to Y.$$

\sssec{}

Set 
$${\mathbf{Maps}}(X,U\subset Y)^{\on{rat}}_{\Ran^{\to} X}:=
\underset{(\fset^{\to})^{op}}{colim}\, {\mathbf{Maps}}(X,U\subset Y)^{\on{rat}}_{X^{\fset^{\to}}}.$$

\medskip

By construction, ${\mathbf{Maps}}(X,U\subset Y)^{\on{rat}}_{\Ran^{\to} X}$ is a pseudo-indscheme that
comes equipped with a map
$$f^{\to}:{\mathbf{Maps}}(X,U\subset Y)^{\on{rat}}_{\Ran^{\to} X}\to \Ran^{\to} X.$$





\begin{rem}
For $S\in \affSch$, the $\infty$-groupoid $\on{Maps}\left(S,{\mathbf{Maps}}(X,U\subset Y)^{\on{rat}}_{\Ran^{\to} X}\right)$
is in fact a set described as follows. Its elements are pairs of non-empty finite subsets $\ol{x}'\subset \ol{x}$ of
$\on{Maps}(S,X)$, plus a rational map $S\times X\to Y$, which is regular on the complement to the graph of $\ol{x}'$,
and is regular as a map to $U$ on the complement to the graph of $\ol{x}$.
\end{rem}

\sssec{}

Recall (see \secref{sss:categ of pairs}) that the are two natural forgetful functors
$${\mathbf{pr}}_{\on{source}},{\mathbf{pr}}_{\on{target}}:
\fset^{\to}\to \fset,$$
that send $J\to I$ to $I$ and $J$, respectively.

\medskip

Note also that we have a natural transformation between the two
functors $$(\fset^{\to})^{op}\rightrightarrows \indSch:$$
$${\mathbf{Maps}}(X,U\subset Y)^{\on{rat}}_{X^{\fset^{\to}}}\Rightarrow 
{\mathbf{Maps}}(X,U\overset{\on{gen.}}\subset Y)^{\on{rat}}_{X^{\fset}}\circ ({\mathbf{pr}}_{\on{source}})^{op}.$$
I.e., for every $(J\to I)$ we have a naturally defined map
\begin{equation} \label{e:specified to gen on spaces before colimit}
f_{\on{source}}(J\to I):{\mathbf{Maps}}(X,U\subset Y)^{\on{rat}}_{X^{J\to I}}\to
{\mathbf{Maps}}(X,U\overset{\on{gen.}}\subset Y)^{\on{rat}}_{X^J}.
\end{equation}

\medskip

Thus, by \secref{sss:expl map}, we obtain a map
$$f_{\on{source}}:{\mathbf{Maps}}(X,U\subset Y)^{\on{rat}}_{\Ran^{\to} X}\to 
{\mathbf{Maps}}(X,U\overset{\on{gen.}}\subset Y)^{\on{rat}}_{\Ran X},$$
and by \eqref{e:trace on homology} a map
\begin{equation} \label{e:specified to gen}
\on{Tr}_{H_\bullet}(f_{\on{source}}):H_\bullet\left({\mathbf{Maps}}(X,U\subset Y)^{\on{rat}}_{\Ran^{\to} X}\right)\to
H_\bullet\left({\mathbf{Maps}}(X,U\overset{\on{gen.}}\subset Y)^{\on{rat}}_{\Ran X}\right),
\end{equation}

We will prove:

\begin{lem}  \label{l:specified to gen}
The map \eqref{e:specified to gen} is an isomorphism. 
\end{lem}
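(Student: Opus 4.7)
The plan is to combine the unital framework of \secref{s:unital} with the Ran-space-with-marked-points machinery of \secref{ss:marked points} in order to analyze $f_{\on{source}}$ fiberwise over its target. Intuitively, for a generic rational map $(x^J, m)$ in the target, the fiber of $f_{\on{source}}$ consists of all finite tuples $\ol{x}^I$ extending $\ol{x}^J$ whose underlying set contains the ``bad locus'' $B(m)$ where $m$ fails to land in $U$; such a fiber is a relative Ran space with marked points $\ol{x}^J \cup B(m)$, and hence should be contractible by \corref{c:marked points homology rel}.

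\textbf{Step 1 (Reduce to ``indep'').} Both ${\mathbf{Maps}}(X,U\subset Y)^{\on{rat}}_{X^{\fset^\to}}$ and ${\mathbf{Maps}}(X,U\overset{\on{gen.}}\subset Y)^{\on{rat}}_{X^{\fset}}$ carry strongly unital structures over $\Ran X$ (in the senses of \secref{sss:over pairs} and \secref{sss:over Ran}, respectively), and the map $f_{\on{source}}$ is $\Ran X$-equivariant. By \corref{c:unital vs nonunital}, it suffices to prove that the induced map on the ``indep'' versions induces an isomorphism on homology. \textbf{Step 2 (Fiber identification).} For each $(J\to I)\in \fset^\to$ and each $S$-point $(x^J, m)$ of the target, the fiber of $f_{\on{source}}(J\to I)$ consists of pairs $(\phi: J\to I, x^I: S\to X^I)$ with $x^J$ recovered from $x^I$ via $\phi$ and with $m$ factoring through $U$ on $S\times X - \{x^I\}$. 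Since $U \subset Y$ is basic open affine with complement $V = \{f=0\}$ a hypersurface, $B(m):=\{m^*(f)=0\}$ is a Cartier divisor on $S\times X - \{x^J\}$ finite over $S$; the condition amounts to $\{x^I\} \supset \{x^J\} \cup B(m)$. Taking colimits over $(J\to I)$ with $J$ fixed identifies the total fiber with the relative Ran space with marked points $\ol{x}^J \cup \supp(B(m))$.

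\textbf{Step 3 (Marked-points contractibility).} Stratify ${\mathbf{Maps}}(X,U\overset{\on{gen.}}\subset Y)^{\on{rat}}_{X^J}$ by the degree $d$ of $B(m)$; on each stratum, $\supp(B(m))$ determines a section into $\Sym^d(X)$, which lifts locally (via the \'etale cover $X^d\to \Sym^d X$) to a section into $X^d$. A relative version of \corref{c:marked points homology rel}, applied with marks $\ol{x}^J \cup \supp(B(m))$, then shows that the relative Ran space over each stratum has the same homology as the stratum itself. Gluing across strata (permissible because $\Gamma_{\dr,c}$ commutes with colimits) and then over $J\in \fset^{op}$ produces an isomorphism $(f_{\on{source}})_!(\omega)\simeq \omega$, and applying $\Gamma_{\dr,c}$ yields the desired isomorphism on homology.

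\textbf{Main obstacle.} The crux is Step 3: the bad divisor $B(m)$ has varying degree and potentially nontrivial multiplicities, while \corref{c:marked points homology rel} accepts as input only a section into some $X^\sA$ with fixed finite $\sA$. Reconciling these requires stratifying by degree (to handle variance), invoking Galois invariance to descend between $X^d$ and $\Sym^d X$ (to handle ordering), and observing that only the underlying reduced support of $B(m)$ enters the Ran-with-marked-points construction (to handle multiplicities). One must also verify that the gluing between strata is compatible with the unital structure and does not introduce spurious contributions to homology.
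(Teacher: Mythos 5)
Your geometric insight is the right one — the fiber of $f_{\on{source}}$ over a point $(x^J, m)$ is a relative Ran space with marked points at $x^J$ together with the indeterminacy locus of $m$ as a map to $U$, and the argument must ultimately reduce to the contractibility of such a space. But the proposal has two genuine gaps and one detour.

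First, Step~1 is a detour and actually creates an internal inconsistency. Passing to the ``indep'' models replaces the colimit over $(\fset^\to)^{op}$ by a geometric realization over $\bDelta_s^{op}$, so after Step~1 there is no longer a diagram indexed by pairs $(J\to I)$ in which one can ``fix $J$'' as Step~2 requires. More to the point, the paper does not use the unital machinery here at all. The reduction one actually needs is the following observation: the map \eqref{e:specified to gen} arises via the paradigm of \secref{sss:paradigm} from the functor $({\mathbf{pr}}_{\on{source}})^{op}:(\fset^\to)^{op}\to (\fset)^{op}$, and this functor is a \emph{co-Cartesian fibration}. Hence it suffices to prove the isomorphism after restricting to each fiber $(\fset_{J/})^{op}$, i.e., for each fixed $J$ separately. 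Your proposal asserts the reduction to fixed $J$ in Step~2 without any justification; this co-Cartesian fibration argument is the missing step.

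Second, Step~3 takes a harder road than necessary. Once one is working over a fixed $J$, the paper upgrades the statement to an isomorphism of \emph{objects} of $\fD\bigl({\mathbf{Maps}}(X,U\overset{\on{gen.}}\subset Y)^{\on{rat}}_{X^J}\bigr)$, namely \eqref{e:map of colimits upstairs}, and checks it after $!$-restriction to strata — in fact at $!$-stalks at $k$-points. At a single $k$-point $(x^J,m)$ the indeterminacy locus is a \emph{fixed} finite subset $\sA\subset X(k)$, so one lands directly in a Ran space with marked points $\sA$ (after passing to the reduced indscheme, exactly as you note for multiplicities), with no need to stratify by the degree of $B(m)$, no $X^d\to \Sym^d X$ descent, and no Galois-invariance bookkeeping. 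Base change along the ind-proper map $f_{\on{source}}(J\to I)$ (the ind-closed embedding you rightly emphasize, reducing to $Y=\BA^1$, $U=\BG_m$) identifies the $!$-stalk with the homology of this fiber, and the contractibility of $\Ran X^\sA_J$ finishes the argument. Your ``Main obstacle'' paragraph thus correctly diagnoses a real difficulty with the degree-stratification approach, but the remedy is to pass to stalks rather than to fight the $\Sym^d$-descent; likewise the claim that ``gluing across strata is permissible because $\Gamma_{\dr,c}$ commutes with colimits'' is not the right justification — what one uses is devissage for D-modules on a finite-type scheme via $!$-stalks, which the stalk-level reduction encodes cleanly.
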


\sssec{}

As was remarked above, the functor
$${\mathbf{Maps}}(X,U\subset Y)^{\on{rat}}_{X^{\fset^{\to}}}:(\fset^\to)^{op}\to \indSch$$
falls into the paradigm of \secref{sss:over pairs}.

\medskip

Note also that the resulting functor
$${\mathbf{Maps}}(X,U\subset Y)^{\on{rat}}_{X^{\fset^{\to}}}\circ ({\mathbf{diag}})^{op}:
(\fset)^{op}\to \indSch$$
identifies with ${\mathbf{Maps}}(X,U)^{\on{rat}}_{X^{\fset}}$.

\medskip

In particular, \eqref{e:from diag} defines a map
$${\mathbf{Maps}}(X,U)^{\on{rat}}_{\Ran X}\to {\mathbf{Maps}}(X,U\subset Y)^{\on{rat}}_{\Ran^{\to} X}.$$
Note that the composed map
$${\mathbf{Maps}}(X,U)^{\on{rat}}_{\Ran X}\to {\mathbf{Maps}}(X,U\subset Y)^{\on{rat}}_{\Ran^{\to} X}\to 
{\mathbf{Maps}}(X,U\overset{\on{gen.}}\subset Y)^{\on{rat}}_{\Ran X}$$
is the map 
$${\mathbf{Maps}}(X,U)^{\on{rat}}_{\Ran X}\to {\mathbf{Maps}}(X,U\overset{\on{gen.}}\subset Y)^{\on{rat}}_{\Ran X},$$
of \eqref{e:open to gen spaces}.

\sssec{}

Now, it follows from \corref{c:pairs homology}, that the induced map 
\begin{equation} \label{e:last step homology}
H_\bullet\left({\mathbf{Maps}}(X,U)^{\on{rat}}_{\Ran X}\right)\to
H_\bullet\left({\mathbf{Maps}}(X,U\subset Y)^{\on{rat}}_{\Ran^{\to} X}\right)
\end{equation}
is an isomorphism. Combined with \lemref{l:specified to gen}, this implies that the composed
map
$$H_\bullet\left({\mathbf{Maps}}(X,U)^{\on{rat}}_{\Ran X}\right)\to
H_\bullet\left({\mathbf{Maps}}(X,U\subset Y)^{\on{rat}}_{\Ran^{\to} X}\right)\to
H_\bullet\left({\mathbf{Maps}}(X,U\overset{\on{gen.}}\subset Y)^{\on{rat}}_{\Ran X}\right)$$
is an isomorphism, as required.

\qed(\propref{p:open to gen}).

\sssec{An alternative argument}
For the reader who chose to skip \secref{s:unital}, an alternative 
(but essentially equivalent) way to deduce the fact that 
the map \eqref{e:last step homology} is an isomorphism is to apply \cite{BD1}, Proposition 4.4.9
to the assignment
$$(J\to I)\mapsto (f^\to(J\to I))_!\left(\omega_{{\mathbf{Maps}}(X,U\subset Y)^{\on{rat}}_{J\to I}}\right)\in \fD(X^I).$$

\ssec{Proof of \lemref{l:specified to gen}}

\sssec{}

Note that the map \eqref{e:specified to gen} can be interpreted as follows. Namely, it is obtained
by applying the paradigm of \secref{sss:paradigm} to
$$\bC_1:=(\fset^{\to})^{op},\,\, \bC_2:=(\fset)^{op},\,\, F:=({\mathbf{pr}}_{\on{source}})^{op},\,\,\bD=\Vect$$
and
$$\Phi_1(J\to I):=H_\bullet\left({\mathbf{Maps}}(X,U\subset Y)^{\on{rat}}_{X^{J\to I}}\right),\,\,
\Phi_2(J):=H_\bullet\left({\mathbf{Maps}}(X,U\overset{\on{gen.}}\subset Y)^{\on{rat}}_{X^J}\right),$$
and the natural transformation whose value on $(J\to I)\in (\fset^{\to})$ is
\begin{equation} \label{e:V4 to V3 before colimit}
\on{Tr}_{H_\bullet}(f_{\on{source}}(J\to I)):H_\bullet\left({\mathbf{Maps}}(X,U\subset Y)^{\on{rat}}_{X^{J\to I}}\right)\to
H_\bullet\left({\mathbf{Maps}}(X,U\overset{\on{gen.}}\subset Y)^{\on{rat}}_{X^J}\right).
\end{equation}

\sssec{}

The proof that \eqref{e:specified to gen} is an isomorphism is based on the following observation.

\medskip

Suppose that in the situation of
\secref{sss:paradigm} the functor $F$ is a co-Cartesian fibration.
Then in order to check that \eqref{e:map of colimits} is an isomorphism,
it suffices to show that for every $\bc_2\in \bC_2$, the map
\begin{equation} \label{e:objectwise colimit}
\underset{\bC_1\underset{\bC_2}\times \bc_2}{colim}\, \Phi_1|_{\bC_1\underset{\bC_2}\times \bc_2}\to \Phi_2(\bc_2)
\end{equation}
is an isomorphism. 

\sssec{}

We note that the functor $({\mathbf{pr}}_{\on{source}})^{op}:(\fset^{\to})^{op}\to (\fset)^{op}$ is a co-Cartesian fibration, i.e., 
the functor ${\mathbf{pr}}_{\on{source}}:\fset^{\to}\to \fset$ is  Cartesian fibration. Indeed for a map $J_1\twoheadrightarrow J_2$ in 
$\fset$ and an object 
$$(J_2\to I_2)\in \fset^{\to}\underset{\fset}\times J_2,$$
its pullback to $\fset^{\to}\underset{\fset}\times J_1$ is given by composition $J_1\to J_2\to I_2$.

\medskip

Thus, in order to show that \eqref{e:specified to gen} is an isomorphism, it suffices to check that the map
\eqref{e:objectwise colimit} is an isomorphism in our situation, i.e., that for every finite set $J$, the map
$$\underset{I\in (\fset_{J/})^{op}}{colim}\, H_\bullet\left({\mathbf{Maps}}(X,U\subset Y)^{\on{rat}}_{X^{J\to I}}\right)\to
H_\bullet\left({\mathbf{Maps}}(X,U\overset{\on{gen.}}\subset Y)^{\on{rat}}_{X^J}\right)$$
is an isomorphism.

\medskip

In fact, we will show that the isomorphism is taking place ``upstairs", i.e., at the level of objects
of $\fD\left({\mathbf{Maps}}(X,U\overset{\on{gen.}}\subset Y)^{\on{rat}}_{X^J}\right)$. Namely, we will show that the 
trace map
\begin{equation} \label{e:map of colimits upstairs}
\underset{I\in (\fset_{J/})^{op}}{colim}\,
f_{\on{source}}(J\to I)_{!}\left(\omega_{{\mathbf{Maps}}(X,U\subset Y)^{\on{rat}}_{X^{J\to I}}}\right)\to
\omega_{{\mathbf{Maps}}(X,U\overset{\on{gen.}}\subset Y)^{\on{rat}}_{X^J}}
\end{equation}
is an isomorphism.

\sssec{}  \label{sss:fiber is proper}

First, we claim that the map $f_{\on{source}}(J\to I)$ is ind-proper; in fact for every $J\to I$ 
the map 
$${\mathbf{Maps}}(X,U\subset Y)^{\on{rat}}_{X^{J\to I}}\hookrightarrow 
X^I\underset{X^J}\times {\mathbf{Maps}}(X,U\overset{\on{gen.}}\subset Y)^{\on{rat}}_{X^J}$$
is an ind-closed embedding.

\medskip

Indeed, it follows from \eqref{e:specified maps} that 
$${\mathbf{Maps}}(X,U\subset Y)^{\on{rat}}_{X^{J\to I}}\simeq
\left(X^I\underset{X^J}\times {\mathbf{Maps}}(X,U\overset{\on{gen.}}\subset  Y)^{\on{rat}}_{X^J}\right)
\underset{{\mathbf{Maps}}(X,U\overset{\on{gen.}}\subset Y)^{\on{rat}}_{X^I}}\times {\mathbf{Maps}}(X,U)^{\on{rat}}_{X^I},$$
so it is sufficient to show that the map
$${\mathbf{Maps}}(X,U)^{\on{rat}}_{X^I}\to {\mathbf{Maps}}(X,U\overset{\on{gen.}}\subset Y)^{\on{rat}}_{X^I}$$
is an ind-closed embedding. Since $U\subset Y$ is, by assumption, the locus of non-vanishing of a regular function, 
it is sufficient to consider the universal case of
$Y=\BA^1$ and $U=\BA^1-\{0\}$. 

\medskip

The latter situation reduces to the following one: let $S$ be a test scheme, and let $D^1$ and $D^2$ be 
two effective Cartier divisors on $S\times X$, both finite and flat over $S$. Consider the functor on $\Sch_{/S}$ 
that sends $g:T\to S$ to the point-set
if $(g\times \on{id})^{-1}(D_1)$ is \emph{set-theoretically} contained in $(g\times \on{id})^{-1}(D_2)$,
and to the empty set, otherwise. Then we claim that this functor is representable by a formal subscheme of $S$:

\medskip

Indeed, the above functor is the colimit over $n\in \BN$ of the functors, where for each $n$ we require 
that $(g\times \on{id})^{-1}(D_1)$ be contained scheme-theoretically in $(g\times \on{id})^{-1}(n\cdot D_2)$.
However, each of the latter functors is representable by a closed subscheme of $S$.

\sssec{}

We stratify the indscheme ${\mathbf{Maps}}(X,U\overset{\on{gen.}}\subset Y)^{\on{rat}}_{X^J}$ according
to the pattern of collision of points in $X$ corresponding to $X^J$
and also according to the pattern of points of $X$, away from which the rational map 
$m:X\to U$ is regular. 

\medskip

We will show that the map \eqref{e:map of colimits upstairs} is an isomorphism after
!-restriction to each stratum. However, in order to unburden the notation, we will do
so only at the level of !-stalks at $k$-points of each stratum (the proof in the general
case is the same). I.e., we will show that the map \eqref{e:map of colimits upstairs}
is an isomorphism at the level of !-stalks at $k$-points of 
${\mathbf{Maps}}(X,U\overset{\on{gen.}}\subset Y)^{\on{rat}}_{X^J}$. 

\medskip

For a fixed point $(x^J,m)\in {\mathbf{Maps}}(X,U\overset{\on{gen.}}\subset Y)^{\on{rat}}_{X^J}$ as above,
let $$\left({\mathbf{Maps}}(X,U\subset Y)^{\on{rat}}_{X^{J\to I}}\right)_{(x^J,m)}$$ denote the fiber
of the map $f_{\on{source}}(J\to I)$ over it. 

\medskip

Note that because $f_{\on{source}}(J\to I)$ is proper,
the base-change formula applies, i.e., the !-stalk of 
$$\left(f_{\on{source}}(J\to I)\right)_{!}\left(\omega_{{\mathbf{Maps}}(X,U\subset Y)^{\on{rat}}_{X^{J\to I}}}\right)$$
at $(x^J,m)$ is isomorphic to the cohomology of the !-restriction of 
$\omega_{{\mathbf{Maps}}(X,U\subset Y)^{\on{rat}}_{X^{J\to I}}}$ to 
$\left({\mathbf{Maps}}(X,U\subset Y)^{\on{rat}}_{X^{J\to I}}\right)_{(x^J,m)}$, the
latter being 
$$\omega_{\left({\mathbf{Maps}}(X,U\subset Y)^{\on{rat}}_{X^{J\to I}}\right)_{(x^J,m)}}.$$
I.e., we have to show that the trace map
\begin{equation} \label{e:map of colimits fibers}
\underset{I\in (\fset_{J/})^{op}}{colim}\,
H_\bullet\left(\left({\mathbf{Maps}}(X,U\subset Y)^{\on{rat}}_{X^{J\to I}}\right)_{(x^J,m)}\right)\to k
\end{equation}
is an isomorphism. We will deduce this from a certain variant of \thmref{t:contractibility of Ran}. 

\sssec{}  \label{sss:descr of fiber}

Let us describe the indscheme 
\begin{equation} \label{e:fiber}
\left({\mathbf{Maps}}(X,U\subset Y)^{\on{rat}}_{X^{J\to I}}\right)_{(x^J,m)}
\end{equation}
explicitly. 

\medskip

Let $\oX\subset X$ be the open subset over which the rational map $m:X\to U$ is regular. Let $\sA$ be the complementary 
finite set of points, and let $x^\sA$ be the corresponding canonical point of $X^\sA$. For a finite set
$I$, let $X^{I,\sA}$ be the following closed subscheme of $X^I$:
$$X^{I,\sA}:=\underset{\psi:\sA\to I}\cup\, \Delta(\psi)^{-1}(x^\sA).$$
I.e., $X^{I,\sA}$ consists of those $I$-tuples of points of $X$, which \emph{contain} $x^\sA$
as a subset. (If $\sA=\emptyset$, we have $X^{I,\sA}=X^I$.) 

\medskip

Let $X^I_{x^J}$ denote the preimage in $X^I$ of the point $x^J\in X^J$. 

\medskip

It follows that the indscheme \eqref{e:fiber}, which according to \secref{sss:fiber is proper},
is ind-closed in $X^I$, equals
$$X^I_{x^J}\underset{X^I}\times \wh{X}{}^{I,\sA},$$
where $\wh{X}{}^{I,\sA}$ denotes the formal completion of $X^{I,\sA}$ in $X^I$.

\sssec{}

Thus, we obtain that the two functors $(\fset/J)^{op}\to \indSch$
$$X^I_{x^J}\underset{X^I}\times X^{I,\sA} \text{ and } \left({\mathbf{Maps}}(X,U\subset Y)^{\on{rat}}_{X^{J\to I}}\right)_{(x^J,m)}$$
become isomorphic after passing to the corresponding reduced indschemes.

\medskip

Hence, we obtain that it is sufficient to show that the trace map
\begin{equation} \label{e:map of colimits fibers bis}
\underset{I\in (\fset_{J/})^{op}}{colim}\,H_\bullet\left(X^I_{x^J}\underset{X^I}\times X^{I,\sA}\right)\to k
\end{equation}
is an isomorphism.

\sssec{}

Thus, we consider the functor $X^{\fset^\sA_J}:(\fset_{J/})^{op}\to \Sch$ given by
$$I\rightsquigarrow X^I_{x^J}\underset{X^I}\times X^{I,\sA}.$$
Set $\Ran X^{\sA}_J:=\underset{(\fset_{J/})^{op}}{colim}\, X^{\fset^\sA_J}$.

\medskip

We need to show that the trace map defines an isomorphism 
$$H_\bullet(\Ran X^{\sA}_J)\to k.$$

However, this follows by repeating the proof of \thmref{t:contractibility of Ran},
see \secref{s:proof contr Ran}.

\sssec{}  \label{sss:including points}

Here is an alternative argument for the last statement. To simplify the notation,
we will consider the space 
$$\Ran X^{\sA}:=\underset{(\fset)^{op}}{colim}\, X^{\fset^\sA},$$
where $X^{\fset^\sA}$ is the functor $(\fset)^{op}\to \Sch$ that sends
$$I\rightsquigarrow X^{I,\sA}.$$

\medskip

Let $i^\sA$ denote the tautological map $\Ran X^{\sA}\to \Ran X$. We shall denote by
$i_\sA$ the map $\Ran X_{\sA}\to \Ran X$ of \eqref{e:forget A}.

\medskip

It is easy to see that for $\CF\in \fD(\Ran X)$ we have:
\begin{equation} \label{e:including points}
(i_\sA)_!\circ (i_\sA)^!(\CF)\simeq (i^\sA)_!\circ (i^\sA)^!(\CF).
\end{equation}

\medskip

In particular, 
$$H_\bullet(\Ran X_{\sA})\simeq H_\bullet(\Ran X^{\sA}).$$

Now, the required statement follows from \corref{c:marked points homology} 
which says that $$H_\bullet(\Ran X_{\sA})\simeq H_\bullet(\Ran X),$$ combined
with \thmref{t:contractibility of Ran}, which says that 
$H_\bullet(\Ran X)\simeq k$.

\section{Applications to D-modules on $\Bun_G$}

\ssec{The Beilinson-Drinfeld Grassmannian}

\sssec{}

Let $G$ be a connected linear algebraic group. Let $\Gr_{X^{\fset}}$ denote the Beilinson-Drinfeld affine Grassmannian of $G$,
viewed as a functor
$$(\fset)^{op}\to \indSch,$$
see e.g. \cite{BD2}, Sect. 5.3.11 or \cite{MV}, Sect. 5.
For a finite set $I$, we shall denote by $\Gr_{X^I}$ the corresponding indscheme over $X^I$.

\medskip

Let $\Gr_{\Ran X}$ denote the object
$\underset{(\fset)^{op}}{colim}\, \Gr_{X^{\fset}}\in \on{PreStk}$.
By construction, $\Gr_{\Ran X}$ is a pseudo-indscheme.

\medskip

We have a tautological natural transformation $\Gr_{X^{\fset}}\Rightarrow X^{\fset}$,
which gives rise to map
$$f_\Gr:\Gr_{\Ran X}\to \Ran X.$$

\medskip

In this section we will study the category $\fD\left(\Gr_{\Ran X}\right)$, i.e., 
$$\fD\left(\Gr_{\Ran X}\right):=\underset{I\in \fset}{lim}\, \fD(\Gr_{X^I}).$$

\begin{remark}
The functor $\Gr_{X^{\fset}}$ is another example of a functor satisfying the assumptions of
\secref{sss:over Ran}. Thus, we could also introduce the space $\Gr_{\Ran X,\on{indep}}$.
By \propref{p:incl of unital}, the forgetful functor
$$\fD\left(\Gr_{\Ran X,\on{indep}}\right)\to \fD\left(\Gr_{\Ran X}\right)$$
is fully faithful.
\end{remark}

\sssec{}

Let $\Bun_G$ denote the moduli stack of $G$-bundles on $X$. Note also that $\Gr_{\Ran X}$ is equipped 
with a forgetful map $\pi:\Gr_{\Ran X}\to \Bun_G$ in $\on{PreStk}$. 


\medskip

Note also that we have a Cartesian square in $\on{PreStk}$:
\begin{equation} \label{e:fibration over Bun}
\CD
{\mathbf{Maps}}(X,G)^{\on{rat}}_{\Ran X}   @>{\imath'_{\one}}>>  \Gr_{\Ran X}  \\
@VVV    @VV{\pi}V  \\
\on{pt}   @>{\imath_{\one}}>> \Bun_G,
\endCD
\end{equation}
where $\one$ refers to the unit point of $\Bun_G$ corresponding to the trivial bundle. 

\medskip

Note that the map $\pi$ is pseudo ind-schematic (see \secref{sss:ind-schematic}, where this notion is introduced). 


\sssec{}

Consider the category $\fD(\Bun_G)$. We remind that for any $\CY\in \on{PreStk}$, the category
$\fD(\CY)$ is by definition 
$$\underset{S\in \affSch_{/\CY}}{lim}\, \fD(S),$$
where the limit is taken $\StinftyCat_{\on{cont}}$.

\medskip

However, when $\CY$ is an Artin stack, we can also replace 
the index category $S\in \affSch_{/\CY}$ by its full subcategory consisting
of those $S$ that map smoothly to $\CY$, and, further, by the non-full subcategory
of the latter, where we allow only smooth maps $S_1\to S_2$. 
Further, writing $\CY$ as a union of its quasi-compact open substacks $\CY_\alpha$,
we have
$$\fD(\CY)\simeq \underset{\alpha}{lim}\, \fD(\CY_\alpha).$$

\sssec{}

We are now ready to formulate the main result of this section. The morphism $\pi$
defines a functor
$$\pi^!:\fD(\Bun_G)\to \fD\left(\Gr_{\Ran X}\right).$$



\medskip

We have:
\begin{thm} \label{t:pullback fully faithful}
The functor $\pi^!:\fD(\Bun_G)\to \fD\left(\Gr_{\Ran X}\right)$
is fully faithful.
\end{thm}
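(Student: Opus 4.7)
The plan is to deduce full faithfulness of $\pi^!$ from the main contractibility result (\thmref{t:contractibility}) by the projection formula. By \secref{sss:when good}(iii), the partially defined left adjoint $\pi_!$ to $\pi^!$ is defined on $\omega_{\Gr_{\Ran X}}$, and more generally on objects of the form $\pi^!(\CF)$. Full faithfulness of $\pi^!$ is equivalent to the counit $\pi_!\pi^!(\CF) \to \CF$ being an isomorphism for every $\CF \in \fD(\Bun_G)$. Applying the projection formula — which is available because $\pi$ is pseudo ind-schematic, so the formalism of \secref{sss:expl map}--\secref{sss:good objects} applies — this reduces to the single isomorphism
\begin{equation*}
\pi_!(\omega_{\Gr_{\Ran X}}) \simeq \omega_{\Bun_G} \quad \text{in } \fD(\Bun_G).
\end{equation*}

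To verify this isomorphism, I would invoke smooth descent for $\fD(-)$ on Artin stacks: it suffices to check the above after $!$-pullback under any smooth morphism $s:S\to \Bun_G$ from an affine scheme. Forming the Cartesian square
\begin{equation*}
\begin{CD}
\CY_S @>{\tilde s}>> \Gr_{\Ran X} \\
@V{\pi_S}VV @VV{\pi}V \\
S @>>{s}> \Bun_G,
\end{CD}
\end{equation*}
and using that smooth base change holds for the pseudo ind-schematic map $\pi$ (since the ind-schematic pieces $\Gr_{X^I}\to \Bun_G\times X^I$ are ind-proper and the formation of $\pi_!(\omega)$ is compatible with the colimit presentation of \secref{ss:colimit}), the desired isomorphism reduces to showing
$(\pi_S)_!(\omega_{\CY_S}) \simeq \omega_S$
for each such $S$. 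Here $\CY_S$ is the pseudo-indscheme over $S$ parametrizing data $(x^I,\alpha)$ with $\alpha$ a trivialization of the pullback of the universal $G$-bundle to $S\times X$ away from the graph of $x^I$: a relative version, over $S$, of the space $\Gr_{\Ran X}$ whose fiber over any geometric point of $S$ is isomorphic to ${\mathbf{Maps}}(X,G)^{\on{rat}}_{\Ran X}$.

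The remaining task is a relative analogue of \thmref{t:contractibility} over $S$. My intention is to follow the absolute proof verbatim but systematically in families, using the relative unital formalism of \secref{sss:relative version} and the product decomposition of \secref{ss:products}. The two essential inputs both carry over: (a) the codimension estimate of \lemref{l:codimension estimate} is geometric and applies to the relative version of ${\mathbf{Maps}}(X,V)^{\on{rat},d}_{X^I}$ over $S$ once one fixes a Noether normalization $\BA^n\to \BA^{n-1}$; (b) the Zariski--\v{C}ech descent argument of Steps 2--3 in \secref{ss:products} is local on the target group $G$ and is insensitive to the base. Combining the relative versions of \propref{p:gen to all}, \propref{p:open to gen}, and the affine-space computation of \secref{sss:parameter d}, one obtains the required relative contractibility.

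The main obstacle will be verifying base change for the pseudo ind-schematic morphism $\pi$ under smooth pullback (ensuring that $s^!\pi_!(\omega_{\Gr_{\Ran X}}) \simeq (\pi_S)_!(\omega_{\CY_S})$), and then carrying out the codimension estimate of \lemref{l:codimension estimate} uniformly in families. Both are essentially bookkeeping, but they require one to interleave the colimit presentation of pseudo-indschemes over $S$ with the smooth-local trivialization of the universal bundle, so that the global object $\pi_!(\omega_{\Gr_{\Ran X}})$ can be controlled from its smooth-local values via the relative factorization structure.
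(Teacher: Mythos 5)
Your initial reduction is sound but the crux of the argument is missing. The first half — reducing full faithfulness to $\pi_!(\omega_{\Gr_{\Ran X}})\simeq\omega_{\Bun_G}$, and then checking this after $!$-pullback to affine $S\to\Bun_G$ using the ind-properness of each $\Gr_{X^I}\to\Bun_G\times X^I$ for base change — is a legitimate variant of the paper's Step 1 (the paper reduces directly at the level of $\Hom$-spaces over each $S$, for an arbitrary $\CF_S\in\fD(S)$, without invoking a projection formula; both reductions work, though you should actually prove the projection formula for the $\overset{!}\otimes$ structure rather than assert it).

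The gap is in how you propose to prove the relative contractibility of $\CY_S\to S$. You observe that the fiber of $\CY_S$ over each geometric point of $S$ is $\mathbf{Maps}(X,G)^{\on{rat}}_{\Ran X}$, and plan to ``follow the absolute proof verbatim but systematically in families.'' This cannot work as stated, because $\CY_S$ is \emph{not} a constant family: it is the Ran-Grassmannian relative to the universal $G$-bundle pulled back to $S\times X$, and that bundle is generically nontrivial in families. The absolute proof of \thmref{t:contractibility} is about maps into a fixed affine target $Y$ (ultimately $\BA^n$ and its opens, via Noether normalization and a \v{C}ech cover); there is no sense in which $\CY_S$ is a family of such maps spaces until you have trivialized the universal bundle. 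The paper's key move, which your proposal omits entirely, is: after passing to an \'etale cover of $S$, apply the theorem of Drinfeld--Simpson to trivialize the universal bundle on $S\times X-\{x^\sA\}$ for some $x^\sA:S\to X^\sA$; then use the marked-points Ran formalism (\propref{p:marked points}, \corref{c:marked points homology rel}) to pass from $\Gr_{\Ran X_\sA}$ to an honest product $S\times\mathbf{Maps}(X,G)^{\on{rat}}_{\Ran X}$, at which point the absolute \thmref{t:contractibility} applies directly. Without this untwisting step you would be forced to generalize the entire contractibility machinery (codimension estimates, \v{C}ech descent, etc.) to the twisted situation, which is a much harder and not clearly well-posed task; the marked-points device is precisely what lets you avoid it. You should identify Drinfeld--Simpson and the marked-point Ran space as the essential inputs before claiming the relative statement.
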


\begin{remark}
In view of Diagram \eqref{e:fibration over Bun}, the statement of
\thmref{t:pullback fully faithful} it is not surprising: once we show that the map $\pi$
behaves like a fibration, which is what the proof of \thmref{t:pullback fully faithful} will
amount to, the assertion would follow from \thmref{t:contractibility}: the property of
fully faithful pullback functor is enjoyed by fibrations with contractible fibers.
\end{remark}

\sssec{}

Note that \thmref{t:pullback fully faithful} can be reformulated as follows:

\begin{cor} \label{c:fully-faith reform}
The partially defined left adjoint $\pi_!$ to $\pi^!$ is defined on the essential
image of $\pi^!$, and for $\CF\in \fD(\Bun_G)$
the adjunction map 
\begin{equation} \label{e:adjunction Gr and Bun}
\pi_{!}\circ \pi^!(\CF)\to \CF 
\end{equation}
is an isomorphism.
\end{cor}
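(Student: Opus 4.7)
The plan is to derive this corollary as a formal consequence of \thmref{t:pullback fully faithful} via the standard relationship between fully faithful functors and their (partially defined) left adjoints, as reviewed in \secref{sss:intr good}.

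Recall that by definition, the partially defined left adjoint $\pi_!$ is defined on an object $\CG\in \fD(\Gr_{\Ran X})$ exactly when the presheaf
\[
\fD(\Bun_G)\to \inftygroup,\qquad \CH\mapsto \Hom_{\fD(\Gr_{\Ran X})}(\CG,\pi^!\CH),
\]
is corepresentable. Taking $\CG=\pi^!(\CF)$, this presheaf becomes $\CH\mapsto \Hom_{\fD(\Gr_{\Ran X})}(\pi^!\CF,\pi^!\CH)$. By \thmref{t:pullback fully faithful}, this functor is canonically equivalent to $\CH\mapsto \Hom_{\fD(\Bun_G)}(\CF,\CH)$, which is tautologically corepresented by $\CF$ itself. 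Thus $\pi_!$ is defined on $\pi^!(\CF)$, and by the uniqueness of representing objects together with the universal property, one obtains a canonical identification $\pi_{!}\circ \pi^!(\CF)\simeq \CF$. Unwinding the universal property of the partially defined adjoint at \secref{sss:intr good}, this identification is precisely the adjunction map \eqref{e:adjunction Gr and Bun}; in particular, that map is an isomorphism.

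Thus the corollary contains no additional content beyond the theorem, and all the substance lies in \thmref{t:pullback fully faithful} itself. The hard part in that theorem — which this corollary merely repackages — is to show that the ind-schematic map $\pi:\Gr_{\Ran X}\to \Bun_G$ is sufficiently fibration-like that the contractibility of its geometric fibers (supplied by \thmref{t:contractibility} applied to $Y=G$, whose Bruhat decomposition provides the required cover by open subsets of affine spaces) implies fully faithfulness of $\pi^!$. That reduction, and not the passage from theorem to corollary, is where the real work is done.
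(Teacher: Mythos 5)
Your proof is correct and matches the paper's treatment: the paper states Corollary~\ref{c:fully-faith reform} without a separate proof precisely because it is, as you argue, the tautological reformulation of Theorem~\ref{t:pullback fully faithful} via the standard dictionary between fully faithful functors and their partially defined left adjoints from \secref{sss:intr good}.
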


\medskip

Applying \eqref{e:adjunction Gr and Bun} to $\CF:=\omega_{\Bun_G}$, we
obtain:

\begin{cor} \label{c:dualizing on Bun}
The trace map
$$\on{Tr}_\omega(\pi):\pi_{!}(\omega_{\Gr_{\Ran X}})\to \omega_{\Bun_G}$$
is an isomorphism.
\end{cor}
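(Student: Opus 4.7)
The plan is to deduce this corollary as an essentially immediate specialization of \corref{c:fully-faith reform} to the single object $\CF = \omega_{\Bun_G}$, so the ``proof'' amounts to identifying the pieces of data on the two sides.

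First, I would identify $\pi^!(\omega_{\Bun_G})$ with $\omega_{\Gr_{\Ran X}}$. Writing $p_{\Bun_G}:\Bun_G\to \on{pt}$ and $p_{\Gr_{\Ran X}}:\Gr_{\Ran X}\to \on{pt}$ for the structure maps, we have $p_{\Gr_{\Ran X}} = p_{\Bun_G}\circ \pi$, so by the functoriality of $\fD^!_{\on{PreStk}}$ recalled in \secref{sss:morphism of Z},
$$\omega_{\Gr_{\Ran X}} \;=\; p_{\Gr_{\Ran X}}^!(k) \;\simeq\; \pi^!\bigl(p_{\Bun_G}^!(k)\bigr) \;=\; \pi^!(\omega_{\Bun_G}).$$
In particular $\omega_{\Gr_{\Ran X}}$ lies in the essential image of $\pi^!$.

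Next, I would feed $\CF:=\omega_{\Bun_G}$ into \corref{c:fully-faith reform}. That corollary asserts that the partially defined left adjoint $\pi_!$ is defined on every object in the essential image of $\pi^!$, and that for such $\CF$ the counit map $\pi_!\circ\pi^!(\CF)\to\CF$ is an isomorphism. Combined with the identification of the previous paragraph, this immediately gives that $\pi_!$ is defined on $\omega_{\Gr_{\Ran X}}$ and that the induced map $\pi_!(\omega_{\Gr_{\Ran X}})\to \omega_{\Bun_G}$ is an isomorphism.

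Finally, I would check that this counit map is literally the trace map $\on{Tr}_\omega(\pi)$. By the definition recorded in \eqref{e:trace map omega}, $\on{Tr}_\omega(\pi)$ is the map $\pi_!(\omega_{\Gr_{\Ran X}})\to \omega_{\Bun_G}$ obtained by $(\pi_!,\pi^!)$-adjunction from the canonical isomorphism $\omega_{\Gr_{\Ran X}}\simeq \pi^!(\omega_{\Bun_G})$, which is exactly the counit evaluated on $\omega_{\Bun_G}$. There is essentially no obstacle in the argument: the whole content has been concentrated in \thmref{t:pullback fully faithful} (hence in \corref{c:fully-faith reform}), whose proof in turn rests on the contractibility result \thmref{t:contractibility}, and the only thing left at this stage is an unwinding of definitions.
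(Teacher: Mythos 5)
Your proposal is correct and is essentially identical to the paper's own argument: the paper states \corref{c:fully-faith reform} as an immediate reformulation of \thmref{t:pullback fully faithful}, and then obtains \corref{c:dualizing on Bun} precisely by taking $\CF=\omega_{\Bun_G}$ there and using $\pi^!(\omega_{\Bun_G})\simeq\omega_{\Gr_{\Ran X}}$. The only extra content you supply is the (correct) unwinding that $\on{Tr}_\omega(\pi)$ of \eqref{e:trace map omega} is the counit of the $(\pi_!,\pi^!)$-adjunction evaluated at $\omega_{\Bun_G}$, which the paper leaves implicit.
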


\sssec{A variant}  \label{sss:with level structure}

For a finite subscheme $D\subset X$ we can consider the stack $\Bun_G^{\on{level}_D}$ that classifies
$G$-bundles ``with structure of level $D$", i.e., equipped with a trivialization when restricted to $D$.

\medskip

Similarly, one can consider the functor 
$$\Gr_{X^{\fset}}^{\on{level}_D}:(\fset)^{op}\to \indSch$$ equal by definition to
$$\Bun_G^{\on{level}_D}\underset{\Bun_G}\times \Gr_{X^{\fset}},$$
and the corresponding object
$$\Gr_{\Ran X}^{\on{level}_D}:=\underset{(\fset)^{op}}{colim}\, \Gr_{X^{\fset}}^{\on{level}_D}\in \on{PreStk}.$$

\medskip

Denote by $\pi_D$ the map $\Gr_{\Ran X}^{\on{level}_D}\to \Bun_G^{\on{level}_D}$.

\medskip

The proof of \thmref{t:pullback fully faithful} applies equally to the present situation, i.e.,
the functor
$$\pi^!_D:\fD(\Bun_G^{\on{level}_D})\to \fD(\Gr_{\Ran X}^{\on{level}_D})$$
is fully faithful.

\medskip

The rest of this subsection is devoted to the proof of \thmref{t:pullback fully faithful}.


\sssec{Step 1}

We claim that it suffices to show that for every affine scheme $S$ equipped with a map
$g:S\to \Bun_G$, for the Cartesian diagram
$$
\CD
S\underset{\Bun_G}\times \Gr_{\Ran X}  @>{g'}>>  \Gr_{X^{\fset}} \\
@V{\pi_S}VV   @VV{\pi}V  \\
S  @>{g}>> \Bun_G,
\endCD
$$
and $\CF_S\in \fD(S)$, the trace map
\begin{equation} \label{e:map after base change}
(\pi_S)_{!}\circ (\pi_S)^!(\CF_S)\to \CF_S
\end{equation}
is an isomorphism (and in particular, the left hand side is defined as an object of $\fD(S)$).
Let us assume that \eqref{e:map after base change} is an isomorphism, and deduce 
the assertion of \thmref{t:pullback fully faithful}.

\medskip

First, for $\CF_1,\CF_2\in \fD(\Bun_G)$, the map
$$\Hom_{\fD(\Bun_G)}(\CF_1,\CF_2)\to \underset{(S,g)\in \affSch_{/\Bun_G}}{lim}\, 
\Hom_{\fD(S)}\left(g^!(\CF_1),g^!(\CF_2)\right)$$
is an isomorphism. 

\medskip

However, for any map $\CY_1\to \CY_2$ in $\on{PreStk}$, the restriction map
$$\fD(\CY_1)\to \underset{S\in \affSch_{/\CY_2}}{lim}\, \fD(S\underset{\CY_2}\times \CY_1)$$
is an equivalence.

\medskip

Therefore, for $\CF_1,\CF_2\in \fD(\Bun_G)$, the map
$$\Hom_{\fD(\Gr_{\Ran X})}\left(\pi^!(\CF_1),\pi^!(\CF_2)\right)\to \underset{(S,g)\in \affSch_{/\Bun_G}}{lim}\, 
\Hom_{\fD(S\underset{\Bun_G}\times \Gr_{\Ran X})}\left(g'{}^!\circ \pi^!(\CF_1),g'{}^!\circ \pi^!(\CF_2)\right)$$
is an isomorphism. 

\medskip

Hence, in order to show that 
$$\Hom_{\fD(\Bun_G)}(\CF_1,\CF_2)\to \Hom_{\fD(\Gr_{\Ran X})}\left(\pi^!(\CF_1),\pi^!(\CF_2)\right)$$
is an isomorphism, it suffices to show that for every $S$ as above, the map
\begin{equation} \label{e:fully faithful after base change}
\Hom_{\fD(S)}\left(g^!(\CF_1),g^!(\CF_2)\right)\to 
\Hom_{\fD(S\underset{\Bun_G}\times \Gr_{\Ran X})}\left(g'{}^!\circ \pi^!(\CF_1),g'{}^!\circ \pi^!(\CF_2)\right)
\end{equation}
is an isomorphism. 

\medskip

However, noting that $g'{}^!\circ \pi^!(\CF_i)\simeq \pi_S^!\circ g^!(\CF_i)$ for $i=1,2$ and taking
$\CF_S=g^!(\CF_1)$, we obtain that the isomorphism 
\eqref{e:fully faithful after base change} follows from \eqref{e:map after base change}.

\sssec{Step 2}

We fix a map $S\to \Bun_G$, and we wish to establish \eqref{e:map after base change}.

\medskip

It is easy to see, however, that (possibly after passing to an \'etale cover of $S$), there exists
a finite set $\sA$ and a map $x^{\sA}:S\to X^{\sA}$, such that the pullback of the universal $G$-bundle
under
$$S\times X\to \Bun_G\times X$$
admits a trivialization over $S\times X-\{x^\sA\}$. This follows, e.g., from Theorem 3 of \cite{DS}. 

\medskip

Recall the category $\fset_\sA$ (see \secref{sss:marked points}) and 
the functor $X^{\fset_\sA}:(\fset_\sA)^{op}\to \Sch_{/S}$ (see \secref{sss:relative version}). 

\medskip

Let $\Gr_{X_\sA^{\fset}}$ be the functor $(\fset_\sA)^{op}\to \indSch$ defined by sending $I\in \fset_{\sA/}$
to 
$$\Gr_{X^I_\sA}:=X^I_\sA\underset{(S\times X^I)}\times \left(S\underset{\Bun_G}\times \Gr_{X^I}\right)\simeq
S\underset{X^\sA\times \Bun_G}\times \Gr_{X^I}\simeq S\underset{S\times X^\sA}\times 
(S\underset{\Bun_G}\times \Gr_{X^I}).$$

\medskip

Set
$$\Gr_{\Ran X_\sA}:=\underset{(\fset)^{op}}{colim}\,\Gr_{X^{\fset}_\sA}\in \on{PreStk}.$$
Let $\pi_{S,\sA}:\Gr_{\Ran X_\sA}\to S$ denote the resulting map.

\medskip

We claim that the map
$$\Gr_{\Ran X_\sA}\hookrightarrow S\underset{\Bun_G}\times \Gr_{\Ran X}$$
induces an isomorphism
\begin{equation} \label{sss:Gr marked an umarked}
(\pi_{S,\sA})_{!}\circ (\pi_{S,\sA})^!(\CF_S)\simeq (\pi_S)_{!}\circ (\pi_S)^!(\CF_S).
\end{equation}
Indeed, this follows from \corref{c:marked points homology rel}.

\begin{remark}
For the reader who chose to skip \secref{s:unital}, here is an alternative
way to deduce the isomorphism \eqref{sss:Gr marked an umarked}:
namely, one can repeat the argument of \cite{BD1}, Proposition 4.4.2.
\end{remark}

\sssec{Step 3}

Consider now the functor 
$${\mathbf{Maps}}_S(X,G)^{\on{rat}}_{X^{\fset}_\sA}:(\fset_\sA)^{op}\to \indSch_{/S}$$
defined by 
$$I\rightsquigarrow X^I_\sA\underset{X^I}\times {\mathbf{Maps}}(X,G)^{\on{rat}}_{X^I}\simeq
S\underset{X^\sA}\times {\mathbf{Maps}}(X,G)^{\on{rat}}_{X^I}.$$
Set
$${\mathbf{Maps}}_S(X,G)^{\on{rat}}_{\Ran X_\sA}:=\underset{(\fset_\sA)^{op}}{colim}\,
{\mathbf{Maps}}_S(X,G)^{\on{rat}}_{X^{\fset}_\sA}.$$

\medskip

Note now that a choice of a trivialization of the pulled-back $G$-bundle on the open subscheme
$S\times X-\{x^\sA\}$ defines an isomorphism
$$\Gr_{X^{\fset}_\sA}\simeq {\mathbf{Maps}}_S(X,G)^{\on{rat}}_{X^{\fset}_\sA},$$
and hence 
$$\Gr_{\Ran X_\sA}\simeq {\mathbf{Maps}}_S(X,G)^{\on{rat}}_{\Ran X_\sA}.$$

\medskip

Let $p_\sA$ denote the projection
$${\mathbf{Maps}}_S(X,G)^{\on{rat}}_{\Ran X_\sA}\to S.$$ 
Thus, we have to show that for $\CF_S\in \fD(S)$, the trace map
$$(p_\sA)_{!}\circ (p_\sA)^!(\CF_S)\to \CF_S$$
is an isomorphism. 

\medskip

However, by the same logic as in Step 2, i.e., by \corref{c:marked points homology rel},
we can replace the pair
$$\left({\mathbf{Maps}}_S(X,G)^{\on{rat}}_{\Ran X_\sA},p_\sA\right)$$ by 
$$\left(S\times {\mathbf{Maps}}(X,G)^{\on{rat}}_{\Ran X}, \on{id}_S\times p\right),$$
where $p:{\mathbf{Maps}}(X,G)^{\on{rat}}_{\Ran X}\to \on{pt}$.

\medskip

Thus, we have to show that the trace map
$$(\on{id}_S\times p)_!\circ (\on{id}_S\times p)^!(\CF_S)\to \CF_S$$
is an isomorphism. However, the left-hand side is isomorphic to
$$H_\bullet\left({\mathbf{Maps}}(X,G)^{\on{rat}}_{\Ran X})\right)\otimes \CF_S,$$
and the desired assertion follows
from \thmref{t:contractibility}.

\qed

\ssec{Recollections on Verdier duality and D-modules on stacks}

In order to state the corollaries of \thmref{t:pullback fully faithful} pertaining to
cohomology of D-modules and quasi-coherent sheaves on $\Bun_G$, let
us recall some basics from the theory of D-modules. For a more detailed
treatment, the reader is referred to \cite[Sects. 5 and 6]{DrGa0}.

\sssec{}  \label{sss:Verdier basics}

First, let us review Verdier duality on schemes. For $Z\in \Sch$ consider
the category $\fD(Z)$. (We remind that according to our conventions, 
all schemes are assumed of finite type, and in particular, quasi-compact.) It is known that $\fD(Z)$
is compactly generated; its compact objects are bounded complexes, 
whose cohomologies are finitely generated.

\medskip

Verdier duality is a canonical equivalence 
\begin{equation} \label{e:Verdier on compact}
\BD_{\fD(Z)}:(\fD(Z)^c)^{op}\to \fD(Z)^c. 
\end{equation}
It is characterized by the property that for $\CF\in \fD(Z)^c$ and any $\CF_1\in \fD(Z)$,
\begin{equation} \label{e:characterize Verdier}
\Hom_{\fD(Z)}(\CF_1,\BD_{\fD(Z)}(\CF))=\Hom_{\fD(Z\times Z)}(\CF_1\boxtimes \CF,\Delta_{\dr,*}(\omega_Z)),
\end{equation}
where $\Delta_{\dr,*}$ denotes the D-module direct image functor (in this case for the diagonal morphism).

\medskip

By \cite{DG}, Sect. 2.3, the equivalence \eqref{e:Verdier on compact} defines an 
equivalence
\begin{equation} \label{e:self duality}
\fD(Z)^\vee\simeq \fD(Z),
\end{equation}
where for $\bC\in \StinftyCat$, we denote by $\bC^\vee$ the dual category, see, e.g., \cite{DG}, Sect. 2.1.

\medskip

Let now $g:Z_1\to Z_2$ be a morphism, and consider he functor $g^!:\fD(Z_2)\to \fD(Z_1)$.
The dual functor, $(g^!)^\vee$, which due to \eqref{e:self duality} can be though of
as a functor $\fD(Z_1)\to \fD(Z_2)$, identifies with the functor
$g_{\dr,*}$ of D-module (a.k.a. de Rham) direct image. 

\medskip

Note also that \cite{DG}, Lemma 2.3.3, formally implies the relationship
between $g_{\dr,*}$, which is the \emph{dual} of $g^!$, with $g_!$, which is
the left adjoint of $g^!$, whenever the latter is defined on all of $\fD(Z_1)$:

\medskip

Namely, for $g_!$ to be defined on all of $\fD(Z_1)$, it is necessary and sufficient that $g_{\dr,*}$
send $\fD(Z_1)^c\to \fD(Z_2)^c$. For a given object $\CF\in \fD(Z_1)^c$, the functor $g_!$
is defined on it if and only if $g_{\dr,*}\left(\BD_{\fD(Z_1)}(\CF)\right)$ belongs to 
$\fD(Z_2)^c$, and in the latter case we have:
$$g_!(\CF)\simeq \BD_{\fD(Z_2)}\circ g_{\dr,*} \circ \BD_{\fD(Z_1)}(\CF).$$

\sssec{}

In particular, it is easy to see that if $g$ is not proper, the functor $g_!$ is not defined on all
of $\fD(Z_1)$:

\medskip

Indeed, for a scheme $Z$, let 
$'\ind^{\fD}_Z:\QCoh(Z)\to \fD(Z)$
denote the functor left adjoint to the forgetful functor
$$'\oblv^{\fD}_Z:\fD(Z)\to \QCoh(Z),$$
where we are thinking of $\fD(-)$ in the ``right D-module realization". 
\footnote{We use the notation $'\ind^{\fD}_Z$ and $'\oblv^{\fD}_Z$ instead of
$\ind^{\fD}_Z$ and $\oblv^{\fD}_Z$, respectively, because the latter is reserved
for the corresponding adjoint pair of functors
$\IndCoh(Z)\rightleftarrows \fD(Z)$, where $\IndCoh(Z)$ is the category introduced in 
\cite{IndCoh}. The functor $'\oblv^{\fD}_Z$ is the composition of $\oblv^{\fD}_Z$
and the colocalization functor $\Psi_Z:\IndCoh(Z)\to \QCoh(Z)$. The functor
$\ind^{\fD}_Z$ is isomorphic to the composition of $\Psi_Z$ and 
$'\ind^{\fD}_Z$, see \cite[Sect. 5.1.10]{DrGa0}.}

\medskip

The functor $'\ind^{\fD}_Z$ sends $\Coh(Z)$ to the subcategory $\fD(Z)^c$ of $\fD(Z)$. 
Furthermore, it is easy to see that for $\CF\in \QCoh(Z)$, we have 
$'\ind^{\fD}_Z(\CF)\in \fD(Z)^c$ \emph{if and only if} $\CF\in \Coh(Z)$.

\medskip

For a morphism $g:Z_1\to Z_2$ we have:
\begin{equation} \label{e:dir image and induction}
g_{\dr,*}\circ {}'\ind^{\fD}_{Z_1}\simeq {}'\ind^{\fD}_{Z_2}\circ g_*.
\end{equation}

\medskip

However, it is known that unless $g$ is proper, the functor $g_*:\QCoh(Z_1)\to \QCoh(Z_2)$
does not send $\Coh(Z_1)$ to $\Coh(Z_2)$, hence
$\ind^{\fD}_{Z_2}\circ g_*(\Coh(Z_1))\notin \fD(Z_2)^c$.

\sssec{}

The above discussion is also applicable when a scheme $Z$ is replaced by a quasi-compact
Artin stack $\CY$, whose inertia stack, i.e., $\CY\underset{\CY\times \CY}\times \CY$,
is affine over $\CY$ (these facts are established in \cite[Sect. 7]{DrGa0}):

\medskip

\noindent (i) We have a pair of adjoint functors
$$'\ind^{\fD}_\CY:\QCoh(\CY)\rightleftarrows \fD(\CY):{}'\oblv^{\fD}_{\CY},$$
with $'\oblv^{\fD}_{\CY}$ being conservative.

\medskip

\noindent (ii) The category $\fD(\CY)$ is compactly generated. In fact, a set of compact generators
is obtained by applying the functor $'\ind^{\fD}_\CY$ to $\Coh(\CY)\subset \QCoh(\CY)$.

\medskip

\noindent (iii) Verdier duality, defined by formula \eqref{e:characterize Verdier}, 
is an equivalence $$\BD_\CY:(\fD(\CY)^c)^{op}\to \fD(\CY)^c,$$ and hence defines an identification
$\fD(\CY)^\vee\simeq \fD(\CY)$.

\medskip

The substantial difference from the case of schemes is the following: for a non-necessarily
schematic morphism morphism $g:\CY_1\to \CY_2$, where $\CY_1$ and $\CY_2$ are as
above, the functor dual to $g^!$ is no longer the naively defined functor $g_{\dr,*}$, but rather 
its renormalized version that we denote by $g_{\rendr,*}$, which is the 
ind-extension of the functor $g_{\dr,*}|_{\fD(\CY)^c}$. We refer the reader to \cite[Sect. 8.3]{DrGa0},
where the basic properties of the functor $g_{\rendr,*}$ are established. 

\medskip

In particular,
\begin{equation} \label{e:ren dR}
g_{\rendr,*}|_{\fD(\CY)^c}\simeq g_{\dr,*}|_{\fD(\CY)^c}.
\end{equation}

\medskip

The original functor
$$g_{\dr,*}:\fD(\CY_1)\to \fD(\CY_2)$$
may fail to be continuous, as can be seen in the example of $\CY_1=\on{pt}/B\BG_m$
and $\CY_2=\on{pt}$. (Whenever $g_{\dr,*}$ is continuous, it is canonically isomorphic
to $g_{\rendr,*}$.)

\medskip

The relationship between $g_!$, whenever the latter is defined, and the functor
$g_{\rendr,*}$ introduced
above, is the same as in the case of schemes: for $\CF\in \fD(\CY_1)^c$, the functor $g_!$ is defined on it if
and only if $g_{\rendr,*}(\BD_{\fD(\CY_1)}(\CF))\in \fD(\CY_2)^c$, and in the latter case we have:
\begin{equation} \label{e:two pushforwards}
g_!(\CF)\simeq \BD_{\fD(\CY_2)}\circ g_{\rendr,*} \circ \BD_{\fD(\CY_1)}(\CF).
\end{equation}

\medskip

Note, however, that formula \eqref{e:dir image and induction} still holds for the naive direct
image $g_{\dr,*}$, i.e.,
\begin{equation} \label{e:dir image and induction stacks}
g_{\dr,*}\circ {}'\ind^{\fD}_{\CY_1}\simeq {}'\ind^{\fD}_{\CY_2}\circ g_*.
\end{equation}
In fact, one can show that for $\CF\in \QCoh(\CY_1)$, the canonical map
$$g_{\rendr,*}\circ {}'\ind^{\fD}_{\CY_1}(\CF)\to g_{\dr,*}\circ {}'\ind^{\fD}_{\CY_1}(\CF)$$
is an isomorphism, see \cite[Corollary 8.3.9 and Example 8.2.4]{DrGa0}.

\medskip

We shall mostly apply the above discussion to $\CY_1=\CY$, $\CY_2=\on{pt}$ and
$g=p_\CY$. We shall use the notation
$$\Gamma_{\dr}(\CY,-):=(p_\CY)_{\dr,*}(-) \text{ and } \Gamma_{\rendr}(\CY,-):=(p_\CY)_{\rendr,*}(-).$$

\ssec{(Co)homology of D-modules on $\Bun_G$}

For the rest of the paper we will assume that $G$ is reductive. The main feature of this
situation is that in this case the indschemes $\Gr_{X^I}$ are ind-proper. In particular,
$\Gr_{\Ran X}$ is pseudo ind-proper (see \secref{sss:ind-schematic}, where the latter notion
is introduced). 

\medskip

By \secref{sss:when good}, from the peusdo ind-properness of $\Gr_{\Ran X}$, the functor
$$\Gamma_{\dr,c}(\Gr_{\Ran X},-):\fD(\Gr_{\Ran X})\to \Vect,$$
left adjoint to $p_{\Gr_{\Ran X}}^!$, is defined on all of $\fD(\Gr_{\Ran X})$.

\sssec{}

We claim: 

\begin{cor}  \label{c:homology of D-modules}
The functor
$$\Gamma_{\dr,c}(\Bun_G,-):\fD(\Bun_G)\to \Vect,$$
left adjoint to $p_{\Bun_G}^!:\Vect\to \fD(\Bun_G)$, is defined on all of $\fD(\Bun_G)$. Moreover,
we have a canonical isomorphism
$$\Gamma_{\dr,c}(\Bun_G,-)\simeq \Gamma_{\dr,c}\left(\Gr_{\Ran X},\pi^!(-)\right).$$
\end{cor}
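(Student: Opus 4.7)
The plan is to define a candidate functor $F:\fD(\Bun_G)\to \Vect$ by
$$F(\CF) := \Gamma_{\dr,c}\left(\Gr_{\Ran X},\pi^!(\CF)\right),$$
and verify directly that $F$ is left adjoint to $p_{\Bun_G}^!$. The point is that $F$ is manifestly defined on all of $\fD(\Bun_G)$: the functor $\pi^!$ is defined tautologically (it is the !-pullback attached to a morphism in $\on{PreStk}$), and since $G$ is reductive, each $\Gr_{X^I}$ is ind-proper, so $\Gr_{\Ran X}$ is pseudo ind-proper. By \secref{sss:when good}(i) and the colimit description of \secref{sss:good objects}, the partially defined functor $(p_{\Gr_{\Ran X}})_! = \Gamma_{\dr,c}(\Gr_{\Ran X},-)$ is therefore defined on all of $\fD(\Gr_{\Ran X})$. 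Hence the composition $F$ makes sense.

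To establish the adjunction, I would compute, for $\CF\in \fD(\Bun_G)$ and $V\in \Vect$,
\begin{align*}
\Hom_{\Vect}(F(\CF),V)
&= \Hom_{\fD(\Gr_{\Ran X})}\left(\pi^!(\CF),\,p_{\Gr_{\Ran X}}^!(V)\right) \\
&\simeq \Hom_{\fD(\Gr_{\Ran X})}\left(\pi^!(\CF),\,\pi^!\circ p_{\Bun_G}^!(V)\right) \\
&\simeq \Hom_{\fD(\Bun_G)}\left(\CF,\,p_{\Bun_G}^!(V)\right),
\end{align*}
where the first isomorphism is the $((p_{\Gr_{\Ran X}})_!,(p_{\Gr_{\Ran X}})^!)$-adjunction (valid since $F(\CF)$ is defined), the second uses the factorization $p_{\Gr_{\Ran X}} = p_{\Bun_G}\circ \pi$, and the third is exactly the full faithfulness of $\pi^!$ supplied by \thmref{t:pullback fully faithful}. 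This chain of isomorphisms shows that the functor $V\mapsto \Hom_{\fD(\Bun_G)}\left(\CF,p_{\Bun_G}^!(V)\right)$ is co-representable by $F(\CF)$, which by the definition in \secref{sss:intr good} is exactly what it means for the partially defined left adjoint $(p_{\Bun_G})_!$ to be defined on $\CF$ with value $F(\CF)$.

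The canonical isomorphism $\Gamma_{\dr,c}(\Bun_G,-)\simeq \Gamma_{\dr,c}(\Gr_{\Ran X},\pi^!(-))$ is then built into the construction. Alternatively, once one knows $(p_{\Bun_G})_!$ is everywhere defined, the isomorphism is a direct consequence of \corref{c:fully-faith reform}: applying $\Gamma_{\dr,c}(\Bun_G,-) = (p_{\Bun_G})_!$ to the isomorphism $\pi_!\circ\pi^!(\CF)\simeq \CF$ and using functoriality of $_!$-pushforward along the composition $p_{\Bun_G}\circ\pi=p_{\Gr_{\Ran X}}$ yields the claim.

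I don't anticipate any real obstacle here; essentially the entire content of the corollary is packaged in \thmref{t:pullback fully faithful}, and what remains is the formal bookkeeping around the fact that $(p_{\Bun_G})_!$ is only a \emph{partially defined} left adjoint. The only step requiring mild care is confirming that co-representability of the relevant Hom-functor is the correct criterion to invoke — once that is set up via \secref{sss:intr good}, the three-line computation above does the job.
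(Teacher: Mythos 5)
Your proof is correct and follows essentially the same route as the paper: you establish the adjunction by the same chain of isomorphisms (the $\Gamma_{\dr,c}(\Gr_{\Ran X},-)\dashv p_{\Gr_{\Ran X}}^!$ adjunction, the factorization $p_{\Gr_{\Ran X}}=p_{\Bun_G}\circ\pi$, and the full faithfulness of $\pi^!$ from \thmref{t:pullback fully faithful}), merely read in the reverse direction. The preliminary observation that the pseudo ind-properness of $\Gr_{\Ran X}$ (via reductivity of $G$) makes $\Gamma_{\dr,c}(\Gr_{\Ran X},-)$ everywhere defined is also the one the paper records just before the corollary.
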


In light of the discussion in \secref{sss:Verdier basics}, the 
meaning of this corollary is that, with respect to the functor $p^!_{\Bun_G}$, the stack 
$\Bun_G$ exhibits features of a proper scheme. 

\medskip

\noindent{\it Warning:} However, it is not true that the functor $\Gamma_{\dr,c}(\Bun_G,-)$
is isomorphic to either $(p_{\Bun_G})_{\dr,*}$ or its renormalized version.

\begin{proof}

We wish to prove that for $\CF\in \fD(\Bun_G)$ and $V\in \Vect$
there exists a functorial isomorphism
$$\Hom_{\fD(\Bun_G)}(\CF,p^!_{\Bun_G}(V))\simeq \Hom_{\Vect}
\left(\Gamma_{\dr,c}\left(\Gr_{\Ran X},\pi^!(\CF)\right),V\right).$$

\medskip

By \thmref{t:pullback fully faithful}, the left-hand side maps isomorphically to
\begin{equation} \label{e:on Gr}
\Hom_{\fD\left(\Gr_{\Ran X}\right)}\left(\pi^!(\CF),\pi^!\circ p^!_{\Bun_G}(V)\right)\simeq
\Hom_{\fD\left(\Gr_{\Ran X}\right)}\left(\pi^!(\CF),p^!_{\Gr_{\Ran X}}(V)\right).
\end{equation}

\medskip

However, the right-hand side in \eqref{e:on Gr} is isomorphic to
$$\Hom_{\Vect}\left(\Gamma_{\dr,c}\left(\Gr_{\Ran X},\pi^!(\CF)\right),V\right),$$
by definition. 

\end{proof}

\begin{remark}
Note that \corref{c:homology of D-modules} is specific to $\Bun_G$, i.e., 
it would not work for $\Bun_G^{\on{level}_D}$, because in the latter case the indschemes 
$\Gr^{\on{level}_D}_{X^I}$ are no longer ind-proper.
\end{remark}

\sssec{}  \label{sss:U alpha}

Being a left adjoint to a continuous functor, the functor $\Gamma_{\dr,c}(\Bun_G,-)$ sends compact objects to compact ones,
i.e., to bounded complexes of vector space with finite-dimensional cohomologies. 

\medskip

Let us recall now that the main theorem of \cite{DrGa1} asserts that $\fD(\Bun_G)$ is compactly generated.
In fact in {\it loc.cit.} a stronger assertion is established:

\medskip

It is shown that $\fD(\Bun_G)$ can be presented as a union of quasi-compact open substacks $\CU_\alpha$
that are \emph{co-truncative} (see \cite[Theorem 4.1.12]{DrGa1}). By definition (see \cite[Sect. 4.1]{DrGa1}),
this means  that for the open embedding
$$\CU_\alpha\overset{j_\alpha}\hookrightarrow \Bun_G,$$
the \emph{a priori partially defined functor} $(j_\alpha)_{!}$,
left adjoint to $j_\alpha^!$, is defined on all of $\fD(\CU_\alpha)$.

\medskip

A generating set of compact objects in $\fD(\Bun_G)$ is provided by
$$(j_\alpha)_{!}(\CF_\alpha)$$
for $\CF_\alpha\in \fD(\CU_\alpha)^c$. (The categories $\fD(\CU_\alpha)$ themselves are compactly
generated because $\CU_\alpha$'s are quasi-compact.)

\sssec{}

From \corref{c:homology of D-modules} we obtain:

\begin{cor}  \label{c:homology on U alpha}
For each open substack $\CU_\alpha$ as above, the functor
$$\Gamma_{\dr,c}(\CU_\alpha,-):\fD(\CU_\alpha)\to \Vect,$$
left adjoint to $p_{\CU_\alpha}^!$, is defined on all of $\fD(\CU_\alpha)$.
\end{cor}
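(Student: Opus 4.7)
The plan is to reduce the statement to \corref{c:homology of D-modules} via the co-truncativity of $\CU_\alpha$. Concretely, let $j_\alpha:\CU_\alpha\hookrightarrow \Bun_G$ denote the open embedding, so that $p_{\CU_\alpha}=p_{\Bun_G}\circ j_\alpha$, and hence
\[
p_{\CU_\alpha}^!\simeq j_\alpha^!\circ p_{\Bun_G}^!:\Vect\to \fD(\CU_\alpha).
\]
To exhibit a left adjoint on all of $\fD(\CU_\alpha)$, it suffices to compose left adjoints to each of the two factors, provided that both are defined on the relevant objects.

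The first factor, $j_\alpha^!$, admits a globally defined left adjoint $(j_\alpha)_!$ precisely by the co-truncativity of $\CU_\alpha$, as recalled in \secref{sss:U alpha} and proved in \cite{DrGa1}. The second factor, $p_{\Bun_G}^!$, admits a globally defined left adjoint $\Gamma_{\dr,c}(\Bun_G,-)$ by \corref{c:homology of D-modules}. Hence the composition
\[
\CF\ \mapsto\ \Gamma_{\dr,c}\bigl(\Bun_G,(j_\alpha)_{!}(\CF)\bigr)
\]
makes sense for every $\CF\in \fD(\CU_\alpha)$, and the natural chain of adjunction isomorphisms
\[
\Hom_{\fD(\CU_\alpha)}(\CF,p_{\CU_\alpha}^!(V))
\simeq \Hom_{\fD(\Bun_G)}((j_\alpha)_{!}(\CF),p_{\Bun_G}^!(V))
\simeq \Hom_{\Vect}\bigl(\Gamma_{\dr,c}(\Bun_G,(j_\alpha)_{!}(\CF)),V\bigr)
\]
identifies this composition with the desired partially defined left adjoint of $p_{\CU_\alpha}^!$, now shown to be everywhere defined.

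There is essentially no obstacle beyond unpacking definitions: the two non-trivial inputs --- the existence of $(j_\alpha)_{!}$ globally (co-truncativity, from \cite{DrGa1}) and the existence of $\Gamma_{\dr,c}(\Bun_G,-)$ globally (\corref{c:homology of D-modules}, which itself rests on \thmref{t:pullback fully faithful} and the ind-properness of $\Gr_{\Ran X}$ for $G$ reductive) --- have already been established. One may also record the resulting concrete formula
\[
\Gamma_{\dr,c}(\CU_\alpha,\CF)\simeq \Gamma_{\dr,c}\bigl(\Gr_{\Ran X},\pi^!\circ (j_\alpha)_{!}(\CF)\bigr),
\]
obtained by further composing with the isomorphism of \corref{c:homology of D-modules}, which makes manifest that coherent objects of $\fD(\CU_\alpha)$ are sent to perfect complexes of vector spaces, in the spirit of the ``proper-like'' behavior emphasized after \corref{c:homology of D-modules}.
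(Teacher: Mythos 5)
Your proof is correct and matches the paper's intent: the paper cites \corref{c:homology of D-modules} and the co-truncativity of $\CU_\alpha$ recalled in \secref{sss:U alpha}, and your argument simply unpacks the composition of the two globally defined left adjoints $(j_\alpha)_!$ and $\Gamma_{\dr,c}(\Bun_G,-)$ along $p_{\CU_\alpha}^! = j_\alpha^! \circ p_{\Bun_G}^!$, which is exactly what the paper's terse statement relies upon.
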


Thus, each of the quasi-compact substacks $\CU_\alpha$ also
exhibits features of a proper scheme. 

\sssec{}

Applying \eqref{e:two pushforwards} and \eqref{e:ren dR} to $p_{\CU_\alpha}:\CU_\alpha\to \on{pt}$, 
we obtain:

\begin{cor}  \label{c:cohomology on opens}
For $\CU_\alpha$ as above, the functor
$$\Gamma_{\dr}(\CU_\alpha,-):\fD(\CU_\alpha)\to \Vect$$
sends compact objects to finite-dimensional vector spaces.
\end{cor}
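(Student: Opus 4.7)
The plan is to deduce \corref{c:cohomology on opens} as a purely formal consequence of \corref{c:homology on U alpha} together with the Verdier-duality package reviewed in \secref{sss:Verdier basics}, applied to the structure morphism $g := p_{\CU_\alpha}:\CU_\alpha \to \on{pt}$. The idea is the usual one: the two functors $\Gamma_{\dr,c}(\CU_\alpha,-)$ and $\Gamma_{\dr}(\CU_\alpha,-)$ are Verdier-dual to each other on compact objects, so once we know the first preserves compactness we get the second for free.

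More concretely, I would fix a compact object $\CF \in \fD(\CU_\alpha)^c$, and observe that $\BD_{\fD(\CU_\alpha)}(\CF)$ is again compact since Verdier duality is an equivalence on compact objects. By \corref{c:homology on U alpha}, the functor $g_! = \Gamma_{\dr,c}(\CU_\alpha,-)$ is defined on all of $\fD(\CU_\alpha)$; being the left adjoint of the continuous functor $p_{\CU_\alpha}^!$, it automatically sends compact objects to compact objects. Hence $g_!\bigl(\BD_{\fD(\CU_\alpha)}(\CF)\bigr)$ is a compact object of $\Vect$, i.e.\ a finite-dimensional complex of vector spaces.

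Now I would feed this into \eqref{e:two pushforwards}: the criterion stated there says that $g_!$ is defined on a compact object $\CF' \in \fD(\CU_\alpha)^c$ precisely when $g_{\rendr,*}(\BD_{\fD(\CU_\alpha)}(\CF'))$ lies in $\Vect^c$, in which case
$$g_!(\CF') \;\simeq\; \BD_{\Vect}\circ g_{\rendr,*}\bigl(\BD_{\fD(\CU_\alpha)}(\CF')\bigr).$$
Applying this with $\CF' = \BD_{\fD(\CU_\alpha)}(\CF)$ (so that $\BD_{\fD(\CU_\alpha)}(\CF') \simeq \CF$), the previous paragraph shows that the hypothesis is satisfied, hence $g_{\rendr,*}(\CF) = \Gamma_{\rendr}(\CU_\alpha,\CF)$ is finite-dimensional. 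Finally, since $\CF$ is compact, the identification \eqref{e:ren dR} gives $\Gamma_{\dr}(\CU_\alpha,\CF) \simeq \Gamma_{\rendr}(\CU_\alpha,\CF)$, yielding the desired finite-dimensionality.

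There is no real obstacle in the argument itself: the deduction is a three-line formal manipulation once one has the Verdier-duality formalism of \secref{sss:Verdier basics} at hand. The entire substantive content is concentrated upstream, in \corref{c:homology on U alpha}, which in turn is extracted from \thmref{t:pullback fully faithful} and ultimately from the contractibility theorem \thmref{t:contractibility}.
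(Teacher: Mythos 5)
Your proof is correct and is essentially identical to the paper's, which simply cites \eqref{e:two pushforwards} and \eqref{e:ren dR}; you have spelled out the formal Verdier-duality bookkeeping that the paper leaves implicit, with \corref{c:homology on U alpha} supplying the needed compactness-preservation of $\Gamma_{\dr,c}(\CU_\alpha,-)$.
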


\medskip

Applying \eqref{e:dir image and induction stacks} to $p_{\CU_\alpha}:U_{\alpha}\to \on{pt}$, we
obtain:

\begin{cor}
The functor $\Gamma:\QCoh(\CU_\alpha)\to \Vect$ sends 
$\Coh(\CU_\alpha)$ to $\Vect^c$, i.e., to bounded complexes of vector spaces
with finite-dimensional cohomologies.
\end{cor}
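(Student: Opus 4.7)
The plan is to deduce this corollary directly from \corref{c:cohomology on opens} via the compatibility between the naive D-module direct image and the induction functor ${}'\ind^{\fD}$ expressed by \eqref{e:dir image and induction stacks}. The strategy is to transport a coherent sheaf into the D-module world, apply the finiteness result for compact D-modules, and then transport back using that ${}'\ind^{\fD}_{\on{pt}}$ is the identity.

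First, I would recall the observation, made in the discussion leading up to the list of properties of $\fD(\CY)$ for Artin stacks, that the functor ${}'\ind^{\fD}_{\CU_\alpha} : \QCoh(\CU_\alpha) \to \fD(\CU_\alpha)$ sends $\Coh(\CU_\alpha)$ into the compact subcategory $\fD(\CU_\alpha)^c$; indeed, a generating set of compact objects of $\fD(\CU_\alpha)$ is obtained this way. Thus, for any $\CF \in \Coh(\CU_\alpha)$, the object ${}'\ind^{\fD}_{\CU_\alpha}(\CF)$ is compact in $\fD(\CU_\alpha)$.

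Next, I would apply \eqref{e:dir image and induction stacks} to the tautological morphism $p_{\CU_\alpha} : \CU_\alpha \to \on{pt}$. Since ${}'\ind^{\fD}_{\on{pt}} : \Vect \to \Vect$ is the identity functor, this produces a canonical isomorphism
$$\Gamma_{\dr}\left(\CU_\alpha,\, {}'\ind^{\fD}_{\CU_\alpha}(\CF)\right) \simeq \Gamma(\CU_\alpha, \CF)$$
for every $\CF \in \QCoh(\CU_\alpha)$.

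Putting the two steps together: for $\CF \in \Coh(\CU_\alpha)$ the left-hand side lies in $\Vect^c$ by \corref{c:cohomology on opens}, hence so does the right-hand side $\Gamma(\CU_\alpha, \CF)$. There is no genuine obstacle here---all the substantive work has already been absorbed into \corref{c:cohomology on opens}, which itself rests on \thmref{t:pullback fully faithful} and the ind-properness of $\Gr_{\Ran X}$. The only point one must verify is that \eqref{e:dir image and induction stacks}, stated for a not-necessarily-schematic morphism between quasi-compact Artin stacks with affine inertia, indeed applies to $p_{\CU_\alpha}$; this is guaranteed by the standing hypotheses on $\CU_\alpha$ inherited from $\Bun_G$ (quasi-compactness having been built into the choice of $\CU_\alpha$ in \secref{sss:U alpha}, and the affine inertia condition holding for $\Bun_G$ and therefore for any open substack).
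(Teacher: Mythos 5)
Your proof is correct and is essentially the paper's argument, only spelled out: the paper simply says ``applying \eqref{e:dir image and induction stacks} to $p_{\CU_\alpha}$'', and you correctly identify the two ingredients implicit there, namely that ${}'\ind^{\fD}_{\CU_\alpha}$ carries $\Coh(\CU_\alpha)$ into $\fD(\CU_\alpha)^c$ and that ${}'\ind^{\fD}_{\on{pt}}$ is the identity, so that \corref{c:cohomology on opens} transports directly to the quasi-coherent side.
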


\ssec{The Hecke-equivariant category}

\sssec{}

We are going to consider another functor $(\fset)^{op}\to \on{PreStk}$, denoted $\CH_{X^{\fset}}$,
and referred to as ``the Hecke stack":

\medskip

For a finite set $I$ and a test scheme $S$ we set $\on{Maps}(S,\CH_{X^I})$ to be 
the groupoid of triples $(x^I,\CP',\CP'',\alpha)$, where $x^I:S\to X^I$, 
$\CP',\CP''$ are $G$-bundles on $S\times X$, and $\alpha$ is an isomorphism
$$\CP'_{S\times X-\{x^I\}}\simeq \CP''_{S\times X-\{x^I\}}.$$

\medskip

Let 
$$\CH_{\Ran X}:=\underset{(\fset)^{op}}{colim}\, \CH_{X^{\fset}}\in \on{PreStk}.$$

We have the natural projections
\begin{gather*}
\xy
(-20,0)*+{\Bun_G}="A";
(0,20)*+{\CH_{\Ran X}}="B";
(20,0)*+{\Bun_G,}="C";
(0,-10)*+{\Ran X}="D";
{\ar@{->}_{\hl} "B";"A"};
{\ar@{->}^{\hr} "B";"C"};
{\ar@{->}^{f_{\CH}} "B";"D"};
\endxy
\end{gather*}
where $\hl$ and $\hr$ remember the data of $\CP'$ and $\CP''$, respectively.

\medskip

An important property of the projections $\hl$ and $\hr$ is that for each $I\in \fset$, the
corresponding maps 
$$\hl|_{\CH_{X^I}},\hr|_{\CH_{X^I}}:\CH_{X^I}\to \Bun_G$$
are ind-schematic and ind-proper. Hence, the maps $\hl$ and $\hr$ are pseudo ind-proper. 

\medskip

In particular, the functors $\hl^!,\hr^!:\fD(\Bun_G)\to \fD(\CH_{\Ran X})$
admit left adjoints, denoted $\hl_!$ and $\hr_!$, respectively. 

\sssec{}

Note now that the projections $(\hl,\hr)$ make $\CH_{\Ran X}$ into a groupoid over $\Bun_G$. The
composition map is defined as follows:

\medskip 

We send an $S$-point $(x^{I_1},\CP'_1,\CP''_1,\alpha_1)$ of $\CH_{X^{I_1}}$ as above and
an $S$-point $(x^{I_2},\CP'_2,\CP''_2,\alpha_2)$ of $\CH_{X^{I_2}}$ with $\CP''_1\simeq \CP'_2$,
to the $S$-point of $\CH_{X^{I_1\sqcup I_2}}$ with $(x^{I_1\sqcup I_2},\CP',\CP'',\alpha)$ given by

\medskip

\begin{itemize}

\item $x^{I_1\sqcup I_2}:=(x^{I_1},x^{I_2})$, 

\item $\CP':=\CP'_1$, $\CP'':=\CP''_2$, and

\item $\alpha$ equal to the composition
$$\CP'_1|_{S\times X-\{x^{I_1},x^{I_2}\}}\overset{\alpha_1}\simeq
\CP''_1|_{S\times X-\{x^{I_1},x^{I_2}\}}\simeq \CP'_2|_{S\times X-\{x^{I_1},x^{I_2}\}}
\overset{\alpha_2}\simeq \CP'_2|_{S\times X-\{x^{I_1},x^{I_2}\}}.$$

\end{itemize}

\sssec{}  \label{sss:equiv general}

In general, whenever 
\begin{gather*}
\xy
(-15,0)*+{\CY}="A";
(0,15)*+{\CG}="B";
(15,0)*+{\CY}="C";
{\ar@{->}_{\hl} "B";"A"};
{\ar@{->}^{\hr} "B";"C"};
\endxy
\end{gather*}
is a groupoid in $\on{PreStk}$, we define the $\CG$-equivariant category $\fD(\CY)^\CG$
as the totalization of the corresponding cosimplicial category $\fD(\CG^\bullet)$,
where 
$$\CG^n:=\underset{n+1}{\underbrace{\CG\underset{\CY}\times...\underset{\CY}\times \CG}}.$$

\medskip

In other words,
$$\fD(\CY)^\CG\simeq \fD(|\CG^\bullet|),$$
where $|\CG^\bullet|$ is the geometric realization of $\CG^\bullet$ in $\on{PreStk}$.

\sssec{}

We define the Hecke equivariant category
$$\fD(\Bun_G)^{\on{Hecke}}:=\fD(\Bun_G)^{\CH_{\Ran X}}.$$

By definition, we have a conservative continuous functor
$$\oblv^{\on{Hecke}}:\fD(\Bun_G)^{\on{Hecke}}\to \fD(\Bun_G).$$

\medskip

We will prove:

\begin{prop} \label{p:descr of monad}
The functor $\oblv^{\on{Hecke}}$ admits a left adjoint (denoted $\ind^{\on{Hecke}}$). The resulting
monad $\oblv^{\on{Hecke}}\circ \ind^{\on{Hecke}}$, viewed as an endo-functor of $\fD(\Bun_G)$
is canonically isomorphic to $\hl_{!}\circ \hr^!$.
\end{prop}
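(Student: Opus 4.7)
The plan is to identify $\fD(\Bun_G)^{\on{Hecke}}$ with the category of modules over the monad $\hl_! \circ \hr^!$ on $\fD(\Bun_G)$, via the standard descent formalism for groupoid actions. Once this identification is achieved, both conclusions of the proposition will follow immediately: $\ind^{\on{Hecke}}$ is the free-module functor, and the underlying endofunctor of the monad is by construction $\hl_! \hr^!$.

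First, I would observe that since $\hl$ and $\hr$ are pseudo ind-proper, so are all the face maps in the simplicial object $\CH^\bullet_{\Ran X}$, for these face maps are obtained as iterated base changes and compositions of $\hl$ and $\hr$. By \secref{sss:when good}(i), the !-pullback functors along all such face maps therefore admit continuous left adjoints defined on the entire category of D-modules. Applying \cite[Lemma 1.3.3]{DG}, I would convert the cosimplicial presentation of the equivariant category into a simplicial colimit,
$$\fD(\Bun_G)^{\on{Hecke}} \;=\; \underset{[n] \in \bDelta}{\lim}\, \fD\bigl(\CH^n_{\Ran X}\bigr) \;\simeq\; \underset{[n] \in \bDelta^{op}}{\on{colim}}\, \fD\bigl(\CH^n_{\Ran X}\bigr),$$
where the transition functors in the colimit are now the left adjoints just produced.

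Next, I would use the Beck--Chevalley isomorphism for the Cartesian square
$$
\CD
\CH_{\Ran X} \underset{\Bun_G}\times \CH_{\Ran X} @>{\on{pr}_2}>> \CH_{\Ran X} \\
@V{\on{pr}_1}VV  @VV{\hr}V \\
\CH_{\Ran X}  @>{\hl}>> \Bun_G
\endCD
$$
to extract the monad structure from the $0$-skeleton and $1$-skeleton of the simplicial diagram. Standard $\infty$-categorical descent/Barr--Beck formalism then exhibits the colimit above as modules over a monad on $\fD(\Bun_G)$, with the forgetful functor to $\fD(\Bun_G)$ being $\oblv^{\on{Hecke}}$. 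The underlying endofunctor of the monad is read off as $\hl_! \circ \hr^!$; its multiplication is induced by the composition morphism $\CH_{\Ran X} \underset{\Bun_G}\times \CH_{\Ran X} \to \CH_{\Ran X}$ of the Hecke groupoid, and its unit by the identity section $\Bun_G \hookrightarrow \CH_{\Ran X}$.

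The main obstacle: verifying the Beck--Chevalley base change isomorphism in the pseudo-indscheme setting, since this is not automatic from the general formalism of $\fD^!$ on prestacks. The strategy is to reduce to the classical statement for proper morphisms of schemes by choosing compatible presentations of $\CH_{\Ran X}$ and $\CH_{\Ran X} \underset{\Bun_G}\times \CH_{\Ran X}$ as pseudo-indschemes and appealing to \secref{sss:good objects} to compute the pushforwards term-by-term. Once this is in place, the rest of the argument is a formal application of $\infty$-categorical monadic descent.
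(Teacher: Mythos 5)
Your strategy is in the same circle of ideas as the paper's — both arguments turn on adjointability of the cosimplicial diagram $\fD(\CG^\bullet)$ and on the Beck--Chevalley condition for pseudo ind-proper maps (which the paper indeed verifies by reducing to the scheme case, exactly as you propose in your ``main obstacle'' paragraph) — but your route is more circuitous and aims for a strictly stronger statement than the proposition actually asserts.

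The paper's proof is a short direct construction. One considers the shifted simplicial object $\CG^{1+\bullet}$ together with the canonical map of simplicial objects $g^\bullet\colon \CG^{1+\bullet}\to \CG^\bullet$. Since $\CG^{1+\bullet}$ is split and augmented by $\CY=\Bun_G$, the totalization $\on{Tot}\bigl(\fD(\CG^{1+\bullet})\bigr)$ identifies with $\fD(\Bun_G)$. Beck--Chevalley upgrades the term-wise left adjoints $(g^n)_!$ into a map of \emph{cosimplicial} categories $(g^\bullet)_!$, and its totalization is the desired left adjoint $\ind^{\on{Hecke}}$ of $\oblv^{\on{Hecke}}$. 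The monad is then read off by composing through the split augmentation, giving $\hl_!\circ\hr^!$. No Barr--Beck--type theorem is invoked.

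Two gaps in your plan. First, the conversion of the limit to a colimit via \cite[Lemma 1.3.3]{DG} would indeed produce the left adjoint (it is the tautological insertion of the $[0]$-th term into the colimit), but that lemma needs left adjoints for \emph{all} the transition maps in the cosimplicial diagram, codegeneracies as well as cofaces. You checked pseudo ind-properness only for the face maps built out of $\hl$ and $\hr$; the degeneracy maps, which involve the unit section of the Hecke groupoid and are not obviously pseudo ind-proper (indeed the unit section is not even immediately well-defined here, since $\CH_{\Ran X}$ sits over $\Ran X$), are not addressed. The paper's manipulation only ever touches $g^\bullet$, which is built from face maps, so it sidesteps this.

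Second, the appeal to ``standard $\infty$-categorical descent/Barr--Beck formalism'' to exhibit your colimit as modules over a monad is both imprecise and overshoots the goal. The theorem you have in mind (Lurie, Higher Algebra 4.7.5.2--4.7.5.3, or its Gaitsgory--Rozenblyum analogue) concerns totalizations of left-adjointable cosimplicial diagrams, so your detour through the colimit does not actually feed into it. And monadicity of $\oblv^{\on{Hecke}}$ is genuinely more than Proposition~\ref{p:descr of monad} claims: in the paper it is obtained only in the proof of Theorem~\ref{t:descr of equiv}, and it needs as an extra input the conservativity of $\sG$ (Lemma~\ref{l:conservativeness}), a non-formal fact relying on every point of $\Bun_G$ being Hecke-connected to the trivial bundle. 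To prove just the proposition, it is cleaner to compute $\oblv^{\on{Hecke}}\circ\ind^{\on{Hecke}}$ directly as the paper does, rather than route through a monadicity theorem whose hypotheses you would then have to verify separately.
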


We will prove the proposition in the general framework of \secref{sss:equiv general}, when
the morphisms $\hl$ and $\hr$ are pseudo ind-proper (see \secref{sss:ind-schematic},
where this notion is introduced).  We will show that in this case the forgetful functor
$$\oblv^{\CG}:\fD(\CY)^\CG\to \fD(\CY)$$
admits a left adjoint, denoted $\ind^{\CG}$, and that the resulting endo-functor
$\oblv^{\CG}\circ \ind^{\CG}$ of $\fD(\CY)$ is isomorphic to $\hl_{!}\circ \hr^!$.
\footnote{We are sure that the assertion of the proposition 
is known in this general context. We are supplying a proof for completeness.}

\sssec{Digression: the Beck-Chevalley condition}

Let us note the following feature of pseudo ind-proper maps in $\on{PreStk}$. 
Let $g:\CY_1\to \CY_2$ be a pseudo ind-proper map. In this case, the functor
$$g^!:\fD(\CY_2)\to \fD(\CY_1),$$ admits a left adjoint, which we denote by $g_{!}$. 
Moreover, for a Cartesian square
\begin{equation} \label{e:Cart diag of stacks}
\CD
\CY'_1  @>{g'}>> \CY'_2 \\
@V{t_1}VV   @VV{t_2}V   \\
\CY_1  @>{g}>> \CY_2.
\endCD
\end{equation}
the map
\begin{equation} \label{e:Beck-Chevalley}
g'_{!}\circ t_1^!\to t_2^!\circ g_{!}
\end{equation} that
arises by adjunction from the isomorphism
$$t_1^!\circ g^!\simeq g'{}^!\circ t_2^!,$$
is an isomorphism, i.e., the above adjunction satisfies what is sometimes referred to as ``the
Beck-Chevalley condition". 

\medskip

The isomorphism \eqref{e:Beck-Chevalley} follows from fact that the Beck-Chevalley condition
is satisfied for schemes: 

\medskip

For a Cartesian diagram of schemes  
$$
\CD
S'_1  @>{g'}>> S'_2 \\
@V{t_1}VV   @VV{t_2}V   \\
S_1  @>{g}>> S_2.
\endCD
$$
with $g$ proper, the resulting map $g'_{!}\circ t_1^!\to t_2^!\circ g_{!}$ equals the base change isomorphism
$$g'_{\dr,*}\circ t_1^!\to t_2^!\circ g_{\dr,*}.$$

\sssec{Proof of \propref{p:descr of monad}}

Consider the shifted 
simplicial object $\CG^{1+\bullet}$ of $\on{PreStk}$. We have a
canonical map of simplicial objects
$$g^\bullet:\CG^{1+\bullet}\to \CG^\bullet.$$
Pullback defines a map of cosimplicial categories
$$(g^\bullet)^!:\fD(\CG^\bullet)\to \fD(\CG^{1+\bullet}).$$
However, the fact that the Beck-Chevalley condition holds
implies that the term-wise left adjoint $(g^\bullet)_{!}$ of
$(g^\bullet)^!$ is also a map of cosimplicial categories.
In particular, we obtain adjoint functors between the totalizations:
\begin{equation} \label{e:functors in totalization}
\on{Tot}((g^\bullet)_{!}):\on{Tot}\left(\fD(\CG^{1+\bullet})\right)\rightleftarrows 
\on{Tot}\left(\fD(\CG^{\bullet})\right):\on{Tot}((g^\bullet)^!).
\end{equation}

\medskip

Note now that the simplicial object $\CG^{1+\bullet}$ is augmented by $\CY$ and split. 
Hence, the category $\on{Tot}\left(\fD(\CG^{1+\bullet})\right)$ identifies with
$\fD(\CY)$. In particular, the above functor $\on{Tot}((g^\bullet)_{!})$
provides a left adjoint to $\oblv^{\CG}:\fD(\CY)^\CG\to \fD(\CY)$. 

\medskip

Moreover, it is easy to see that the composition
$$\fD(\CY)\overset{\on{\text{splitting}}} \simeq 
\on{Tot}\left(\fD(\CG^{1+\bullet})\right)\overset{\on{Tot}((g^\bullet)_{!})}\longrightarrow 
\on{Tot}\left(\fD(\CG^{\bullet})\right)\overset{\on{Tot}((g^\bullet)^!)}\longrightarrow 
\on{Tot}\left(\fD(\CG^{1+\bullet})\right) \overset{\on{\text{augmentation}}} \simeq \fD(\CY)$$
is isomorphic to $\hl_{!}\circ \hr^!$, as required.

\qed

\sssec{}

We are now ready to formulate the main theorem of this subsection. Let recall that $\one$ denotes
the unit point of $\Bun_G$, and $\imath_{\one}$ the corresponding map $\on{pt}\to \Bun_G$.

\medskip

We have:

\begin{thm} \label{t:descr of equiv}
The composed functor
$$\fD(\Bun_G)^{\on{Hecke}}\overset{\oblv^{\on{Hecke}}}\longrightarrow \fD(\Bun_G) \overset{\imath^!_{\one}}\to
\Vect$$
is an equivalence.
\end{thm}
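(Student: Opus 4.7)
The geometric content of the theorem is that the geometric realization $|\CH^\bullet|$ of the Hecke groupoid — the ``quotient'' $\Bun_G/\CH_{\Ran X}$ — is equivalent to a point: every $G$-bundle is generically trivializable, so all points of $\Bun_G$ are Hecke-equivalent to the trivial bundle $\one$, and the automorphisms of $\one$ in this groupoid form precisely the prestack ${\mathbf{Maps}}(X,G)^{\on{rat}}_{\Ran X}$, which is contractible by \thmref{t:contractibility}. The plan is to exhibit an explicit two-sided inverse to $\imath_{\one}^!\circ\oblv^{\on{Hecke}}$, namely
$$\Phi\;:=\;p^!_{|\CH^\bullet|}\colon\Vect\longrightarrow\fD(|\CH^\bullet|)=\fD(\Bun_G)^{\on{Hecke}},$$
which on underlying D-modules sends $V\mapsto V\otimes\omega_{\Bun_G}$ with its canonical Hecke-equivariant structure, obtained from the identification $\hl^!\omega_{\Bun_G}\simeq\omega_{\CH_{\Ran X}}\simeq\hr^!\omega_{\Bun_G}$ by transitivity of $!$-pullback.

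The key geometric input is the canonical factorization, up to natural $2$-isomorphism in $\on{PreStk}$,
$$\Bigl(\Gr_{\Ran X}\xrightarrow{\pi}\Bun_G\longrightarrow|\CH^\bullet|\Bigr)\;\simeq\;\Bigl(\Gr_{\Ran X}\xrightarrow{p}\on{pt}\xrightarrow{\imath_{\one}}\Bun_G\longrightarrow|\CH^\bullet|\Bigr),$$
realized as follows. The map $j\colon\Gr_{\Ran X}\to\CH_{\Ran X}$, $(x^I,\CP,\alpha_\CP)\mapsto (x^I,\CP_0,\CP,\alpha_\CP^{-1})$, satisfies $\hl\circ j=\imath_{\one}\circ p$ and $\hr\circ j=\pi$, and the simplicial structure of $\CH^\bullet$ canonically identifies the compositions of $\hl$ and $\hr$ with the projection $\Bun_G\to|\CH^\bullet|$. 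Taking $!$-pullbacks and setting $\Psi:=\imath_{\one}^!\circ\oblv^{\on{Hecke}}$, we obtain an isomorphism
$$\pi^!\circ\oblv^{\on{Hecke}}\;\simeq\;p^!\circ\Psi$$
of functors $\fD(|\CH^\bullet|)\to\fD(\Gr_{\Ran X})$. Applying $\pi_!$, which is defined on the right-hand side by \corref{c:dualizing on Bun} (giving $\pi_!\circ p^!\simeq p^!_{\Bun_G}$ via the projection formula) and on the left by \corref{c:fully-faith reform} (giving $\pi_!\circ\pi^!\simeq\on{id}$), we deduce the natural isomorphism
$$\oblv^{\on{Hecke}}\;\simeq\;p^!_{\Bun_G}\circ\Psi\;=\;\oblv^{\on{Hecke}}\circ\Phi\circ\Psi,$$
while the trivial composition $\Psi\circ\Phi\simeq\on{id}_{\Vect}$ is immediate from the identity $p_{|\CH^\bullet|}\circ\imath_{\one}'=\on{id}_{\on{pt}}$, where $\imath_{\one}'$ denotes the composition $\on{pt}\xrightarrow{\imath_{\one}}\Bun_G\to|\CH^\bullet|$.

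The main obstacle is to promote this isomorphism of underlying D-modules to an isomorphism in the Hecke-equivariant category $\fD(|\CH^\bullet|)$, i.e., to verify its compatibility with Hecke-equivariant structure. I would handle this by running the same argument cosimplicially over $\CH^\bullet$: for each $n\geq 0$, form $\Gr^{(n)}:=\CH^n\times_{\Bun_G}\Gr_{\Ran X}$ using (say) the leftmost face map $\CH^n\to\Bun_G$; the same $j$-construction produces a factorization $\Gr^{(n)}\to\CH^n\to|\CH^\bullet|$ through $\on{pt}$, and the level-$n$ analogue of \thmref{t:pullback fully faithful} — whose proof transfers verbatim, as the fibers of $\Gr^{(n)}\to\CH^n$ are still controlled by the contractible space ${\mathbf{Maps}}(X,G)^{\on{rat}}_{\Ran X}$ via \thmref{t:contractibility} — yields isomorphisms compatible with the face and degeneracy maps of $\CH^\bullet$. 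These assemble in $\on{Tot}(\fD(\CH^\bullet))=\fD(|\CH^\bullet|)$, giving the desired Hecke-equivariant identification. Equivalently, since the $2$-isomorphism in $\on{PreStk}$ producing the factorization is natural, it automatically carries the required higher coherence, so the lift follows from conservativity of $\oblv^{\on{Hecke}}$ (a consequence of cosimplicial descent).
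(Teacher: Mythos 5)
Your factorization of $\Gr_{\Ran X}\to|\CH^\bullet|$ through $\on{pt}$, the construction of $\Phi=p^!_{|\CH^\bullet|}$, and the verifications $\Psi\circ\Phi\simeq\on{id}_\Vect$ and $\oblv^{\on{Hecke}}\simeq\oblv^{\on{Hecke}}\circ\Phi\circ\Psi$ are all sound, and they draw on the same inputs as the paper (full faithfulness of $\pi^!$, $\pi_!\omega_{\Gr_{\Ran X}}\simeq\omega_{\Bun_G}$, and in essence Diagram \eqref{e:big fibration over Bun}). The geometric picture (contractible stabilizer) is correctly identified.

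The gap is in the final promotion step, and the "Equivalently\ldots the lift follows from conservativity of $\oblv^{\on{Hecke}}$" clause is not correct logic. An isomorphism $\oblv^{\on{Hecke}}(\CF)\simeq\oblv^{\on{Hecke}}(\Phi\Psi(\CF))$ in $\fD(\Bun_G)$ does not by itself produce $\CF\simeq\Phi\Psi(\CF)$ in $\fD(|\CH^\bullet|)$: two objects of the equivariant category can have isomorphic underlying D-modules while being non-isomorphic, because the isomorphism need not respect the equivariant structure. Conservativity of $\oblv^{\on{Hecke}}$ lets you test whether a map already living in $\fD(|\CH^\bullet|)$ is an isomorphism, but you have no such map to test --- you only have an isomorphism downstairs in $\fD(\Bun_G)$. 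So the burden falls entirely on your first suggestion, the cosimplicial assembly over $\CH^\bullet$, and you only sketch it; exhibiting the compatible isomorphisms at each level and checking coherence with all face and degeneracy maps is precisely where the real work lies.

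The paper avoids this coherence problem by a different route: since $\sG:=\imath^!_\one\circ\oblv^{\on{Hecke}}$ is continuous and conservative (\lemref{l:conservativeness}), the Barr-Beck-Lurie theorem identifies $\fD(\Bun_G)^{\on{Hecke}}$ with modules over the monad $\sG\sF$ on $\Vect$, and it then suffices to show the \emph{single} unit map $k\to\sG\sF(k)$ is an isomorphism. That is computed pointwise using $\oblv^{\on{Hecke}}\circ\ind^{\on{Hecke}}\simeq\hl_!\circ\hr^!$ (\propref{p:descr of monad}), the base change isomorphism of \lemref{l:ULA}, and \corref{c:dualizing on Bun}, followed by a short non-vanishing check to see that the resulting isomorphism $k\simeq\sG\sF(k)$ is indeed the adjunction unit. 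In effect the monadic machinery packages the cosimplicial coherence you would otherwise have to supply by hand. If you want to carry your approach through, you should either spell out the cosimplicial argument in full, or switch to the Barr-Beck-Lurie formulation.
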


\begin{remark}
Let us observe that, like \thmref{t:pullback fully faithful}, there should not be
much surprise in  \thmref{t:descr of equiv} either:

\medskip

Note that the Cartesian square \eqref{e:fibration over Bun} can be extended to a diagram
in which all squares are Cartesian:
\begin{equation} \label{e:big fibration over Bun}
\CD
{\mathbf{Maps}}(X,G)^{\on{rat}}_{\Ran X}  @>{\imath'_{\one}}>>     \Gr_{\Ran X}  @>{p_{\Gr_{\Ran X}}}>>   \on{pt}   \\
& & @V{\imath''_{\one}}VV  @VV{\imath_{\one}}V  \\
@VVV   \CH_{\Ran X}   @>{\hr}>>  \Bun_G  \\
& & @V{\hl}VV \\
\on{pt}  @>{\imath_{\one}}>>  \Bun_G,
\endCD
\end{equation}
where the composed arrow $\hl\circ \imath''_{\one}:\Gr_{X^{\fset}}\to \Bun_G$ is $\pi$.

\medskip

Thus, if we imagine $\CH_{\Ran X}$ as a groupoid acting transitively on $\Bun_G$, we obtain that
the stabilizer of the point $\one$ is ${\mathbf{Maps}}(X,G)^{\on{rat}}_{\Ran X}$ which is contractible. 
In such situation it is natural to expect that the category of groupoid-equivariant objects
be equivalent to $\Vect$.

\end{remark}

\sssec{}  \label{sss:omega equiv}

Let us note that the object
$$\omega_{\Bun_G}\in \fD(\Bun_G)$$
naturally upgrades to an object of $\fD(\Bun_G)^{\on{Hecke}}$. Namely, it corresponds to the object
$$\omega_{|(\CH_{\Ran X})^\bullet|}\in \fD(|(\CH_{\Ran X})^\bullet|)=:\fD(\Bun_G)^{\on{Hecke}}.$$

\medskip

Since $\imath^!_{\one}(\omega_{\Bun_G})\simeq k$, \thmref{t:descr of equiv} implies that
under the equivalence of the theorem, $\omega_{\Bun_G}$ corresponds to $k\in \Vect$.

\medskip

Therefore, the inverse to the functor
of the theorem is given by $V\mapsto V\otimes \omega_{\Bun_G}$.

\sssec{}

We claim that under the identification $\fD(\Bun_G)^{\on{Hecke}}\simeq \Vect$
of \thmref{t:descr of equiv}, the functor
$$\ind^{\on{Hecke}}:\fD(\Bun_G)\to \fD(\Bun_G)^{\on{Hecke}}$$
corresponds to the functor
$$\Gamma_{\dr,c}(\Bun_G,-):\fD(\Bun_G)\to \Vect.$$

\medskip

Indeed, by \secref{sss:omega equiv} above, the right adjoint functor
$$\Vect\simeq \fD(\Bun_G)^{\on{Hecke}}\overset{\oblv^{\on{Hecke}}}\longrightarrow \fD(\Bun_G)$$
identifies with $p_{\Bun_G}^!$, and our
assertion follows from the $(\Gamma_{\dr,c}(\Bun_G,-),p_{\Bun_G}^!)$-adjunction. 

\sssec{Variant with level structure}

We can study a variant of $\fD(\Bun_G)^{\on{Hecke}}$ for the stack $\Bun_G^{\on{level}_D}$
(see \secref{sss:with level structure}). The corresponding groupoid is given by 
identifying the $G$-bundles generically, ignoring the level structure. 

\medskip

Note that in this case the functor
$$\oblv^{\on{Hecke}}:\fD(\Bun^{\on{level}_D}_G)^{\on{Hecke}}\to \fD(\Bun^{\on{level}_D}_G)$$
does not admit a left adjoint because the corresponding map
$$\hl:\CH^{\on{level}_D}_{\Ran X}\to \Bun_G^{\on{level}_D}$$
is no longer pseudo ind-proper.

\medskip

However, we claim that $\fD(\Bun^{\on{level}_D}_G)^{\on{Hecke}}$ is still equivalent to $\Vect$.

\medskip

Indeed, the groupoid $\CH^{\on{level}_D}_{\Ran X}$ is by definition the pullback of the groupoid
$\CH_{\Ran X}$ under the forgetful map $\Bun^{\on{level}_D}_G\to \Bun_G$, i.e.,
$$\CH^{\on{level}_D}_{\Ran X}\simeq (\Bun^{\on{level}_D}_G\times \Bun^{\on{level}_D}_G)
\underset{\Bun_G\times \Bun_G}\times \CH_{\Ran X}.$$

Now, since the map $\Bun^{\on{level}_D}_G\to \Bun_G$ is faithfully flat, and in particular, satisfies
descent of D-modules, we have:
$$\fD(\Bun^{\on{level}_D}_G)^{\CH^{\on{level}_D}_{\Ran X}}\simeq
\fD(\Bun_G)^{\CH_{\Ran X}}.$$

\sssec{Proof of \thmref{t:descr of equiv}}

Let us denote the functor in the theorem by $\sG$. Consider
its left adjoint, which is given by
$$\sF:\Vect\overset{(\imath_{\one})_{!}}\longrightarrow \fD(\Bun_G)\overset{\ind^{\on{Hecke}}}\longrightarrow
\fD(\Bun_G)^{\on{Hecke}}.$$

\medskip

By construction, both functors $\sF$ and $\sG$ are continuous (i.e., commute with colimits). We have:

\begin{lem} \label{l:conservativeness}
The functor $\sG$ is conservative.
\end{lem}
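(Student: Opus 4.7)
The plan is as follows. Since the forgetful functor $\oblv^{\on{Hecke}}$ is conservative by construction, it suffices to show that any $\CF \in \fD(\Bun_G)^{\on{Hecke}}$ with $\imath_{\one}^!(\oblv^{\on{Hecke}}(\CF)) = 0$ has $\CF' := \oblv^{\on{Hecke}}(\CF) = 0$ in $\fD(\Bun_G)$. By the definition of $\fD(\Bun_G)$ as a limit over affine schemes mapping to $\Bun_G$, this reduces to showing $g^!(\CF') = 0$ for every map $g : S \to \Bun_G$ with $S$ affine.

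The underlying idea is that Hecke-equivariance makes $\CF'$ ``invariant'' under rational modifications of $G$-bundles, and by Drinfeld--Simpson every $G$-bundle becomes generically trivial after an étale cover. Concretely, as recalled in Step~2 of the proof of \thmref{t:pullback fully faithful}, possibly after passing to an étale cover of $S$ we can produce a finite set $\sA$, an $S$-point $x^\sA : S \to X^\sA$, and a trivialization of the pulled-back $G$-bundle on $S \times X - \{x^\sA\}$. This data packages into a map $\tilde g : S \to \CH_{X^\sA} \to \CH_{\Ran X}$ satisfying $\hr \circ \tilde g = g$ and $\hl \circ \tilde g = \imath_{\one} \circ p_S$, where $p_S : S \to \on{pt}$. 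The Hecke-equivariance datum includes an isomorphism $\hl^!(\CF') \simeq \hr^!(\CF')$ in $\fD(\CH_{\Ran X})$, so pulling back along $\tilde g^!$ produces
$$g^!(\CF') \simeq \tilde g^! \circ \hr^!(\CF') \simeq \tilde g^! \circ \hl^!(\CF') \simeq p_S^! \circ \imath_{\one}^!(\CF') = 0.$$
Since $\fD(-)$ satisfies étale descent, this vanishing descends back to $S$ itself, and we conclude $\CF' = 0$, hence $\CF = 0$.

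The only non-formal ingredient is the construction of the lift $\tilde g$, which is exactly the Drinfeld--Simpson input already used in \thmref{t:pullback fully faithful}; the remainder is a straightforward chase of $!$-pullbacks combined with the $0$-th coherence of Hecke-equivariance. In particular, neither higher coherences, nor compact generation, nor the pseudo ind-properness of $\hl, \hr$ enter into the argument.
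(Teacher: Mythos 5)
Your proposal is correct and follows essentially the same route as the paper's own proof: reduce to showing $g^!(\CF') = 0$ for $g : S \to \Bun_G$ affine, use Drinfeld--Simpson (after an étale cover of $S$) to lift $g$ to a map into $\CH_{X^I}$ connecting $g$ and $\imath_{\one}$, and then transport the vanishing along the Hecke-equivariance isomorphism $\hl^! \simeq \hr^!$. The only cosmetic difference is your choice of which of $\hl, \hr$ hits $g$ (opposite to the paper's), which is immaterial.
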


The proof will be given in \secref{sss:proof of cons}. Let us 
continue with the proof of the theorem. We obtain that the pair of functors
$$\sF:\Vect\rightleftarrows \fD(\Bun_G)^{\on{Hecke}}:\sG$$ satisfies the conditions of the
Barr-Beck-Lurie theorem, see e.g., \cite{DG}, Proposition 3.1.1. 

\medskip

Hence, to prove the theorem, it suffices to show that the adjunction map
$$k\to \sG\circ \sF(k)$$ 
is an isomorphism.

\medskip

We are going to calculate the object $\sG\circ \sF(k)$ explicitly. We have:
\begin{equation}  \label{e:calc composition} 
\sG\circ \sF(k)\simeq \imath^!_{\one}\circ \oblv^{\on{Hecke}}\circ \ind^{\on{Hecke}}\circ (\imath_{\one})_{!}(k).
\end{equation}

By \propref{p:descr of monad}, we have an isomorphism of endo-functors on $\fD(\Bun_G)$:
$$\oblv^{\on{Hecke}}\circ \ind^{\on{Hecke}}\simeq \hl_{!}\circ \hr^!,$$
so we have:
\begin{equation}  \label{e:calc composition one} 
\sG\circ \sF(k)\simeq \imath^!_{\one}\circ \hl_{!}\circ \hr^! \circ (\imath_{\one})_{!}(k).
\end{equation}

\medskip

From the right Cartesian square in \eqref{e:big fibration over Bun}, we obtain a map
\begin{equation} \label{e:ULA}
(\imath''_{\one})_{!}\circ (p_{\Gr_{\Ran X}})^!(k)
\to \hr{}^!\circ (\imath_{\one})_!(k)\in \fD(\CH_{\Ran X}).
\end{equation}

\begin{lem} \label{l:ULA}
The map \eqref{e:ULA} is an isomorphism.
\end{lem}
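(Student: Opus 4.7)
\smallskip

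The plan is to establish the base change isomorphism for the Cartesian square
$$
\CD
\Gr_{\Ran X} @>{p_{\Gr_{\Ran X}}}>> \on{pt} \\
@V{\imath''_{\one}}VV @VV{\imath_{\one}}V \\
\CH_{\Ran X} @>{\hr}>> \Bun_G,
\endCD
$$
by reducing it to classical proper base change at each finite-set level. First, I would use the pseudo-indscheme structure: since $\CH_{\Ran X} = \underset{I \in (\fset)^{op}}{\colim}\, \CH_{X^I}$ and $\Gr_{\Ran X} = \underset{I \in (\fset)^{op}}{\colim}\, \Gr_{X^I}$, the description of $\fD(\CH_{\Ran X})$ as $\underset{I}{\colim}\, \fD(\CH_{X^I})$ from Sect.~\ref{ss:colimit} (together with the colimit formula \eqref{e:calc dir image c}) reduces the lemma to showing, for every finite set $I$, that the analogous map
$$(\imath''_{\one, X^I})_!\, \omega_{\Gr_{X^I}} \to (\hr_{X^I})^! (\imath_{\one})_!(k)$$
is an isomorphism in $\fD(\CH_{X^I})$.

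Second, I would exploit the fact that $\hr_{X^I}: \CH_{X^I} \to \Bun_G$ is ind-schematic and ind-proper. Presenting $\CH_{X^I}$ as a filtered colimit $\underset{\alpha}{\colim}\, Z_\alpha$ of closed subschemes with each map $g_\alpha: Z_\alpha \to \Bun_G$ proper and schematic, the base-changed indschemes $Z'_\alpha := Z_\alpha \underset{\Bun_G}{\times} \on{pt}$ form a presentation of $\Gr_{X^I}$. The desired iso on $\CH_{X^I}$ then splits into one iso for each $Z_\alpha$:
$$(\imath''_{\one, \alpha})_!\, \omega_{Z'_\alpha} \to g_\alpha^!\, (\imath_{\one})_!(k).$$

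Third, for each proper schematic $g_\alpha$ the needed statement is a form of proper base change that, unlike the Beck--Chevalley iso recorded in the ``Digression'' subsection of the paper, exchanges $(\imath_{\one})_!$ with $g_\alpha^!$ rather than $g_{\alpha,!}$ with $\imath_{\one}^!$. One deduces it as follows: since $g_\alpha$ is proper schematic one has $g_{\alpha,!} \simeq g_{\alpha,\dr,*}$ and the usual proper base change iso $t^! g_{\alpha,*} \simeq g'_{\alpha,*} t'{}^!$ holds for any $t$; dualizing via the Verdier duality formalism recalled in Sect.~\ref{sss:Verdier basics} on both $Z_\alpha$ and $\Bun_G$, and using that $(\imath_{\one})_!(k)$ is a well-defined compact object of $\fD(\Bun_G)$ (equal to $\BD_{\Bun_G}((\imath_{\one})_{\rendr,*}(k))$, since $\imath_{\one}$ is representable and quasi-compact), one reads off the opposite base change iso that we need.

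The main obstacle is precisely the non-ind-properness of $\imath_{\one}: \on{pt} \to \Bun_G$ (the stabilizer is the affine non-proper group $G$), which prevents a direct invocation of the Beck--Chevalley statement of the paper. The substantive content is therefore the verification that the ``other'' direction of base change nonetheless holds in the 6-functor formalism of D-modules on Artin stacks with affine stabilizers, as developed in \cite{DrGa0} --- this is what the label \eqref{e:ULA} alludes to: the dualizing sheaf $\omega_{\CH_{X^I}}$ is universally locally acyclic over $\Bun_G$ with respect to $\hr_{X^I}$, and this ULA property is precisely what makes the desired base change isomorphism hold.
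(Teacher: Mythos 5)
Your reduction to each individual finite set $I$ and to closed (bounded) Hecke substacks is fine and parallels the paper, which reduces to the substacks $\CH^{\lambda^I}_{X^I}$ (these are Artin stacks, not schemes, but the maps $\hr|_{\CH^{\lambda^I}_{X^I}}$ are indeed schematic and proper). Your closing observation is also on target: the right condition is a ULA-type property of $\omega$ along $\hr$, and this, not properness of $\hr$ alone, is what makes the base change hold.

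The problem is Step 3. The Verdier-duality maneuver does not produce the base change you need. Starting from proper base change $\imath^!\,(g_\alpha)_{\dr,*}\simeq (g'_\alpha)_{\dr,*}\,(\imath'')^!$, passing to the dual functors (which exchanges $(-)^!$ with $(-)_{\dr,*}$) yields $g_\alpha^!\circ \imath_{\dr,*}\simeq (\imath'')_{\dr,*}\circ (g'_\alpha)^!$; conjugating instead by $\BD$ at the level of objects (which exchanges $(-)^!$ with $(-)^*$ and $(-)_!$ with $(-)_{\dr,*}$) turns the desired identity $(\imath'')_!\circ (g'_\alpha)^!\simeq g_\alpha^!\circ \imath_!$ into $(\imath'')_{\dr,*}\circ (g'_\alpha)^*\simeq g_\alpha^*\circ \imath_{\dr,*}$. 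In every case, the pushforward along $\imath_{\one}$ that appears after duality is $\imath_{\dr,*}$, not $\imath_!$; since $\imath_{\one}$ is affine and not proper (its automorphisms at the trivial bundle form the full group $G$), these two functors genuinely differ, and none of the functor equalities you can extract this way coincides with the one you want. Indeed, the residual base change $g_\alpha^*\circ \imath_{\dr,*}\simeq (\imath'')_{\dr,*}\circ (g'_\alpha)^*$ is \emph{not} automatic for a proper (or even proper flat) $g_\alpha$ — already for $g_\alpha:\BA^1\to\BA^1$, $z\mapsto z^2$, and a nontrivial rank-one local system $\CL$ on $\BG_m\subset\BA^1$, the stalk at $0$ of $g_\alpha^* j_*\CL$ differs from that of $j'_* g_\alpha'{}^*\CL$. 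So properness does not close the gap, and there is no way to read off the opposite base change iso from abstract nonsense alone.

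What is missing is precisely the geometric input that the paper supplies and that you gesture at in your last paragraph without proving: \emph{smooth-locally on $X^I\times\Bun_G$, the substack $\CH^{\lambda^I}_{X^I}$ is isomorphic to the product $\Gr^{\lambda^I}_{X^I}\times\Bun_G$}, i.e., $\hr$ (restricted to each bounded piece) is a twisted product projection. The base change $(\imath'')_!\circ p^!\simeq \hr^!\circ \imath_!$ is evident for an honest product projection (it is Künneth), and local triviality in the smooth topology is enough to propagate it. This is the entire content of the lemma; the paper phrases its proof in the generality of an arbitrary schematic $g:\CY\to\Bun_G$ with $\CF\in\fD(\CY)$ such that $g_!\CF$ is defined, and then invokes exactly this fiber-bundle structure. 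Your proposal acknowledges that the ULA property ``is precisely what makes the desired base change isomorphism hold,'' but asserts it rather than establishing it, and Step 3 cannot be repaired to substitute for it.
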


The proof of the lemma is given below, see \secref{sss:proof of ULA}. Assuming the lemma, we obtain that 
$$\sG\circ \sF(k)\simeq \imath^!_{\one}\circ \hl_{!}\circ (\imath''_{\one})_{!}\circ p_{\Gr_{\Ran X}}^!(k)\simeq
\imath^!_{\one}\circ \pi_{!}(\omega_{\Gr_{\Ran X}}).$$

\medskip

However, by \corref{c:dualizing on Bun}, $\pi_{!}(\omega_{\Gr_{\Ran X}})\simeq \omega_{\Bun_G}$.
Hence, we obtain that there exists \emph{an} isomorphism 
\begin{equation} \label{e:an iso}
k\simeq \sG\circ \sF(k).
\end{equation}

\medskip

By tracing through the sequence of isomorphisms that led to \eqref{e:an iso}, one could show
directly that it equals the adjunction map $k\to \sG\circ \sF(k)$. However, instead of doing so, we
will use a shortcut:
 
\medskip

From \eqref{e:an iso}, we obtain that the unit map of the adjunction 
\begin{equation} \label{e:unit}
\on{Id}_{\Vect}\to \sG\circ \sF
\end{equation}
is either zero or an isomorphism. 

\medskip

However, we claim that \eqref{e:unit} is non-zero. Indeed, whenever we have a pair of adjoint functors
between stable categories
$$\sF:\bC_1\rightleftarrows \bC_2:\sG,$$
if the unit of the adjunction is zero, this implies that for any $\bc_1\in \bC_1$ and $\bc_2\in \bC_2$,
$$\Hom(\bc_1,\sG(\bc_2))=0.$$
However, this is clearly not the case in our situation: take $\bc_1=k$ and $\bc_2=\omega_{\Bun_G}$.

\medskip

Thus, we conclude that the unit map \eqref{e:unit} is an isomorphism. Applying the
Barr-Beck-Lurie theorem mentioned above, we conclude that $\sG$ is an equivalence. 

\qed

\sssec{Proof of \lemref{l:ULA}}  \label{sss:proof of ULA}

We will prove the following generalization: 

\medskip

Instead of $\imath_{\one}:\on{pt}\to \Bun_G$
we will take an arbitrary schematic morphism $g:\CY\to \Bun_G$, and instead of $k\in \fD(\on{pt})=\Vect$
an arbitrary $\CF\in \fD(\CY)$, for which $g_!(\CY)\in \fD(\Bun_G)$ is defined. 

\medskip

We will show that in the notation of the following Cartesian diagram
$$
\CD
\CY\underset{\Bun_G}\times \CH_{\Ran X}  @>{g'}>>  \CH_{\Ran X}   \\
@V{\pi_\CY}VV    @VV{\pi}V   \\
\CY  @>{g}>>  \Bun_G
\endCD
$$
the map
$$g'_{!}\circ \pi_\CY^!(\CF')\to \pi^!\circ g_{!}(\CF)\in \fD(\CH_{\Ran X})$$
is an isomorphism (in particular, the left-hand side is defined). 

\medskip

In fact, we will show that the isomorphism takes place for every individual finite set $I$, i.e.,
for the diagram
$$
\CD
\CY\underset{\Bun_G}\times \CH_{X^I}  @>{g'}>>  \CH_{X^I}  \\
@V{\pi(I)_\CY}VV    @VV{\pi(I)}V   \\
\CY  @>{g}>>  \Bun_G
\endCD
$$
and the corresponding functors.

\medskip

For each $I$-tuple $\lambda^I$ of dominant coweights of $G$, let $\CH_{X^I}^{\lambda^I}$ be the correspoding
closed substack of $\CH_{X^I}$. We have
$$\CH_{X^I}\simeq \underset{\lambda^I\in (\Lambda^+)^I}{colim}\, \CH_{X^I}^{\lambda^I},$$
where the set $(\Lambda^+)^I$ is given the standard ordering. 

\medskip

We will show that the indicated isomorphism takes place for every individual $\lambda^I$,
for the functors in the Cartesian diagram
$$
\CD
\CY\underset{\Bun_G}\times \CH_{X^I}^{\lambda^I}  @>{g'}>>  \CH_{X^I}^{\lambda^I}    \\
@V{\pi(I)^{\lambda^I}_\CY}VV    @VV{\pi(I)^{\lambda^I}}V   \\
\CY  @>{g}>>  \Bun_G.
\endCD
$$

The latter follows from the fact that, locally in the smooth topology on $X^I\times \Bun_G$, the 
stack $\CH_{X^I}^{\lambda^I}$ is isomorphic to the product $\Gr_{X^I}^{\lambda^I}\times \Bun_G$.
(In fact $\CH_{X^I}^{\lambda^I}$ is a fiber bundle over $X^I\times \Bun_G$ obtained by
twisting $\Gr_{X^I}^{\lambda^I}$ by a torsor with respect to a certain group-scheme acting 
on it.)

\qed

\sssec{Proof of \lemref{l:conservativeness}}  \label{sss:proof of cons}

The idea of the proof is the following: if $\CF\in \fD(\Bun_G)^{\on{Hecke}}$ is such that
$\imath_{\one}^!(\CF)=0$, then $\imath_{\bg}^!(\CF)=0$ for any other point $\bg\in \Bun_G$,
because any two points of $\Bun_G$ can be connected by a Hecke correspondence.

\medskip

We articulate this idea as follows. Let $\CF\in \fD(\Bun_G)^{\on{Hecke}}$ be an object
such that $\imath^!_{\one}\left(\oblv^{\on{Hecke}}(\CF)\right)=0$. We wish to show
that $\CF=0$. Since the functor $\oblv^{\on{Hecke}}$ is conservative by definition,
it suffices to show that $\CF':=\oblv^{\on{Hecke}}(\CF)\in \fD(\Bun_G)$ vanishes.

\medskip

Let $S$ be a scheme equipped with a map $g:S\to \Bun_G$. We need
to show that $g^!(\CF')=0$.

\medskip

As in the proof of \thmref{t:pullback fully faithful}, possibly after passing to an \'etale cover of
$S$, there exists a finite set $I$ and a map $x^I:S\to X^I$, such that the pullback of
the universal bundle to $S\times X-\{x^I\}$ admits a trivialization. 

\medskip

A choice of such trivialization is the same as a lift of the map $(g,x^I):S\to \Bun_G\times X^I$ to a map
$g':S\to \CH_{X^I}$, 
such that
$$\hl\circ g'=g \text{ and } \hr\circ g'=\imath_{\one}\circ p_S.$$

Hence, 
$$g^!(\CF')\simeq g'{}^!\left(\hl{}^!(\CF')\right)\simeq g'{}^!\left(\hr{}^!(\CF')\right)\simeq
p_S^!(\imath_{\one}^!(\CF'))=0,$$
where the isomorphism
$$\hl{}^!(\CF')\simeq \hr{}^!(\CF')$$
takes place because $\CF'$ came from a Hecke-equivariant object $\CF$.

\qed

\ssec{Another version of the Hecke category}

Let us note that one could define the category $\fD(\Bun_G)^{\on{Hecke}}$
slightly differently. 

\sssec{}

Namely, instead of taking the simplicial object of $\on{PreStk}$ to be
$(\CH_{\Ran X})^\bullet$ defined above, we can take 
$$(\CH^\bullet)_{\Ran X},$$
where we take iterated Cartesian products of $\CH_{\Ran X}$
``over'' $\Ran X$, as well as over $\Bun_G$. 
\footnote{Quotation marks for ``over" are due
to the fact that we are not actually taking Cartesian products over $\Ran X$ in $\on{PreStk}$,
but rather at the level of functors $(\fset)^{op}\to \on{PreStk}$, and then pass to the colimit. The former
is an ill-behaved operation because the category $(\fset)^{op}$ is not filtered.}

\medskip

I.e., each term $(\CH^n)_{\Ran X}$ is by definition
$$\underset{I\in (\fset)^{op}}{colim}\, (\CH^n)_{X^I},$$
where 
\begin{multline*} 
\on{Maps}(S,(\CH^n)_{X^I})=\{x^I\in \on{Maps}(S,X^I),\CP^0,...,\CP^n\in \on{Maps}(S,\Bun_G),\\
\CP^0|_{S\times X-\{x^I\}}\overset{\alpha_1}\simeq
\CP^1|_{S\times X-\{x^I\}},...,\CP^{n-1}|_{S\times X-\{x^I\}}\overset{\alpha_n}\simeq
\CP^n|_{S\times X-\{x^I\}}\}.
\end{multline*}

Let us denote the version of the Hecke-equivariant category defined using $(\CH^\bullet)_{\Ran X}$
by $'\fD(\Bun_G)^{\on{Hecke}}$. We claim that is in fact equivalent to $\fD(\Bun_G)^{\on{Hecke}}$.

\sssec{}

Note that there are maps of simplicial objects of $\on{PreStk}$:
\begin{equation} \label{e:maps between two versions of Hecke}
(\CH^\bullet)_{\Ran X}\leftrightarrows (\CH_{\Ran X})^\bullet
\end{equation}

By definition, the right-hand side is
$$\underset{(I_1,...,I_{n})\in (\fset^{\times n})^{op}}{colim}\, 
\CH_{X^{I_1}}\underset{\Bun_G}\times...\underset{\Bun_G}\times \CH_{X^{I_n}}.$$
It receives a map from the left-hand side via the identification
$$(\CH^n)_{X^I}\simeq X^I\underset{(X^I)^{\times n}}\times (\CH_{\Ran X})^n,$$
where 
$$X^I\to (X^I)^{\times n}=\underset{n}{\underbrace{X^I\times...\times X^I}}$$
is the diagonal map.

\medskip

The map $\leftarrow$ in \eqref{e:maps between two versions of Hecke} is
constructed via the procedure of \secref{sss:expl map}:
we send $$(\fset^{\times n})^{op}\to (\fset)^{op}$$ by the disjoint union functor
$$(I_1,...,I_n)\mapsto I_1\sqcup...\sqcup I_n.$$
The corresponding map of indschemes sends the datum of
\begin{multline*}
\{(x^{I_1},\CP^0,\CP^1,\CP^0|_{S\times X-\{x^{I_1}\}}\overset{\alpha_1}\simeq 
\CP^1|_{S\times X-\{x^{I_1}\}}),...\\
...,(x^{I_n},\CP^{n-1},\CP^n,\CP^{n-1}|_{S\times X-\{x^{I_n}\}}\overset{\alpha_n}\simeq 
\CP^n|_{S\times X-\{x^{I_n}\}}\}
\end{multline*}
to the tautologically defined point of $(\CH^n)_{\Ran X}$, where we regard each $\alpha_k$, $k=1,...,n$
as defined over $S\times X-\{x^{I_1},...,x^{I_n}\}$.

\medskip

Pullback along the maps in \eqref{e:maps between two versions of Hecke} defines functors
\begin{equation} \label{e:maps between two versions of Hecke category}
'\fD(\Bun_G)^{\on{Hecke}}\leftrightarrows \fD(\Bun_G)^{\on{Hecke}},
\end{equation}
and we claim that these functors are mutually inverse equivalences.

\sssec{}

The objects $(\CH_{\Ran X})^\bullet$ and $(\CH^\bullet)_{\Ran X}$, viewed as simplicial objects 
of $\on{PreStk}$ equipped with pseudo ind-proper maps to $\Bun_G$, define simplicial monads
$$(\hl^\bullet)_!\circ (\hr^\bullet)^! \text{ and } ({}'\hl^\bullet)_!\circ ({}'\hr^\bullet)^!,$$
acting on $\fD(\Bun_G)$, where $(\hl^\bullet,\hr^\bullet)$ and $({}'\hl^\bullet,{}'\hr^\bullet)$ are the maps
$$(\CH_{\Ran X})^\bullet\rightrightarrows \Bun_G  \text{ and } (\CH^\bullet)_{\Ran X}\rightrightarrows \Bun_G,$$
respectively. 

\medskip

The monads
$$\oblv^{\on{Hecke}}\circ \ind^{\on{Hecke}} \text{ and } {}'\oblv^{\on{Hecke}}\circ {}'\ind^{\on{Hecke}},$$
acting on $\fD(\Bun_G)$ and responsible for $\fD(\Bun_G)^{\on{Hecke}}$ and $'\fD(\Bun_G)^{\on{Hecke}}$,
are given by $$|(\hl^\bullet)_!\circ (\hr^\bullet)^!| \text{ and } |({}'\hl^\bullet)_!\circ ({}'\hr^\bullet)^!|.$$
respectively. 

\medskip

However, we claim that the maps \eqref{e:maps between two versions of Hecke} define simplicial
isomorphisms of monads
$$(\hl^\bullet)_!\circ (\hr^\bullet)^! \text{ and } ({}'\hl^\bullet)_!\circ {}'\hr^\bullet)^!.$$
This follows from a relative and iterated version of \corref{c:prod}.

\medskip

Alternatively, for the reader who skipped \secref{s:unital}, one can prove this directly,
by repeating the argument of \cite{BD1}, Theorem 4.3.6.

\section{A rederivation of the Atiyah-Bott formula}  \label{s:Atiyah-Bott}

\ssec{A local expression for the homology of $\Bun_G$}

The goal of this section is to explain how the isomorphism of \corref{c:homology of D-modules}
reproduces the Atiyah-Bott formula for $$\Gamma^\bullet_{\dr}(\Bun_G,k)=:H^\bullet(\Bun_G).$$

\sssec{}

Note that \corref{c:homology of D-modules} implies the following isomorphism:

\begin{cor} \label{c:homology of Bun G}
$\Gamma_{\dr,c}(\Bun_G,\omega_{\Bun_G})\simeq \Gamma_{\dr,c}\left(\Gr_{\Ran X},\omega_{\Gr_{\Ran X}}\right)$.
\end{cor}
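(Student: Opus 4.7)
The plan is to observe that this corollary is essentially immediate from \corref{c:homology of D-modules} once one identifies the relevant dualizing sheaves, and I will sketch two equivalent ways to conclude.

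First, I would apply \corref{c:homology of D-modules} directly to the object $\CF = \omega_{\Bun_G} \in \fD(\Bun_G)$. That corollary asserts that $\Gamma_{\dr,c}(\Bun_G,-)$ is defined on all of $\fD(\Bun_G)$ and comes with a canonical isomorphism
\[
\Gamma_{\dr,c}(\Bun_G,\CF) \simeq \Gamma_{\dr,c}\left(\Gr_{\Ran X},\pi^!(\CF)\right).
\]
Hence it suffices to identify $\pi^!(\omega_{\Bun_G})$ with $\omega_{\Gr_{\Ran X}}$. But $\omega_{\Bun_G} = p_{\Bun_G}^!(k)$ by the definition of the dualizing sheaf in \secref{sss:morphism of Z}, and by the functoriality of $!$-pullback we have
\[
\pi^!(\omega_{\Bun_G}) \simeq \pi^! \circ p_{\Bun_G}^!(k) \simeq (p_{\Bun_G} \circ \pi)^!(k) \simeq p_{\Gr_{\Ran X}}^!(k) \simeq \omega_{\Gr_{\Ran X}}.
\]
Substituting this identification yields the desired isomorphism.

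Alternatively, and giving a useful cross-check, one could deduce the statement from \corref{c:dualizing on Bun}, which asserts the trace isomorphism $\pi_!(\omega_{\Gr_{\Ran X}}) \simeq \omega_{\Bun_G}$. Applying $\Gamma_{\dr,c}(\Bun_G,-)$ and using transitivity $(p_{\Bun_G})_! \circ \pi_! \simeq (p_{\Bun_G}\circ \pi)_! = (p_{\Gr_{\Ran X}})_!$ (whose validity here is guaranteed by the pseudo ind-properness of $\Gr_{\Ran X}$, see \secref{sss:when good}) yields the same conclusion.

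There is essentially no obstacle: the corollary is a formal consequence of \corref{c:homology of D-modules}, and the only content beyond it is the identification of $\omega_{\Gr_{\Ran X}}$ with $\pi^!(\omega_{\Bun_G})$, which is purely formal. All the genuine work has already been done in proving \thmref{t:pullback fully faithful} (and hence \corref{c:homology of D-modules}), which in turn rested on the main contractibility result \thmref{t:contractibility}.
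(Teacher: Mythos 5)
Your proof is correct and matches the paper's intent: the paper derives this corollary by simply noting it is implied by \corref{c:homology of D-modules}, and your first argument (substituting $\CF=\omega_{\Bun_G}$ and identifying $\pi^!(\omega_{\Bun_G})\simeq\omega_{\Gr_{\Ran X}}$ via $p_{\Bun_G}\circ\pi = p_{\Gr_{\Ran X}}$) spells out precisely the formal manipulation the paper leaves implicit. The alternative route via \corref{c:dualizing on Bun} is a fine cross-check and is likewise consistent with the paper's machinery.
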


\sssec{}

First, we observe that the left-hand side in \corref{c:homology of Bun G} is really the
homology of $\Bun_G$ in the sense that
$$\Hom^\bullet_{\Vect}\left(\Gamma_{\dr,c}(\Bun_G,\omega_{\Bun_G}),k\right)\simeq 
H^\bullet(\Bun_G).$$

\medskip

Indeed, for an Artin stack $\CY$, let $\underline{k}_\CY\in \fD(\CY)$ denote the 
``constant sheaf" object, i.e., one for which
$$\Hom^{\bullet}\left(\underline{k}_\CY,-\right)\simeq \Gamma_{\dr}(\CY,-).$$
Then by definition 
$$H^\bullet(\CY):=\Gamma_{\dr}(\CY,\underline{k}_\CY)\simeq
\Hom^{\bullet}_{\fD(\CY)}\left(\underline{k}_\CY,\underline{k}_\CY\right).$$

\medskip

Now, since the left adjoint $(p_\CY)_{!}=\Gamma_{\dr,c}(\CY,-)$ to $p_\CY^!$ is defined on $\omega_\CY$,
we have by definition
$$\Hom^\bullet(\Gamma_{\dr,c}(\CY,\omega_\CY),k)\simeq \Hom^\bullet_{\fD(\CY)}(\omega_\CY,\omega_\CY).$$

\medskip

However, it is easy to see that  
$$\Hom^\bullet_{\fD(\CY)}(\omega_\CY,\omega_\CY)\simeq 
\Hom^{\bullet}_{\fD(\CY)}\left(\underline{k}_\CY,\underline{k}_\CY\right).$$
Indeed, by pulling back to schemes $Z$ mapping smoothly to $\CY$, the
latter isomorphism follows from the fact that $\BD_{\fD(Z)}(\underline{k}_Z)\simeq \omega_{Z}$. 

\sssec{}

Let us now observe that the right-hand side in \corref{c:homology of Bun G} can be rewritten
as follows: 
\begin{equation} \label{e:cohomology via Ran space}
\Gamma_{\dr,c}\left(\Ran X,f_!(\omega_{\Gr_{\Ran X}})\right).
\end{equation}

\medskip

Recall now that the basic feature of the functor $\Gr_{X^{\fset}}:(\fset)^{op}\to \indSch$ is {\it factorization}
with respect to $X^{\fset}$, see \cite{BD2}, Sect. 5.3.12.

\medskip

This implies that the object
$$\CB:=f_!(\omega_{\Gr_{\Ran X}})\in \fD(\Ran X)$$
has a structure of {\it factorization algebra} in the terminology
of \cite{BD1}, Sect. 3.4.4 (or {\it factorization D-module} in the terminology 
of \cite{FrGa}, Sect. 2.4).

\begin{remark}
Let us observe that unlike the situation of Remark \ref{r:not naive}, the object $f_!(\omega_{\Gr_{\Ran X}})\in \fD(\Ran X)$
can be calculated term-wise, i.e.,
$$(\Delta^I)^!\left(f_!(\omega_{\Gr_{\Ran X}})\right)\simeq f(I)_!(\omega_{\Gr_{X^I}}).$$
This is due to the fact that the properness of $\Gr_{X^I}$ over $X^I$ and the proper base change formula.
\end{remark}

\sssec{}

Recall now the notion of chiral algebra on $X$, see \cite{BD1}, Sect. 3.3 (see also
\cite{FrGa}, Sect. 2.4, where the definition is spelled out in the setting of higher
categories; note that in {\it loc.cit.}, chiral algebras on $X$ are referred to as 
``chiral Lie algebras on $X$"). 

\medskip

We shall denote by $\on{Lie-alg}^{\on{ch}}(X)$ the 
category of chiral algebras on $X$.  

\medskip

Thus, by Theorem 1.2.4 of \cite{FrGa}, $\CB$ corresponds to a \emph{chiral algebra}
$B$ on $X$. 

\sssec{}

Recall that for a chiral algebra $B$, its chiral homology $\underset{X}\int\, B$ is defined as 
$\Gamma_{\dr,c}(\Ran X,\CB)$, where $\CB$ is the corresponding factorization algebra,
viewed as an object of $\fD(\Ran X)$. 

\medskip

Thus, we obtain that
\begin{equation} \label{e:AB for homology}
H_\bullet(\Bun_G)\simeq \underset{X}\int\, B,
\end{equation}
for the above chiral algebra $B$ on $X$. 



\sssec{}

One can regard the operation of taking chiral homology as a local-to-global 
principle on $X$. In this sense, \eqref{e:AB for homology} gives a ``local on $X$"
expression for the homology of $\Bun_G$.

\ssec{The Atiyah-Bott formula}

\sssec{}

Recall that for any space $\CY$, the category $\fD(\CY)$ has a natural symmetric monoidal
structure: the monoidal operation corresponds to the composed functor
$$\fD(\CY)\otimes \fD(\CY)\to \fD(\CY\times \CY)\overset{\Delta^!_\CY}\longrightarrow \fD(\CY).$$

\medskip

Recall also (\cite{BD1}, Sect. 3.3.1) that commutative algebras in $\fD(X)$
in the above symmetric monoidal structure give rise to chiral algebras. 

\medskip

If $A$ is a commutative chiral algebra, the vector space
$\underset{X}\int\, A$ has a natural structure of commutative algebra, 
which satisfies the following universal property (see \cite{BD1}, Sect. 4.6.1):

\medskip

Note that the functor $p_X^!:\Vect\to \fD(X)$ has a natural symmetric
monoidal structure (this is true for the functor $g^!$ for any morphism
of prestacks $g:\CY_1\to \CY_2$). Thus, $p_X^!$ gives rise to the (eponymous) functor
$$p_X^!:\on{Com-alg}(\Vect)\to \on{Com-alg}(\fD(X)).$$

\medskip

For $A\in \on{Com-alg}(\fD(X))$, viewed as a chiral algebra, and $A'\in \on{Com-alg}(\Vect)$ we have:
\begin{equation} \label{e:adjunction for commutative conformal blocks}
\Hom_{\on{Com-alg}(\Vect)}\Bigl(\underset{X}\int\, A,A'\Bigr)\simeq
\Hom_{\on{Com-alg}(\fD(X))}\bigl(A,p_X^!(A')\bigr).
\end{equation}

\medskip

For $A\in \on{Com-alg}(\Vect)$ we will use a short-hand notation
$\underset{X}\int\, A$ for $\underset{X}\int\, p^!_X(A)$.

\sssec{}

We apply the above discussion to $A:=H^\bullet(BG)$ and $A'=H^\bullet(\Bun_G)$,
where $BG$ is the stack $\on{pt}/G$. 

\medskip

Pullback along the universal map $\Bun_G\times X\to BG$ gives rise to a map in
$\on{Com-alg}(\fD(X))$
\begin{equation} \label{e:A-B local}
p_X^!(H^\bullet(BG))\to p_X^!(H^\bullet(\Bun_G)).
\end{equation}

Thus, from \eqref{e:adjunction for commutative conformal blocks} we obtain a map
\begin{equation} \label{e:A-B}
\underset{X}\int\, H^\bullet(BG)\to H^\bullet(\Bun_G).
\end{equation}

The Atiyah-Bott formula says that when $G$ is semi-simple and simple connected,
the map \eqref{e:A-B} is an isomorphism. 

\begin{remark}
We emphasize that although the map \eqref{e:A-B} is an isomorphism only when $G$
is semi-simple and simply connected, formula \eqref{e:AB for homology} is valid for any
reductive $G$.
\end{remark}

\sssec{}  \label{sss:usual form}

Let us bring the above version of the Atiyah-Bott formula to a more familiar form. Let us recall
that the commutative algebra $H^\bullet(BG)$ is free, i.e., it is isomorphic to
$\on{Sym}(\fa_G)$ for some particular object $\fa_G\in \Vect$.

\medskip

In fact $$\fa_G\simeq \underset{e}\oplus\, k[-2\cdot e],$$ where $e$ runs through the set of
exponents of $G$. 

\medskip

By \cite{BD1}, Proposition 4.6.2, chiral homology of a free commutative chiral algebra
$\on{Sym}(V)$ is
computed by 
$$\underset{X}\int\, \on{Sym}(V)\simeq \on{Sym}(V\otimes H_\bullet(X)).$$

\medskip

Taking $V=\fa_G$, we obtain that \eqref{e:A-B} gives rise to an isomorphism
\begin{equation} \label{e:A-B usual}
\Sym(\fa_G\otimes H_\bullet(X))\simeq H^\bullet(\Bun_G),
\end{equation}
which is the more usual form of the Atiyah-Bott formula.

\ssec{The rederivation of the formula}

Let us now explain the equivalence of the isomorphisms \eqref{e:A-B} and
\eqref{e:AB for homology}. We will only give a sketch; a detailed proof will
appear in the forthcoming joint paper of the author and J.~Lurie, \cite{GaLu}.

\sssec{}

Let $\fb_G$ be the Lie algebra in $\Vect$ that governs the homotopy type 
of $BG$ tensored with $k$. I.e., by definition, $\fb_G$ is the Lie algebra such
that
$$\on{C}^\bullet(\fb_G)\simeq H^\bullet(BG),$$
where $\on{C}^\bullet$ denotes the cohomological Chevalley complex of a Lie algebra.

\medskip

In the case of $BG$, the Lie algebra $\fb_G$ is abelian, which corresponds to the fact
that $H^\bullet(BG)$ is free as a commutative algebra:

\medskip 

We have
$$\fa_G\simeq \fb_G^\vee[-1],$$
where $\fa_G\in \Vect$ is the vector space from \secref{sss:usual form}.

\sssec{}

Recall the notion of Lie-* algebra on $X$, see \cite{BD1}, Sect. 2.5
(this is what is called a $\star$-Lie algebra on $X$ in \cite{FrGa}, Sect. 2.4).
Let $\on{Lie-alg}^\star(X)$ denote the category of Lie-* algebras on $X$.

\medskip

Recall also that if $L$ is a Lie algebra in $\Vect$, the object $p_X^{\dr,*}(L)\in \fD(X)$ 
naturally upgrades to one in $\on{Lie-alg}^\star(X)$. Here 
$$p_X^{\dr,*}:\Vect\to \fD(X)$$ is the functor of *-pullback, left adjoint to the 
direct image functor $(p_X)_!\simeq (p_X)_{\dr,*}$.

\medskip

Finally, recall that the forgetful functor
$$\on{Lie-alg}^{\on{ch}}(X)\to \on{Lie-alg}^{\star}(X)$$
admits a left adjoint, called the functor of \emph{chiral universal envelope},
and denoted by $U^{\on{ch}}$.

\medskip

We have:

\begin{prop}  \label{p:descr of chiral algebra}
For $G$ semi-simple and simply connected, the chiral algebra $B$ of \eqref{e:AB for homology} 
is canonically isomorphic to $U^{\on{ch}}(p_X^{\dr,*}(\fb_G))$.
\end{prop}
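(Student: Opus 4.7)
The plan is to construct a natural morphism of chiral algebras from $U^{\on{ch}}(p_X^{\dr,*}(\fb_G))$ to $B$ and then check that it is an isomorphism, combining a stalkwise computation with the factorization structures on both sides.

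First I would construct a morphism of Lie-$\star$ algebras $p_X^{\dr,*}(\fb_G) \to B$, where $B$ is viewed in $\on{Lie-alg}^{\star}(X)$ via the forgetful functor from $\on{Lie-alg}^{\on{ch}}(X)$. By the adjunction between $U^{\on{ch}}$ and this forgetful functor, such a map extends canonically to a map of chiral algebras
$$\varphi : U^{\on{ch}}(p_X^{\dr,*}(\fb_G)) \to B.$$
To obtain the Lie-$\star$ map, I would use proper base change: the !-stalk $B_x$ of $B = f(1)_!(\omega_{\Gr_X})$ at a geometric point $x\in X$ is $H_\bullet(\Gr_x)$. For $G$ semi-simple and simply connected, $\Gr_x$ is topologically modeled by the based loop space $\Omega G$, and its rational homology is isomorphic as a Hopf algebra to $\Sym(\fb_G)$, with $\fb_G$ sitting inside as the space of primitives. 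Globalizing this embedding gives a morphism of D-modules $p_X^{\dr,*}(\fb_G) \to B$; its compatibility with the Lie-$\star$ bracket follows from the fact that the Hopf algebra structure on each fiber is induced by the factorization structure of $\Gr_{X^{\fset}}$ near the diagonal (the diagonal restriction of the factorization product recovers the H-space product on $\Omega G$).

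Second, to verify that $\varphi$ is an isomorphism, I would compare both sides as factorization algebras. By the chiral PBW theorem, $U^{\on{ch}}(p_X^{\dr,*}(\fb_G))$ carries a filtration whose associated graded is the commutative chiral algebra $\Sym^{\on{ch}}(p_X^{\dr,*}(\fb_G))$; on the open locus of $X^I$ of pairwise distinct points, its !-restriction is the external tensor product $\Sym(\fb_G)^{\boxtimes I}$. Via the factorization isomorphism $\Gr_{X^I}|_{\text{disj}} \simeq (\Gr_X)^I|_{\text{disj}}$ and the K\"unneth formula, the corresponding restriction of $B$ is also $\Sym(\fb_G)^{\boxtimes I}$, and $\varphi$ intertwines these identifications. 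To propagate the conclusion from the open locus to all of $X^I$, one invokes that a morphism of factorization algebras which is an isomorphism away from the main diagonal is an isomorphism everywhere.

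The main obstacle is justifying this last propagation step and, more fundamentally, verifying that $\varphi$ respects the full chiral algebra structure (not merely the underlying D-module). Both are most naturally handled via a non-abelian Poincar\'e duality-type statement in the algebro-geometric setting, relating the factorization algebra governing $H_\bullet(\Bun_G)$ to the chiral envelope of the Lie algebra encoding the homotopy type of $BG$; this is a form of Koszul duality between $E_2$-algebras and Lie algebras, in the D-module framework. Carrying this out rigorously requires substantial higher-categorical input and is deferred to \cite{GaLu}; at the level of this paper I would only sketch the construction and verify consistency in the abelian case where $G$ is a torus (so that $\fb_G$ is concentrated in degree $2$ and the construction reduces to the Heisenberg/Kac-Moody chiral algebra at trivial level, whose chiral homology is known to compute $H^\bullet(\Bun_T)$).
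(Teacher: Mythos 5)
Your proposal takes a genuinely different route from the paper. You propose to build a morphism $\varphi:U^{\on{ch}}(p_X^{\dr,*}(\fb_G))\to B$ by extracting, fiber by fiber, the primitives of the Hopf algebra $H_\bullet(\Gr_x)$ and globalizing; you then verify that $\varphi$ is an isomorphism by combining the chiral PBW filtration with the factorization structure. The paper inverts this: it first observes that the diagonal map $Z\to Z\times Z$ makes $B$ a cocommutative coalgebra in the category of unital augmented chiral algebras, and that the functor $U^{\on{ch}}$ upgrades to an equivalence between Lie-$\star$ algebras and such coalgebras once a connectivity hypothesis holds. By \secref{sss:connectivity} the hypothesis is precisely the connectedness of the fibers of $\Gr_X$ over $X$, i.e.\ the assumption that $G$ is semi-simple and simply connected. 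This immediately yields $B\simeq U^{\on{ch}}(L)$ for a canonically determined $L$ --- no PBW, no stalk computation, and no choice of a primitive sub-object. The remaining task is to identify $L$, and the paper produces a map in the opposite direction $L\to p_X^{\dr,*}(\fb_G)$ from the pairing \eqref{e:A-B factor} between $\ol\CB$ and the commutative factorization algebra $\ol\CA$ attached to $\on{C}^\bullet_{\fD(X)}(p_X^{\dr,*}(\fb_G))$, checking it is an isomorphism globally via chiral homology on $X=\BP^1$ rather than locally on fibers. The paper's route buys you a canonical identification of $B$ as a chiral envelope before you know what the Lie-$\star$ algebra is, and a one-line reduction of the remaining check to an elementary computation on $\BP^1$; your route is more hands-on but pushes all the burden into constructing $\varphi$ and into an explicit PBW comparison.

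Two substantive issues with your sketch. First, the propagation step is misstated: the disjoint locus together with the main diagonal do not cover $X^I$ once $|I|>2$, so \emph{a morphism of factorization algebras that is an isomorphism away from the main diagonal} is not the right hypothesis. The correct statement is that a morphism of factorization algebras which is an isomorphism on $X$ is an isomorphism on all of $\Ran X$: the Ran-space compatibility with diagonal pullbacks handles every partial diagonal by devissage, and the factorization isomorphism handles the open stratum, with the cone of the map killed by induction on $|I|$. Your stalkwise identification supplies exactly the input (an isomorphism on $X$), so the argument is salvageable, but it needs to be reorganized. Second, and more seriously, you propose to verify the construction only in the abelian case $G$ a torus, but the proposition is stated for $G$ semi-simple and simply connected --- these hypotheses are disjoint, and the torus verification does not test the assertion being proved. (Note also that the connectivity hypothesis the paper uses is exactly what \emph{fails} for a torus, since $\Gr_{X,\BG_m}$ has $\BZ$-many components; so the torus case behaves differently and cannot serve as a check for the semi-simple simply connected case.) A verification that actually addresses the case at hand is the one the paper gives, namely $X=\BP^1$ with general $G$ semi-simple and simply connected, using the map $\Gr_0/G\to \Bun_G(\BP^1)$. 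Finally, a minor point: the fiber $H_\bullet(\Gr_x)$ is $\Sym(\fb_G[-1])\simeq \on{C}_\bullet(\fb_G[-2])$, not $\Sym(\fb_G)$; the degree shift does not affect the logic but matters when globalizing the inclusion of primitives.
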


\begin{remark}
The assertion of the above proposition is well-known in topology. At the level
of !-stalks at points of $X$ it says that 
$$H_\bullet(\Gr_x)\simeq \on{C}_\bullet(\fb_G[-2]),$$
where $x$ is any point of $X$, and $\Gr_x$ is the fiber of $\Gr_{X}$ over
$x\in X$, and $\fb_G[-2]$ is the Lie algebra obtained by looping $\fb_G$
twice. This results from the fact that $\Gr_x$ is homotopy-equivalent to
the double-loop space of $BG$. 
\end{remark}

The proof will be sketched in \secref{ss:ident of chiral algebra}. Let us 
proceed to showing how \eqref{e:AB for homology}, combined with 
\propref{p:descr of chiral algebra}, reproduces the isomorphism
\eqref{e:A-B}.

\sssec{}

By \cite{BD1}, Theorem 4.8.1 (see also \cite{FrGa}, Corollary 6.3.4), 
for a Lie-* algebra $L$ on $X$ we have:
$$\underset{X}\int\, U^{\on{ch}}(L)\simeq \on{C}_\bullet(\Gamma_{\dr,c}(X,L)),$$
where $\Gamma_{\dr,c}(X,L)\in \Vect$ acquires a natural structure of Lie
algebra by \cite{FrGa}, Sect. 6.2.1. We apply this to $L=p_X^{\dr,*}(\fb_G)$,
and we obtain that
\begin{equation} \label{e:homology}
H_\bullet(\Bun_G)\simeq \underset{X}\int\, B\simeq 
\on{C}_\bullet\left(\Gamma_{\dr,c}(X,p^{\dr,*}_X(\fb_G))\right).
\end{equation}

Hence,
\begin{multline} \label{e:cohomology}
H^\bullet(\Bun_G)\simeq \left(H_\bullet(\Bun_G)\right)^\vee\simeq  \\
\simeq \left(\on{C}_\bullet\left(\Gamma_{\dr,c}(X,p^{\dr,*}_X(\fb_G))\right)\right)^\vee\simeq
\on{C}^\bullet\left(\Gamma_{\dr,c}(X,p^{\dr,*}_X(\fb_G))\right).
\end{multline}

\sssec{}

On the other hand, we have $H^\bullet(BG)\simeq \on{C}^\bullet(\fb_G)$,
and therefore we can identify the commutative chiral algebra 
$p_X^!(H^\bullet(BG))$ with 
$$\on{C}_{\fD(X)}^\bullet(p^{\dr,*}_X(\fb_G)),$$
where for a $L\in \on{Lie-alg}^{\star}(X)$ we denote by 
$$\on{C}_{\fD(X)}^\bullet(L)\in \on{Com-alg}(\fD(X))$$
the corresponding Chevalley algebra, see \cite{BD1}, Sect. 1.4.14. 

\medskip

By \cite{BD1}, Proposition 4.7.1, we have:
$$\underset{X}\int\, \on{C}_{\fD(X)}^\bullet(L)\simeq \on{C}^\bullet(\Gamma_{\dr,c}(X,L)).$$
Thus, we obtain that
\begin{equation} \label{e:cohomology again}
\underset{X}\int\, H^\bullet(BG)\simeq \on{C}^\bullet\left(\Gamma_{\dr,c}(X,p^{\dr,*}_X(\fb_G))\right).
\end{equation}

Comparing \eqref{e:cohomology} with \eqref{e:cohomology again}, we deduce the desired
isomorphism
\begin{equation} \label{e:A-B again}
H^\bullet(\Bun_G)\simeq \underset{X}\int\, H^\bullet(BG).
\end{equation}

\begin{remark}
The construction of the isomorphism $\CB\simeq U^{\on{ch}}(p_X^{\dr,*}(\fb_G))$ sketched in
\secref{ss:ident of chiral algebra} implies that the isomorphism \eqref{e:A-B again} coincides
with that of \eqref{e:A-B}.
\end{remark}

\ssec{Proof of \propref{p:descr of chiral algebra}} \label{ss:ident of chiral algebra}

We shall only sketch the proof; details will be supplied in \cite{GaLu}. 

\sssec{}

Let $Z$ be any functor $(\fset)^{op}\to \indSch$ as in \secref{sss:over Ran}, which is a unital 
\emph{factorization monoid}, see \cite{BD1}, Sect. 3.10.16. 
In this case $\CB:=f_!(\omega_{Z_{\Ran X}})$ is a unital and augmented factorization 
algebra in $\fD(\Ran X)$. Let $B$ be the corresponding chiral algebra on $X$. We let
$\ol{B}$ denote the augmentation ideal of $B$, and $\ol\CB$ the corresponding factorization
algebra on $\Ran X$.

\medskip

The diagonal map $Z\to Z\times Z$ defines on $B$ a structure of unital augmented
commutative co-algebra object in the category of chiral algebras. 

\medskip

Now, the functor $U^{\on{ch}}$
canonically factors through a functor
\begin{equation} \label{e:Lie * to fact}
\on{Lie-alg}^{\star}(X)\to \on{Com-coalg}(\on{Lie-alg}^{\on{ch}}_{\on{unital,augm}}(X)),
\end{equation}
followed by the forgetful functor
$$\on{Com-coalg}(\on{Lie-alg}^{\on{ch}}_{\on{unital,augm}}(X))\to 
\on{Lie-alg}^{\on{ch}}(X).$$
Moreover, the functor 
of \eqref{e:Lie * to fact} induces an equivalence on 
the subcategories of objects satisfying an appropriate connectivity hypothesis. In particular,
\secref{sss:connectivity} implies that this hypothesis is satisfied by $B$ if the fibers of $Z(\{1\})$ 
over $X$ are connected. 

\sssec{}

Let us return now to the situation when $Z=\Gr_{X^{\fset}}$. If $G$ is semi-simple and
simply connected, the fibers of $\Gr_{X}$ over $X$ are connected. Hence, we obtain
that in this case
$$B\simeq U^{\on{ch}}(L)$$
for a canonically defined Lie-* algebra $L$ on $X$.

\medskip

Let $\CA$ denote the factorization algebra corresponding to the commutative chiral
algebra $A:=p_X^!(H^\bullet(BG))$. Let $\ol{A}$ denote the augmentation ideal of $A$,
corresponding to the unit point of $BG$. Let $\ol{\CA}$ denote the corresponding 
factorization algebra.

\medskip

A local version of the map
\eqref{e:A-B local} gives rise to a map
\begin{equation} \label{e:A-B factor}
\on{union}_!(\ol{\CB}\boxtimes \ol{\CA})\to \omega_{\Ran X},
\end{equation}
compatible with the commutative co-algebra structure on $\ol\CB$ and the commutative algebra
structure on $\ol\CA$.

\medskip

Since 
$$p_X^!(H^\bullet(BG))\simeq \on{C}^\bullet_{\fD(X)}(p^{\dr,*}_X(\fb_G)),$$ 
the above properties
of the map \eqref{e:A-B factor} define a map of Lie-* algebras 
\begin{equation} \label{e:ident of Lie}
L\to  p^{\dr,*}_X(\fb_G),
\end{equation}

It remains to show that \eqref{e:ident of Lie} is an isomorphism. This is a local question,
hence we can assume that $X=\BP^1$. It is easy to see that it is sufficient to show that 
the map
$$\on{C}_\bullet(\Gamma_{\dr,c}(X,L))\to \on{C}_\bullet(\Gamma_{\dr,c}(X,p^{\dr,*}_X(\fb_G)))$$
is an isomorphism. I.e., that the composition
\begin{equation} \label{e:local to global comp}
\underset{X}\int\, H^\bullet(BG)\simeq  \on{C}^\bullet(\Gamma_{\dr,c}(X,p^{\dr,*}_X(\fb_G)))
\to \on{C}^\bullet(\Gamma_{\dr,c}(X,L))\simeq \left(\underset{X}\int\, B\right)^\vee\simeq H^\bullet(\Bun_G)
\end{equation}
is an isomorphism for $X=\BP^1$. We shall do so by reversing the above manipulations. 

\sssec{}

The main observation is that the map \eqref{e:A-B factor} makes the following diagram commute:
\begin{equation} \label{e:local to global}
\CD
\Gamma_{\dr,c}(\Ran X,\on{union}_!(\ol\CB\boxtimes \ol\CA))  
@>{\sim}>> \Gamma_{\dr,c}(\Ran X,\ol\CB)  \otimes \Gamma_{\dr,c}(\Ran X,\ol\CA) \\ 
@VVV     @VVV   \\ 
\Gamma_{\dr,c}(\Ran X,\omega_{\Ran X})  & &  \ol{H}_\bullet(\Bun_G)\otimes \ol{H}^\bullet(\Bun_G)   \\
@VVV    @VVV  \\
k   @>{\on{id}}>>  k,
\endCD
\end{equation} 
where $\ol{H}_\bullet(\Bun_G)$ (resp., $\ol{H}^\bullet(\Bun_G)$) denotes the reduced
homology (resp., cohomology) of $\Bun_G$ with respect to the base point ${\mathbf 1}\in \Bun_G$.
This property is true for any $G$, not necessarily semi-simple 
and simply connected.

\medskip

Now, for a Lie-* algebra $L'$, if we denote by $\CB'$ the factorization algebra corresponding to
the chiral algebra $U^{\on{ch}}(L')$, and by $\CA'$ the factorization algebra corresponding to
the chiral algebra $\on{C}_{\fD(X)}^\bullet(L')$, and by $\ol\CB'$ and $\ol\CA'$ their reduced counterparts
(i.e., the factorization algebras corresponding to the augmentation ideals in the corresponding chiral
algebras), the canonical map
$$\on{union}_!(\ol\CB'\boxtimes \ol\CA')\to \omega_{\Ran X}$$
makes the following diagram commute:
$$
\CD
\Gamma_{\dr,c}(\Ran X,\on{union}_!(\ol\CB'\boxtimes \ol\CA'))  
@>{\sim}>> \Gamma_{\dr,c}(\Ran X,\ol\CB')  \otimes \Gamma_{\dr,c}(\Ran X,\ol\CA') \\ 
@VVV    @VV{\sim}V   \\
\Gamma_{\dr,c}(\Ran X,\omega_{\Ran X})     & &  \ol{\on{C}}_\bullet(\Gamma_{\dr,c}(X,L'))\otimes 
\ol{\on{C}}^\bullet(\Gamma_{\dr,c}(X,L')) \\
@VVV    @VVV  \\
k   @>{\on{id}}>>  k.
\endCD
$$

This implies that the map \eqref{e:local to global comp} equals the map \eqref{e:A-B}. 

\sssec{}

Thus, it remains
to show that the map \eqref{e:A-B} is an isomorphism for $X=\BP^1$, which is an easy verification: it 
follows e.g., from the fact that the map 
$$\Gr_0/G\to \Bun_G(\BP^1)$$
induces an isomorphism on cohomology, and the computation of the $G$-equivariant cohomology of $\Gr_0$.

\qed

\section{Appendix: contractibility of the Ran space}  \label{s:proof contr Ran}

Here is the promised proof of \thmref{t:contractibility of Ran}. By \secref{sss:connectivity},
$H_\bullet(\Ran X)$ is connective, and $H_0(\Ran X)$ maps isomorphically to $k$. 

\medskip

Assume by
contradiction that for some $n>0$ we have $H_n(\Ran X)\neq 0$. With no restriction of
generality, we can assume that $n$ is minimal such integer. By the K\"unneth formula, we obtain that 
the maps $\on{id}\times p_{\Ran X}$ and $p_{\Ran X}\times \on{id}$
$$\Ran X\times \Ran X\rightrightarrows \Ran_X$$
define an isomorphism
$$H_n(\Ran X\times \Ran X)\to H_n(\Ran X)\oplus H_n(\Ran X).$$
Denote $M:=H_n(\Ran X)$. 

\medskip

The map $$\on{union}:\Ran X\times \Ran X\to \Ran X$$ defines, therefore, a map
$$M\oplus M\to M,$$
which by symmetry must be of the form $u\oplus u$ for some map $u:M\to M$. By associativity,
for any $k\geq 2$, the corresponding map
$$(\Ran X)^{\times k}\overset{\on{union}^k}\longrightarrow \Ran X$$
acts on $H_n$ as $u^{\oplus k}$. 

\medskip

For an integer $k$, consider now the diagonal map $\Ran X\to (\Ran X)^{\times k}$. It induces on
$H_n$ the diagonal map $M\to M^{\oplus k}$. 

\medskip

By \secref{sss:sq of diag}, the composition
$$\Ran X \to (\Ran X)^{\times k}\overset{\on{union}^k}\longrightarrow \Ran X$$
is the identity map. Hence, we obtain that $k\cdot u=\on{id}$ for any $k\geq 2$. Taking $k$
to be, e.g., $2$ and $3$, we obtain that $2\cdot u=3\cdot u$, i.e., $u=0$. Hence, $\on{id}:M\to M$ 
is the zero map, which is a contradiction. 

\qed

\end{document}